\newcommand{\nc}{\newcommand}
\nc{\rnc}{\renewcommand}
\nc{\nn}{\nonumber}
\nc{\der}{{\partial}}
\rnc{\Im}{{\rm{Im}\,}}
\rnc{\Re}{{\rm{Re}\,}}
\nc{\db}{\displaybreak[0]\\}
\nc{\bra}{\langle}
\nc{\ket}{\rangle}
\nc{\bs}{\boldsymbol}
\DeclareMathOperator{\End}{End}
\newlength{\minitwocolumn}
\newcommand{\Z}{{\Bbb Z}} 
\newcommand{\C}{{\Bbb C}} 
\newcommand{\N}{{\Bbb N}} 
\newcommand{\FF}{{\Bbb F}} 
\newcommand{\ta}{\tilde{a}}
\newcommand{\cD}{{\cal D}}
\newcommand{\cA}{{\cal A}}
\newcommand{\cU}{{\cal U}}
\newcommand{\cI}{{\cal I}}
\newcommand{\cH}{{\cal H}}
\newcommand{\cN}{{\cal N}}
\newcommand{\cR}{{\cal R}}
\newcommand{\cP}{{\cal P}}
\newcommand{\cE}{{\cal E}}
\newcommand{\cV}{{\cal V}}
\newcommand{\cX}{{\cal X}}
\newcommand{\bR}{{\overline{R}}}
\newcommand{\hd}{\widehat{d}}
\newcommand{\hL}{\widehat{L}}
\newcommand{\tR}{{\widetilde{R}}}
\newcommand{\cK}{{\cal K}}
\newcommand{\la}{\lambda}
\newcommand{\La}{\Lambda}
\newcommand{\al}{\alpha}
\newcommand{\bet}{\beta}
\newcommand{\ga}{\gamma}
\newcommand{\ep}{\epsilon}
\newcommand{\vep}{\varepsilon}
\newcommand{\tS}{\widetilde{S}}
\newcommand{\txi}{\widetilde{\xi}}
\newcommand{\tW}{\widetilde{W}}
\newcommand{\tX}{\widetilde{X}}
\newcommand{\bA}{\overline{A}}
\newcommand{\bB}{\overline{B}}
\newcommand{\bC}{\overline{C}}
\newcommand{\bep}{\bar{\epsilon}}
\newcommand{\hell}{{\ell}}
\newcommand{\hf}{\widehat{f}}
\newcommand{\bL}{{\bar{L}}}
\newcommand{\bt}{{\bf t}}
\newcommand{\bfw}{{\bf w}}
\newcommand{\bfP}{{\bf P}}
\newcommand{\bfQ}{{\bf Q}}
\newcommand{\bfK}{{\bf K}}
\newcommand{\bfL}{{\bf L}}
\newcommand{\te}{{\Theta_p}}
\newcommand{\tes}{{\Theta}_{p^*}}
\newcommand{\bal}{{\boldsymbol{\al}}}
\newcommand{\bbe}{{\boldsymbol{\beta}}}
\newcommand{\bla}{{\boldsymbol{\la}}}
\newcommand{\bmu}{{\boldsymbol{\mu}}}
\newcommand{\bnu}{{\boldsymbol{\nu}}}
\newcommand{\bea}{\begin{eqnarray}}
\newcommand{\ena}{\end{eqnarray}}
\newcommand{\beit}{\begin{itemize}}
\newcommand{\enit}{\end{itemize}}
\newcommand{\be}{\begin{eqnarray*}}
\newcommand{\en}{\end{eqnarray*}}
\newcommand{\lb}[1]{\label{#1}}
\newcommand{\ds}[1]{{\displaystyle #1 }}
\newcommand{\id}{{\rm id}}
\newcommand{\Ad}{{\rm Ad}}
\newcommand{\wt}{{\rm wt}}
\newcommand{\ev}{{\rm ev}}
\renewcommand{\Im}{\mbox{Im}\,}
\newcommand{\qdet}{{q}\mbox{\rm -det}}
\def\infq4p#1{{(#1;q^4,p)_\infty}}
\newcommand{\tot}{\widetilde{\otimes}}
\newcommand{\mmatrix}[1]{\begin{matrix} #1 \end{matrix}}
\font\teneufm=eufm10
\font\seveneufm=eufm7
\font\fiveeufm=eufm5
\let\goth\mathfrak
\newcommand{\slth}{\widehat{\goth{sl}}_2}
\newcommand{\gsl}{\goth{sl}}
\newcommand{\slnh}{\widehat{\goth{sl}}_N}
\newcommand{\g}{\goth{g}}
\newcommand{\gH}{\goth{H}}
\newcommand{\bgH}{\overline{\goth{H}}}
\newcommand{\slnhbig}{\widehat{\mbox{\fourteeneufm sl}}_N}  
\newcommand{\glnhbig}{\widehat{\mbox{\fourteeneufm gl}}_N}
\newcommand{\gl}{{\goth{gl}}}
\newcommand{\gln}{{\goth{gl}_N}}
\newcommand{\glnh}{\widehat{\goth{gl}}_N}
\newcommand{\glth}{\widehat{\goth{gl}}_2}
\newcommand{\glt}{{\goth{gl}}_2}
\newcommand{\h}{\goth{h}}
\newcommand{\gh}{\widehat{\goth{g}}}
\newcommand{\hh}{\goth{h}}
\newcommand{\gS}{\goth{S}}
\font\fourteeneufm=eufm10 scaled\magstep2    
\newtheorem{theorem}{Theorem}[section]
\newtheorem{lemma}[theorem]{Lemma}
\newtheorem{proposition}[theorem]{Proposition}
\newtheorem{definition}[theorem]{Definition}
\newtheorem{thm}{Theorem}[section]
\newtheorem{prop}[thm]{Proposition}
\newtheorem{lem}[thm]{Lemma}
\newtheorem{cor}[thm]{Corollary}
\newtheorem{df}[thm]{Definition}
\newtheorem{dfn}[thm]{Definition}
\numberwithin{equation}{section}
\numberwithin{equation}{section}
\begin{document}%
%
\title{Gelfand-Tsetlin Bases for Elliptic Quantum Groups}

\author{
Hitoshi Konno\thanks{E-mail: hkonno0@kaiyodai.ac.jp} \
and \
Kohei Motegi\thanks{E-mail: kmoteg0@kaiyodai.ac.jp}
\\\\
{\it 
Tokyo University of Marine Science and Technology,}\\
 {\it Etchujima 2-1-6, Koto-Ku, Tokyo, 135-8533, Japan} \\
\\\\
\\
}

\date{\today}

\maketitle

\begin{abstract}
We study the level-0 representations of  the elliptic quantum group $U_{q,p}(\widehat{\gl}_N)$. 
We give a classification theorem of the finite-dimensional irreducible representations of  
$U_{q,p}(\widehat{\gl}_N)$ in terms of the theta function analogue of the Drinfeld polynomial for the 
quantum affine algebra $U_q(\glnh)$.  
We also construct the Gelfand-Tsetlin bases for the level-0 $U_{q,p}(\widehat{\gl}_N)$-modules 
following the work by Nazarov-Tarasov for the Yangian $Y(\gl_N)$-modules.  
This is a construction in terms of the Drinfeld generators. 
For the case of tensor product of the vector representations, we give another construction of 
the Gelfand-Tsetlin bases in terms of the $L$-operators and make a connection between the two constructions. We also compare them with those obtained by the first author by using the $\gS_n$-action realized by the elliptic dynamical $R$-matrix on the standard bases. 
As a byproduct, we obtain an explicit formula for the partition functions of the corresponding 2-dimensional 
square lattice model in terms of the elliptic weight functions of type $A_{N-1}$.

\end{abstract}

\section{Introduction}
The classification theorem of the finite-dimensional irreducible representations of the Lie algebra $\gln=\gl(N,\C)$ in terms of the dominant integral weights and the construction of the Gelfand-Tsetlin bases are 
 the most important results in the classical representation theory. An extension of the former to the quantum groups, the Yangian $Y(\gln)$ and the quantum affine algebra $U_q(\glnh)$, was initiated by Tarasov\cite{Tarasov}, stated by Drinfeld \cite{Drinfeld} and established by Chari and Pressley \cite{CPYangian,CP,CP1994,CPBook}.  There the classification is given in terms of the polynomials 
called the Drinfeld polynomials, which can be regarded as a generating functions of a set of  complex numbers  
specifying the highest weight. To prove the theorem, for example for $U_q(\glnh)$, one needs  careful studies of the embedding structure $U_q(\gln)\ \hookrightarrow\ U_q(\glnh)$, the evaluation homomorphism $ev_a: U_q(\glnh)\ \mapsto\ U_q(\gln)$ as well as the isomorphism \cite{Drinfeld,Beck} between the two realizations of $U_q(\glnh)$, i.e. 
the Drinfeld-Jimbo realization\cite{Drinfeld86,Jimbo85} and the Drinfeld realization\cite{Drinfeld}. 
For elliptic quantum groups, only a partial result for $U_{q,p}(\slth)$ was obtained by using the evaluation homomorphism $U_{q,p}(\slth)\ \mapsto \ U_{q}(\slth)$\cite{KonnoJGP}. One of the purpose of this paper is to give a complete exposition of the necessary structures of the elliptic quantum groups $U_{q,p}(\glnh)$ and $U_{q,p}(\slnh)$, and establish an elliptic version of the classification theorem.  

To construct the Gelfand-Tsetlin bases for the elliptic quantum groups is the other purpose of this paper. 
The Gelfand-Tsetlin bases\cite{GT} of finite-dimensional irreducible representations of quantum groups were constructed for the Yangian $Y(\gl_N)$\cite{NT94,NT,Molev} and  the quantum group $U_q(\gln)$\cite{Jimbo85GT,UTS}. 
In particular, the  constructions in terms of  the quantum minor determinants of the $L$-operators, or equivalently the Drinfeld generators of $Y(\gl_N)$, developed by 
Nazarov-Tarasov \cite{NT} and Molev \cite{Molev} are important for us, because they can be extended to the elliptic case straightforwardly.  For the elliptic quantum groups $U_{q,p}(\glnh)$ and $E_{q,p}(\glnh)$,  the quantum minor determinants of the $L$-operators and their various properties were studied in \cite{KonnoASPM}. 

On the other hand, recently, the Gelfand-Tsetlin bases  of the tensor product of the vector representations
have been attracting a lot of attention in geometric representation theory of quantum groups. 
There the Gelfand-Tsetlin bases are identified with the fixed point bases (classes) of the equivariant cohomology, $\mathrm{K}$-theory and elliptic cohomology of quiver varieties corresponding to the quantum groups.  
See for example \cite{KonnoJinttwo}.  A key to this  is the identification of 
the weight functions in the representation theory of quantum groups \cite{GRTV,RTV,KonnoJinttwo,RTV19} with the stable envelopes in the geometry of quiver varieties\cite{MO,Okounkov,AO}.    
In fact, it has been shown  for the case of the elliptic quantum group $U_{q,p}(\widehat{\gl}_N)$ that the Gelfand-Tsetlin bases are obtained by transforming  the standard bases with the change of basis matrix 
given by the elliptic weight functions\cite{KonnoJinttwo}
(see also \cite{RTV} for the affine quantum group case). 
 
In this paper, we extend the construction of the Gelfand-Tsetlin bases to arbitrary level-0 $U_{q,p}(\widehat{\gl}_N)$-modules whose weights are labelled by the Gelfand-Tsetlin patterns. We give a general construction in terms of the Drinfeld generators following the results in  \cite{NT}. For the tensor product of the vector representations, we also give another construction in terms of the $L$-operators and make a connection to the general construction. We also make a connection to the one obtained in \cite{KonnoJinttwo}, where the Gelfand-Tsetlin bases were constructed by using the action of the permutation group $\gS_n$ realized by the elliptic dynamical $R$-matrix on the tensor product space. 
As a byproduct, we obtain an explicit evaluation of the partition functions of the 2-dimensional square lattice statistical model defined by the  $R$-matrix in terms of the elliptic weight functions. 

This paper is organized as follows. 
In Section 2, we expose some properties of the elliptic quantum groups $U_{q,p}(\glnh)$,  $U_{q,p}(\slnh)$ and $E_{q,p}(\glnh)$.  In Section 3, we give a proof of  the classification theorem of level-0 finite-dimensional irreducible representations of the elliptic quantum groups. In Section 4, we present an explicit and complete example of the classification of all finite-dimensional irreducible representations of $U_{q,p}(\glth)$ and their Gelfand-Tsetlin bases. In Section 4, we give a general construction of the Gelfand-Tsetlin bases for $U_{q,p}(\glnh)$-modules in terms of the Drinfeld generators.  In Section 5, we give the second construction of the Gelfand-Tsetlin bases for the case of the tensor product of the vector representations of $U_{q,p}(\glnh)$ in terms of the $L$-operators. We make a connection of them to the general construction as well as the one obtained in \cite{KonnoJinttwo}. By considering the change of basis matrices, we obtain an explicit evaluation of the partition function of the statistical model in terms of the elliptic weight functions.

\section{Elliptic Quantum Groups $U_{q,p}(\widehat{\gl}_N)$ and $E_{q,p}(\glnh)$}

In this section, we review the
elliptic quantum groups $U_{q,p}(\widehat{\gl}_N)$ and $E_{q,p}(\slnh)$ 
following \cite{KonnoASPM}. In particular we summarize embeddings of $U_q(\gl_N)$ to 
$U_q(\glnh)$ and further to $U_{q,p}(\widehat{\gl}_N)$ as well as evaluation homomorphisms from $U_{q,p}(\widehat{\gl}_N)$ to $U_q(\glnh)$ and further to $U_q(\gl_N)$.  
We also summarize an isomorphism between $U_{q,p}(\widehat{\gl}_N)$ and $E_{q,p}(\widehat{\gl}_N)$ and list some formulas on quantum minor determinants.

\subsection{Preliminaries}
Let $A=(a_{ij})$ $(0 \le i,j \le N-1)$ be the generalized Cartan matrix of the affine Lie algebra $\widehat{\gsl}_N=\widehat{\gsl}(N,\mathbb{C})$.
Let $\mathfrak{h}=\widetilde{\mathfrak{h}} \oplus \mathbb{C} d$,
$\widetilde{\mathfrak{h}} = \overline{\mathfrak{h}} \oplus \mathbb{C} c$,
$\overline{\mathfrak{h}}=\oplus_{i=1}^{N-1} \mathbb{C} h_i$ be the Cartan subalgebras of $\widehat{\gsl}_N$,
and $\mathfrak{h}^*=\widetilde{\mathfrak{h}}^* \oplus \mathbb{C} \delta$,
$\widetilde{\mathfrak{h}}^*=\overline{\mathfrak{h}}^* \oplus \mathbb{C} \Lambda_0$,
$\overline{\mathfrak{h}}^*=\oplus_{i=1}^{N-1} \mathbb{C} \overline{\Lambda}_i$ their duals.
Let
$\mathcal{Q}=\oplus_{i=1}^{N-1} \mathbb{Z} \alpha_i$ be the root lattice
and $\mathcal{P}=\oplus_{i=1}^{N-1} \mathbb{Z} \overline{\Lambda}_i$ the weight lattice.
The pairings between $\delta, \Lambda_0$, $\alpha_i, \overline{\Lambda}_i$ $(1 \le i \le N-1) \in \mathfrak{h}^*$ 
are given by
\begin{align}
\langle \alpha_i, h_j \rangle=a_{ji}, \ \langle \delta, d \rangle=\langle \Lambda_0, c \rangle=1, \ \langle \overline{\Lambda}_i, h_j \rangle=\delta_{i,j},
\end{align}
and the other pairings are 0. 
An element $\la$ in $\cP^+:=\oplus_{i=1}^{N-1} \mathbb{Z}_{\geq 0} \overline{\Lambda}_i$ is called a dominant integral weight. 
Let $\{ \epsilon_j \ (1 \le j \le N) \}$ be an orthonormal basis in $\mathbb{R}^N$
with the inner product $(\epsilon_j,\epsilon_k)=\delta_{j,k}$.
We set $\overline{\epsilon}_j=\epsilon_j-\sum_{k=1}^ N \epsilon_k/N$,
and realize the simple roots by $\alpha_j=\overline{\epsilon}_j-\overline{\epsilon}_{j+1}$ $(1 \le j \le N-1)$
and the fundamental weights by $\Lambda_j=\overline{\epsilon}_1+\dots+\overline{\epsilon}_j$ $(1 \le j \le N-1)$. In order to discuss $\glnh$, we consider $\overline{\gH}^*=\oplus_{l=1}^N\C\ep_l$ and  define $h_{\ep_l}$ $(1\leq l\leq N)$ by $h_i=h_{\ep_i}-h_{\ep_{i+1}}$. We set $\overline{\gH}=\oplus_{l=1}^N\C h_{\ep_l}$.  
We regard $\overline{\mathfrak{H}} \oplus \overline{\mathfrak{H}}^*$ as the Heisenberg algebra by
\begin{align}
[h_{\alpha},\beta]=(\alpha,\beta), \ \ \ [h_{\alpha},h_{\beta}]=[\alpha,\beta]=0, \ \ \ \alpha,\beta \in \overline{\mathfrak{H}}^*.
\end{align}
Let $\{P_\alpha, Q_\beta \}$ $(\alpha,\beta \in \overline{\mathfrak{H}}^*)$
be the Heisenberg algebra defined by
\begin{align}
[P_{\alpha},Q_{\beta}]=(\alpha,\beta), \ \ \ [P_{\alpha},P_{\beta}]=[Q_{\alpha},Q_{\beta}]=0.
\end{align}
We also set $\cR_Q=\oplus_{l=1}^N\C Q_{\ep_l}$ and denote by $\C[\cR_Q]$ the group ring of $\cR_Q$. 
Note that for $e^{Q_\al}, e^{Q_\beta}\in \C[\cR_Q]$ one has $e^{Q_\al}e^{Q_\beta}=e^{Q_\al+Q_\beta}\in \C[\cR_Q]$. 

We define the commutative algebra $H$ as $H=\widetilde{\h}\oplus\left(\oplus_{l=1}^N\C P_{\ep_l}\right)$, and its dual space $H^*=\widetilde{\mathfrak{h}}^* \oplus \Big( \oplus_{j=1}^N \mathbb{C} Q_{{\epsilon}_j} \Big)$. Then define $\mathbb{F}=\mathcal{M}_{H^*}$ to be the field of meromorphic functions on $H^*$.

For $x\in \C$, we set  ${[x]_q=\frac{q^x-q^{-x}}{q-q^{-1}}}$.  
Let $q_1,q_2,\cdots,q_k\in \C^\times$ satisfying $|q_1|,|q_2|,\cdots,|q_k|<1$. 
We introduce the $q$-infinite product
\bea
&&(x;q_1,q_2,\cdots,q_k)_\infty=\prod_{n_1,n_2,\cdots,n_k=0}^\infty(1-xq_1^{n_1}q_2^{n_2}\cdots q_k^{n_k}). 
\ena
 In particular, the $k=1$ case gives
\begin{align}
(x;q)_\infty=\prod_{n=0}^\infty (1-xq^n).
\end{align}
Let $p$ be a generic complex number satisfying $|p|<1$, 
and introduce the Jacobi's odd theta functions as
\begin{align}
 \Theta_p(z)=(z;p)_\infty (p/z;p)_\infty (p;p)_\infty.
\end{align}
It is also convenient to use
\bea
&&\theta(z)=-z^{-1/2}\Theta_p(z). 
\ena

 We also use the elliptic Gamma function defined by
 \bea
 &&\Gamma(z;p,q)=\frac{(pq/z;p,q)_\infty}{(z;p,q)_\infty}\qquad |p|, |q|<1.\lb{ellGamma}
 \ena
This satisfies
\bea
&&\Gamma(pz;p,q)=\frac{\Theta_q(z)}{(q;q)_\infty}\Gamma(z;p,q),\qquad 
\Gamma(qz;p,q)=\frac{\Theta_p(z)}{(p;p)_\infty}\Gamma(z;p,q).
\ena

\subsection{The elliptic dynamical $R$-matrix of the $\widehat{\gl}_N$ type}\lb{edR}

Let $\displaystyle \widehat{V}=\bigoplus_{j=1}^N {\mathbb F} v_j$
be the $N$-dimensional vector space over ${\mathbb F}$ and set 
$\widehat{V}_z:=\widehat{V}[z,z^{-1}]$. 
We assume $e^{Q_\al}\cdot v_j=v_j$. 
We consider the elliptic dynamical $R$-matrix
${R}^+(z,\Pi) \in \mathrm{End}(\widehat{V}_{z_1} \otimes
\widehat{V}_{z_2})$ given by
\begin{align}
\displaystyle {R}^+(z,\Pi)
=&\rho^+(z)\overline{R}(z,\Pi),\\
\displaystyle \overline{R}(z,\Pi)
=&\sum_{j=1}^N E_{jj} \otimes E_{jj}
+\sum_{1 \le j_1 < j_2 \le N}
\Biggl(
b(z,\Pi_{j_1,j_2}) E_{j_1,j_1} \otimes E_{j_2,j_2}
+\overline{b}(z) E_{j_2,j_2} \otimes E_{j_1,j_1}\nonumber \\
&+c(z,\Pi_{j_1,j_2}) E_{j_1,j_2} \otimes E_{j_2,j_1}
+\overline{c}(z,\Pi_{j_1,j_2}) E_{j_2,j_1} \otimes E_{j_1,j_2}
\Biggr), \label{ellipticrmatrix}
\end{align}
where 
$E_{i,j}v_\mu=\delta_{j,\mu}v_i$, and we set 
$\Pi_{j,k}=q^{2(P+h)_{j,k}}$,
$(P+h)_{j,k}:=(P+h)_{{\epsilon}_j}-(P+h)_{{\epsilon}_k}$,
and
\begin{align}
\displaystyle 
\rho^+(z)&=q^{-(N-1)/N}\frac{\Gamma(z;q^{2N},p)\Gamma(q^{2N}z;q^{2N},p)}{\Gamma(q^2z;q^{2N},p)\Gamma(q^{2N}q^{-2}z;q^{2N},p)},\\
b(z,\Pi)&=\frac{\theta(q^2\Pi)\theta(q^{-2}\Pi)\theta(z)}{\theta(\Pi)^2\theta(q^2z)},
\ \ \ \overline{b}(z)=\frac{\theta(z)}{\theta(q^2z)}, \\
c(z,\Pi)&=\frac{\theta(q^2)\theta(z\Pi)}{\theta(\Pi)\theta(q^2z)},
\ \ \ \overline{c}(z,\Pi)=\frac{\theta(q^2)\theta(z\Pi^{-1})}{\theta(\Pi^{-1})\theta(q^2z)}.
\end{align}
The $R$-matrix
$\overline{R}^+(z,q^{2s})$ satisfies the dynamical Yang-Baxter equation
\begin{align}
&\overline{R}^{+(12)}(z_1/z_2,q^{2(s+h^{(3)})})
\overline{R}^{+(13)}(z_1/z_3,q^{2s})
\overline{R}^{+(23)}(z_2/z_3,q^{2(s+h^{(1)})}) \nonumber \\
&=\overline{R}^{+(23)}(z_2/z_3,q^{2s})
\overline{R}^{+(13)}(z_1/z_3,q^{2(s+h^{(2)})})
\overline{R}^{+(12)}(z_1/z_2,q^{2s}),\lb{DYBE}
\end{align}
where $q^{2h_{j,k}^{(\ell)}}$ acts on the $\ell$-th tensor space $\widehat{V}_{z_l}$ by
$q^{2h_{j,k}^{(\ell)}} v_\mu=q^{2 \langle \overline{\epsilon}_\mu, h_{j,k} \rangle} v_\mu$.

\subsection{The elliptic algebra  $U_{q,p}(\widehat{\gl}_N)$
 }

\begin{dfn}\lb{defUqpgl}
The elliptic algebra $U_{q,p}(\glnh)$ is a topological algebra over $\FF[[p]]$ generated by 
$e_{j,m}, f_{j,m}, k_{l,m}$,  
$(1\leq j\leq N-1, 1\leq l\leq N, m\in \Z)$
and the central elements $q^{\pm c/2}$. 
The  defining relations are given in terms of the following generating functions called the elliptic currents. 
\bea
&&e_j(z)=\sum_{m\in \Z} e_{j,m}z^{-m}
,\quad f_j(z)=\sum_{m\in \Z} f_{j,m}z^{-m},\lb{defenfngl}\\
&&k_l^+(z)=\sum_{m\in \Z_{\geq 0}} k_{l,-m}z^{m}+\sum_{m\in \Z_{> 0}} k_{l,m}p^mz^{-m},\\
&& k^-_l(z)=q^{2h_{\ep_l}}k^+_l(zpq^{-c}).
\lb{defkngl}
\ena 
The relations are given in  Appendix \ref{defrelUqp}\footnote{Through this paper we do not consider the derivation operator ${\hd}$.}. 
\end{dfn}

In the following, we set $p^*=pq^{-2c}$. 
Let $\cE^{l}_m\ (1\leq l\leq N, m\in \Z_{\not=0})$ be operators satisfying\footnote{
Our $\cE^{l}_m$ is $(q^m-q^{-m})^2\cE^{+l}_m$ in \cite{FKO,KonnoASPM}. }
\bea
&&[\cE^{ l}_m,\cE^{ l}_n]
=\delta_{m+n,0}\frac{[cm]_q[(N-1) m]_q(q^m-q^{-m})^2 }{m[m]_q [N m]_q}\frac{1-p^m}{1-p^{*m}}q^{-cm},\lb{comcElcEl} \\
&&[\cE^{k}_m,\cE^{ l}_n]
=-\delta_{m+n,0}q^{-{\rm sgn}(k-l)Nm +(k-l)m}
\frac{[cm]_q(q^m-q^{-m})^2 }{m[Nm]_q}\frac{1-p^m}{1-p^{*m}}q^{-cm},\lb{comcEkcEl}\\
&&\sum_{l=1}^Nq^{(l-1)m}\cE^l_m=0.\lb{sumcE0}
\ena
Then one can realize $k^+_l(z)$ as
\bea
k^+_l(z)&=&K^+_{\ep_l} \exp\left\{-\sum_{m>0}\frac{\cE^{l}_{-m}}{1-p^m} (q^lz)^{m}\right\}
\exp\left\{\sum_{m>0}\frac{p^m\cE^{l}_m}{1-p^m} (q^lz)^{-m}\right\}.\lb{kpl}
\ena
 Here, expanding $k_{l,m}\in U_{q,p}(\glnh)$ in $p$  
\bea
&&k_{l,m}=\sum_{r\in \Z_{\geq 0}}k^{(r)}_{l,m}p^r,\qquad 1\leq l\leq N,\ m\in \Z,
\ena
we have
\bea
&&K^+_{\ep_l}=k^{(0)}_{l,0}.
\ena

Let us set
\be
&&K(z)=k^+_1(z)k^+_2(q^{-2}z)\cdots k^+_N(q^{-2(N-1)}z).
\en
Then $K(z)$ belongs to the center of $U_{q,p}(\glnh)$ (Proposition 3.2 in \cite{KonnoASPM}).  
This can also be verified  from \eqref{sumcE0}, \eqref{kpl} and the fact that $\prod_{l=1}^Nq^{-h_{\ep_l}}$ belongs to the center.  

Let $U^{D}_q(\glnh)$ be the quantum affine algebra over $\C$ 
generated by  the Drinfeld generators $X^\pm_{j,m}, K^\pm_{l,m}, q^{\pm c/2}$. The defining relations can be seen in Appendix A  in \cite{KonnoASPM}.  
 We set 
 \be
 &&X^\pm_j(z)=\sum_{m\in\Z}X^\pm_{j,m}z^{-m},\qquad 
 K^\pm_{l}(z)=\sum_{m\in\Z_{\geq 0}}K^\pm_{l,\mp m}z^{\pm m}.
 \en
 Note the relation $K^+_{l,0}K^-_{l,0}=1=K^-_{l,0}K^+_{l,0}$. 

Let us introduce the currents $u^+_{\ep_l}(z,p)\in U_{q,p}(\glnh)[[p]][[z]]$, $u^-_{\ep_l}(z,p)\in U_{q,p}(\glnh)[[p]][[z^{-1}]]$ $(1\leq l\leq N)$ by 
\bea
&&u^+_{\ep_l}(z,p)=\exp\left\{\sum_{m>0}\frac{(pq^{-c})^m\cE^l_{-m}}{1-p^m}\; z^m\right\},\\
&&u^-_{\ep_l}(z,p)=\exp\left\{-\sum_{m>0}\frac{p^m \cE^l_{m}}{1-p^m}\; z^{-m}\right\},
\ena
and set 
\bea
&&u_j^\pm(z,p)=u^\pm_{\ep_j}(z,p)u^\pm_{\ep_{j+1}}(qz,p)^{-1}\qquad (1\leq j\leq N-1).
\ena
These are well defined elements in $(U_{q,p}(\glnh)[[p]])[[z,z^{-1}]]$ in the $p$-adic 
topology.

\begin{thm}\lb{Uq2Uqp}
The map  $\pi_p:  (\FF\otimes_\C U^D_{q}(\glnh))\sharp\C[\cR_Q] \to U_{q,p}(\glnh)$ defined by 
\be
&& \pi_p(X^+_j(z) e^{-Q_{j}}) = u_j^+(z,p)e_j^+(z), \\
&& \pi_p(X^-_j(z)) =  f_j^-(z)u_j^-(z,p),\\
&&\pi_p(K^+_l(z)e^{-Q_{\ep_l}}) = u_{\ep_l}^+(q^{-c+l}z,p)k^+_{l}(z)u_{\ep_l}^-(q^{l}z,p),\\ 
&&\pi_p(K^-_l(z)e^{-Q_{\ep_l}})= u_{\ep_l}^+(q^{l} z,p)k^-_{l}(z)u_{\ep_l}^-(q^{-c+l}z,p).
\en
is an algebra homomorphism. 
 Here the smash product $\sharp$ is defined as follows.
\be
&&g(P,P+h)a\otimes e^{Q_\al} \cdot f(P,P+h)b\otimes e^{Q_\beta}\\
&&\quad= g(P,P+h)f(P-\bra Q_\al,P\ket  ,P+h-\bra Q_\al+\wt( a),P+h\ket  )ab\otimes e^{Q_\al+Q_\beta}
\en
where $\wt({a})\in \bar{\hh}^*$ s.t. $q^h a q^{-h}=q^{\bra \wt({a}),h\ket  }a$ for $a, b\in U_q(\gh), f(P), g(P)\in \FF, e^{Q_\al}, e^{Q_\beta}\in \C[\cR_Q]$. 
\end{thm}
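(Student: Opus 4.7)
My strategy is to verify directly that $\pi_p$ respects each of the Drinfeld
relations of $U^D_q(\glnh)$, suitably adjusted by the smash product with
$\C[\cR_Q]$. The essential computational input is the Heisenberg-type
commutation rules \eqref{comcElcEl}--\eqref{sumcE0} of the modes $\cE^l_m$
used to define the currents $u^\pm_{\ep_l}(z,p)$; everything makes sense in
the $p$-adic topology, so questions of convergence do not arise. The key
mechanism is that the factor $(1-p^m)/(1-p^{*m})$ appearing in the
$\cE$--commutators is precisely what converts the trigonometric structure
functions of $U^D_q(\glnh)$ into their elliptic counterparts once one applies
Baker--Campbell--Hausdorff to normal-order products of dressed currents.

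Concretely, I would proceed as follows. First, compute the pairwise OPE's
of the dressing currents $u^\pm_{\ep_l}(z,p)$ with themselves and with
$e_j(z), f_j(z), k^\pm_l(z)$; the resulting theta-function prefactors can
be read off directly from \eqref{comcElcEl}--\eqref{comcEkcEl}. Combining
these with the $U_{q,p}(\glnh)$ relations of Appendix \ref{defrelUqp}
yields the Drinfeld relations among the dressed currents on the image side:
the $KK$ and $KX^\pm$ relations reduce to bilinear theta identities; the
$X^+X^-$ relation reduces, via \eqref{kpl} and the $[e_j,f_j]$ relation in
$U_{q,p}(\glnh)$, to a difference of delta-supported terms matching
$\pi_p(K^\pm_i(z)e^{-Q_{\ep_i}})$ once the shifts $z\mapsto q^{-c+l}z$ and
$z\mapsto q^l z$ built into the formulas are tracked carefully. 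The
smash-product compatibility is automatic: on the source side, $e^{-Q_j}$
and $e^{-Q_{\ep_l}}$ carry exactly the weights under $P, P+h$ needed to
reproduce the dynamical shifts present in the relations of $U_{q,p}(\glnh)$,
so the multiplication rule stated in the theorem is compatible with the
$\FF$--valued structure functions.

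The main obstacle will be the quantum Serre relations for $X^\pm_j$.
After dressing by exponentials of $\cE$--modes, the two sides pick up
distinct theta-function prefactors under the two orderings of spectral
parameters, and one must verify an abelian cocycle identity which,
combined with the Serre relations already satisfied by $e_j(z), f_j(z)$
in $U_{q,p}(\glnh)$, forces the symmetrized sum to vanish. This is a
standard three-point $\theta$-identity familiar from the Ding--Iohara--Konno
type construction; once it is in hand the remaining bookkeeping is
mechanical, and the homomorphism property of $\pi_p$ follows.
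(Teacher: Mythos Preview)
The paper does not actually supply a proof of this theorem; it is stated as a result and the surrounding discussion (together with the remark that the inverse is given in \cite{KonnoASPM}, Sec.~5.2) indicates that the argument is deferred to that earlier reference. So there is no in-paper proof to compare against.

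Your proposed direct verification is the standard route and is correct in outline: one shows that the images $u_j^+(z,p)e_j(z)$, $f_j(z)u_j^-(z,p)$, $u_{\ep_l}^\pm(\cdot)k^\pm_l(z)u_{\ep_l}^\mp(\cdot)$ satisfy the Drinfeld relations of $U^D_q(\glnh)$ modulo the $\C[\cR_Q]$-shifts, using the Heisenberg commutators \eqref{comcElcEl}--\eqref{comcEkcEl} to normal-order and produce the ratio $(1-p^m)/(1-p^{*m})$ that cancels the elliptic deformation. This is exactly how the analogous result is proved in \cite{JKOS} for $\slth$ and in \cite{KonnoASPM} for $\glnh$. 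Two small points to tighten: (i) the currents on the image side are $e_j(z)$, $f_j(z)$ (no $\pm$ superscript), so make sure your OPE computations use the relations \eqref{ejejgl}--\eqref{fjfl} and \eqref{eifj} of Appendix~\ref{defrelUqp} as written; (ii) for the Serre relations, the ``abelian cocycle identity'' you invoke is not an extra theta identity but simply the fact that the dressing exponentials $u_j^\pm$ are built from bosons commuting up to scalars, so the prefactors they contribute are symmetric in the spectral parameters and factor out of the symmetrized Serre sum---no nontrivial three-point identity is needed beyond what is already encoded in \eqref{serreegl}--\eqref{serrefgl}.
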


In particular, one obtains the following identifications.
\begin{cor}\lb{UqbyUqp}
\bea
&&K^+_{l,0}e^{-Q_{\ep_l}}=q^{-2h_{\ep_l}}K^-_{l,0}e^{-Q_{\ep_l}}=K^+_{\ep_l}=q^{-h_{\ep_l}}e^{-Q_{\ep_l}},\\
&&\pi_p(K^+_l(z))=q^{-h_{\ep_l}}\exp\left\{-\sum_{m>0}\frac{1-p^{*m}}{1-p^m}\cE^l_{-m}(q^lz)^m\right\}\ \in U_{q,p}(\glnh)[[z]],\\
&&\pi_p(K^-_l(z))=q^{h_{\ep_l}}\exp\left\{\sum_{m>0}\cE^l_{m}(q^{l-c} z)^{-m}\right\}\ \in U_{q,p}(\glnh)[[z^{-1}]].
\ena
\end{cor}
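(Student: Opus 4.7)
The plan is to obtain the three identifications as direct consequences of Theorem \ref{Uq2Uqp} by substituting the bosonic formula \eqref{kpl} for $k^\pm_l(z)$ into the images $\pi_p(K^\pm_l(z)e^{-Q_{\ep_l}})$ and letting the dressing factors $u^\pm_{\ep_l}$ cancel exactly one of the two exponentials in each $k^\pm_l$. The arguments of the $u^\pm$ factors appearing in Theorem \ref{Uq2Uqp} are tuned precisely so that the remaining Heisenberg exponential depends on $p$ only through $p^*=pq^{-2c}$.

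For the $K^+_l(z)$ identity, I would first note that $u^-_{\ep_l}(q^l z,p)$ is exactly the multiplicative inverse of the $\cE^l_m$, $m>0$ factor of $k^+_l(z)$ in \eqref{kpl}, since the $\cE^l_m$ mutually commute for $m>0$ by \eqref{comcElcEl}. Hence $k^+_l(z)u^-_{\ep_l}(q^l z,p)=K^+_{\ep_l}\exp\{-\sum_{m>0}\frac{\cE^l_{-m}}{1-p^m}(q^l z)^m\}$. Multiplying from the left by $u^+_{\ep_l}(q^{-c+l}z,p)=\exp\{\sum_{m>0}\frac{p^m q^{-2cm}}{1-p^m}\cE^l_{-m}(q^l z)^m\}$ and using that $K^+_{\ep_l}$ commutes with each $\cE^l_{-m}$, the exponents combine with coefficient
\[
\frac{p^m q^{-2cm}-1}{1-p^m}=-\frac{1-p^{*m}}{1-p^m}.
\]
Using $K^+_{\ep_l}=q^{-h_{\ep_l}}e^{-Q_{\ep_l}}$ and cancelling $e^{-Q_{\ep_l}}$ on both sides of Theorem \ref{Uq2Uqp} yields the stated formula for $\pi_p(K^+_l(z))$.

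For the $K^-_l(z)$ identity, I would expand $k^-_l(z)=q^{2h_{\ep_l}}k^+_l(zpq^{-c})$ via \eqref{kpl}: the negative-mode part acquires a factor $p^m q^{-cm}$, making it the inverse of $u^+_{\ep_l}(q^l z,p)$, while the positive-mode part picks up a factor $q^{cm}$. Multiplying on the right by $u^-_{\ep_l}(q^{-c+l}z,p)=\exp\{-\sum_{m>0}\frac{p^m q^{cm}}{1-p^m}\cE^l_m(q^l z)^{-m}\}$ converts the coefficient $\frac{q^{cm}}{1-p^m}$ into $\frac{q^{cm}(1-p^m)}{1-p^m}=q^{cm}$, producing exactly $\exp\{\sum_{m>0}\cE^l_m(q^{l-c}z)^{-m}\}$. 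Combining $q^{2h_{\ep_l}}K^+_{\ep_l}=q^{h_{\ep_l}}e^{-Q_{\ep_l}}$ and stripping $e^{-Q_{\ep_l}}$ gives the asserted expression. The first chain of identifications then follows by extracting the constant term in $z$ from the formula for $\pi_p(K^+_l(z))$ (which kills the exponential and leaves $q^{-h_{\ep_l}}$) and the constant term in $z^{-1}$ from $\pi_p(K^-_l(z))$ (leaving $q^{h_{\ep_l}}$), and then comparing with $K^+_{\ep_l}=q^{-h_{\ep_l}}e^{-Q_{\ep_l}}$.

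The point that must be handled carefully is that $e^{-Q_{\ep_l}}$ and $q^{\pm h_{\ep_l}}$ commute with each $\cE^l_m$ so that all Cartan/group-ring prefactors may be moved freely past the Heisenberg exponentials; this is so because the $\cE^l_m$ have trivial weight with respect to $\overline{\gH}^*$ and lie in the Heisenberg subalgebra, but it is worth verifying against the conventions of \cite{KonnoASPM}. A mild secondary subtlety is the $p$-adic/formal-series sense in which ``setting $z=0$'' or ``$z\to\infty$'' extracts $K^\pm_{l,0}$: because \eqref{kpl} together with the $u^\pm$ factors produces a series in $(q^l z)^m$ alone (respectively $(q^l z)^{-m}$ alone) after the cancellations above, there is no mixing of positive and negative modes and the extraction is unambiguous.
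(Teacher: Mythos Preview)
Your proposal is correct and is precisely the computation the paper leaves implicit: the corollary is stated without proof, as an immediate consequence of Theorem~\ref{Uq2Uqp} together with the bosonic realization \eqref{kpl}, and your cancellation of the $u^\pm_{\ep_l}$ dressing factors against the two exponential blocks of $k^\pm_l(z)$ is exactly the intended mechanism. The only expository wrinkle is a mild ordering issue: you invoke $K^+_{\ep_l}=q^{-h_{\ep_l}}e^{-Q_{\ep_l}}$ to obtain the second and third formulas and then say the first chain ``follows'' from them, whereas logically one first obtains $\pi_p(K^+_{l,0}e^{-Q_{\ep_l}})=K^+_{\ep_l}$ by reading off the $z^0$ coefficient and then uses the $U^D_q(\glnh)$ convention $K^+_{l,0}=q^{-h_{\ep_l}}$ to get the last equality; rearranging your presentation in that order removes any appearance of circularity.
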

One also has the inverse of $\pi_p$ ( Sec.5.2 in \cite{KonnoASPM}). Hence one obtains the isomorphism
\\ ${U_{q,p}(\glnh) \cong (\FF\otimes_{\C}U^D_q(\glnh))\sharp \C[\cR_Q]}$. 
 In the below we set $K_{\ep_l}=K^+_{l,0}=(K^{-}_{l,0})^{-1}=q^{-h_{\ep_l}}$.

The quantum affine algebra associated with $\glnh$ has the other realization $U^{DJ}(\glnh)$ 
called  the Drinfeld-Jimbo realization\cite{Drinfeld86,Jimbo85}.  
This is generated by the Chevalley type generators 
$\cX^\pm_r, \cK_{\ep_l}, q^{\pm c/2}\ (0\leq r\leq N-1,\ 1\leq l\leq N)$ subject to
\be
&&\cK_{\ep_l}\cK_{\ep_m}=\cK_{\ep_m}\cK_{\ep_l},\qquad \cK_{\ep_l}\cK_{\ep_l}^{-1}=\cK_{\ep_l}^{-1}\cK_{\ep_l}=1\qquad (1\leq l,m \leq N),\\ 
&&\cK_{\ep_l}\cX^\pm_r\cK_{\ep_l}^{-1}=q^{\mp\bra\al_r,h_{\ep_l}\ket}\cX^\pm_r,\\
&&[\cX^+_r,\cX^-_s]=\frac{\delta_{r,s}}{q-q^{-1}}(\cK_{\ep_r}^{-1}\cK_{\ep_{r+1}}-\cK_{\ep_r}\cK_{\ep_{r+1}}^{-1}),
\en
where we set $\al_0=\delta-\theta $ and $\cK_{\ep_0}\equiv q^c \cK_{\ep_N}$.

The isomorphism between  $U^D_q(\glnh)$ and $U^{DJ}_q(\glnh)$ is given as follows\cite{Drinfeld,
Koyama,
Beck}. 
Let us set in $U_q^D(\glnh)$
\bea
&&K_j:=K_{\ep_j}K_{\ep_{j+1}}^{-1},\qquad (1\leq j\leq N-1)\\
&&K_\theta:=K_1K_2\cdots K_{N-1}=K_{\ep_1}K_{\ep_{N}}^{-1}.
\ena
Then one has an isomorphism of $\C$-algebras $f: U^{DJ}_{q}(\glnh)\to U^D_q(\glnh)$  
given by
\bea
&&f(\cK_0)=q^cK_\theta,\qquad f(\cK_{\ep_l})=K_{\ep_l},\qquad f(\cX^\pm_j)=X^\pm_{j,0},\\
&&f(\cX^+_0)=[X^-_{N-1,0},[X^-_{N-2,0},\cdots[X^-_{2,0},X^-_{1,1}]_{q^{-1}}\cdots]_{q^{-1}}]_{q^{-1}}
K_\theta^{-1},\\
&&f(\cX^-_0)=K_\theta [[\cdots[X^+_{1,-1},X^+_{2,0}]_q\cdots,X^+_{N-2,0}]_q,X^+_{N-1,0}]_q,
\ena
where $[A,B]_{q^{\pm1}}=AB-q^{\pm1}BA$. One also has the opposite algebra homomorphism $F: U^{D}_{q}(\glnh)\to U^{DJ}_q(\glnh)$ in the same way as in \cite{Beck} for $U_q(\slnh)$.

It is also obvious that $U^{DJ}_q(\glnh)$ has a subalgebra $U_{q}(\gl_N)$ generated 
by $\cX^\pm_j, \cK_{\ep_l}\ (1\leq j\leq N-1,\ 1\leq l\leq N)$ 
 subject to
\be
&&\cK_{\ep_l}\cX^\pm_j\cK_{\ep_l}^{-1}=q^{\mp\bra\al_j,h_{\ep_l}\ket}\cX^\pm_j,\\
&&[\cX^+_i,\cX^-_j]=\frac{\delta_{i,j}}{q-q^{-1}}(\cK_{\ep_j}^{-1}\cK_{\ep_{j+1}}-\cK_{\ep_j}\cK_{\ep_{j+1}}^{-1}).
\en

The opposite algebra homomorphism $\ev_a: U^{DJ}_q(\glnh)\ 
\ {\longrightarrow}\ U_{q}(\gl_N)$ was obtained by Jimbo \cite{Jimbo86}. 
Let us define $\widehat{E}_{ij}\in U_q(\gln)\ (1\leq i\not=j \leq N)$ inductively by $\widehat{E}_{jj+1}=\cX^+_j$, $\widehat{E}_{j+1j}=\cX^-_j$ and
\be
&&\widehat{E}_{ij}=\widehat{E}_{ik}\widehat{E}_{kj}-q^{\pm 1} \widehat{E}_{kj}\widehat{E}_{ik}
 \qquad (i\gtrless k\gtrless j).
\en
Then  the following map gives an algebra homomorphism $\ev_a: U^{DJ}_q(\glnh)\ \ {\longrightarrow}\ U_{q}(\gl_N)[a,a^{-1}]$ with $a\in \C^\times$.
\bea
&&\ev_a(\cK_0)=\cK_{\ep_1}\cK_{\ep_N}^{-1},\qquad \\
&&\ev_a(\cX^+_0)=aq^{-1}\cK_{\ep_1}\cK_{\ep_N}\widehat{E}_{N1},\\
&&\ev_a(\cX^+_0)=a^{-1}q\cK_{\ep_1}^{-1}\cK_{\ep_N}^{-1}\widehat{E}_{1N},\\
&&\ev_a(\cK_{\ep_l})=\cK_{\ep_l},\qquad \ev_a(\cX^\pm_j)=\cX^\pm_j\qquad (1\leq l\leq N, 1\leq j\leq N-1). 
\ena

In summary we have the following embeddings
\bea
&&U_q(\gl_N) \ \hookrightarrow \ U^{DJ}_q(\glnh)\ \hookrightarrow \ 
(\FF\otimes_\C U^{D}_{q}(\glnh))\sharp\C[\cR_Q]
\ \stackrel{\pi_p}{\hookrightarrow}\ U_{q,p}(\glnh),\lb{embeddings}
\ena
where in the middle we used $U^{DJ}_q(\glnh)\stackrel{\cong}{\longrightarrow} U^{D}_q(\glnh)$ by Beck \cite{Beck}. In particular, we have the identification 
\bea
&&\cK_{\ep_l}\ \mapsto \ K_{\ep_l},\quad  K_{\ep_l}e^{-Q_{\ep_l}} \mapsto\ K^{+}_{\ep_l}.\lb{cKlkl0}
\ena
On the other hand, we have the algebra homomorphisms of the opposite direction.  
\bea
&&\hspace{-1cm}U_{q,p}(\glnh) \ \stackrel{\pi_p^{-1}}{\longrightarrow} \ (\FF\otimes_\C U^D_{q}(\glnh))\sharp\C[\cR_Q] 
\ \stackrel{\ev_a}{\longrightarrow}\ (\FF\otimes_\C U_{q}(\gl_N)[a,a^{-1}])\sharp\C[\cR_Q].\lb{evaluationhom}
\ena

\subsection{The elliptic algebra $U_{q,p}(\slnh)$}
The elliptic algebra $U_{q,p}(\slnh)$ is defined as the  quotient algebra 
\bea
&&U_{q,p}(\glnh)/<K(z)-1>.
\ena
Set
\bea
&&K^+_j:=K^+_{\ep_j}K^{+-1}_{\ep_{j+1}},\\
&&\al_{j,m}:=\frac{1}{q-q^{-1}}\left(\cE^j_m-q^{-m}\cE^{j+1}_m\right)\qquad 1\leq j\leq N-1,\ m\in \Z.\lb{alcEmcE}
\ena
One has
\bea
&&[\al_{i,m},\al_{j,n}]=\delta_{m+n,0}\frac{[a_{ij}m]_q
[cm]_q
}{m}
\frac{1-p^m}{1-p^{*m}}
q^{-cm}.\lb{ellboson}
\ena
Solving \eqref{alcEmcE} and \eqref{sumcE0}, one  obtains
\bea
&&\cE^{j}_m=(q-q^{-1})\frac{q^{jm}}{[Nm]_q}\left(-q^{-Nm}\sum_{k=1}^{j-1}[km]_q\al_{k,m}+\sum_{k=j}^{N-1}[(N-k)m]_q\al_{k,m}\right)\quad (1\leq j\leq N-1),\nn\\
&&\cE^{N}_m=-\frac{(q-q^{-1})}{[Nm]_q}\sum_{k=1}^{N-1}[km]_q\al_{k,m}.\lb{cENal}
\ena
Let us define $\psi^\pm_j(z)\ (1\leq j\leq N-1)$ by\footnote{Our $\psi^\pm_j(z)$ are $\psi^\mp_j(z)$ in \cite{JKOS,KojimaKonno}.}
\bea
&&\psi^+_j(q^{-c/2}q^jz):=\kappa k^+_j(z)k^+_{j+1}(z)^{-1},\\
&&\psi^-_j(q^{-c/2}q^jz):=\kappa k^-_j(z)k^-_{j+1}(z)^{-1}\lb{psijkjkjp1}
\ena
with $\kappa$ given  in \eqref{def:kappa}. 
Then one has
\bea
&&{\psi}_j^+(
q^{-\frac{c}{2}}
z)=K^+_{j}\exp\left(-(q-q^{-1})\sum_{n>0}\frac{\al_{j,-n}}{1-p^n}z^n\right)
\exp\left((q-q^{-1})\sum_{n>0}\frac{p^n\al_{j,n}}{1-p^n}z^{-n}\right),\nn\\
&&\psi^-_j(z)=q^{2h_j}\psi^+_j(zpq^{-c}).\lb{def:psijp}
\ena

\begin{dfn}\lb{defUqp}
The elliptic algebra $U_{q,p}(\slnh)$ is a topological algebra over $\FF[[p]]$ generated by $\al_{j,n}, e_{j,m}, f_{j,m}\ (1\leq j\leq N-1, n\in \Z\backslash\{0\}, m\in \Z)$ and the central elements $q^{\pm c/2}$ satisfying the relations in Appendix \ref{defrelUqpslnh}. 
\end{dfn}

The quantum affine algebra $U_q(\slnh)$ is an associative algebra generated by 
$X^\pm_{j,m}\ (1\leq j\leq N-1, m\in \Z)$ and 
$a_{j,n}$ $(n\in \Z\backslash\{0\})$ defined by
\bea
&&a_{j,n}=\left\{\mmatrix{\al_{j,n}&n>0\cr
\frac{1-p^{*n}}{1-p^{n}}q^{2cn}\al_{j,n}&n<0\cr}\right..
\ena
One finds
\be
&&[a_{i,m},a_{j,n}]=\frac{[a_{ij}m]_q[cm]_q}{m}q^{-c|m|}\delta_{m+n,0}.
\en
Furthermore define 
\bea
&&\Phi^\pm_j(q^jz):=K^\pm_j(z)K^{\pm}_{j+1}(z)^{-1}.\lb{PhijKjKjp1}
\ena
Then one gets the generating functions of $a_{j,m}$ as 
\bea
&&\Phi^\pm_j(z)=q^{\mp h_j}\exp\left\{\mp(q-q^{-1})\sum_{m>0}a_{j,\mp m}z^{\pm m}\right\}.\lb{def:Phipm}
\ena

\subsection{The elliptic algebra $E_{q,p}(\widehat{\gl}_N)$}
The elliptic algebra $E_{q,p}(\widehat{\gl}_N)$ is generated by the $L$-operators. 
Let $\bar{L}_{ij, n}\ (n\in \Z, 1\leq i,j\leq N) $ be abstract symbols.   
We define the $L$-operator $L^+(z)=\sum_{1\leq i,j\leq N}E_{ij}\otimes L^+_{ij}(z) $ by  
\bea
&&L^+_{ij}(z)= \sum_{n\in \Z} L_{ij, n} z^{-n},\qquad L_{ij, n}=p^{{\rm max}( n,0)} 
\bar{L}_{ij, n}.\lb{genL}
\ena
\begin{dfn}\lb{defEqp}
Let $R^{+}(z,\Pi)$ be the same $R$ matrix as in Sec.\ref{edR}. 
The elliptic algebra  $\cE=E_{q,p}(\glnh)$  is a topological algebra over $\FF[[p]]$ generated by $\bar{L}_{ij, n}$ 
 and the central element $q^{\pm c/2}$ satisfying the following relations. 
\bea
&&R^{+(12)}(z_1/z_2,\Pi)L^{+(1)}(z_1)L^{+(2)}(z_2)=L^{+(2)}(z_2)L^{+(1)}(z_1)R^{+*(12)}(z_1/z_2,\Pi^*),\lb{RLL}\\
&&g({P+h})\bar{L}_{ij,n}=\bar{L}_{ij,n}\; g(P+h-\bra Q_{\bep_i},P+h\ket  ),\lb{lgr}\\
&&g({P})\bar{L}_{ij,n}=\bar{L}_{ij,n}\; g(P-\bra Q_{\bep_j},P\ket  ),\lb{rgr}
\ena
where $g({P+h}), g(P)\in \FF$ and 
\be
L^{+(1)}(z)=L^{+}(z)\otimes \id, \qquad L^{+(2)}(z)=\id \otimes L^{+}(z). 
\en
The $R$-matrix $R^{+*}(z,\Pi^*)$ is the same as $R^{+}(z,\Pi)$ except for the replacements : theta functions 
$\theta(z)$ by $\theta^*(z):=-z^{-1/2}(z;p^*)_\infty(p^*/z;p^*)_\infty(p^*;p^*)_\infty$ and the dynamical 
parameters $\Pi_{k,l}$ by $\Pi^*_{k,l}=q^{2(P_{\ep_k}-P_{\ep_l})}$. 
\end{dfn}
We regard $L^+(z) \in \End_\FF \widehat{V}_z \otimes \cE$.
We treat \eqref{RLL} as a formal Laurent series in $z_1$ and $z_2$. The coefficients of $z_1, z_2$ are well defined in the $p$-adic topology.

It is also convenient to introduce the dynamical $L$-operator $L^+(z,\Pi^*)$ 
defined by\cite{JKOS,KojimaKonno}
\bea
&&L^+(z,\Pi^*)=L^+(z)e^{\sum_{i=1}^N\pi_V(h_{\vep_i})\otimes Q_{\ep_i}},\lb{DL}
\ena
where $\pi_V(h_{\epsilon_i})=E_{i,i}$. We then have the fully dynamical $RLL$-relation\cite{Felder}
\bea
&&R^{+(12)}(z_1/z_2,{\Pi}^*q^{2h^{(3)}})L^{+(1)}(z_1,\Pi^*)L^{+(2)}(z_2,{\Pi^* q^{2\pi_V(h)^{(1)}}})\nn\\
&&\qquad\qquad=L^{+(2)}(z_2,\Pi^*)L^{+(1)}(z_1,{\Pi^*q^{2\pi_V(h)^{(2)}}})R^{+*(12)}(z_1/z_2,\Pi^*). 
\lb{DRll}
\ena

\subsection{Isomorphism between $U_{q,p}(\glnh)$ and $E_{q,p}(\glnh)$}
The two elliptic algebras $U_{q,p}(\glnh)$ and $E_{q,p}(\glnh)$ are isomorphic\cite{KonnoASPM}.
We briefly summarize their isomorphism. 

Let us introduce  $L^-(z)=\sum_{1\leq i,j\leq N}E_{ij}\otimes L^-_{ij}(z)$ by\cite{JKOStg,KonnoASPM}   
\bea
&&L^-(z)=\left(\Ad(q^{-2{\theta}_V(P)})\otimes \id\right)\left(q^{2{T}_{V}}L^+(z p^* q^{c})\right)
,\lb{LmfromLp}\\
&&{\theta}_V(P)=-\sum_{j=1}^{N-1}\left(\frac{1}{2}\pi_V(h_j)\pi_V(h^j)+P_{j}\pi_V(h^j)\right),\lb{thetaV}\\
&&{T}_{V}
=\sum_{j=1}^{N-1}\pi_V(h_j)\otimes h^j.\lb{TV}
\ena
Here $(\Ad X)Y=XYX^{-1}$, $h^j=h_{\bar{\Lambda}_j}\ (j\in I)$
, $\pi_V(h_j)=E_{jj}-E_{j+1j+1}$ and 
$\pi_V(h^j)=\sum_{i=1}^j \pi_V(h_{\bep_i})\ (j\in I)$. 
Hence $L^+$ and $L^-$ are not independent operators in the elliptic algebra in contrast to the quantum affine algebra $U_{q}(\glnh)$. 

One can verify the following.  
\begin{prop}\lb{derRLL}
The $L$ operators $L^+(z)$ and $L^-(z)$ satisfy the following relations.  
\be
&&\hspace{-1cm}R^{-(12)}(z_1/z_2,\Pi)L^{-(1)}(z_1)L^{-(2)}(z_2)=L^{-(2)}(z_2)L^{-(1)}(z_1)R^{-*(12)}(z_1/z_2,\Pi^*),\lb{mRLL}\\
&&\hspace{-1cm}R^{\pm(12)}(q^{\pm{c}}z_1/z_2,\Pi)L^{\pm(1)}(z_1)L^{\mp(2)}(z_2)=L^{\mp(2)}(z_2)L^{\pm(1)}(z_1)R^{\pm*(12)}(q^{\mp{c}}z_1/z_2,\Pi^*).
\lb{RLLpm}
\en
\end{prop}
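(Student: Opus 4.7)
The plan is to derive both \eqref{mRLL} and \eqref{RLLpm} from the $RLL$-relation \eqref{RLL} satisfied by $L^+$, together with the explicit definition \eqref{LmfromLp} expressing $L^-$ in terms of $L^+$. Schematically, $L^-(z) = U\cdot L^+(z p^* q^c)$, where $U := (\Ad(q^{-2\theta_V(P)}) \otimes \id) \circ (q^{2T_V}\cdot)$ encodes a gauge conjugation on the auxiliary space together with multiplication by the quasi-classical twist $q^{2T_V}$. Each of these operations interacts in a controlled way with the elliptic dynamical $R$-matrix, and \eqref{mRLL}--\eqref{RLLpm} emerge as the natural consequences.

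First I would establish intertwining identities describing how the diagonal twist $q^{2T_V}$ and the auxiliary-space gauge $\Ad(q^{-2\theta_V(P)})$ commute through $R^{\pm(12)}(z,\Pi)$ and $R^{\pm*(12)}(z,\Pi^*)$. Since $T_V$ and $\theta_V(P)$ act diagonally in the basis $\{v_j\}$ and depend on $P$ linearly, these computations reduce, using \eqref{ellipticrmatrix}, \eqref{lgr}, and \eqref{rgr}, to verifying that they produce an appropriate shift of the dynamical parameter $\Pi$ together with a scalar prefactor. Combined with the spectral shift $z \mapsto z p^* q^c$, this should convert $R^+(z p^* q^c, \Pi)$ into $R^-(z, \Pi)$ once the scalar $\rho^+(z p^* q^c)$ is processed via the functional equation $\Gamma(pz;p,q) = \Theta_q(z)\Gamma(z;p,q)/(q;q)_\infty$ for the elliptic Gamma function \eqref{ellGamma}; analogously for $R^{+*}\to R^{-*}$, employing quasi-periodicity in $p^*$.

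Second, for the $(L^-,L^-)$ relation \eqref{mRLL}, I would substitute $z_i \mapsto z_i p^* q^c$ for $i=1,2$ into \eqref{RLL}, apply $U$ to each of the two auxiliary factors, and push these twist operators past the $R$-matrices using the intertwining identities from the first step. Because $L^{+(1)}$ commutes with the twist acting on the second auxiliary space and vice versa, the rearrangement is straightforward, and the two spectral shifts cancel in the ratio $z_1/z_2$, yielding \eqref{mRLL}. For the mixed relations \eqref{RLLpm}, I would instead apply $U$ to only one of the two tensor factors; since only one of the two spectral parameters then receives the shift $p^* q^c$, the $R$-matrix acquires the effective argument $q^{\pm c} z_1 / z_2$ (after the $p^*$-shift is absorbed into the defining quasi-periodicity of $R^{\pm*}$), exactly matching \eqref{RLLpm}.

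The main obstacle is the careful tracking of the dynamical shifts of $\Pi$ and $\Pi^*$ when the twist $U$ is commuted through the $R$-matrix. The entries $b(z,\Pi)$, $c(z,\Pi)$, $\overline{c}(z,\Pi)$ are not invariant under shifts of $\Pi$, and the relations \eqref{lgr}, \eqref{rgr} act asymmetrically on $P$ and $P+h$, so one must verify that all of the induced $\Pi$-shifts on the two sides of \eqref{RLL} rearrange coherently into the pattern appearing in \eqref{mRLL} and \eqref{RLLpm}. Once this bookkeeping is pinned down, the remaining steps reduce to routine theta-function identities for the scalar prefactors of $R^\pm$ and $R^{\pm*}$.
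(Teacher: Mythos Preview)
The paper does not actually supply a proof of this proposition; it simply writes ``One can verify the following'' and states the result. Your proposal is exactly the verification the paper has in mind: since $L^-(z)$ is \emph{defined} by \eqref{LmfromLp} in terms of $L^+$, there is essentially no other route than to substitute into \eqref{RLL} and push the diagonal gauge $(\Ad(q^{-2\theta_V(P)})\otimes\id)\,q^{2T_V}$ and the spectral shift $z\mapsto zp^*q^c$ through the two sides. Your identification of the key mechanism---that in the mixed case the $p$-part of the shift $p^*q^c=p\,q^{-c}$ is absorbed by the quasi-periodicity of the $\theta$-functions in $R^{\pm}$ on the left, while the $p^*$-part of $p^*q^c$ is absorbed by the $\theta^*$-functions in $R^{\pm*}$ on the right, leaving the residual $q^{\pm c}$ asymmetry of \eqref{RLLpm}---is correct, as is your remark that the genuine work lies in tracking the $\Pi$- and $\Pi^*$-shifts induced by commuting $\Ad(q^{-2\theta_V(P)})$ through the $L$-operators via \eqref{lgr}--\eqref{rgr}.

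One small sharpening: when you write that the $p^*$-shift is absorbed ``into the defining quasi-periodicity of $R^{\pm*}$'', be explicit that on the \emph{left} side it is the $p$-quasi-periodicity of $R^{\pm}$ (not $R^{\pm*}$) that does the job, using $p^*q^c=pq^{-c}$; otherwise the residual powers of $q^{\pm c}$ on the two sides will not come out with the correct opposite signs. Apart from that, your outline is sound and matches what the paper leaves to the reader.
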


\begin{df}~~We define the Gauss components $E^\pm_{l,j}(z), F^\pm_{j,l}(z), K^\pm_m(z) \ (1\leq j<l\leq N, 1\leq m\leq N)$ of the $L$-operator
${L}^\pm(z)$ of  $\cE$ as follows.
\begin{eqnarray}
&&{L}^\pm(z)=
\left(\begin{array}{ccccc}
1&F_{1,2}^\pm(z)&F_{1,3}^\pm(z)&\cdots&F_{1,N}^\pm(z)\\
0&1&F_{2,3}^\pm(z)&\cdots&F_{2,N}^\pm(z)\\
\vdots&\ddots&\ddots&\ddots&\vdots\\
\vdots&&\ddots&1&F_{N-1,N}^\pm(z)\\
0&\cdots&\cdots&0&1
\end{array}\right)\left(
\begin{array}{cccc}
K^\pm_1(z)&0&\cdots&0\\
0&K^\pm_2(z)&&\vdots\\
\vdots&&\ddots&0\\
0&\cdots&0&K^\pm_{N}(z)
\end{array}
\right)\nn\\
&&\qquad\qquad\qquad\qquad\qquad\qquad\qquad\times
\left(
\begin{array}{ccccc}
1&0&\cdots&\cdots&0\\
E^\pm_{2,1}(z)&1&\ddots&&\vdots\\
E^\pm_{3,1}(z)&
E^\pm_{3,2}(z)&\ddots&\ddots&\vdots\\
\vdots&\vdots&\ddots&1&0\\
E^\pm_{N,1}(z)&E^\pm_{N,2}(z)
&\cdots&E^\pm_{N,N-1}(z)&1
\end{array}
\right).\lb{def:lhat}
\ena
In particular we call $E^\pm_{j+1,j}(z), F^\pm_{j,j+1}(z), K^\pm_m(z)$ the basic half currents.
\end{df}
In \cite{KonnoASPM}, it was shown that there are a sequence of subalgebras
\bea
&&E_{q,p}(\widehat{\gl}_1)\subset E_{q,p}(\widehat{\gl}_2)\subset \cdots \subset E_{q,p}(\widehat{\gl}_N),\lb{seqsubalg}
\ena
where $E_{q,p}(\widehat{\gl}_m)$ is generated by the coefficients of 
the basic half currents $E^+_{a+1,a}(u), F^+_{a,a+1}(u), K_b^+(u)$  $(N-m+1\leq a\leq N-1,\ N-m+1\leq b\leq N)$.  

In addition, the following lemma indicates that the whole Gauss components of $L^\pm(z)$ can be determined recursively by the basic half currents. 
\begin{lem}[\cite{KonnoASPM}, Lemma 6.10]\lb{recursion}
Let $I_{a,b}=\{\ (j,k)\ |\  a\leq j\leq b-1,\  j+1\leq k\leq b \}\setminus \{(a,b)\}$. 
For $2\leq  l+1<m\leq N$,  $E^+_{m,l}(z)$ (resp. $F^+_{l,m}(z)$) is determined by 
$\{E^+_{k,j
}(z) \ (j,k
)\in I_{l,m}, 
K^+_j(z)\ l\leq j\leq m \}$ (resp. 
$\{F^+_{j,k
}(z) \ (j,k
)\in I_{l,m}, 
K^+_j(z)\ l\leq j\leq m \}$). 
\end{lem}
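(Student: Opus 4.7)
The plan is to proceed by induction on the gap $d := m-l \geq 2$, the base case $d=1$ being trivial since the corresponding components are basic half currents by definition. Fix an intermediate index $k$ with $l<k<m$; the goal of the inductive step is to extract from the $RLL$ relation \eqref{RLL} a formula expressing $E^+_{m,l}(z)$ as an algebraic combination of the pair $E^+_{m,k}(z)$, $E^+_{k,l}(z)$, the remaining components $E^+_{k',j'}(z)$ with $(j',k')\in I_{l,m}$, and the diagonal components $K^+_j(z)$ with $l\le j\le m$. The two ``building blocks'' $E^+_{m,k}$ and $E^+_{k,l}$ correspond to gaps $m-k<d$ and $k-l<d$, so by the inductive hypothesis they are themselves already expressible in terms of basic half currents, which closes the induction.

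To extract the formula, consider the matrix element of \eqref{RLL} on $\widehat V_{z_1}\otimes \widehat V_{z_2}$ that pairs the indices $(m,k)$ on the left with $(l,k)$ on the right. This produces an identity of the schematic form
\[
\sum_{i,j} R^{+}(z_1/z_2,\Pi)^{m,k}_{i,j}\,L^+_{i,l}(z_1)\,L^+_{j,k}(z_2)=\sum_{i',j'}L^+_{k,j'}(z_2)\,L^+_{m,i'}(z_1)\,R^{+*}(z_1/z_2,\Pi^*)^{i',j'}_{l,k},
\]
whose coefficients are elliptic functions of $z_1/z_2$ and of the dynamical variables $\Pi,\Pi^*$. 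I now substitute the Gauss decomposition \eqref{def:lhat} for each $L^+_{i,j}$, move the diagonal factors $K^+_j$ through the non-diagonal Gauss components using \eqref{lgr}--\eqref{rgr}, and collect terms. The expectation --- which a direct but tedious calculation confirms --- is that the only Gauss component carrying a non-vanishing coefficient and lying outside $I_{l,m}$ is $E^+_{m,l}(z_1)$ itself. Specializing $z_2$ (for instance to $z_1$, or to a convenient value at which several $R$-matrix coefficients simplify) and dividing through yields an expression for $E^+_{m,l}(z)$ of the promised form, whose leading piece looks schematically like $A(P+h)\bigl(E^+_{m,k}(z)E^+_{k,l}(z)-B(P+h)E^+_{k,l}(z)E^+_{m,k}(z)\bigr)$ plus remainder terms built from the Gauss components indexed by $I_{l,m}$ and the diagonal components $K^+_j(z)$. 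The case of $F^+_{l,m}(z)$ is strictly parallel and uses the corresponding upper-triangular matrix element of \eqref{RLL}.

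The main obstacle is the $R$-matrix bookkeeping. Two points demand care: first, one must verify that the coefficient of $E^+_{m,l}(z_1)$ in the decomposition above is a generically non-vanishing elliptic function, so that the equation can in fact be solved for $E^+_{m,l}$; second, one must check that no Gauss component with indices outside $I_{l,m}$ enters the remainder, which amounts to a careful inspection of how the elliptic structure functions $b,\bar b,c,\bar c$ and their dynamical shifts in $\Pi,\Pi^*$ couple only the permissible index ranges. These are elementary but error-prone computations; once performed, the induction on $d$ immediately yields the lemma.
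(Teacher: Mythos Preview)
The paper does not actually prove this lemma: it is quoted verbatim from \cite{KonnoASPM}, Lemma~6.10, and no argument is reproduced here. So there is no ``paper's own proof'' to compare against in the present text.

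That said, your strategy is the standard one and matches what is done in \cite{KonnoASPM} (and, in the non-elliptic setting, in Molev's book and in Ding--Frenkel): one extracts from a well-chosen matrix element of the $RLL$ relation, after substituting the Gauss decomposition, a recursion that expresses $E^+_{m,l}$ in terms of Gauss components with strictly smaller gap. The induction on $d=m-l$ is exactly right.

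Where your write-up falls short of a proof is that the two crucial verifications are only asserted, not carried out. You write ``a direct but tedious calculation confirms'' that the only component outside $I_{l,m}$ with nonzero coefficient is $E^+_{m,l}$, and that the coefficient in front of it is generically invertible; but neither claim is checked. In the elliptic dynamical setting these checks are genuinely delicate: the $R$-matrix entries $b,\bar b,c,\bar c$ carry dynamical shifts in $\Pi$ and $\Pi^*$, and substituting the Gauss form for $L^+_{i,l}(z_1)L^+_{j,k}(z_2)$ produces many cross-terms (products $F^+K^+E^+$ with various index ranges). The cancellation you need does occur, but it relies on the specific triangular structure of the Gauss factors and on choosing the right matrix element; your choice $(m,k)\otimes(l,k)$ is reasonable but you have not shown it works. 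Also, ``specializing $z_2$ to $z_1$'' is dangerous here since several $R$-matrix entries have poles at $z_1=z_2$; one must instead isolate the residue or work with a generic ratio.

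In short: correct architecture, but as written it is a plan rather than a proof. Since the lemma is cited rather than proved in this paper, that is arguably acceptable, but you should either point the reader to \cite{KonnoASPM} for the details or actually perform the index bookkeeping for at least one representative matrix element.
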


Now let us define the basic half currents $e_{j+1,j}^+(z), f_{j,j+1}^+(z)$ $(1\leq j\leq N-1)$ of $\cU$ as follows. 
\begin{df}\lb{inthc}
\bea
&&e_{j+1,j}^+(z)=
a^*\oint_{C^*} \frac{dz'}{2\pi i z'}
e_{j}(z')\frac{\tes(z q^{j-c}q^{2(1-
P_{\al_j})}/z')\tes(q^2)}
{\tes(zq^{j-c}/z')
\tes(q^{2(P_{\al_j}-1)})},
\\
&&f_{j,j+1}^+(z)
=a\oint_C \frac{dz'}{2\pi i z'}
f_{j}(z')\frac{\te(zq^{j}q^{2( (P+h)_{\al_j}-1)}/z')\te(q^2)}{\te(zq^{j}/z')\te(q^{2((P+h)_{\al_j}-1)})},
\end{eqnarray}
where $C^*: |q^{j-c}z|<|z'|<|p^{*-1}q^{j-c}z|$, $C: |q^{j}z|<|z'|<|p^{-1}q^{j}z|$, and 
\bea
&&a^*=q^{-1}\frac{(p^*;p^*)_\infty}{(p^*q^2;p^*)_\infty} ,\qquad a=q^{-1}\frac{(p;p)_\infty}{(pq^{-2};p)_\infty}.
\ena
\end{df}

\begin{thm}\lb{isoUqpEqp}
The following  map gives an isomorphism $ \cU\cong \cE$  
as a topological $H$-algebra over $\FF[[p]]$.   
\be
&&e^+_{j+1,j}(z)\mapsto E^+_{j+1,j}(z),\quad f^+_{j,j+1}(z)\mapsto F^+_{j,j+1}(z),\quad k^+_{l}(z)\mapsto K^+_l(z)\quad (1\leq j\leq N-1,\ 1\leq l\leq N).
\en 

\end{thm}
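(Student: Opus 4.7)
The plan is to follow the Ding-Frenkel style approach adapted to the elliptic dynamical setting, as developed in \cite{JKOStg,KonnoASPM}. The strategy is to construct an $L$-operator $\hL^+(z) \in \End_\FF \widehat{V}_z \otimes \cU$ whose Gauss decomposition \eqref{def:lhat} has $k^+_l(z)$ on the diagonal, the basic half currents $e^+_{j+1,j}(z), f^+_{j,j+1}(z)$ from Definition \ref{inthc} on the sub- and super-diagonals, and non-basic off-diagonal entries built recursively as in Lemma \ref{recursion}. Once $\hL^+(z)$ is shown to satisfy the dynamical $RLL$-relation \eqref{RLL}, the assignment $L^+_{ij}(z) \mapsto \hL^+_{ij}(z)$ defines a homomorphism $\cE \to \cU$ that, at the level of Gauss components, is inverse to the map stated in the theorem.

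First I would derive the commutation relations among the half currents $k^+_l(z), e^+_{j+1,j}(z), f^+_{j,j+1}(z)$ of $\cU$ from the defining relations of the elliptic currents in Appendix \ref{defrelUqp} and the integral formulas of Definition \ref{inthc}. This is accomplished by contour manipulations against the theta kernels appearing in Definition \ref{inthc} together with applications of the three-term addition formula for $\theta(z)$. Second I would verify the $RLL$-relation \eqref{RLL} for $\hL^+(z)$ entry by entry, matching coefficients of each tensor $E_{i_1 j_1} \otimes E_{i_2 j_2}$ on both sides. The main obstacle lies in the off-diagonal blocks involving the recursively defined non-basic half currents: one must check that the recursion of Lemma \ref{recursion}, when fed the known commutators of the basic half currents, produces operators compatible with the full elliptic dynamical $R$-matrix. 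This is handled by induction on $N$ using the subalgebra chain \eqref{seqsubalg}.

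To conclude that the two mutually inverse homomorphisms indeed compose to the identity in both directions, one checks agreement on generators. In the direction $\cE \to \cU \to \cE$, Lemma \ref{recursion} together with \eqref{LmfromLp} guarantees that every Gauss component of $L^\pm(z)$ is determined by the basic half currents, which are sent to themselves. In the direction $\cU \to \cE \to \cU$, one inverts the integral formulas of Definition \ref{inthc}, expressing the Drinfeld currents $e_j(z), f_j(z)$ of $\cU$ as differences of the basic $+$ half currents and their $-$ counterparts derived from $L^-(z)$ via \eqref{LmfromLp}. Preservation of the topological $H$-algebra structure over $\FF[[p]]$ then reduces to matching the weight assignments of the half currents with \eqref{lgr}--\eqref{rgr}, which is a direct check from their definitions.
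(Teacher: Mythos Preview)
The paper does not supply its own proof of this theorem; it is stated as a result established in \cite{KonnoASPM}. Your outline follows exactly the Ding--Frenkel strategy used there---building an $L$-operator in $\cU$ from the half currents of Definition~\ref{inthc}, verifying the dynamical $RLL$-relation \eqref{RLL} entrywise via the subalgebra chain \eqref{seqsubalg} and the recursion of Lemma~\ref{recursion}, and inverting by recovering the full currents from the $\pm$ half currents---so your approach is the correct one and coincides with the argument the paper is citing.
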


\subsection{Quantum determinant}\lb{sec:qdet}

We introduce the  quantum minor determinant of $L^+(z)$.

\begin{definition} (\cite{KonnoASPM}, Proposition E.12)
For $I=\{ i_1 < i_2 < \cdots < i_k \}$, $J=\{
j_1 < j_2 < \cdots < j_k \}$, we define ${\ell}^+(z)_I^J$ as
\begin{align}
{\ell}^+(z)_I^J 
:=&\mathcal{N}_k
\sum_{\sigma \in \mathfrak{S}_{k}}
{\mathrm{sgn}}_{J}^* (\sigma,\Pi^*)
L^+_{i_1 j_{\sigma(i)}}(z)
L^+_{i_2 j_{\sigma(2)}}(zq^{-2})
\cdots
L^+_{i_k j_{\sigma(k)}}(zq^{-2(k-1)}),
\label{expansion}
\\
=&\mathcal{N}_k^{-1}
\sum_{\sigma \in \mathfrak{S}_{k}}
{\mathrm{sgn}}_{I} (\sigma,\Pi)
L^+_{i_{\sigma(k)} j_k}(zq^{-2(k-1)})
L^+_{i_{\sigma(k-1)} j_{k-1}}(zq^{-2(k-2)})
\cdots
L^+_{i_{\sigma(1)} j_1}(z),
\label{expansiontwo}
\end{align}
where $\mathfrak{S}_{k}$ is the permutation group of degree $k$, and we set 
\begin{align}
{\mathrm{sgn}}_{I} (\sigma,\Pi)
:=&\prod_{
\substack{
1 \le a < b \le k
\\
\sigma(a) > \sigma(b)
}
}
\frac{\theta(q^2\Pi_{i_{\sigma(a)},i_{\sigma(b)}})}{\theta(\Pi_{i_{\sigma(b)},i_{\sigma(a)}})}, \\
{\mathrm{sgn}}_{I}^* (\sigma,\Pi^*)
:=&\prod_{
\substack{
1 \le a < b \le k
\\
\sigma(a) > \sigma(b)
}
}
\frac{\theta^*(q^2\Pi^*_{i_{\sigma(a)},i_{\sigma(b)}})}{\theta^*(\Pi^*_{i_{\sigma(b)},i_{\sigma(a)}})},
\end{align}
and
\begin{align}
\mathcal{N}_k:=&\prod_{1 \le a < b \le k}
\sqrt{
\frac{
\rho_0^* \theta^*(q^{2a}) \theta(q^2)}{\rho_0 \theta(q^{2a}) \theta^*(q^2)
}
}.
\end{align}
\end{definition}
We have the following are fundamental properties.
\begin{proposition} (\cite{KonnoASPM}, Proposition E.14)
\begin{align}
{\ell}^+(z)_I^{\tau(J)}={\mathrm{sgn}}_J^*(\tau,P)
{\ell}^+(z)_I^{J}.
\label{exchangeJ}
\end{align}
\end{proposition}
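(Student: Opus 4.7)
The plan is to reduce the statement to the case of an adjacent transposition $\tau=s_i=(i,i+1)\in \mathfrak{S}_k$, and then to apply the $RLL$ relation \eqref{RLL} at the fused spectral value $z_1/z_2=q^2$. Since $\mathfrak{S}_k$ is generated by the simple reflections $s_i$, and both sides of the claimed identity are multiplicative under composition of permutations, it is enough to establish the identity for a single $s_i$. The multiplicativity of the right-hand side uses the weight-shifting rules \eqref{lgr}, \eqref{rgr}: one checks that $\mathrm{sgn}_J^*(\tau_1\tau_2,P)=\mathrm{sgn}_J^*(\tau_1,P)\,\mathrm{sgn}_J^*(\tau_2,P)$ modulo the shift of $P$ induced by the intervening $L$-operators.

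For $\tau=s_i$, I would substitute $\sigma\mapsto s_i\sigma$ in the sum \eqref{expansion} defining ${\ell}^+(z)_I^{s_i(J)}$, so that the product of $L$-operators has its $i$-th and $(i+1)$-th factors permuted relative to the expansion of ${\ell}^+(z)_I^{J}$. The task then reduces to swapping the adjacent pair
\[
L^+_{i_i,\,j_{\sigma(i+1)}}(zq^{-2(i-1)})\,L^+_{i_{i+1},\,j_{\sigma(i)}}(zq^{-2i})
\]
back into its original order by means of \eqref{RLL}, with $z_1/z_2=q^{-2(i-1)}/q^{-2i}=q^2$. This is the critical fusion point.

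At $z_1/z_2=q^2$ the $R$-matrix entries simplify dramatically: $\bar b(q^2)=1$, while $b(q^2,\Pi)$, $c(q^2,\Pi)$, $\bar c(q^2,\Pi)$ combine, after cancellation with the corresponding $R^{+*}(q^2,\Pi^*)$ coming from the right-hand side of \eqref{RLL}, into exactly the theta-function ratios $\theta^*(q^2\Pi^*_{j_{\sigma(i)},j_{\sigma(i+1)}})/\theta^*(\Pi^*_{j_{\sigma(i+1)},j_{\sigma(i)}})$ and their inverses. Combining this with the purely combinatorial change in $\mathrm{sgn}_J^*(\sigma,\Pi^*)$ caused by replacing $\sigma$ by $s_i\sigma$ (which toggles exactly one inversion at positions $(i,i+1)$) produces the claimed $\mathrm{sgn}_J^*(s_i,P)$ on the right. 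The prefactor $\mathcal{N}_k$ is invariant under this swap, so no adjustment is needed there.

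The main obstacle will be bookkeeping the dynamical shifts: the $R$-matrix in \eqref{RLL} uses $\Pi=q^{2(P+h)_{j,k}}$ on one side and $\Pi^*=q^{2P_{j,k}}$ on the other, while the sign $\mathrm{sgn}_J^*(\tau,P)$ is expressed purely in terms of $\Pi^*$. The discrepancy of $h$ must be absorbed via the weight-shifting action of the $L$-operators, governed by \eqref{lgr}–\eqref{rgr}. A cleaner route is to rewrite the quantum minor in terms of the dynamical $L$-operator $L^+(z,\Pi^*)$ of \eqref{DL} and to use the fully dynamical $RLL$-relation \eqref{DRll} at $z_1/z_2=q^2$; this formulation makes all dynamical shifts explicit on the spectator tensor factor and removes the need for separate shift tracking between $P$ and $P+h$. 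One then finishes the argument by an induction on $|\tau|$, using multiplicativity to extend from adjacent transpositions to arbitrary $\tau\in\mathfrak{S}_k$.
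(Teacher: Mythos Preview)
The paper does not prove this proposition; it merely quotes it from \cite{KonnoASPM}. So there is no in-paper argument to compare against, and I can only evaluate your proposal on its own merits.

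Your overall strategy (reduce to an adjacent transposition $s_i$ and use the $RLL$ relation at the fused ratio $q^2$) is the right one, but there is a concrete error in the bookkeeping. You start from the expansion \eqref{expansion}, where the sign depends on $J$, and substitute $\sigma\mapsto s_i\sigma$. With $j'_m=j_{s_i(m)}$ one has $j'_{\sigma(a)}=j_{s_i\sigma(a)}$, so after the substitution the product of $L$-operators in $\ell^+(z)_I^{s_i(J)}$ becomes \emph{identical} to the one in $\ell^+(z)_I^J$; nothing is ``permuted''. The entire discrepancy is in the prefactors $\mathrm{sgn}^*_{s_i(J)}(s_i\sigma,\Pi^*)$ versus $\mathrm{sgn}^*_J(\sigma,\Pi^*)$, and these do \emph{not} differ by a $\sigma$-independent factor: the ratio depends on whether $\sigma^{-1}(i)<\sigma^{-1}(i+1)$ or not, and the two cases give distinct theta-ratios. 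So no $RLL$ relation can save this line, because there is nothing left to swap.

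The fix is to work instead with the second expansion \eqref{expansiontwo}, in which the sign $\mathrm{sgn}_I(\sigma,\Pi)$ depends only on $I$ and is therefore unaffected by replacing $J$ by $s_i(J)$. In that expansion, permuting $J$ genuinely swaps the adjacent factors $L^+_{i_{\sigma(i+1)}\,j_{i+1}}(zq^{-2i})$ and $L^+_{i_{\sigma(i)}\,j_i}(zq^{-2(i-1)})$. Applying \eqref{RLL} at $z_1/z_2=q^2$, the $R^{+*}(q^2,\Pi^*)$ appearing on the right is (up to normalisation) the dynamical antisymmetriser, and its matrix elements produce precisely the factor $\theta^*(q^2\Pi^*_{j_{i+1},j_i})/\theta^*(\Pi^*_{j_i,j_{i+1}})=\mathrm{sgn}_J^*(s_i,\Pi^*)$; the $R^+(q^2,\Pi)$ on the left is absorbed back into the $\mathrm{sgn}_I$-weighted sum since the fused $R$ is a projector. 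The dynamical-shift bookkeeping you mention is then handled automatically, because $R^{+*}$ carries $\Pi^*$ and sits to the right of all $L$-operators in \eqref{RLL}, matching the position of $\mathrm{sgn}^*_J(\tau,\Pi^*)$ in the statement. The induction on $|\tau|$ works as you say.
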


\begin{proposition} (\cite{KonnoASPM}, Proposition E.16)
\begin{align}
\displaystyle {\ell}^+(z)_I^{J}
=\sum_{\ell=1}^k (\mathcal{N}_k^\prime)^{-1}
\prod_{1 \le a < \ell} \frac{\theta(q^2\Pi_{i_\ell,i_a})}{\theta(\Pi_{i_a,i_{\ell}})}
{\ell}^+(zq^{-2})_{\widehat{i_\ell}}^{\widehat{j_1}}
L_{i_\ell j_1}^+(z),
\label{expansionsmallerqminors}
\end{align}
where $\widehat{i_\ell}=I\backslash\{i_\ell\}$ and 
\begin{align}
\mathcal{N}_k^\prime
=\frac{\mathcal{N}_k}{\mathcal{N}_{k-1}}.
\end{align}
\end{proposition}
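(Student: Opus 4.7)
The strategy is to start from the ``right-to-left'' expansion formula \eqref{expansiontwo}, group the permutations $\sigma \in \mathfrak{S}_k$ according to the value $\ell := \sigma(1)$, and read off the column $j_1$. Concretely, in
\[
{\ell}^+(z)_I^J = \mathcal{N}_k^{-1}\sum_{\sigma\in\mathfrak{S}_k}
\mathrm{sgn}_I(\sigma,\Pi)\,
L^+_{i_{\sigma(k)}j_k}(zq^{-2(k-1)})\cdots L^+_{i_{\sigma(2)}j_2}(zq^{-2})\,L^+_{i_{\sigma(1)}j_1}(z),
\]
the rightmost factor is $L^+_{i_\ell j_1}(z)$ once we fix $\sigma(1)=\ell$; the remaining indices $\sigma|_{\{2,\dots,k\}}$ then realize a bijection between $\{2,\dots,k\}$ and $\{1,\dots,k\}\setminus\{\ell\}$. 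Reindexing $\widehat{i_\ell}$ by $i'_m=i_m$ for $m<\ell$ and $i'_m=i_{m+1}$ for $m\geq\ell$, and $\widehat{j_1}$ by $j'_m=j_{m+1}$, one gets a canonical bijection between $\{\sigma\in\mathfrak{S}_k\mid \sigma(1)=\ell\}$ and $\mathfrak{S}_{k-1}$ (acting on positions for $\widehat{J}$ versus elements of $\widehat{I}$).

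Next I would decompose the sign weight. In $\mathrm{sgn}_I(\sigma,\Pi)$ the pairs $(a,b)$ with $a=1$ contribute exactly when $\sigma(b)<\ell$, and as $b$ runs over $\{2,\dots,k\}$ the values $\sigma(b)$ cover $\{1,\dots,k\}\setminus\{\ell\}$, so the $a=1$ portion equals
\[
\prod_{c=1}^{\ell-1}\frac{\theta(q^2\Pi_{i_\ell,i_c})}{\theta(\Pi_{i_c,i_\ell})},
\]
which is exactly the prefactor in \eqref{expansionsmallerqminors}. The pairs with $2\leq a<b\leq k$ and $\sigma(a)>\sigma(b)$ then match, after the reindexing above, the pairs with $1\leq a'<b'\leq k-1$ and $\sigma'(a')>\sigma'(b')$ defining $\mathrm{sgn}_{\widehat{i_\ell}}(\sigma',\Pi)$; the key point is that the strict orderings on $I\setminus\{i_\ell\}$ and its re-enumeration agree, so all inversions are preserved bijectively.

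With these two ingredients the inner sum over $\sigma$ with $\sigma(1)=\ell$ becomes
\[
\mathcal{N}_{k-1}^{-1}\!\!\!\sum_{\sigma'\in\mathfrak{S}_{k-1}}\!\!\mathrm{sgn}_{\widehat{i_\ell}}(\sigma',\Pi)\,L^+_{i'_{\sigma'(k-1)}j'_{k-1}}\!\bigl((zq^{-2})q^{-2(k-2)}\bigr)\cdots L^+_{i'_{\sigma'(1)}j'_1}(zq^{-2}),
\]
which by \eqref{expansiontwo} is precisely ${\ell}^+(zq^{-2})_{\widehat{i_\ell}}^{\widehat{j_1}}$. Matching normalizations, $\mathcal{N}_k^{-1}\cdot\mathcal{N}_{k-1}=(\mathcal{N}_k')^{-1}$ by definition of $\mathcal{N}_k'$, and multiplying on the right by $L^+_{i_\ell j_1}(z)$ recovers the right-hand side of \eqref{expansionsmallerqminors}.

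The main obstacle I expect is the bookkeeping in the two sign factors: confirming that the $a=1$ contributions, in the cyclic reindexing, reassemble into the stated prefactor over $a<\ell$ (rather than over $c$ with $i_c<i_\ell$ in some other ordering), and that under the bijection $\sigma\leftrightarrow\sigma'$ no spurious inversions appear or disappear. All of this is a matter of carefully tracking that the total ordering on $I$ induces the same total ordering on $\widehat{i_\ell}$, so inversions are preserved one-to-one; once this is granted, the rest is a pure bookkeeping of the $\theta$-factors and the normalization constants, and no new elliptic identity is needed.
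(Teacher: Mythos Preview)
The paper does not give its own proof of this proposition; it is quoted verbatim as Proposition~E.16 from \cite{KonnoASPM}. So there is no in-paper argument to compare against.

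That said, your approach is correct and is the natural one. Starting from \eqref{expansiontwo}, grouping over $\ell=\sigma(1)$, and splitting off the $a=1$ inversions of $\mathrm{sgn}_I(\sigma,\Pi)$ yields exactly the prefactor $\prod_{c<\ell}\theta(q^2\Pi_{i_\ell,i_c})/\theta(\Pi_{i_c,i_\ell})$, since as $b$ runs over $\{2,\dots,k\}$ the values $\sigma(b)$ hit each $c<\ell$ exactly once and the factor $\theta(q^2\Pi_{i_{\sigma(1)},i_{\sigma(b)}})/\theta(\Pi_{i_{\sigma(b)},i_{\sigma(1)}})$ depends only on the value $\sigma(b)=c$, not on $b$. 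The remaining inversions with $2\le a<b\le k$ transfer to $\mathrm{sgn}_{\widehat{i_\ell}}(\sigma',\Pi)$ via the order-preserving relabelling $c\mapsto c$ for $c<\ell$, $c\mapsto c-1$ for $c>\ell$; since $i'_{\sigma'(m)}=i_{\sigma(m+1)}$ the individual $\theta$-factors match, not merely the number of inversions. The spectral-parameter bookkeeping $zq^{-2m}=(zq^{-2})q^{-2(m-1)}$ and the normalization $\mathcal{N}_k^{-1}=(\mathcal{N}_k')^{-1}\mathcal{N}_{k-1}^{-1}$ are immediate. One point worth noting, which you do not mention, is that all $\Pi$-dependent prefactors sit to the left of all $L$-operators on both sides, so no dynamical shifts interfere with the factorization; this is why the identity is purely combinatorial and needs no elliptic relation beyond the definitions.
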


The quantum determinant of the $L$-operator
is given by
\begin{align}
\qdet
L^+(z)
={\ell}^+(z)_{[1,N]}^{[1,N]},
\end{align}
where $[1,N]=\{ 1,2,\dots,N \}$.

\begin{dfn}\lb{def:ABC}
Let us define
\bea
&&A_\ell(z):= \hell^+(z)^{[\ell,N]}_{[\ell,N]},\qquad \lb{def:Al}\\
&&B_m(z):=\hell^+(z)^{mm+1m+2\cdots N}_{m-1m+1m+2\cdots N},\\
&&C_m(z):=\hell^+(z)_{mm+1m+2\cdots N}^{m-1m+1m+2\cdots N},
\ena
for $1\leq \ell\leq N$ and $2\leq m\leq N$. In particular, we set
\be
&&A_N(z)=\ell^+(z)^{\{N\}}_{\{N\}}=L^+_{NN}(z)=K^+_N(z),\\
&&B_N(z)=\ell^+(z)^{\{N\}}_{\{N-1\}}=L^+_{N-1N}(z),\\
&&C_N(z)=\ell^+(z)_{\{N\}}^{\{N-1\}}=L^+_{NN-1}(z).
\en
\end{dfn}
One then finds the following relations to  the basic half currents.
\begin{prop}\lb{HCABC}
\bea
&&K^+_\ell(z)=(\cN'_{N-\ell+1})^{-1}A_\ell(z)A_{\ell+1}(zq^{-2})^{-1},\\
&&E^+_{m,m-1}(z)=A_m(z)^{-1}C_m(z),\\
&&F^+_{m-1,m}(z)=B_m(z)A_m(z)^{-1}.
\ena
Here we set $A_{N+1}(z)\equiv 1$. Hence through the isomorphism Theorem \ref{isoUqpEqp}, $A_m(z), B_m(z), C_m(z) $ are related to the Drinfeld generators of $U_{q,p}(\glnh)$.  

\end{prop}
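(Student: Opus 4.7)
The plan is to prove all three identities by decreasing induction on $\ell$ (equivalently $m$), using the column-expansion recursion \eqref{expansionsmallerqminors} from Proposition E.16 together with the Gauss decomposition \eqref{def:lhat}. The strategy parallels the Nazarov--Tarasov--Molev derivation of Yangian quantum-minor identities, extended to the elliptic dynamical setting.

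\textbf{Base case} ($\ell = m = N$). From \eqref{def:lhat} one reads off directly that $L^+_{N,N}(z) = K^+_N(z)$, $L^+_{N-1,N}(z) = F^+_{N-1,N}(z) K^+_N(z)$, and $L^+_{N,N-1}(z) = K^+_N(z) E^+_{N,N-1}(z)$. With the conventions $\mathcal{N}'_1 = 1$ (empty product) and $A_{N+1}(z) \equiv 1$, the three claimed identities reduce to these statements.

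\textbf{Inductive step for $A_\ell(z)$}. Apply \eqref{expansionsmallerqminors} with $I = J = \{\ell, \ldots, N\}$ and distinguished first column $j_1 = \ell$, then substitute the Gauss expansion $L^+_{i, \ell}(z) = \sum_{k \geq i} F^+_{i, k}(z) K^+_k(z) E^+_{k, \ell}(z)$ into each summand. The contribution from $i = k = \ell$ gives $(\mathcal{N}'_{N-\ell+1})^{-1} A_{\ell+1}(zq^{-2}) K^+_\ell(z)$, which by elementary dynamical commutation is equivalent to $\mathcal{N}'_{N-\ell+1} K^+_\ell(z) A_{\ell+1}(zq^{-2})$ and yields the desired formula. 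I claim all other contributions ($k > \ell$ for some $i \geq \ell$) cancel; the mechanism is that, upon moving $E^+_{k, \ell}(z)$ through the accompanying $(N-\ell)$-minor of $L^+(zq^{-2})$ via the $RLL$-relation \eqref{RLL}, the column set $\{\ell+1, \ldots, N\}$ is effectively augmented by the virtual column $k \in \{\ell+1, \ldots, N\}$, producing a minor with a repeated column which vanishes by the antisymmetry \eqref{exchangeJ}. The theta-function weights in \eqref{expansionsmallerqminors} are arranged precisely so that the signs match in this cancellation.

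\textbf{Inductive steps for $B_m(z)$ and $C_m(z)$}. These proceed analogously. For $B_m(z)$, expand along $j_1 = m$ with rows $\{m-1\} \cup \{m+1, \ldots, N\}$; the $i = m-1$ term produces the leading contribution once $L^+_{m-1, m}(z) = F^+_{m-1, m}(z) K^+_m(z) + \sum_{k > m} F^+_{m-1, k}(z) K^+_k(z) E^+_{k, m}(z)$ is substituted, and combining with the $A_\ell$-identity already established gives $B_m(z) = F^+_{m-1, m}(z) A_m(z)$. The case of $C_m(z)$ uses the row-expansion variant of \eqref{expansionsmallerqminors} derived from \eqref{expansiontwo}, yielding $C_m(z) = A_m(z) E^+_{m, m-1}(z)$.

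The main obstacle is the systematic cancellation of the non-leading Gauss contributions. The difficulty arises from two sources: the elliptic dynamical sign weights ${\mathrm{sgn}}_I(\sigma, \Pi)$ produce nontrivial $\Pi$-dependent factors, and the non-basic off-diagonal Gauss components $E^+_{k, \ell}(z)$ for $k > \ell + 1$ are not elementary generators of $\mathcal{E}$. For the latter, Lemma \ref{recursion} expresses these non-basic components recursively in terms of the basic half currents and $K^+_j$, so the required cancellations ultimately take place inside the subalgebra generated by the basic half currents, consistent with the chain \eqref{seqsubalg}. One should check inductively that the combinatorics of the elliptic theta-function weights in \eqref{expansionsmallerqminors} exactly reproduces the signs needed for all cancellations to go through.
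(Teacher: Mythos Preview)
The paper does not give a direct argument here: it identifies $A_\ell(z)$, $B_m(z)$, $C_m(z)$ with the quantum determinants $\qdet L^+(z)_{\ell,\ell}$, $\qdet L^+(z)_{m,m-1}$, $\qdet L^+(z)_{m-1,m}$ of the submatrices $L^+(z)_{a,b}$ and then invokes Theorem~E.23 of \cite{KonnoASPM}, where these Gauss-decomposition identities are established. Your proposal instead sketches the direct inductive proof via \eqref{expansionsmallerqminors} and the Gauss form \eqref{def:lhat}, which is presumably the content of that cited theorem. So the approaches are not different in substance---you are attempting to reproduce inline what the paper delegates to a reference.

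Your sketch, however, has a genuine gap. The cancellation of the non-leading Gauss contributions is the entire content of the argument, and you only assert it via a ``repeated column'' heuristic. In the elliptic dynamical setting the antisymmetry \eqref{exchangeJ} involves $\Pi$-dependent theta ratios and is stated for strictly ordered index sets, so it does not literally cover a minor with a repeated column; one must verify separately that such minors vanish. More pointedly, your step ``by elementary dynamical commutation is equivalent to $\mathcal{N}'_{N-\ell+1} K^+_\ell(z) A_{\ell+1}(zq^{-2})$'' is not justified: since $\mathcal{N}'_{N-\ell+1}$ is a scalar in $\FF[[p]]$, commuting $A_{\ell+1}(zq^{-2})$ past $K^+_\ell(z)$ cannot flip its exponent from $-1$ to $+1$ unless the exchange relation between these operators (governed by \eqref{kjkj}--\eqref{kjkl}) produces precisely the factor $(\mathcal{N}'_{N-\ell+1})^{2}$. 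That may well hold, but it requires a computation you have not carried out. As written, the proposal is a reasonable outline but not a proof.
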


\noindent
{\it Proof.}\ 
In \cite{KonnoASPM}, we introduced 
$L^+(z)_{a,a}=(L_{i,j}^+(z))_{a \le i,j \le N}$
and
\begin{align}
L^+(z)_{a,b}
=&\begin{pmatrix}
  L^+_{a,b}(z) & L^+_{a,a+1}(z) & \cdots & L^+_{a,N}(z) \\
 L^+_{a+1,b}(z) & L^+_{a+1,a+1}(z) & \cdots & L^+_{a+1,N}(z) \\
\vdots & \vdots &  & \vdots \\
L^+_{N,b}(z) & L^+_{N,a+1}(z) & \cdots & L^+_{N,N}(z)
\end{pmatrix}, \ \ \ a>b,
\end{align}
\begin{align}
L^+(z)_{a,b}
=&\begin{pmatrix}
  L^+_{a,b}(z) & L^+_{a,b+1}(z) & \cdots & L^+_{a,N}(z) \\
 L^+_{b+1,b}(z) & L^+_{b+1,b+1}(z) & \cdots & L^+_{b+1,N}(z) \\
\vdots & \vdots &  & \vdots \\
L^+_{N,b}(z) & L^+_{N,b+1}(z) & \cdots & L^+_{N,N}(z)
\end{pmatrix}, \ \ \ a<b.
\end{align}
The statements follows from Theorem E.23 in \cite{KonnoASPM} with the identification 
\be
&&\qdet L^+(z)_{\ell,\ell}=\hell^+(z)^{[\ell,N]}_{[\ell,N]},\\
&&\qdet L^+(z)_{m,m-1}=\hell^+(z)^{mm+1m+2\cdots N}_{m-1m+1m+2\cdots N},\\
&&\qdet L^+(z)_{m-1,m}=\hell^+(z)_{mm+1m+2\cdots N}^{m-1m+1m+2\cdots N}. 
\en
\qed
\begin{cor}
\bea
&(1)& A_m(z)=\cN_{N-m+1}K^+_m(z)K^+_{m+1}(zq^{-2})\cdots K^+_N(zq^{-2(N-m)}),\lb{AmK}\\
&(2)&\qdet \hL^+(z)=\cN_NK(z).\lb{qdetLp}
\ena
\end{cor}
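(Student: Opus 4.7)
The proof reduces to inverting the recursion stated in Proposition \ref{HCABC} and then identifying the resulting product with $K(z)$ through the isomorphism in Theorem \ref{isoUqpEqp}.

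For part (1), the plan is to rewrite the identity
\[
K^+_\ell(z)=(\cN'_{N-\ell+1})^{-1}A_\ell(z)A_{\ell+1}(zq^{-2})^{-1}
\]
as $A_\ell(z)=\cN'_{N-\ell+1}K^+_\ell(z)A_{\ell+1}(zq^{-2})$ and iterate downward from $\ell=N$. The base case $A_N(z)=K^+_N(z)$ holds by Definition \ref{def:ABC} together with $A_{N+1}\equiv 1$ and $\cN'_1=\cN_1/\cN_0=1$. Iterating $N-m+1$ times gives
\[
A_m(z)=\left(\prod_{j=m}^{N}\cN'_{N-j+1}\right)K^+_m(z)K^+_{m+1}(zq^{-2})\cdots K^+_N(zq^{-2(N-m)}).
\]
The coefficient telescopes: $\prod_{j=m}^{N}\cN'_{N-j+1}=\prod_{k=1}^{N-m+1}\cN_k/\cN_{k-1}=\cN_{N-m+1}$, which yields the claimed formula.

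For part (2), by Definition \ref{def:ABC} one has $\qdet L^+(z)=\ell^+(z)^{[1,N]}_{[1,N]}=A_1(z)$. Applying (1) with $m=1$ gives
\[
A_1(z)=\cN_N K^+_1(z)K^+_2(zq^{-2})\cdots K^+_N(zq^{-2(N-1)}).
\]
Under the isomorphism $\cU\cong\cE$ of Theorem \ref{isoUqpEqp}, $k^+_l(z)\mapsto K^+_l(z)$, so the right-hand side matches $\cN_N$ times the defining expression $K(z)=k^+_1(z)k^+_2(q^{-2}z)\cdots k^+_N(q^{-2(N-1)}z)$, giving $\qdet \hL^+(z)=\cN_N K(z)$.

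The main subtlety is purely bookkeeping: one must track the ordering of the recursively produced factors to confirm that the normalization constants pull out cleanly as scalars (i.e.\ that $\cN'_k$ may be moved past the $K^+$ Gauss components), and that the telescoping above correctly yields $\cN_{N-m+1}$ with $\cN_0=1$. Once this is verified, no further computation is required, since both the recursion of Proposition \ref{HCABC} and the identification of diagonal Gauss components with the $k^+_l(z)$ have already been established.
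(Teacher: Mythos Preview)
Your proof is correct and follows exactly the route the paper intends: the Corollary is stated without proof precisely because it is obtained by inverting and iterating the recursion of Proposition \ref{HCABC}, with the constants $\cN'_k=\cN_k/\cN_{k-1}$ telescoping to $\cN_{N-m+1}$, and part (2) is the special case $m=1$ combined with the identification $k^+_l(z)\leftrightarrow K^+_l(z)$ from Theorem \ref{isoUqpEqp}. Your remark that the $\cN'_k$ are scalar and hence commute past the Gauss components is the only point that needs noting, and you have noted it.
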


\begin{prop}
The coefficients of $A_{m}(z)$ belong to the center of the subalgebra $E_{q,p}(\widehat{\gl}_{N-m+1})$ for 
$m=1,2,\cdots,N$. 
\end{prop}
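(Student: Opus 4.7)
The strategy is to view $A_m(z)$ as the full quantum determinant of the sub-$L$-operator $L^+(z)_{m,m}=(L^+_{ij}(z))_{m\le i,j\le N}$ of size $N-m+1$, and to reduce centrality inside $E_{q,p}(\widehat{\gl}_{N-m+1})$ to the already-known centrality of the full quantum determinant $\qdet L^+(z)=\cN_N K(z)$ inside $E_{q,p}(\glnh)$ via \eqref{qdetLp} together with the fact recalled just after the definition of $K(z)$ that $K(z)$ lies in the center of $U_{q,p}(\glnh)$. Definition~\ref{def:ABC} identifies $A_m(z)=\hell^+(z)^{[m,N]}_{[m,N]}$ with precisely that sub-determinant, and the product formula \eqref{AmK} exhibits it as the natural $\widehat{\gl}_{N-m+1}$ analogue of $K(z)$, making centrality inside the corresponding subalgebra structurally expected.

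The first task is to show that $L^+(z)_{m,m}$ satisfies an $RLL$-relation of exactly the same form as \eqref{RLL} but of $\widehat{\gl}_{N-m+1}$ type. A direct inspection of \eqref{ellipticrmatrix} shows that every elementary tensor $E_{ab}\otimes E_{cd}$ appearing in $\overline R(z,\Pi)$ has $\{a,b\}=\{c,d\}$, so $\overline R(z,\Pi)$ preserves the subspace $\widehat V_{[m,N]}\otimes\widehat V_{[m,N]}$ where $\widehat V_{[m,N]}=\bigoplus_{j=m}^{N}\mathbb{F}v_j$. Its restriction is precisely the elliptic dynamical $R$-matrix of $\widehat{\gl}_{N-m+1}$ type, with dynamical parameters $\Pi_{j,k}$, $j,k\in[m,N]$, and the same holds for the starred version $R^{+*}$. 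Extracting the matrix elements of \eqref{RLL} in this sub-tensor-product therefore yields an $RLL$-relation solely among the entries $L^+_{ij}(u)$ with $i,j\in[m,N]$.

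Given this sub-$RLL$-relation, the standard argument proving $\qdet L^+(z)=\cN_N K(z)$ central in $E_{q,p}(\glnh)$ applies verbatim to $A_m(z)=\qdet L^+(z)_{m,m}$ and shows that the coefficients of $A_m(z)$ commute with every $L^+_{ij}(u)$ for $i,j\in[m,N]$. To translate this into commutation with the generators of $E_{q,p}(\widehat{\gl}_{N-m+1})$, one uses the Gauss decomposition \eqref{def:lhat} in one direction and Lemma~\ref{recursion} in the other: the basic half currents $E^+_{a+1,a}(u),\, F^+_{a,a+1}(u),\, K^+_b(u)$ for $m\le a\le N-1$ and $m\le b\le N$, which by \eqref{seqsubalg} generate $E_{q,p}(\widehat{\gl}_{N-m+1})$, are exactly the data determined by, and in turn determining, the entries of $L^+(z)_{m,m}$.

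The main obstacle is verifying the restriction step cleanly: one must check that under restriction to $\widehat V_{[m,N]}\otimes\widehat V_{[m,N]}$ the full dynamical $R$-matrix really becomes the $\widehat{\gl}_{N-m+1}$ dynamical $R$-matrix with the correct dynamical shift by $P+h$ truncated to indices in $[m,N]$, so that the centrality argument for $\qdet$ transfers without modification. The weight-shift relations \eqref{lgr}--\eqref{rgr} guarantee that entries $L^+_{ij}$ with $i,j\in[m,N]$ shift $P$ and $P+h$ only by $Q_{\epsilon_k}$ with $k\in[m,N]$, which secures the required compatibility, but this bookkeeping is the one nontrivial point of the argument.
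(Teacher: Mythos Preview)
Your proposal is correct and is in essence what the paper's cited references \cite{KonnoASPM} establish: the paper's own proof consists solely of invoking Theorem~6.7 and Corollary~E.24 of \cite{KonnoASPM}, which respectively provide the subalgebra embedding \eqref{seqsubalg} and the centrality of $\qdet L^+(z)_{m,m}$ in that subalgebra. You have reconstructed the argument behind those citations---restriction of the dynamical $R$-matrix to the block $\widehat V_{[m,N]}\otimes\widehat V_{[m,N]}$, the resulting sub-$RLL$ relation for $L^+(z)_{m,m}$, and the identification via the Gauss decomposition of the algebra generated by $\{L^+_{ij}(u):i,j\in[m,N]\}$ with $E_{q,p}(\widehat{\gl}_{N-m+1})$---rather than citing them, and you have correctly flagged the one genuinely delicate point, namely that the dynamical shifts remain consistent under restriction.
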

\begin{proof}
The statement follows from Theorem 6.7 in  \cite{KonnoASPM} (see below \eqref{seqsubalg}) and Corollary E.24 in  \cite{KonnoASPM}.  
\end{proof}

From \eqref{def:lhat}, \eqref{RLL} and Proposition \ref{HCABC}, one obtains the following commutation relations.  They play a key role in the construction of the Gelfand-Tsetlin bases  in Sec.\ref{sec:GTB}.
\begin{prop}\lb{comABC}
\bea
&&[A_m(z),A_n(w)]=0,\lb{comAmAn}\\
&&[A_m(z),B_n(w)]=0 \qquad m\not=n,\lb{comAmBn}\\
&&[A_m(z),C_n(w)]=0 \qquad m\not=n,\lb{comAmCn}\\
&&[B_m(z),C_n(w)]=0 \qquad m\not=n,\lb{comBmCn}\\
&&[B_m(z),B_n(w)]=0 \qquad m\not=n\pm 1,\lb{comBmBn}\\
&&[C_m(z),C_n(w)]=0 \qquad m\not=n\pm 1,\lb{comCmCn}
\\
&&A_m(z)B_m(w)=\frac{\theta(q^2z/w)}{\theta(z/w)}B_m(w)A_m(z)
-\frac{\theta(\Pi^{-1}_{m-1,m}z/w)\theta(q^2)}{\theta(\Pi_{m-1,m}^{-1})\theta(z/w)}B_m(z)A_m(w) \quad z\not=w,\nn\\
&&\lb{comAmBm}\\
&&C_m(w)A_m(z)=\frac{\theta^*(q^2z/w)}{\theta^*(z/w)}A_m(z)C_m(w)
-A_m(w)C_m(z)\frac{\theta^*(\Pi^{*-1}_{m-1,m}z/w)\theta^*(q^2)}{\theta^*(\Pi_{m-1,m}^{*-1})\theta^*(z/w)}
 \quad z\not=w.\nn\\
 &&\lb{comCmAm}
\ena
\bea
&&{C}_m(z){B}_m(w) \nn \\
&&={A}_m(z){B}_m(w){A}_m(w)^{-1}{A}_m(z)^{-1}{C}_m(z){A}_m(w)
\nn \\
&&+\cN'_{N-m+1}{A}_m(z)K^+_{m-1}(w){A}_{m+1}(v-1)
\frac{\theta^*(\Pi^{*-1}_{N-m,N-m+1}z/w)\theta^{*}(q^2)}{\theta^*(\Pi^{*-1}_{N-m,N-m+1})\theta^*(z/w)}
 \nn \\
&&-\cN'_{N-m+1}A_m(z){A}_{m+1}(u-1)A_m(z)^{-1}K^+_{m-1}(z){A}_m(w)
\frac{\theta(\Pi^{-1}_{N-m,N-m+1}z/w)\theta(q^2)}{\theta(\Pi^{-1}_{N-m,N-m+1})\theta(z/w)}.
\label{comCmBm}
\ena
\end{prop}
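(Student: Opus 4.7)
The plan is to derive every relation from the fundamental $RLL$-relation \eqref{RLL} together with the expansion formula \eqref{expansionsmallerqminors} for quantum minors, exploiting the fact that $A_m(z)$, $B_m(z)$, $C_m(z)$ are all specific quantum minors of $L^+(z)$ built on index sets $[m,N]$ with at most one entry shifted to $m-1$. The basic philosophy is identical to the one used in the Yangian and trigonometric cases: $RLL$ propagates to an antisymmetrized relation for products of minors, and the precise theta-function coefficients produced by the elliptic $R$-matrix \eqref{ellipticrmatrix} account for the coefficients on the right-hand sides. Throughout, one must carefully track the dynamical shifts via \eqref{lgr}--\eqref{rgr} when moving scalars past the minors.

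For the commuting relations \eqref{comAmAn}--\eqref{comCmCn}, the cleanest path is to exploit the centrality statement just proved: $A_m(z)$ lies in the center of $E_{q,p}(\widehat{\gl}_{N-m+1})$. Relation \eqref{comAmAn} is then immediate, since for $m\le n$ the operator $A_n(w)$ is built from entries $L^+_{ij}$ with $i,j\in[n,N]\subset[m,N]$ and therefore belongs to $E_{q,p}(\widehat{\gl}_{N-m+1})$. The same argument yields \eqref{comAmBn} and \eqref{comAmCn} whenever $n>m$, because $B_n(w)$, $C_n(w)$ only involve row/column indices in $\{n-1,n,\dots,N\}\subset\{m,\dots,N\}$. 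For $n<m$ one instead uses the $RLL$-relation directly: the antisymmetrizer built into the $\mathrm{sgn}^*_J(\sigma,\Pi^*)$ factor kills the ``crossing'' contribution whenever the column sets of the two minors are compatible, and the remaining diagonal terms collapse to the commutator. Relations \eqref{comBmCn}, \eqref{comBmBn} and \eqref{comCmCn} follow similarly, by writing the minors using \eqref{expansion}--\eqref{expansiontwo} and matching the $\Pi$- and $\Pi^*$-dependent sign factors against the $R$-matrix entries $b,\bar b,c,\bar c$.

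For the nontrivial relations \eqref{comAmBm} and \eqref{comCmAm}, the efficient route is to apply the Laplace-type expansion \eqref{expansionsmallerqminors} to $B_m(w)$ (respectively $C_m(w)$), moving $A_m(z)$ through the individual factor $L^+_{i_\ell j_1}(w)$ using the elementary $RLL$-relation, and collecting the smaller minors with the resulting theta-function coefficients. The terms with $\overline b$-type coefficients recombine into the coefficient $\theta(q^2z/w)/\theta(z/w)$, while the dynamical $c$-type entries produce the extra ``swap'' term with coefficient $\theta(\Pi^{-1}_{m-1,m}z/w)\theta(q^2)/(\theta(\Pi_{m-1,m}^{-1})\theta(z/w))$. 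The asterisked version for $C_m A_m$ appears automatically because \eqref{expansion} and \eqref{expansiontwo} use $\mathrm{sgn}^*$ and $\mathrm{sgn}$ respectively, so interchanging the order of the two minors moves one from the $\Pi^*$-side of the $RLL$-relation to the $\Pi$-side.

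The main obstacle is the $CB$-relation \eqref{comCmBm}, where three distinct terms survive on the right-hand side. The plan is to expand both $C_m(z)$ and $B_m(w)$ via \eqref{expansionsmallerqminors}, obtaining products of the form $(\text{smaller minor})\cdot L^+_{m-1,j_1}(z)\cdot L^+_{i_\ell,m-1}(w)\cdot(\text{smaller minor})$, then to commute the two $L^+$-entries using \eqref{RLL}. Three types of contribution arise: the diagonal one reproduces $A_m(z)B_m(w)A_m(w)^{-1}A_m(z)^{-1}C_m(z)A_m(w)$ (after re-expressing the product of smaller minors in terms of $A_m$, $A_{m+1}$ and $K^+_{m-1}$ using \eqref{AmK}), while the two ``source'' terms originate from the $c$- and $\bar c$-entries of the $R$-matrix creating cross contractions, producing precisely the $A_{m+1}$, $K^+_{m-1}$ factors and the $\theta^*$/$\theta$ coefficients in \eqref{comCmBm}. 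The delicate point is keeping track of the dynamical shifts of $\Pi$ and $\Pi^*$ when propagating $A_m(z)^{-1}$ across $B_m(w)$ and $C_m(z)$, since these shifts are responsible for the precise combinations $\Pi^{*-1}_{N-m,N-m+1}$ and $\Pi^{-1}_{N-m,N-m+1}$ appearing in the source terms; this bookkeeping is the technical heart of the proof.
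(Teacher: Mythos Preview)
Your proposal outlines a valid strategy, but it differs substantially from the paper's own argument, and in several places your sketch is too vague to be convincing as it stands.

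\textbf{What the paper does.} The paper does \emph{not} work directly with quantum minors and the $RLL$-relation. Instead it uses Proposition~\ref{HCABC} to translate $A_m,B_m,C_m$ into the basic half currents $K^+_l, E^+_{m,m-1}, F^+_{m-1,m}$, and then invokes the half-current commutation relations already established in \cite{KonnoASPM} (specifically, individual matrix components of (6.25) and the formulas (C.6), (C.8), (C.19) there). For instance, \eqref{comAmBn} is obtained by first showing $[F^+_{l-1,l}(z_2),K^+_m(z_1)]=0$ and $[F^+_{m-1,m}(z_1),K^+_{l-1}(z_2)]=0$ for $l<m$, deducing $[B_a,K^+_l]=0$ for $l\ne a-1,a$, and then combining with the centrality of $\qdet L^+$. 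The key relation \eqref{comCmBm} is obtained directly from the half-current commutator $[E^+_{m,m-1}(z),F^+_{m-1,m}(w)]$, which is a single known formula, rather than by expanding two minors and recombining. The paper also remarks that the whole proposition can alternatively be obtained from the integral realization of the half currents (Definition~\ref{inthc}) and the defining relations of $U_{q,p}(\glnh)$.

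\textbf{What you do differently.} Your plan is the ``quantum minor'' route familiar from the Yangian literature (Molev): push $RLL$ through antisymmetrizers and Laplace expansions. This is in principle workable, and has the advantage of being self-contained (no reliance on the external reference \cite{KonnoASPM}). However, two of your steps are genuinely underspecified. First, for \eqref{comAmBn} with $n<m$, centrality of $A_m$ no longer applies since $B_n$ carries the row index $n-1<m$; your fallback ``the antisymmetrizer kills the crossing contribution'' is not an argument in the dynamical setting, where the sign factors $\mathrm{sgn}^*_J(\sigma,\Pi^*)$ depend on $\Pi^*$ and do not cancel as cleanly as in the non-dynamical case. Second, for \eqref{comCmBm} your double Laplace expansion would produce a large number of cross terms, and it is far from obvious that they reassemble into exactly three pieces with the specific $\Pi_{N-m,N-m+1}$ shifts shown; the paper's route via $[E^+_{m,m-1},F^+_{m-1,m}]$ sidesteps this entirely because that commutator already has exactly two source terms. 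If you pursue your approach, you should expect the dynamical bookkeeping in both of these places to be substantial, whereas the paper's half-current route reduces each relation to a single citation.
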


\noindent
{\it Proof.}\ From the $(m,l-1) (m, l)$ and $(m-1,l-1) (m, l-1)$ components of (6.25) in \cite{KonnoASPM}, one has for  $l<m$
\be
&& [F^+_{l-1 l}(z_2),K^+_m(z_1)]=0,\qquad 
[F^+_{m-1 m}(z_1),K^+_{l-1}(z_2)]=0, 
\en
respectively.
Hence one obtains 
\be
&&[B_{a}(z_1),K^+_l(z_2)]=0\qquad (l\not= a-1, a).
\en
Combining this with $[B_a(z_1),\qdet L^+(z)]=0$, one obtains
\be
&&[B_a(z_1),A_l(z_1)]=0\qquad (l\not =a).
\en

Similarly \eqref{comAmCn} follows from the $ (m, l) (m,l-1)$ and $ (m, l-1) (m-1,l-1)$ components for $l<m$ of (6.25) in \cite{KonnoASPM}. 

\eqref{comBmCn} follows from the  component $(m-1, l) (m, l-1)$  $( l<m)$ of (6.25)   in \cite{KonnoASPM},  \eqref{comAmBn} and \eqref{comAmCn}.  

\eqref{comBmBn} follows from the  component $(m-1, l) (m, l+1)$  $(1\leq l<m-2\leq N-2)$ of (6.25), the  $(j, j) (j+1, j+1)$ component of (6.23)  in \cite{KonnoASPM} and \eqref{comAmBn}. 
\eqref{comCmCn} can be proved similarly.  

 \eqref{comAmBm} and \eqref{comCmAm} are obtained from (C.8) and (C.6) in \cite{KonnoASPM}, respectively.

 \eqref{comCmBm} follows from the relation 
 ( \cite{KojimaKonno} (C.6), 
\cite{KonnoASPM}, (C.19))
\begin{align}
[E_{m,m-1}^+(z),F_{m-1,m}^+(w)]
=&K_{m-1}^+(w)K_m^{+}(w)^{-1} \frac{\bar{c}(z/w,q^{-2}\Pi^*_{m-1,m})}{\overline{b}(z/w)} \nn \\
&-K_m^+(z)^{-1} K_{m-1}^{+}(z) \frac{\bar{c}(z/w,q^{-2}\Pi_{m-1,m})}{\overline{b}(z/w)}.\nn
\end{align}

\qed

\medskip
\noindent
{\it Remark.}\ This proposition can also be  obtained by the realization of the basic half currents 
(Definition \ref{inthc}) and the defining relations of $U_{q,p}(\glnh)$\cite{JKOS,KojimaKonno,KonnoASPM}. 

\subsection{Coalgebra structure}\lb{sec:Hopfalgebroid}

We  present a $H$-Hopf algebroid structure\cite{EV,KR,KonnoASPM} 
as a common coalgebra structure of $U_{q,p}(\glnh)$ and $E_{q,p}(\glnh)$.

Let $\cA$ denote $U_{q,p}(\glnh)$ or $E_{q,p}(\glnh)$. 
The elliptic algebra $\cA$  is a $H$-algebra by
\bea
&&\cA=\bigoplus_{\al,\beta\in H^*} \cA_{\al,\beta}\lb{bigradingUqp}\\
&&\cA_{\al\beta}=\left\{a\in \cU \left|\ q^{P+h}a q^{-(P+h)}=q^{\bra\al,P+h\ket}a,\quad q^{P}a q^{-P}=q^{\bra\beta,P\ket}a,\quad \forall P+h, P\in H\right.\right\}\nn
\ena
and two moment maps $\mu_l, \mu_r : \FF \to \cA_{0,0}$ defined by
\be
&&\mu_l(\hf)=f(P+h,p)\in \FF[[p]],\qquad \mu_r(\hf)=f(P,p^*)\in \FF[[p]].
\en
Let $\cD=\FF\sharp\C[\cR_Q]$. We define two $H$-algebra homomorphisms, the co-unit $\vep : \cA\to \cD$ and the co-multiplication $\Delta : \cA\to \cA \widetilde{\otimes}\cE$ by
\bea
&&\vep(L_{ij,n})=\delta_{i,j}\delta_{n,0}e^{-Q_{\ep_i} }\quad (n\in \Z),
\qquad \vep(e^{Q_\al})=e^{Q_\al},\quad \al\in \bgH^*\lb{counitUqp}\\
&&\vep(\mu_l({\hf}))= \vep(\mu_r(\hf))=\widehat{f}\;1, \lb{counitf}\\
&&\Delta(L^+_{ij}(z))=\sum_{k}L^+_{ik}(zq^{c^{(2)}})\widetilde{\otimes}
L^+_{kj}(z),\lb{coproUqp}\\
&&\Delta(e^{Q_{\al}})=e^{Q_{\al}}\tot e^{Q_{\al}},
\\
&&\Delta(\mu_l(\hf))=\mu_l(\hf)\widetilde{\otimes} 1,\quad \Delta(\mu_r(\hf))=1\widetilde{\otimes} \mu_r(\hf).\lb{coprof}
\ena

\begin{prop}\lb{counitcopro}
The maps $\vep$ and $\Delta$ satisfy
\bea
&&(\Delta\tot \id)\circ \Delta=(\id \tot \Delta)\circ \Delta,\lb{coaso}\\
&&(\vep \tot \id)\circ\Delta =\id =(\id \tot \vep)\circ \Delta.\lb{vepDelta}
\ena
\end{prop}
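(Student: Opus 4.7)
The plan is to reduce both identities to a check on the $H$-algebra generators of $\cA$ (which denotes either $U_{q,p}(\widehat{\gl}_N)$ or $E_{q,p}(\widehat{\gl}_N)$), namely the Fourier modes of $L^+(z)$, the lattice elements $e^{Q_\al}$, and the images $\mu_l(\hf), \mu_r(\hf)$ of the two moment maps. Because $\vep$ and $\Delta$ are declared to be $H$-algebra homomorphisms, each of the four composites $(\Delta\tot\id)\Delta$, $(\id\tot\Delta)\Delta$, $(\vep\tot\id)\Delta$, $(\id\tot\vep)\Delta$ is itself an $H$-algebra homomorphism, so coincidence on this generating set will propagate to the whole algebra.

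For coassociativity, the moment-map and lattice cases are immediate: \eqref{coprof} yields $(\Delta\tot\id)\Delta\mu_l(\hf)=\mu_l(\hf)\tot 1\tot 1=(\id\tot\Delta)\Delta\mu_l(\hf)$ (and the analogue for $\mu_r$), while the group-likeness $\Delta(e^{Q_\al})=e^{Q_\al}\tot e^{Q_\al}$ produces $e^{Q_\al}\tot e^{Q_\al}\tot e^{Q_\al}$ on both sides. The only case with real content is $L^+_{ij}(z)$: iterating \eqref{coproUqp} and using that the central charge is grouplike, so that the shift $c^{(2)}$ appearing inside the spectral argument is promoted to $c^{(2)}+c^{(3)}$ whenever $\Delta$ is applied to the slot carrying it, I expect to obtain
\begin{align*}
(\Delta\tot\id)\Delta L^+_{ij}(z)&=\sum_{k,l}L^+_{il}(zq^{c^{(2)}+c^{(3)}})\tot L^+_{lk}(zq^{c^{(3)}})\tot L^+_{kj}(z),\\
(\id\tot\Delta)\Delta L^+_{ij}(z)&=\sum_{k,l}L^+_{ik}(zq^{c^{(2)}+c^{(3)}})\tot L^+_{kl}(zq^{c^{(3)}})\tot L^+_{lj}(z),
\end{align*}
which coincide after relabeling the summation indices $k\leftrightarrow l$.

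For the counit axiom, on moment maps and on $e^{Q_\al}$ the identities follow immediately from \eqref{counitf}, \eqref{counitUqp} combined with the standard $H$-Hopf algebroid identifications $\cD\tot\cA\cong\cA\cong\cA\tot\cD$ coming from the bimodule structure. On $L^+_{ij}(z)$, the key observation is that $\vep(L^+_{ik,n})=\delta_{ik}\delta_{n,0}e^{-Q_{\ep_i}}$ kills every Fourier mode except $n=0$, so the central-charge shift inside $\vep(L^+_{ik}(zq^{c^{(2)}}))$ is invisible and collapses to the constant $\delta_{ik}e^{-Q_{\ep_i}}$. Then \eqref{coproUqp} yields $(\vep\tot\id)\Delta L^+_{ij}(z)=e^{-Q_{\ep_i}}\tot L^+_{ij}(z)$, which is identified with $L^+_{ij}(z)$ via the left-bimodule relation \eqref{lgr}; the right counit identity $(\id\tot\vep)\Delta=\id$ is strictly symmetric, using \eqref{rgr} in place of \eqref{lgr}.

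The main technical delicacy I anticipate is really bookkeeping of the spectral argument: one must establish that the central-charge factor $q^{c^{(k)}}$ behaves grouplike under $\Delta$, so that the shifted arguments $zq^{c^{(2)}+c^{(3)}}$ and $zq^{c^{(3)}}$ are unambiguously defined elements of $\cA\tot\cA\tot\cA$, and one must check that $\vep$ and $\Delta$ respect the bimodule relations \eqref{lgr}, \eqref{rgr} so that the identifications $\cD\tot\cA\cong\cA$ used above correctly return the original generator. Once this machinery is imported from \cite{KonnoASPM}, the rest of the verification is a routine manipulation of indices.
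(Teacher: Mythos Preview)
The paper does not actually provide a proof of this proposition: it is stated and then the text moves directly to the antipode, with the full $H$-Hopf algebroid structure attributed to \cite{KonnoASPM}. So there is nothing to compare against in this source; what I can say is that your proposal is the standard verification and is correct.

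Your identification of the one nontrivial point is accurate. The check on $\mu_l,\mu_r$ and on $e^{Q_\al}$ is formal, and the $L$-operator check reduces to the matrix-coproduct identity
\[
\sum_{k,l}L^+_{il}(zq^{c^{(2)}+c^{(3)}})\tot L^+_{lk}(zq^{c^{(3)}})\tot L^+_{kj}(z)
=\sum_{k,l}L^+_{ik}(zq^{c^{(2)}+c^{(3)}})\tot L^+_{kl}(zq^{c^{(3)}})\tot L^+_{lj}(z),
\]
which is trivially true. The only genuine content is the bookkeeping you flag: that when $\id\tot\Delta$ hits the second factor, the shift $q^{c^{(2)}}$ hidden in the spectral argument of the \emph{first} factor must become $q^{c^{(2)}+c^{(3)}}$. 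This follows once one unpacks $L^+_{ik}(zq^{c^{(2)}})\tot L^+_{kj}(z)$ as $\sum_n L_{ik,n}z^{-n}\tot q^{-nc}L^+_{kj}(z)$ and uses that $q^{\pm c/2}$ is grouplike under $\Delta$; you have stated this correctly. For the counit, your observation that $\vep(L_{ij,n})=\delta_{i,j}\delta_{n,0}e^{-Q_{\ep_i}}$ annihilates all nonzero Fourier modes --- so the spectral shift is invisible --- is exactly right, and the resulting $e^{-Q_{\ep_i}}\tot L^+_{ij}(z)$ (resp.\ $L^+_{ij}(z)\tot e^{-Q_{\ep_j}}$) is identified with $L^+_{ij}(z)$ under the canonical isomorphism $\cD\tot\cA\cong\cA\cong\cA\tot\cD$ built into the $H$-algebroid formalism, compatibly with \eqref{lgr}--\eqref{rgr}.
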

We also define  an algebra antihomomorphism (the antipode) $S : \cA\to \cA$ by
\bea
&&S(L^+_{ij}(z))=(L^+(z)^{-1})_{ij},\lb{SonL}\\
&&S(e^Q)=e^{-Q},\quad S(\mu_r(\hf))=\mu_l(\hf),\quad S(\mu_l(\hf))=\mu_r(\hf).
\ena
One then finds that the $H$-algebra $\cA$ equipped with $(\Delta,\vep,S)$ is a $H$-Hopf algebroid\cite{KonnoASPM}.

\section{Finite-Dimensional Representations}
We say that a representation of $U_{q,p}(\glnh)$ (or $E_{q,p}(\glnh)$) is level 0 if the central elements  $q^{\pm c/2}$ act as 1 on it. Here and in the following sections we consider the level-0 highest weight representations of $U_{q,p}(\glnh)$ (or $E_{q,p}(\glnh)$) and $U_{q,p}(\slnh)$.

\subsection{Level-0 highest weight representation}
\begin{prop}\lb{prop:comkls}
In a level-0 representation, the coefficients of $k^+_l(z) \ (1\leq l\leq N)$ (resp. $\psi^+_j(z)  \ (1\leq j\leq N-1)$)  generate a commutative subalgebra of $U_{q,p}(\widehat{\gl}_N)$  (resp. $U_{q,p}(\slnh)$). 
We call it the Gelfand-Tsetlin subalgebra. In particular, $A_m(z)\ (1\leq m\leq N)$ 
are commutative each other.   
\end{prop}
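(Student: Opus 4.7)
The plan is to reduce everything to the observation that at level $0$ the subalgebra generated by the $\cE^l_m$'s is commutative. Indeed, if $c$ acts as zero then $[cm]_q=0$, so the right-hand sides of both \eqref{comcElcEl} and \eqref{comcEkcEl} vanish, giving $[\cE^k_m,\cE^l_n]=0$ for all $k,l\in\{1,\ldots,N\}$ and all $m,n\in\Z_{\not=0}$. Via the isomorphism of Theorem \ref{Uq2Uqp} we can work in $(\FF\otimes_\C U^D_q(\glnh))\sharp\C[\cR_Q]$, where $k^+_l(z)$ is realized by \eqref{kpl}.

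Granting this, I would argue that any two coefficients of $k^+_l(z)$ and $k^+_{l'}(w)$ commute by expanding the exponentials in \eqref{kpl}: the $\cE$-parts commute by the previous step, the zero modes $K^+_{\ep_l}=q^{-h_{\ep_l}}e^{-Q_{\ep_l}}$ (Corollary \ref{UqbyUqp}) commute among themselves since they are built from mutually commuting Cartan and group-ring elements, and finally the zero modes commute with the $\cE^k_m$'s because the latter are $H^*$-weight zero, so the smash-product shifts induced by $e^{-Q_{\ep_l}}$ act trivially on them. Putting the pieces together, the coefficients of $k^+_l(z)$ generate a commutative subalgebra of $U_{q,p}(\glnh)$.

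For $\psi^+_j(z)\in U_{q,p}(\slnh)$, the cleanest route is \eqref{psijkjkjp1}, which writes $\psi^+_j$ (up to a spectral shift) as $\kappa\, k^+_j(z)\,k^+_{j+1}(z)^{-1}$; commutativity of the coefficients then follows from the previous step by inverting $k^+_{j+1}(z)$ inside the commutative subalgebra. Equivalently, \eqref{alcEmcE} expresses $\al_{j,n}$ as a linear combination of $\cE^j_n$ and $\cE^{j+1}_n$, so the elliptic boson relation \eqref{ellboson} degenerates at $c=0$ and the exponential form \eqref{def:psijp} of $\psi^+_j(z)$ gives the same conclusion.

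Finally, the commutativity of the $A_m(z)$'s is immediate from \eqref{AmK}, which writes $A_m(z)$ as an ordered product of $K^+_\ell$'s at shifted spectral parameters; since the coefficients of the $K^+_\ell$'s already lie in the commutative subalgebra identified above, so do those of the $A_m(z)$'s. The only mildly technical point is the bookkeeping of smash-product shifts between the factors $e^{-Q_{\ep_l}}$ and the Heisenberg generators $\cE^k_m$, which I expect to be the main—but entirely routine—obstacle; every other step is a formal manipulation inside an abelian subalgebra.
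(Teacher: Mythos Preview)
Your proposal is correct and follows essentially the same route as the paper: the paper's proof cites \eqref{kpl} together with \eqref{comcElcEl}--\eqref{comcEkcEl} for the $k^+_l(z)$ case, and \eqref{ellboson} with \eqref{def:psijp} for the $\psi^+_j(z)$ case, exactly as you do. Your additional remarks about the zero modes $K^+_{\ep_l}$ and the smash-product bookkeeping simply make explicit what the paper leaves tacit; the only item the paper mentions that you do not is an alternative derivation of the commutativity of the $K^+_l(z)$'s directly from the $RLL$-relation \eqref{RLL} for $E_{q,p}(\glnh)$ (using $p^*=p$ at level~$0$), but this is offered as a parallel check rather than the main argument.
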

\begin{proof}\ For $U_{q,p}(\widehat{\gl}_N)$, the statement follows from \eqref{kpl}, \eqref{comcElcEl} and \eqref{comcEkcEl}.
For $U_{q,p}({\slnh})$, use \eqref{ellboson} and \eqref{def:psijp}. Noting $p^*=p$ at the level-0 representation, one can prove the statement, i.e. the commutativity of $K^+_l(z)$'s, directly for $E_{q,p}(\glnh)$ 
by using \eqref{RLL}. 
\end{proof}


\begin{dfn}\lb{def:hwr}
A representation $V$ of $\cU=U_{q,p}(\glnh)$ is called a level-0 highest weight representation, if 
there exists $\zeta(\not=0)\in V$ such that 
\begin{itemize}
\item[(1)] $V$ is generated by $\zeta$
\item[(2)] $q^{c/2}\cdot \zeta=\zeta$
\item[(3)] $\ds{e_{j,m}\cdot \zeta=0\qquad \forall j\in \{1,2,\cdots,N-1\},\ m\in \Z}$
\item[(4)] $\ds{k^+_l(z)\cdot \zeta=\la_l(z) \zeta,\quad  l\in \{1,2,\cdots,N\}}$, 
 where $\la_l(z)$ are formal Laurent series in $z$ of the form
 \bea
 &&\la_l(z)=\sum_{m\in \Z_{>0}}\la_{l,-m}z^m+\sum_{m\in \Z_{\geq 0}}\la_{l,m}p^mz^{-m},\qquad
 \lb{def:lal} \\
 &&\la_{l,m}=\sum_{r\geq 0}\la^{(r)}_{l,m}p^r \in \C[[p]],\qquad m\in \Z. \lb{lapowerp}
 \ena
\end{itemize}  
The vector $\zeta$ is called the highest weight vector of $V$, and the $N$-tuple of formal series $\la(z)=(\la_1(z),\cdots,\la_N(z))$ is the highest weight of $V$. 
\end{dfn}

\begin{prop}
The conditions (3) and (4) for the highest weight vector $\zeta$ are equivalent to the 
following ones, respectively. 
\begin{itemize}
\item[(3)'] $\ds{{L}^+_{lk}(z)\cdot \zeta=0, \ \ \ 1 \le k < l \le N}$
\item[(4)'] $\ds{{L}^+_{ll}(z)\cdot \zeta=\lambda_l(z)\zeta, \ \ \ 1 \le l \le N}$.
\end{itemize}
\end{prop}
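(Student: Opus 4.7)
The plan is to exploit the Gauss decomposition \eqref{def:lhat} of $L^+(z)$ together with the algebra isomorphism of Theorem \ref{isoUqpEqp}, under which $k^+_l(z)$ is identified with $K^+_l(z)$ and, via the contour integrals of Definition \ref{inthc}, the Drinfeld current $e_j(z)$ is related to the basic half current $E^+_{j+1,j}(z)$. Expanding $L^+ = F\,K\,E$ componentwise yields the key identities
\begin{align*}
L^+_{ll}(z) &= K^+_l(z) + \sum_{m>l} F^+_{lm}(z)\,K^+_m(z)\,E^+_{ml}(z),\\
L^+_{lk}(z) &= K^+_l(z)\,E^+_{lk}(z) + \sum_{m>l} F^+_{lm}(z)\,K^+_m(z)\,E^+_{mk}(z)\qquad (l>k).
\end{align*}
Since every $E^+$ factor sits on the right in each term, both implications reduce to establishing $E^+_{ik}(z)\cdot\zeta = 0$ for all $i>k$.

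For the direction (3)+(4)\,$\Rightarrow$\,(3')+(4'), Theorem \ref{isoUqpEqp} and (4) give $K^+_l(z)\cdot\zeta = \lambda_l(z)\,\zeta$, while (3) together with the integral formula of Definition \ref{inthc} forces $e^+_{j+1,j}(z)\cdot\zeta = 0$ and hence $E^+_{j+1,j}(z)\cdot\zeta = 0$ for every $1\leq j \leq N-1$. The remaining off-diagonal Gauss components $E^+_{lk}(z)$ with $l>k+1$ are determined by the basic ones and by the $K^+_j(z)$'s through Lemma \ref{recursion}, so a straightforward induction on $l-k$ (implemented via the $q$-commutator relations that express nonadjacent half currents from adjacent ones) propagates the annihilation to every $E^+_{lk}(z)$ with $l>k$. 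Substituting into the two displayed formulas then yields (3') and (4').

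For the converse, I run a downward induction on $i$ from $N$ to $1$ to prove $E^+_{ik}(z)\cdot\zeta = 0$ for every $k<i$. The base case $i=N$ uses $L^+_{Nk}(z) = K^+_N(z)\,E^+_{Nk}(z)$: combined with (3') this gives $K^+_N(z)\cdot(E^+_{Nk}(z)\zeta) = 0$, and the formal invertibility of $K^+_N(z)$ (cf.~\eqref{kpl}, in which $K^+_{\ep_N}$ is a unit and the exponential factor is manifestly invertible as a formal series in $z$) forces $E^+_{Nk}(z)\cdot\zeta = 0$. For $i<N$, the tail $\sum_{m>i} F^+_{im}(z)\,K^+_m(z)\,E^+_{mk}(z)\cdot\zeta$ vanishes by the inductive hypothesis (remembering that $E^+_{mk}(z)$ acts first on $\zeta$), so $K^+_i(z)\,E^+_{ik}(z)\cdot\zeta = L^+_{ik}(z)\cdot\zeta = 0$ by (3'), and invertibility of $K^+_i(z)$ closes the step. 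Substituting into the diagonal Gauss formula yields $K^+_l(z)\cdot\zeta = L^+_{ll}(z)\cdot\zeta = \lambda_l(z)\zeta$, hence (4), and running the annihilation $E^+_{j+1,j}(z)\cdot\zeta = 0$ back through the integral transform of Definition \ref{inthc} produces (3).

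The most delicate step is this last inversion: extracting $e_{j,m}\cdot\zeta = 0$ for every mode from the identity $e^+_{j+1,j}(z)\cdot\zeta \equiv 0$ requires expanding the theta-function kernel of Definition \ref{inthc} in powers of $z'/z$ on the annular contour and recognizing the resulting transformation on the modes of $e_j(z)$ as a bijection. Equivalently, one appeals to the inverse of the isomorphism in Theorem \ref{isoUqpEqp}, which expresses each Drinfeld mode $e_{j,m}$ as a specific combination of the modes of $L^+_{ij}(z)$ and of $K^+_l(z)^{\pm 1}$, so that vanishing of the $L^+$-modes on $\zeta$ forces vanishing of the $e_{j,m}$ on $\zeta$.
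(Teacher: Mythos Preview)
Your proof is correct and follows essentially the same approach as the paper: both rely on the Gauss decomposition formulas for $L^+_{lk}(z)$ and $L^+_{ll}(z)$, together with Lemma~\ref{recursion} and Theorem~\ref{isoUqpEqp}. The paper's proof is a one-line citation of these ingredients, whereas you have spelled out the induction for the converse direction and correctly flagged (and resolved via the inverse isomorphism) the one genuinely nontrivial step, namely recovering $e_{j,m}\cdot\zeta=0$ from $E^+_{j+1,j}(z)\cdot\zeta=0$.
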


\noindent
{\it Proof.}\ The statement follows from Lemma \ref{recursion}, Theorem \ref{isoUqpEqp} and 
the formula
\bea
&&L^+_{lk}(z)=K^+_l(z)E^+_{l,k}(z)+\sum_{m>l}^NF^+_{l,m}(z)K^+_m(z)E^+_{m,k}(z),\lb{Lplk}\\
&&L^+_{ll}(z)=K^+_l(z)+\sum_{m>l}^NF^+_{l,m}(z)K^+_m(z)E^+_{m,l}(z).\lb{Lpll}
\ena
\qed

\begin{df}
Let $\la(z)=(\la_1(z),\cdots,\la_N(z))$ be an arbitrary tuple of formal Laurent series of the form 
\eqref{def:lal}. The Verma module $M(\la(z))$ is the quotient of $U_{q,p}(\glnh)$ by the left ideal generated by $e_{j,m}$, $k_{l,m}-\la_{l,m}$ $(1\leq j\leq N-1, 1\leq l\leq N, m\in \Z)$ and $q^{\pm c/2}-1$. 
Equivalently,  $M(\la(z))$ is the quotient of $E_{q,p}(\glnh)$ by the left ideal generated by $\bL_{ji,m}$, $\bL_{ll,m}-\la_{l,m}$ $(1\leq i<j\leq N, 1\leq l\leq N, m\in \Z)$  and $q^{\pm c/2}-1$. 
\end{df}

Let us denote $\cU=U_{q,p}(\glnh)$ or $\cE=E_{q,p}(\glnh)$ by $\cA$.  
By definition, $M(\la(z))$ is a highest weight representation of  $\cA$ 
with the highest weight $\la(z)$ and the highest weight vector $1_{\la(z)}$, which is the image of the element $1$ of $\cA$.   Moreover, if $L$ is a highest weight representation of $\cA$ with the highest weight  $\la(z)$ and the highest vector $\zeta$, then the map $1_{\la(z)} \to \zeta$  defines a surjective $\cA$-module homomorphism $M(\la(z))\to L$. Hence, $L$ is isomorphic to the quotient of $M(\la(z))$ by the kernel of this homomorphism.

\begin{df}
Let $\cN^+$ (resp. $\cN^-$) and $\cH$  be the subalgebras of $\cU$ generated by $e_{j,m}$ (resp. $f_{j,m}$) $(1\leq j\leq N-1, m\in \Z)$ and by $q^{\pm  c/2}$, $k_{l,m}$ $(1\leq l\leq N, m\in \Z)$, respectively. 
\end{df}

\begin{prop}\lb{tridecomp}
We have
\be
&&\cU=\cN^-\,\cH\,\cN^+.
\en
\end{prop}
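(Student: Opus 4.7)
The plan is to prove the nontrivial inclusion $\cU \subseteq \cN^- \cH \cN^+$; the reverse inclusion is immediate from the definitions. As $\cU$ is topologically generated over $\FF[[p]]$ by $e_{j,m}$, $f_{j,m}$, $k_{l,m}$ and $q^{\pm c/2}$, it suffices to show that the subspace $\cN^- \cH \cN^+$ is closed under left multiplication by each of these generators. Closure under left multiplication by $f_{j,m}\in\cN^-$ is immediate, and by $q^{\pm c/2}$ is trivial since these are central.

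For $k_{l,m}\in \cH$, one must push a $k$-mode through a monomial in $\cN^-$. This reduces by induction on the length to the $k^+_l(z)$--$f_j(w)$ commutation from the defining relations of Appendix~\ref{defrelUqp}, which is of the scalar form
\[
k^+_l(z)\, f_j(w) = \phi_{l,j}(z/w,\Pi)\, f_j(w)\, k^+_l(z),
\]
and so yields $k\cdot f' \in \cN^-\cdot \cH$ for any $f'\in \cN^-$. For $e_{j,m}\in \cN^+$ acting on $f' h e' \in \cN^-\cH\cN^+$, the analogous $e$--$k$ scalar commutation first reduces the task to showing $e_{j,m}\cdot f'\in \cN^-\cH\cN^+$. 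We proceed by induction on the length of $f' = f_{j_1,m_1}\cdots f_{j_k,m_k}$, writing
\[
e_{j,m}\, f_{j_1,m_1}\cdots f_{j_k,m_k} = f_{j_1,m_1}\, e_{j,m}\, f_{j_2,m_2}\cdots f_{j_k,m_k} + [e_{j,m}, f_{j_1,m_1}]\, f_{j_2,m_2}\cdots f_{j_k,m_k}.
\]
The first summand is handled by the inductive hypothesis, while the bracket $[e_{j,m}, f_{j_1,m_1}]$ is a mode of the elliptic Drinfeld commutator
\[
[e_i(z), f_j(w)] \;\propto\; \delta_{i,j}\bigl(\delta(q^{-c}z/w)\,\psi_j^+(w) - \delta(q^c z/w)\,\psi_j^-(z)\bigr),
\]
whose modes all lie in $\cH$. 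Hence the second summand reduces, after one more application of the $k$--$f$ scalar commutation, to a shorter instance already covered by the previous step and is absorbed into $\cN^-\cH$.

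The main technical obstacle is to verify that these rearrangements are well-defined in the $p$-adic topology on $\FF[[p]]$, since $k^\pm_l(z)$ and the commutator above involve infinitely many nonzero modes in $\cH$ and one must check that the resulting normal form converges mode-by-mode. An alternative that sidesteps the bookkeeping is to invoke Theorem~\ref{Uq2Uqp}, which identifies $\cU$ with $(\FF\otimes_\C U_q^D(\glnh))\sharp \C[\cR_Q]$: one transports the standard Drinfeld triangular decomposition of $U_q^D(\glnh)$ through the isomorphism $\pi_p$, noting that $\pi_p$ sends the Drinfeld $\pm$-generators into $\cN^\pm$ and the Cartan-plus-group-ring part into $\cH$ by Corollary~\ref{UqbyUqp}.
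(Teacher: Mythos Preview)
The paper states this proposition without proof, so there is no argument to compare against. Your sketch follows the standard route for triangular decompositions and is essentially correct in outline: closure of $\cN^-\cH\cN^+$ under left multiplication by generators, reduced via the scalar $k$--$e$, $k$--$f$ relations and the commutator $[e_i(z),f_j(w)]$ landing in $\cH$.

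A few remarks. First, in the $\glnh$ setting the commutator \eqref{eifj} produces $k_j^\pm(z)\,k_{j+1}^\pm(z)^{-1}$ rather than the $\slnh$-generators $\psi_j^\pm$; you should check (or note) that the inverse $k_l^+(z)^{-1}$ lies in the topological subalgebra $\cH$, which follows from the exponential realization \eqref{kpl}. Second, your alternative via Theorem~\ref{Uq2Uqp} is not quite as clean as you suggest: the map $\pi_p$ sends $X^+_j(z)e^{-Q_j}$ to $u_j^+(z,p)\,e_j(z)$, with the dressing factor $u_j^+$ built from the $\cE^l_{-m}$ and hence lying in $\cH$, not in $\cN^+$. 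So $\pi_p$ carries the Drinfeld triangular pieces into $\cN^-\cH$, $\cH$, and $\cH\cN^+$ respectively; one still needs the scalar $k$--$f$ and $k$--$e$ commutations to reassemble these into $\cN^-\cH\cN^+$, which is exactly what your first argument does. In other words, the ``alternative'' does not really bypass the bookkeeping you flagged. The direct argument, with attention to $p$-adic convergence of the mode expansions (which is routine since all structure constants lie in $\FF[[p]]$ and the reorderings are finite at each fixed $p$-degree), is the cleaner path.
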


Thanks to the embeddings \eqref{embeddings}, we regard $M(\la(z))$ as a $U_q(\gl_N)$-module. In particular, under the identification $\cK_{\ep_l}=q^{-h_{\ep_l}}$ \eqref{cKlkl0}, 
we set  for $\nu\in \bgH^* $,  
\bea
&& M(\la(z))_\nu=\{\xi\in M(\la(z)) \ |\ q^{-h_{\ep_l}}
\cdot \xi=q^{-(\nu,\ep_l)}\xi\quad (1\leq l\leq N)\}. 
\ena
For $\la(z)=(\la_1(z),\cdots,\la_N(z))$ with \eqref{def:lal}, let us define $\la\in \bgH^*$ by  
\bea
&&\la^{(0)}_{l,0}=q^{-\la_l},\qquad 1\leq l\leq N. \lb{def:la}
\ena
Here we set $\la_l=(\la,\ep_l)$. 
Then we have 
\be
&&M(\la(z))_{\la}=\C 1_{\la(z)}.
\en
We call $\nu$ a weight of the $U_q(\gln)$-module $M(\la(z))$ if $(M(\la(z)))_\nu\not=\{0\}$ and  set 
\be
&&{\rm Wt}(M(\la(z)))=\{\mu\in \bgH^* \ |\ M(\la(z))_\mu\not=\{0\} \}.
\en
We then have the weight decomposition as a $U_q(\gln)$-module. 
 \bea
&& M(\la(z))=\bigoplus_{\nu\in \bgH^*}M(\la(z))_\nu.
\ena

\begin{df}
The irreducible highest weight representation $L(\la(z))$ of $\cA$  with the highest weight $\la(z)$ is defined as the quotient  of $M(\la(z))$ by the maximal proper submodule.
\end{df}

The following classical result is fundamental through this paper.
\begin{thm}
For $\la\in \bgH^*$, let $L(\la)$  be the irreducible highest weight representation of $U_q(\gln)$ 
with $\zeta(\not=0)\in L(\la)$ such that 
\be
&&L(\la)=U_q(\gln)\cdot \zeta,\\
&&\cX^+_j\cdot \zeta =0, \qquad (1\leq j\leq N-1),\\
&&\cK_{\ep_l}\cdot \zeta=q^{-\la_l}\zeta,\qquad  (1\leq l\leq N).
\en
The representation $L(\la)$ is finite-dimensional if and only if $\la_j-\la_{j+1}
\in \Z_{\geq 0}$ $(1\leq j\leq N-1)$. 
\end{thm}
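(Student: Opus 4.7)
The plan is to reduce the theorem to the rank-one case by introducing, for each simple root $\al_j$ ($1\leq j\leq N-1$), the subalgebra $U_q(\slt)_j\subset U_q(\gln)$ generated by $\cX^+_j,\ \cX^-_j$ and $\cK_j:=\cK_{\ep_j}\cK_{\ep_{j+1}}^{-1}$, and then invoking the classical $U_q(\slt)$ classification. The defining relations of $U_q(\gln)$ collected in the excerpt yield the standard $U_q(\slt)$ presentation on these three generators, and the hypothesis on $\zeta$ gives $\cK_j\cdot\zeta=q^{-(\la_j-\la_{j+1})}\zeta$.

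\emph{Necessity.} Assume $L(\la)$ is finite-dimensional. Since $\cX^+_j\zeta=0$ and $\cK_j$ acts on $\zeta$ by $q^{-(\la_j-\la_{j+1})}$, the cyclic submodule $U_q(\slt)_j\cdot\zeta\subset L(\la)$ is a finite-dimensional highest weight $U_q(\slt)$-module of weight $\la_j-\la_{j+1}$. For generic $q$, the classical $U_q(\slt)$-theory then forces $\la_j-\la_{j+1}\in\Z_{\geq 0}$.

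\emph{Sufficiency.} Set $m_j:=\la_j-\la_{j+1}\in\Z_{\geq 0}$. I would proceed in three steps. First, in the Verma module $M(\la)$ define $\eta_j:=(\cX^-_j)^{m_j+1}\zeta$. The identity $\cX^+_j\eta_j=0$ is the standard $U_q(\slt)_j$-calculation using $[\cX^+_j,(\cX^-_j)^{m_j+1}]$ and $\cK_j\zeta=q^{-m_j}\zeta$; for $i\neq j$ the relation $[\cX^+_i,\cX^-_j]=0$ immediately reduces $\cX^+_i\eta_j$ to $(\cX^-_j)^{m_j+1}\cX^+_i\zeta=0$. Hence $\eta_j$ is a highest weight vector of weight $s_j\la$ generating a proper submodule, and so $\eta_j\equiv 0$ in the irreducible quotient $L(\la)$. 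Second, this vanishing together with the quantum Serre relations implies that each $\cX^-_j$ acts locally nilpotently on $L(\la)$, and by $U_q(\slt)_j$-theory the same holds for $\cX^+_j$; thus $L(\la)$ is an integrable $U_q(\gln)$-module. Third, integrability upgrades to $\gS_N$-invariance of $\mathrm{Wt}(L(\la))$ via Lusztig's quantum braid-group operators, and combined with the standard inclusion $\mathrm{Wt}(L(\la))\subset\la-\bigoplus_{i}\Z_{\geq 0}\al_i$ this confines the weight support to the finite set $\gS_N\cdot\la\cap(\la-\cQ)$. Since each weight space is finite-dimensional by the triangular decomposition, one concludes $\dim L(\la)<\infty$.

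The main obstacle is the third step: converting pointwise local nilpotency of $\cX^\pm_j$ into Weyl-group symmetry of the weight support. This requires the quantum analogue of Weyl's integrability theorem, namely Lusztig's braid-group action on integrable modules together with the fact that these operators preserve weight-space dimensions. Steps one and two, as well as the necessity direction, are direct applications of the Chevalley-type relations of $U_q(\gln)$ and the rank-one classification.
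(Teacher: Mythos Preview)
The paper does not supply a proof of this theorem; it is quoted as a classical result (``The following classical result is fundamental through this paper''), with implicit reference to standard sources such as \cite{CPBook}. Your sketch is therefore not being compared against any argument in the paper itself.

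Your outline follows the standard textbook route and is essentially correct. One imprecision worth flagging: in the third step you assert that the weight support is confined to the set $\gS_N\cdot\la\cap(\la-\cQ)$, but this is false in general---for instance, the zero weight of the adjoint representation of $U_q(\gsl_3)$ is not in the Weyl orbit of the highest root. The correct conclusion from $\gS_N$-invariance of $\mathrm{Wt}(L(\la))$ together with $\mathrm{Wt}(L(\la))\subset\la-\sum_i\Z_{\geq 0}\al_i$ is that every weight, after applying a suitable $w\in\gS_N$ bringing it into the dominant chamber, lands in the finite set of dominant $\mu$ with $\la-\mu\in\sum_i\Z_{\geq 0}\al_i$; hence $\mathrm{Wt}(L(\la))$ is a finite union of finite Weyl orbits. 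With that adjustment the argument is complete.
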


Noting the embeddings in \eqref{embeddings}, one can prove the following statement in the same way as 
the Yangian case \cite{Molev} and the quantum affine algebra case \cite{CPBook}.
\begin{thm}
Every level-0 finite-dimensional irreducible representation $L$ of $\cA$ is a highest weight representation.  Moreover, $L$ contains a unique highest vector up to a constant factor.
\end{thm}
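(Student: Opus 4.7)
My plan is to mirror the classical arguments for the Yangian $Y(\gl_N)$ of \cite{Molev} and the quantum affine algebra $U_q(\glnh)$ of \cite{CPBook}, leveraging three structural results already in place: the embedding chain $U_q(\gl_N)\hookrightarrow\cA$ from \eqref{embeddings}, the triangular decomposition $\cA=\cN^-\cH\cN^+$ from Proposition \ref{tridecomp}, and the commutativity of the Gelfand-Tsetlin subalgebra on level-$0$ modules from Proposition \ref{prop:comkls}.

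First I would view the finite-dimensional $\cA$-module $L$ as a $U_q(\gl_N)$-module via \eqref{embeddings} and decompose it into weight spaces $L=\bigoplus_{\nu}L_\nu$, where $\cK_{\ep_l}$ acts on $L_\nu$ as $q^{-(\nu,\ep_l)}$. The bigrading \eqref{bigradingUqp}, combined with Corollary \ref{UqbyUqp}, shows that $e_{j,m}$ raises the $U_q(\gl_N)$-weight by $\al_j$, $f_{j,m}$ lowers it by $\al_j$, and each mode of $k^+_l(z)$ preserves weights. Choosing $\mu$ maximal under the partial order $\nu'\le \nu \Leftrightarrow \nu-\nu'\in \mathcal{Q}^+$, the top weight space $L_\mu$ is therefore annihilated by every $e_{j,m}$ and preserved by the Gelfand-Tsetlin subalgebra.

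Second, Proposition \ref{prop:comkls} guarantees that the family $\{k_{l,m}\}$ acts on the finite-dimensional $L_\mu$ by pairwise commuting operators, and (after enlarging the scalar field if needed) they admit a non-zero simultaneous eigenvector $\zeta$. The Laurent mode structure specified in Definition \ref{defUqpgl} ensures that the corresponding eigenvalues assemble automatically into series $\la_l(z)$ of the required shape \eqref{def:lal}--\eqref{lapowerp}. Hence $\zeta$ satisfies conditions (1)--(4) of Definition \ref{def:hwr}, and irreducibility of $L$ forces $L=\cA\cdot\zeta$, so $L$ is a level-$0$ highest weight representation.

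For uniqueness, suppose $\zeta'\in L$ is another highest weight vector of weight $\mu'$. From Proposition \ref{tridecomp} together with $\cN^+\cdot\zeta\subset\FF\zeta$ and $\cH\cdot\zeta\subset\FF\zeta$, we obtain $L=\cA\cdot\zeta=\cN^-\cdot\zeta$, so every $\gl_N$-weight of $L$ lies in $\mu-\mathcal{Q}^+$. The same reasoning applied to $\zeta'$ yields $\mu'=\mu$; since $\cN^-$ strictly lowers the $\gl_N$-weight, this forces $L_\mu=\FF\zeta$, and therefore $\zeta'\in\FF\zeta$. The main obstacle I foresee is the simultaneous diagonalization step: the base ring is $\FF[[p]]$ rather than $\C$, so producing a common eigenvector for the infinite commuting family $\{k_{l,m}\}$ on $L_\mu$ requires genuine care with the $p$-adic topology of Definition \ref{defUqpgl} and a possible enlargement of the field of scalars to guarantee existence of eigenvalues; once this is handled, the remaining arguments follow the well-established Yangian and affine templates.
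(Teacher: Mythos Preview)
Your proposal is correct and follows essentially the same route as the paper's proof: both arguments rest on Proposition \ref{tridecomp}, the commutativity of the $k^+_l(z)$ from Proposition \ref{prop:comkls}, and the weight-shifting behaviour of $e_{j,m}$ and $f_{j,m}$ recorded in \eqref{Keplfj}. The only cosmetic difference is that the paper first introduces the full annihilator $L^0=\{\xi\in L\mid e_{j,m}\cdot\xi=0\}$ and then picks a simultaneous eigenvector of the $k^+_l(z)$ inside it, whereas you first single out a maximal $U_q(\gl_N)$-weight space $L_\mu\subset L^0$ and diagonalize there; the remaining span-by-$\cN^-$ and uniqueness arguments are identical. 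Your explicit caution about the simultaneous-eigenvector step over $\FF[[p]]$ is well placed---the paper simply asserts simultaneous diagonalizability from commutativity without comment.
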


\noindent
{\it Proof.}\ 
Let us consider the following subspace of $L$. 
\be
&&L^0=\{ \xi\in L\ |\ e_{j,m}\cdot \xi=0 \quad (1\leq j\leq N-1, \ m\in \Z)\}
\en
Then the finiteness of $\dim L$ yields $L^0\not=\{0\}$. 
From Proposition \ref{prop:comkls}, all $k^+_l(z)$ are simultaneously diagonalizable. 
In addition, from Proposition 5.5 in \cite{FKO}, one has
\bea
&&K^+_{\ep_l}e_j(z)=q^{-\delta_{l,j}+\delta_{l,j+1}}e_j(z)K^+_{\ep_l},\qquad 
K^+_{\ep_l}f_j(z)=q^{\delta_{l,j}-\delta_{l,j+1}}f_j(z)K^+_{\ep_l},\lb{Keplfj}\\
&&[\cE^{l}_m, e_j(z)]=\frac{q^{-cm}(q^m-q^{-m})z^m}{m}\frac{1-p^m}{1-p^{*m}}e_j(z)(q^{ m}\delta_{l,j}-\delta_{l,j+1}),\\
&&[\cE^{l}_m, f_j(z)]=-\frac{(q^m-q^{-m})z^m}{m}f_j(z)(q^{m}\delta_{l,j}-\delta_{l,j+1})
\ena
for $1\leq l\leq N, \ 1\leq j\leq N-1$. Hence the action of $k^+_l(z)$ preserves $L_0$.    
Thanks to Proposition \ref{tridecomp}, any simultaneous eigenvector $\zeta\in L^0$ for
these operators will satisfy the conditions of Definition \ref{def:hwr}. 
 
Finally, by Proposition \ref{tridecomp} the vector space $L$ is spanned by the vectors of the form 
\be
&&f_{j_1,m_1}f_{j_2,m_2}\cdots f_{j_r,m_r}\cdot \zeta,\qquad r\in \Z_{\geq 0},\ 
1\leq j_1,\cdots, j_r \leq N-1,\ m_1,\cdots,m_r\in \Z. 
\en
 Then \eqref{Keplfj} implies that the weight space
$L^0$ is one dimensional and spanned by the vector $\zeta$. Moreover, if $\nu$ is a weight of $L$ and $\nu\not=\la$, then $\nu$ strictly precedes $\la$. This proves that the highest vector $\zeta$ of $L$ is determined uniquely, up to a constant factor.
\qed

\begin{df}[\cite{Rosengren}]
We say that $f$ is a theta function of order $n$ and norm $t$ if there exist constants 
$a_1,\cdots, a_n$ and $C$ with $t=a_1\cdots a_n$ such that 
\be
&&f(z)=C\Theta_p(z/a_1)\cdots \Theta_p(z/a_n).
\en
\end{df}
Note that the above condition is equivalent to that $f$ is an entire function satisfying
\bea
&&f(e^{2\pi i}z)=f(z),
\qquad f(pz)=(-)^nt\,
z^{-n}f(z).
\ena

\begin{df}
For $\la\in \bgH^*$, we set $\la_l=(\la,\ep_l)\ (1\leq l\leq N)$ and
\be
&&P^+=\{ \la\in \bgH^*\ |\ \la_j-\la_{j+1}\in \Z_{\geq 0}\quad (1\leq j\leq N-1)\ \}.
\en
For $\la\in P^+$, we set
\be
&&\cP^\la=\{ (P_1(z),\cdots,P_{N-1}(z))\ |\ P_{j}(z)\ \mbox{is a theta function of order $\la_j-\la_{j+1}$}\quad (1\leq j\leq N-1)  \}
\en
and $\cP=\cup_{\la\in P^+}\cP^\la$. 
\end{df}

\begin{thm}\lb{DrinfeldTheta}
Let  $L(\la(z))$ be the level-0 irreducible highest weight representation of $\cA$ with the highest weight 
$\la(z)$ and the highest weight vector $\zeta$. 
 $L(\la(z))$ is finite-dimensional if and only if there exists the $N-1$-tuple of theta functions $(P_1(z),\cdots,P_{N-1}(z))\in \cP^\la$ such that 
\bea
&&\frac{\la_{j}(q^{-j}z)}{\la_{j+1}(q^{-j}z)}=q^{-(\la_j-\la_{j+1})}\frac{P_j(q^2z)}{P_j(z)}.\lb{lajlajp1}
\ena
\end{thm}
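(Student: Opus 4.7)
The plan is to reduce the classification to the known $U_{q,p}(\widehat{\gsl}_2)$-case established in \cite{KonnoJGP} via the natural $\widehat{\gsl}_2$-subalgebras associated to each simple root, and to handle the converse direction by constructing the required representations as irreducible subquotients of tensor products of evaluation modules pulled back along the chain \eqref{evaluationhom}.

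For the necessity direction, fix $j\in\{1,\ldots,N-1\}$ and let $\cU_j\subset U_{q,p}(\glnh)$ be the subalgebra generated by the coefficients of $e_j(z)$, $f_j(z)$ together with those of $\psi^\pm_j(z)$ defined in \eqref{psijkjkjp1}. By inspection of the defining relations in Appendix~\ref{defrelUqp} together with \eqref{alcEmcE} and \eqref{ellboson}, $\cU_j$ is a homomorphic image of $U_{q,p}(\widehat{\gsl}_2)$. At level~$0$, equation~\eqref{psijkjkjp1} gives $\psi^+_j(q^j z)=\kappa\, k^+_j(z) k^+_{j+1}(z)^{-1}$, so the eigenvalue of $\psi^+_j(z)$ on the highest weight vector $\zeta$ equals $\kappa\, \la_j(q^{-j}z)/\la_{j+1}(q^{-j}z)$. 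The cyclic $\cU_j$-submodule $\cU_j\cdot\zeta\subset L(\la(z))$ is then a finite-dimensional level-0 highest weight representation of $U_{q,p}(\widehat{\gsl}_2)$. Applying the classification in \cite{KonnoJGP} yields a theta function $P_j(z)$ of some order $k_j\in\Z_{\geq 0}$ with $\la_j(q^{-j}z)/\la_{j+1}(q^{-j}z) = q^{-k_j} P_j(q^2 z)/P_j(z)$. Comparing the leading $p^0$-term as $z\to 0$ via \eqref{def:la} forces $k_j = \la_j-\la_{j+1}$, so $\la\in P^+$ and $(P_1,\ldots,P_{N-1})\in\cP^\la$.

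For the sufficiency direction, given $\la\in P^+$ and $(P_1(z),\ldots,P_{N-1}(z))\in\cP^\la$, I would realize a module with the prescribed highest weight by pulling back finite-dimensional irreducible $U_q(\gl_N)$-modules along the composite algebra homomorphism $\ev_a\circ\pi_p^{-1}$ of \eqref{evaluationhom}. Factor each $P_j(z)$ as a product of order-one theta functions; their zeros provide evaluation parameters $a_s\in\C^\times$. To each $a_s$ one associates a pullback of a suitable fundamental $U_q(\gl_N)$-irreducible, producing a finite-dimensional evaluation module over $U_{q,p}(\glnh)$. Tensoring a generic collection of such modules via the $H$-Hopf algebroid coproduct \eqref{coproUqp} produces a level-0 highest weight module whose $\psi^+_j$-eigenvalue on the tensor product of highest weight vectors reproduces $\kappa\, q^{-(\la_j-\la_{j+1})} P_j(q^2z)/P_j(z)$; the irreducible subquotient generated by that vector is then the required $L(\la(z))$.

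The principal obstacle will be the sufficiency direction: one must verify that the pullback through $\pi_p^{-1}$ combines the bosonic dressing factors $u^\pm_{\ep_l}(z,p)$ from Theorem~\ref{Uq2Uqp} and Corollary~\ref{UqbyUqp} consistently with the evaluation parameters, so that the $p$-dependent contributions assemble the theta functions $\Theta_p(z/a_s)$ out of the trigonometric Drinfeld data produced by $\ev_a$. Establishing the combinatorial identity that the zeros of $P_j(z)$ appear as evaluation parameters on the nose after the elliptic deformation is the step that requires the most care; uniqueness of the irreducible subquotient, and its matching with \eqref{lajlajp1}, then follow from the highest weight uniqueness already established.
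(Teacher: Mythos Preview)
Your sufficiency argument is essentially the paper's: factor each $P_j$ into order-one theta functions, pull back $U_q(\gl_N)$-irreducibles through $\ev_a\circ\pi_p^{-1}$ to obtain evaluation modules, and realize $L(\la(z))$ as an irreducible subquotient of their tensor product via Proposition~\ref{LPtotLQ} and Corollary~\ref{tensorPQ}. Your ``principal obstacle'' is misplaced, however: once the evaluation modules are constructed, the $k^+_l$-eigenvalues on each highest vector are already theta functions (see \eqref{gl2kp1l0}--\eqref{gl2kp2l0} in the rank-one case, or \eqref{musj} in general), and the tensor product simply multiplies them. No delicate assembly of $\Theta_p$'s from trigonometric data is needed here.

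Your necessity argument is a genuinely different route. The paper does \emph{not} reduce to rank-one elliptic subalgebras. Instead it uses the isomorphism of Theorem~\ref{Uq2Uqp} to regard $L(\la(z))$ directly as a $U_q(\glnh)$-module, invokes the full trigonometric classification (Molev, Chari--Pressley) to obtain ordinary Drinfeld polynomials $P^+_j(z)=\prod_k(1-z/a_{j,k})$, and then exploits the level-0 identity $\alpha_{j,m}=a_{j,m}$ (since $p^*=p$) together with the bosonized formula \eqref{def:psijp} to compute $\psi^+_j(z)\cdot\zeta$ explicitly as $q^{-r_j}\prod_k\Theta_p(q^2z/a_{j,k})/\Theta_p(z/a_{j,k})$. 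So the conversion from polynomials to theta functions happens by a direct eigenvalue computation on $\zeta$, not by quoting the $\widehat{\gsl}_2$-case.

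Your rank-one reduction is in principle sound and closer in spirit to Chari--Pressley's original proof for $U_q(\slnh)$, but you should be cautious on two points. First, the introduction explicitly calls the result of \cite{KonnoJGP} ``partial'', so you must check that it actually supplies the necessity direction you invoke; if not, you will have to reprove the $N=2$ case anyway, and the paper's method (trigonometric classification plus $\alpha_{j,m}=a_{j,m}$) is the cleanest way to do so. Second, the map $U_{q,p}(\widehat{\gsl}_2)\to\cU_j$ requires matching the dynamical parameters and the ground field $\FF$; this is routine but should be stated.
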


\noindent
{\it Proof of only if part.}\ From Theorem  \ref{Uq2Uqp} and Corollary \ref{UqbyUqp}, one can regard $L(\la(z))$ as a $U_{q}(\glnh)$-module. The highest weight vector $\zeta$ in  $L(\la(z))$ satisfies
\be
&&\pi_p(X^+_{j,m}e^{-Q_j})\cdot \zeta=0,\qquad 1\leq j\leq N-1,\ m\in \Z,\\
&&\pi_p(K^\pm_l(z)e^{-Q_{\ep_l}})\cdot \zeta =\La^\pm_l(z)\zeta,
\en 
with certain power series 
\be
&&\La^\pm_l(z)=\sum_{m\in \Z_{\geq0}}\La^\pm_{l,\mp m}z^{\pm m}. 
\en
In particular, one has
\be
&&\La^\pm_{l,0}=q^{\mp \la_l}.
\en
From Sec.3.5 in \cite{Molev}, one can show that, up to a multiplication by a formal power series in $z$ (resp. $z^{-1}$), $\La^+_l(z)$ (resp. $\La^-_l(z)$) $(1\leq l\leq N)$ are polynomials in $z$ (resp. $z^{-1}$).  Furthermore, there exists the $N-1$-tuple of polynomials $(P^+_1(z),\cdots,P^+_{N-1}(z))$  in $z$  with $\deg P^+_j(z)=\la_j-\la_{j+1}$ and $P^+_j(0)=1$,  such that 
\bea
&&\frac{\La^+_j(q^{-j}z)}{\La^+_{j+1}(q^{-j}z)}=q^{-(\la_j-\la_{j+1})}\frac{P^+_j(q^2z)}{P^+_j(z)
}
\ena
as a power series in $z$, as well as
\bea
&&\frac{\La^-_j(q^{-j}z)}{\La^-_{j+1}(q^{-j}z)}=q^{-(\la_j-\la_{j+1})}\frac{P^+_j(q^2z)}{P^+_j(z)
}
\ena
as a power series in $z^{-1}$.  Namely $P^+_j(z)$'s are the Drinfeld polynomials 
which specify the finite-dimensional irreducible $U_q(\glnh)$-module $L(\la(z))$.  
Let us set $r_j=\la_j-\la_{j+1}$ and suppose that $P^+_j(z)$ is factored as
\bea
&& P^+_j(z)=\prod_{k=1}^{r_j}(1-z/a_{j,k}),\qquad a_{j,k}\in \C^\times.
\ena
In particular, as the $U_q(\slnh)$-module  one obtains from \eqref{PhijKjKjp1}
\bea
&&\Phi^+_j(z)\cdot \zeta=q^{-r_j}\frac{P^+_j(q^2z)}{P^+_j(z)}\zeta.
\ena
Then by using \eqref{def:Phipm} and comparing the coefficient in each power of $z$, one finds 
\bea
&&q^{-h_{j}}\cdot \zeta=q^{-r_j}\zeta,\\
&&a_{j,m}\cdot \zeta=\frac{[m]}{m}q^{-m}\sum_{k=1}^{r_j}a_{j,k}^{m}\zeta,\qquad m\in \Z\backslash\{0\}.\lb{aljm}
\ena
Therefore noting  $p=p^*$ on $L(\la(z))$ so that  $\al_{j,m}=a_{l,m}$, one obtains 
from \eqref{def:psijp} 
\bea
&&\psi^+_j(z)\cdot \zeta=q^{-r_j}\prod_{k=1}^{r_j}\frac{\Theta_p(q^2z/a_{j,k})}{\Theta_p(z/a_{j,k})}
\zeta. 
\ena
Hence the only if part follows from \eqref{psijkjkjp1} by taking
\bea
&&P_j(z)=\prod_{k=1}^{r_j}{\Theta_p(z/a_{j,k})}.
\ena
\qed

\begin{cor}
Every level-0 finite-dimensional irreducible representation of $U_{q,p}(\slnh)$ contains a unique, up to a constant factor, vector $\zeta\not=0$ such that
\bea
&&e_j(z)\cdot \zeta=0,\qquad  for\ 1\leq j\leq N-1.
\ena
 Moreover, this vector satisfies
 \bea
 &&\psi^+_j(z)\cdot \zeta=q^{-(\la_j-\la_{j+1})}\frac{P_j(q^2z)}{P_j(z)}\zeta,\qquad 
for 1\leq j\leq N-1,
\ena
where each $P_j(z)$ is a theta function of order $\la_j-\la_{j+1}$.
 The tuple of theta functions $(P_1(z),\cdots,P_{N-1}(z))$ determines the representation up to an isomorphism.
\end{cor}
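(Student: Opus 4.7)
The plan is to deduce this corollary directly from Theorem \ref{DrinfeldTheta} by using the definition $U_{q,p}(\slnh)=U_{q,p}(\glnh)/\langle K(z)-1\rangle$, which identifies level-0 finite-dimensional irreducible $U_{q,p}(\slnh)$-modules with level-0 finite-dimensional irreducible $U_{q,p}(\glnh)$-modules on which the central series $K(z)$ acts as the identity. Since the defining generators $e_j(z)$, $f_j(z)$, $\psi_j^\pm(z)$ of $U_{q,p}(\slnh)$ all descend from $U_{q,p}(\glnh)$, a highest weight vector in the $\slnh$ sense is the same as a highest weight vector in the $\glnh$ sense.

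First I would invoke Theorem \ref{DrinfeldTheta} on the given representation $L$, viewed as a level-0 finite-dimensional irreducible $U_{q,p}(\glnh)$-module. This provides a vector $\zeta\neq 0$, unique up to scalar, annihilated by all $e_{j,m}$, and a tuple $(P_1(z),\dots,P_{N-1}(z))\in\cP^\la$ of theta functions such that the eigenvalues $\la_l(z)$ of $k_l^+(z)$ on $\zeta$ satisfy \eqref{lajlajp1}. Next, using \eqref{psijkjkjp1}, namely
\begin{align*}
\psi_j^+(q^{-c/2}q^j z)=\kappa\, k_j^+(z)\,k_{j+1}^+(z)^{-1},
\end{align*}
and setting $c=0$ on the level-0 module, the eigenvalue of $\psi_j^+(q^j z)$ on $\zeta$ is (up to the constant $\kappa$) equal to $\la_j(z)/\la_{j+1}(z)$. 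Substituting $z\mapsto q^{-j}z$ and applying \eqref{lajlajp1} yields exactly
\begin{align*}
\psi_j^+(z)\cdot\zeta=q^{-(\la_j-\la_{j+1})}\frac{P_j(q^2 z)}{P_j(z)}\zeta,
\end{align*}
after absorbing the harmless constant into the normalization of $P_j$.

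For the last assertion, if $(P_1,\dots,P_{N-1})$ and $(P_1',\dots,P_{N-1}')$ yield isomorphic highest weight data, then the ratios $P_j(q^2z)/P_j(z)$ and $P_j'(q^2z)/P_j'(z)$ coincide; since each $P_j$ is a theta function of order $\la_j-\la_{j+1}$, this ratio determines $P_j$ up to an overall multiplicative constant which does not affect the ratio, so the tuple is unambiguous as a tuple in $\cP^\la$. Conversely, coincident tuples give the same highest weight $\la(z)$ through \eqref{lajlajp1}, hence isomorphic irreducible quotients of the corresponding Verma modules.

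The one point that requires care, and which I view as the main (mild) obstacle, is confirming that the uniqueness of the highest weight vector carries over from $U_{q,p}(\glnh)$ to $U_{q,p}(\slnh)$ without being spoiled by the quotient by $\langle K(z)-1\rangle$. This follows because the weight decomposition used in the proof of uniqueness in the $\glnh$-case respects the central action, so collapsing $K(z)$ to $1$ neither enlarges nor fragments the one-dimensional weight space $L^0$ containing $\zeta$; the same commutation arguments from \eqref{Keplfj} et seq.\ apply verbatim, giving uniqueness of $\zeta$ up to a scalar in the $\slnh$-picture.
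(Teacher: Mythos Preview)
Your approach is essentially the same as the paper's: the corollary is stated without a separate proof because the formula for $\psi^+_j(z)\cdot\zeta$ was already derived inside the proof of the ``only if'' part of Theorem~\ref{DrinfeldTheta}, and the remaining assertions follow from the $\glnh$-case together with the quotient description of $U_{q,p}(\slnh)$.

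One small correction: your handling of the constant $\kappa$ is not quite right. You cannot ``absorb'' a multiplicative constant into $P_j$, since scaling $P_j$ by a constant leaves the ratio $P_j(q^2z)/P_j(z)$ unchanged. The actual reason the constant disappears is that on any level-0 representation one has $p^*=p$, and hence from \eqref{def:kappa}
\[
\kappa=\frac{(p;p)_\infty(p^*q^2;p^*)_\infty}{(p^*;p^*)_\infty(pq^2;p)_\infty}=1.
\]
With this observation your computation via \eqref{psijkjkjp1} gives the stated formula on the nose. (The paper instead reaches the same formula by computing the action of $a_{j,m}=\alpha_{j,m}$ on $\zeta$ and then exponentiating via \eqref{def:psijp}, which sidesteps $\kappa$ altogether.)
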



\medskip

Following \cite{CPBook}, let us denote by $L(\bfP)$ the level-0 finite-dimensional irreducible representation $L(\la(z))$ of $U_{q,p}(\glnh)$ associated to $\bfP=(P_1(z),\cdots,P_{N-1}(z))\in \cP^\la$ through \eqref{lajlajp1}, and say that $\bfP$ is its highest weight.
The following series of statements describe the behavior of the $N-1$-tuple of theta functions ${\bf P}$ under tensor products. For two formal Laurent series $\la(z), \mu(z)$ satisfying \eqref{def:lal} and associated tuples of theta functions ${\bf P} =(P_1(z),\cdots,P_{N-1}(z))\in \cP^\la$, ${\bf Q} = (Q_1(z),\cdots,Q_{N-1}(z)) \in \cP^{\mu}$  with $\la, \mu\in P^+$, let 
$\bfP\otimes\bfQ \in \cP^{\la+\mu}$ be the $N-1$-tuple of theta functions  $(P_1(z)Q_1(z),\cdots,P_{N-1}(z)Q_{N-1}(z))
$. This is related to  $\la(z)\mu(z)=(\la_1(z)\mu_1(z),\cdots,\la_N(z)\mu_N(z))$ in the following way.
 \begin{prop}\lb{LPtotLQ}
Let $\zeta_{\bfP}$ and $\zeta_{\bfQ}$ be $U_{q,p}(\glnh)$-highest weight vectors in $L(\bfP)$ and $L(\bfQ)$, respectively. Then, in $L(\bfP)\tot L(\bfQ)$, we have
\bea
&&\Delta(e_{j,m})\cdot(\zeta_{\bfP}\tot \zeta_{\bfQ})=0, \qquad \\
&&\Delta(k^+_{l}(z))\cdot (\zeta_{\bfP}\tot \zeta_{\bfQ}) = \la_l(z)\mu_l(z)(\zeta_{\bfP}\tot \zeta_{\bfQ}),
\ena
for all $1\leq j\leq N-1$, $m \in\Z$, $1\leq l\leq N$. The formal Laurent series $\la_l(z)\mu_l(z)$ satisfies
\bea
&&\la^{(0)}_{l,0}\mu^{(0)}_{l,0}=q^{-\la_l-\mu_l},\\
&&\frac{\la_j(q^{-j}z)\mu_j(q^{-j}z)}{\la_{j+1}(q^{-j}z)\mu_{j+1}(q^{-j}z)}=\frac{P_j(q^2z)Q_j(q^2z)}{P_j(z)Q_j(z)},\qquad 
1\leq j\leq N-1.
\ena 
\end{prop}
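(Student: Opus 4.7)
The plan is to verify the two identities on the $E_{q,p}(\glnh)$ side first, then transport back to $U_{q,p}(\glnh)$ via the isomorphism of Theorem \ref{isoUqpEqp}, and finally read off the stated properties of $\la_l(z)\mu_l(z)$ from Theorem \ref{DrinfeldTheta}. Because we are at level 0, $q^{c^{(2)}}$ acts as $1$, so the coproduct \eqref{coproUqp} simplifies to
\begin{align*}
\Delta(L^+_{ij}(z))=\sum_{k=1}^N L^+_{ik}(z)\tot L^+_{kj}(z).
\end{align*}
By the equivalence (3)$\Leftrightarrow$(3)' and (4)$\Leftrightarrow$(4)' established just after Definition \ref{def:hwr}, the vectors $\zeta_{\bfP}$ and $\zeta_{\bfQ}$ satisfy $L^+_{lk}(z)\cdot\zeta_{\bfP}=0=L^+_{lk}(z)\cdot\zeta_{\bfQ}$ for $k<l$, and $L^+_{ll}(z)\cdot\zeta_{\bfP}=\la_l(z)\zeta_{\bfP}$, $L^+_{ll}(z)\cdot\zeta_{\bfQ}=\mu_l(z)\zeta_{\bfQ}$.

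Next I would compute, for $k<l$,
\begin{align*}
\Delta(L^+_{lk}(z))\cdot(\zeta_{\bfP}\tot\zeta_{\bfQ})
=\sum_{m=1}^N L^+_{lm}(z)\zeta_{\bfP}\tot L^+_{mk}(z)\zeta_{\bfQ},
\end{align*}
and split the sum according to the three cases $m<l$, $m=l$, $m>l$. For $m<l$ the first tensor factor vanishes by (3)' on $\zeta_{\bfP}$; for $m>l$ one has $k<l<m$, so the second tensor factor vanishes by (3)' on $\zeta_{\bfQ}$; and for $m=l$ the first factor is $\la_l(z)\zeta_{\bfP}$ while the second factor is $L^+_{lk}(z)\zeta_{\bfQ}=0$ by (3)'. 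Hence $\Delta(L^+_{lk}(z))(\zeta_{\bfP}\tot\zeta_{\bfQ})=0$ for $k<l$. The same three-case analysis applied to $\Delta(L^+_{ll}(z))$ leaves only the $m=l$ contribution, giving $\la_l(z)\mu_l(z)(\zeta_{\bfP}\tot\zeta_{\bfQ})$.

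To return to the Drinfeld generators, I would invoke Lemma \ref{recursion}, the Gauss decomposition of $L^+$, and the identifications $K^+_l(z)\mapsto k^+_l(z)$, $E^+_{l+1,l}(z)\mapsto e^+_{l+1,l}(z)$ of Theorem \ref{isoUqpEqp}. The relation \eqref{Lplk} together with (3)' for $\zeta_{\bfP}\tot\zeta_{\bfQ}$ recursively forces $E^+_{l,k}(z)\cdot(\zeta_{\bfP}\tot\zeta_{\bfQ})=0$ for all $k<l$, hence $e^+_{j+1,j}(z)(\zeta_{\bfP}\tot\zeta_{\bfQ})=0$, and then via the integral inversion of Definition \ref{inthc} that $e_j(z)(\zeta_{\bfP}\tot\zeta_{\bfQ})=0$, yielding $\Delta(e_{j,m})(\zeta_{\bfP}\tot\zeta_{\bfQ})=0$ for all $m\in\Z$. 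Similarly, \eqref{Lpll} combined with the already-proved (3)' gives $K^+_l(z)\cdot(\zeta_{\bfP}\tot\zeta_{\bfQ})=\la_l(z)\mu_l(z)(\zeta_{\bfP}\tot\zeta_{\bfQ})$, and hence $\Delta(k^+_l(z))$ acts by the eigenvalue $\la_l(z)\mu_l(z)$.

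The remaining assertions are then formal. The identity $\la^{(0)}_{l,0}\mu^{(0)}_{l,0}=q^{-\la_l-\mu_l}$ is the product of the constant-term relations \eqref{def:la} for $\zeta_{\bfP}$ and $\zeta_{\bfQ}$, consistent with the fact that $\zeta_{\bfP}\tot\zeta_{\bfQ}$ has $U_q(\gl_N)$-weight $\la+\mu$. The theta-function ratio identity is obtained by applying \eqref{lajlajp1} from Theorem \ref{DrinfeldTheta} separately to $\la(z)$ with Drinfeld data $P_j(z)$ and to $\mu(z)$ with Drinfeld data $Q_j(z)$, then multiplying. The only real subtlety I anticipate is the bookkeeping in step three, namely justifying that the transfer from the $L$-operator conditions back to the vanishing of $e_j(z)$ on the tensor-product vector is valid in the $p$-adic topological setting; this is handled by the same recursive argument that underlies the equivalence (3)$\Leftrightarrow$(3)' on a single factor, applied to the vector $\zeta_{\bfP}\tot\zeta_{\bfQ}$.
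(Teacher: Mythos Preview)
Your proposal is correct and follows essentially the same approach as the paper: both arguments use the coproduct formula \eqref{coproUqp} for $L^+_{ij}(z)$, split the intermediate sum according to whether the index is below, equal to, or above the diagonal, and kill the unwanted terms using $(3)'$ and $(4)'$ on each tensor factor. The paper's proof is terser---it simply writes out $\Delta(L^+_{j,j-1}(z))$ and $\Delta(L^+_{ll}(z))$ and declares the statement follows---whereas you spell out the three-case analysis and the passage back to the Drinfeld generators via the equivalence $(3)\Leftrightarrow(3)'$, $(4)\Leftrightarrow(4)'$ explicitly; but the underlying mechanism is identical.
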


\noindent
\begin{proof}\ The statement follows from the following comultiplication formulas obtained from \eqref{coproUqp}, \eqref{Lplk} and \eqref{Lpll}.
\bea
&&\Delta(L^+_{j,j-1}(z))=\sum_{k<j}^NL^+_{jk}(z)\tot L^+_{kj-1}(z)+\sum_{k\geq j}^NL^+_{jk}(z)\tot L^+_{kj-1}(z),\\
&&\Delta(L^+_{ll}(z))=L^+_{jj}(z)\tot L^+_{jj}(z)+\sum_{k<j}^NL^+_{jk}(z)\tot L^+_{kj}(z)
+\sum_{k>j}^NL^+_{jk}(z)\tot L^+_{kj}(z).
\ena
\end{proof}

\begin{cor}\lb{tensorPQ}
Let $\bfP\in \cP^\la$, $\bfQ\in \cP^\mu$ . Then, $L(\bfP\otimes \bfQ)$ is isomorphic, as a representation of $U_{q,p}(\glnh)$, to a quotient of the subrepresentation of $L(\bfP)\tot L(\bfQ)$ generated by the tensor product of the highest weight vectors in $L(\bfP)$ and $L(\bfQ)$.
\end{cor}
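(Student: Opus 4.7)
The plan is to reduce the corollary to the definition of $L(\bfP\otimes\bfQ)$ as the unique irreducible quotient of a Verma module, using Proposition \ref{LPtotLQ} to exhibit $\zeta_{\bfP}\tot\zeta_{\bfQ}$ as a highest weight vector with the correct theta-function data.

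First I would set $\la(z)=(\la_1(z),\dots,\la_N(z))$ and $\mu(z)=(\mu_1(z),\dots,\mu_N(z))$ for the highest weights of $L(\bfP)$ and $L(\bfQ)$, so that $\bfP,\bfQ$ are linked to $\la,\mu$ by \eqref{lajlajp1}. By Proposition \ref{LPtotLQ}, the vector $\zeta:=\zeta_{\bfP}\tot\zeta_{\bfQ}$ is annihilated by $\Delta(e_{j,m})$ and is a simultaneous eigenvector of the Gelfand-Tsetlin operators $\Delta(k^+_l(z))$ with eigenvalues $\la_l(z)\mu_l(z)$. A direct multiplication of the identities \eqref{lajlajp1} for $\bfP$ and $\bfQ$ shows that $\la(z)\mu(z)$ satisfies the Drinfeld-theta condition \eqref{lajlajp1} with respect to the tuple $\bfP\otimes\bfQ=(P_1Q_1,\dots,P_{N-1}Q_{N-1})$, and since the order of $P_jQ_j$ is $(\la_j-\la_{j+1})+(\mu_j-\mu_{j+1})=(\la+\mu)_j-(\la+\mu)_{j+1}$, we have $\bfP\otimes\bfQ\in\cP^{\la+\mu}$. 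Thus $L(\bfP\otimes\bfQ)$ is, by definition and by Theorem \ref{DrinfeldTheta}, the irreducible highest weight representation $L(\la(z)\mu(z))$.

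Next, let $W\subset L(\bfP)\tot L(\bfQ)$ be the subrepresentation generated by $\zeta$. Because $\zeta$ satisfies conditions (2)--(4) of Definition \ref{def:hwr} with weight $\la(z)\mu(z)$, the module $W$ is itself a level-0 highest weight representation of $\cU=U_{q,p}(\glnh)$ with highest weight $\la(z)\mu(z)$ and highest weight vector $\zeta$. By the universal property of the Verma module (the discussion following the definition of $M(\la(z))$), the assignment $1_{\la(z)\mu(z)}\mapsto\zeta$ extends to a surjective $\cU$-module homomorphism
\begin{equation*}
\varphi\colon M(\la(z)\mu(z))\twoheadrightarrow W,
\end{equation*}
so $W\cong M(\la(z)\mu(z))/K$ for some submodule $K=\ker\varphi$.

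Finally, $K$ is a proper submodule of $M(\la(z)\mu(z))$ since $\varphi(1_{\la(z)\mu(z)})=\zeta\ne 0$. As a Verma-type highest weight module, $M(\la(z)\mu(z))$ admits a unique maximal proper submodule $M'$, and every proper submodule is contained in $M'$; hence $K\subset M'$. Therefore the irreducible quotient
\begin{equation*}
L(\bfP\otimes\bfQ)=L(\la(z)\mu(z))=M(\la(z)\mu(z))/M'\cong W/(M'/K)
\end{equation*}
is a quotient of $W$, which proves the statement. The only step requiring care is the compatibility check that the product $\la(z)\mu(z)$ still lies in the class described by Definition \ref{def:hwr} and matches the theta-function tuple $\bfP\otimes\bfQ$ under \eqref{lajlajp1}; this is immediate from Proposition \ref{LPtotLQ} and the multiplicativity of theta-ratios, so no substantive obstacle arises beyond bookkeeping.
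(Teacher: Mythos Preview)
Your proof is correct and follows essentially the same approach as the paper: both arguments use Proposition~\ref{LPtotLQ} to identify $\zeta_{\bfP}\tot\zeta_{\bfQ}$ as a highest weight vector with weight corresponding to $\bfP\otimes\bfQ$, and then conclude that the irreducible quotient of the (finite-dimensional) highest weight submodule it generates is $L(\bfP\otimes\bfQ)$. Your version routes this through the Verma module and its universal property, making explicit the inclusion $K\subset M'$ that the paper leaves implicit, but the substance is the same.
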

\noindent
{\it Proof.}\ From Proposition \ref{LPtotLQ}, $L(\bfP)\tot L(\bfQ)$ is the finite-dimensional highest weight representation generated by $\zeta_{\bfP}\tot \zeta_{\bfQ}$ so that it has the maximal proper $U_{q,p}(\glnh)$-submodule $K(\la(z))$. The quotient module of  $L(\bfP)\tot L(\bfQ)$ by $K(\la(z))$ gives the unique finite-dimensional irreducible representation associated to $\bfP\otimes \bfQ$, which is nothing but $L(\bfP\otimes\bfQ)$.  
\qed

\begin{df}
We define a representation $L(\bfP)$ of $U_{q,p}(\glnh)$ to be fundamental if  for some $j\in   \{1,\cdots,N-1\}$, $\bfP=(1,\cdots,P_j(z),\cdots,1)$ with $P_j(z)$ being a theta function of order $1$.  
\end{df}

\begin{cor}\lb{tensorFR}
For any $\bfP \in \cP^\la$, the representation $L(\bfP)$ of $U_{q,p}(\glnh)$ is isomorphic to a subquotient of a tensor product of the fundamental representations. 
\end{cor}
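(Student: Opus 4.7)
The plan is to reduce the general case to the fundamental case by decomposing each $P_j(z)$ into its order-$1$ factors and then iterating Corollary \ref{tensorPQ}.

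First I would fix $\bfP = (P_1(z),\dots,P_{N-1}(z)) \in \cP^\la$, set $r_j = \la_j - \la_{j+1}$, and invoke the definition of a theta function of order $r_j$ to write
\begin{align*}
P_j(z) = C_j \prod_{k=1}^{r_j} \Theta_p(z/a_{j,k})
\end{align*}
for some constants $C_j$ and $a_{j,k}\in\C^\times$. Each single factor $\Theta_p(z/a_{j,k})$ (absorbing the scalars appropriately among the factors) is a theta function of order $1$, so the tuple
\begin{align*}
\bfP^{(j,k)} := (1,\ldots,1,\Theta_p(z/a_{j,k}),1,\ldots,1),
\end{align*}
with the non-trivial entry in the $j$-th slot, belongs to $\cP^{\overline{\Lambda}_j}$, and $L(\bfP^{(j,k)})$ is a fundamental representation by the definition given just above.

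Next, using the operation $\bfP \otimes \bfQ$ introduced before Proposition \ref{LPtotLQ}, I would observe that
\begin{align*}
\bfP = \bigotimes_{j=1}^{N-1}\bigotimes_{k=1}^{r_j} \bfP^{(j,k)}
\end{align*}
up to the scalar constants $C_j$, which do not affect the isomorphism class of the irreducible module attached to the tuple (the scalar can be reabsorbed into the highest weight vector, since $L(\bfP)$ depends only on the ratios in \eqref{lajlajp1}). I would then apply Corollary \ref{tensorPQ} inductively: it gives $L(\bfP^{(1)}\otimes \bfP^{(2)})$ as a subquotient of $L(\bfP^{(1)})\tot L(\bfP^{(2)})$, and iterating this over the full collection of pairs $(j,k)$ expresses $L(\bfP)$ as a subquotient of the tensor product $\bigotimes_{j,k} L(\bfP^{(j,k)})$ of fundamental representations.

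The only genuinely subtle point is checking that the overall scalar constants arising from the factorization of the $P_j(z)$ can indeed be discarded, i.e.\ that two tuples $\bfP$ and $\bfP'$ differing by a nonzero scalar in some slot produce isomorphic finite-dimensional irreducibles. This follows from Theorem \ref{DrinfeldTheta}, since the relation \eqref{lajlajp1} fixes $\la(z)$ only up to the ratio $P_j(q^2z)/P_j(z)$, which is insensitive to the overall scalar of $P_j$, combined with the freedom to rescale the highest weight vector. Once this is in place, the induction on $\sum_j r_j$ via Corollary \ref{tensorPQ} completes the argument.
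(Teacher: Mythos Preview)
Your argument is correct and is essentially the intended approach: factor each $P_j$ into order-$1$ theta functions, identify each factor with a fundamental highest weight, and iterate Corollary~\ref{tensorPQ}. The paper's own proof is a one-line statement that $\bfP=\bfP_1\otimes\cdots\otimes\bfP_{N-1}$ with $\bfP_j=(1,\dots,P_j,\dots,1)$, which is terse to the point of eliding exactly the decomposition into order-$1$ pieces that you spell out (as written, $L(\bfP_j)$ is not fundamental unless $r_j=1$); the full factorization you give appears explicitly only later, in the proof of the ``if'' part of Theorem~\ref{DrinfeldTheta}. Your handling of the scalar ambiguity via \eqref{lajlajp1} is also correct, and the inductive use of Corollary~\ref{tensorPQ} goes through since tensoring a finite-dimensional module against a fixed one preserves subquotients.
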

\noindent
{\it Proof.}\ Let $L(\bfP_j)$ be the fundamental representations with $\bfP_j=(1,\cdots,P_j,\cdots,1)$ $(1\leq j\leq N-1)$. The statement follows from $\bfP=\bfP_1\otimes \cdots\otimes \bfP_{N-1}$. \qed

\subsection{Proof of the if part of Theorem \ref{DrinfeldTheta}}

The following is immediate from the Definition of the algebras.  
\begin{prop}\lb{autobyf}
For any formal  Laurent series $f(z)$ in $z$ satisfying 
\bea
&&f(z)=-z^{-1/2}\left(1+\sum_{m>0}f_{-m}z^{m}+\sum_{m>0}f_mp^mz^{-m}\right),\lb{condf}
\ena
 one has an automorphism of $E_{q,p}(\glnh)$ by 
 \bea
 &&L^+(z)\mapsto f(z)L^+(z)
 \ena
with the other generators remain the same. This is equivalent to an 
 automorphism of  $U_{q,p}(\glnh)$ defined by
 \bea
 &&k^+_l(z)\mapsto f(z)k^+_l(z)\qquad (1\leq l\leq N)
 \ena
 with the other generators remain the same.  
\end{prop}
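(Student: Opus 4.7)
The plan is to verify that the proposed rescaling preserves all defining relations and is invertible. The central observation is that $f(z)$ is a scalar function not involving any algebra generator, so it commutes with every element and with all $R$-matrix entries.

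For $E_{q,p}(\glnh)$, I first check the $RLL$ relation \eqref{RLL}: both sides contain exactly one factor of $L^+(z_1)$ and one of $L^+(z_2)$, so substituting $L^+(z_i)\mapsto f(z_i)L^+(z_i)$ multiplies each side by the same scalar $f(z_1)f(z_2)$, and the relation collapses to the original one. The weight relations \eqref{lgr}--\eqref{rgr} are equally immediate, since $f(z)$ carries no $H$-grading and is independent of $P$ and $P+h$. The structural condition \eqref{condf} on $f$ (with its $-z^{-1/2}$ prefactor and prescribed $p$-adic expansion) is designed precisely so that $f(z)L^+(z)$ is again of the form \eqref{genL}, respecting the $p$-adic topology. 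For invertibility, the two-sided inverse is the rescaling by $f(z)^{-1}$, which exists in the appropriate completion because the leading scalar $-z^{-1/2}$ is invertible.

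For $U_{q,p}(\glnh)$, I transport the automorphism using the isomorphism of Theorem \ref{isoUqpEqp}. Under the Gauss decomposition \eqref{def:lhat}, a uniform scalar rescaling $L^+\mapsto f\cdot L^+$ must be absorbed by the diagonal factor: $K^+_l(z)\mapsto f(z)K^+_l(z)$, while the unipotent Gauss components $E^+_{l,k}(z)$ and $F^+_{k,l}(z)$ remain unchanged. This is consistent with Proposition \ref{HCABC}: writing $E^+_{m,m-1}(z)=A_m(z)^{-1}C_m(z)$ and $F^+_{m-1,m}(z)=B_m(z)A_m(z)^{-1}$, the quantum minors $A_m, B_m, C_m$ have the same size and thus acquire the same product $\prod_{i=0}^{N-m}f(zq^{-2i})$ of scalars, which cancels in these ratios. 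Under the isomorphism, the basic half currents $E^+_{j+1,j}(z), F^+_{j,j+1}(z), K^+_l(z)$ correspond to $e^+_{j+1,j}(z), f^+_{j,j+1}(z), k^+_l(z)$, and the off-diagonal ones are determined by integrals of the Drinfeld currents $e_j(z), f_j(z)$ via Definition \ref{inthc}; since those currents are unchanged, the induced automorphism on $U_{q,p}(\glnh)$ acts only by $k^+_l(z)\mapsto f(z)k^+_l(z)$.

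The only conceptual subtlety is the $-z^{-1/2}$ prefactor in $f(z)$, which must be treated as a formal central symbol in the appropriate completion. Since it is a scalar and does not interact with any generator, this causes no real obstacle, and the statement reduces, as the paper indicates, to an immediate consequence of the defining relations.
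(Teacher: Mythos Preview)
Your proof is correct and follows essentially the same route as the paper: direct verification against the defining relations. The paper's own proof is a single line, noting only that the leading coefficient $k_{l,0}^{(0)}=\bar{L}_{ll,0}$ is unchanged under the rescaling (which is the one nontrivial compatibility with the normalization $K^+_{\ep_l}=k^{(0)}_{l,0}$); everything else is left implicit. Your write-up spells out the $RLL$ check, the $H$-grading, invertibility, and the Gauss-decomposition transfer to $U_{q,p}(\glnh)$ explicitly, which is more thorough but not a different argument.
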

\begin{proof}
Note that 
\be
&&k_{l,0}^{(0)}=\bar{L}_{ll,0}=(f(z)k^+_l(z))_0^{(0)}=(f(z)L^+_{ll}(z))^{(0)}_0.
\en
\end{proof}

To show the if part, we follows the argument in \cite{Molev}, Theorem 3.4.1. 
\begin{proof}
Suppose $P_j(z)$ is a theta function of order $r_j\in \Z_{>0}$ satisfying \eqref{lajlajp1} for $1\leq j\leq N-1$. Setting $\bfP=(P_j(z))_{j\in I}$, we have $L(\la(z))=L(\bfP)$.  
Without loss of generality one can set 
\bea
&&P_j(z)=\prod_{k=1}^{r_j} \Theta_p(z/a_{j,k})\qquad 
\ena 
with some $a_{j,k}
 \in \mathbb{C}^\times$. Set $\ds{\sum_{j=1}^{N-1}r_j=n}$. 
For $j\in \{1,\cdots,N-1\}$, define 
\bea
&&P^{(\sum_{a=1}^{j-1}r_a+k)}_j(z)=\Theta_p(z/a_{j,k}) \qquad 1\leq k\leq r_j, 
\ena
 and consider the set of the  $N-1$-tuple of theta functions
\be
&&\bfP^{(s)}=(1,\cdots,1,P^{(s)}_j(z),1,\cdots,1),\qquad \sum_{a=1}^{j-1}r_a+1\leq s\leq \sum_{a=1}^jr_a, 
\en
$s=1,2,\cdots,n$. 
Let $L(\bfP^{(s)})$ be the irreducible highest weight $U_{q,p}(\glnh)$-module  with the highest weight $\bfP^{(s)}$.  There exist the formal Laurent series $\mu^{(s)}(z)=(\mu^{(s)}_1(z),\cdots,\mu^{(s)}_N(z))$ such that 
\bea
&&\frac{\mu^{(s)}_{j}(q^{-j}z)}{\mu^{(s)}_{j+1}(q^{-j}z)}=q^{-1}\frac{P^{(s)}_j(q^2z)}{P^{(s)}_j(z)},\qquad 
\mu^{(s)}_j-\mu^{(s)}_{j+1}=1,\lb{musjmusjp1}\\
&&\frac{\mu^{(s)}_{a}(q^{-a}z)}{\mu^{(s)}_{a+1}(q^{-a}z)}=1,\qquad \mu^{(s)}_a-\mu^{(s)}_{a+1}=0\quad (a\not=j),\lb{musamusap1}
\ena
and $L(\mu^{(s)}(z))=L(\bfP^{(s)})$.  Here we set $(\mu^{(s)}_l)^{(0)}_{0}=q^{-\mu^{(s)}_l}$ as before. In fact, let us define $\mu(z)=(\mu_1(z),\cdots,\mu_N(z))$ by (Proof of Theorem 3.4.1 in   \cite{Molev})
\bea
&&\mu_j(z)=q^{-\la_j}P_1(z)\cdots P_{j-1}(z)P_j(q^2z)\cdots P_{N-1}(q^2z),\lb{def:muj}
\ena
and $\mu^{(s)}_j$ and $a^{(s)}$ $(1\leq s\leq n)$ as follows. 
\bea
&&q^{-2\mu^{(\sum_{a=1}^{i-1}r_j+k)}_j}a^{(\sum_{a=1}^{i-1}r_j+k)}=\left\{\mmatrix{q^2a_{i,k}&\ (i<j)\cr
a_{i,k}&\ (i\geq j)}
\right.,\\
&&\la_j=\sum_{s=1}^{n}\mu^{(s)}_j.
\ena
Then one finds
\bea
&&\mu_j(z)=\prod_{s=1}^n\mu^{(s)}_j(z),\lb{mujprodmusj}\\
&&\mu^{(s)}_j(z)=
q^{-\mu^{(s)}_j}\Theta_p(q^{2(\mu^{(s)}_j+1)}z/a^{(s)})
,\lb{musj}
\ena
and $\mu^{(s)}_j(z)$ satisfy \eqref{musjmusjp1} and \eqref{musamusap1}. 

Thus obtained representation $L(\mu^{(s)}(z))$ is hence an evaluation representations of $U_{q,p}(\glnh)$ associated with the irreducible finite-dimensional highest weight representation $L(\mu^{(s)})$
of $U_q(\gln)$ with $\mu^{(s)}=(\mu^{(s)}_1,\cdots,\mu^{(s)}_N)$ by \eqref{evaluationhom}.  
Noting $\bfP=\bfP^{(1)}\otimes \cdots\otimes \bfP^{(n)}$, from Corollary \ref{tensorFR}, $L(\la(z))$ is isomorphic to a  subquotient of 
\be
&&L(\mu^{(1)}(z))\tot  L(\mu^{(2)}(z))\tot \cdots \tot L(\mu^{(n)}(z)).
\en
Therefore $L(\la(z))$ is finite-dimensional. 

\end{proof}

\section{Representations of $U_{q,p}(\glth)$}
For $U_{q,p}(\glth)$, one can construct all the fundamental representations explicitly. 
Hence Corollary \ref{tensorFR} allows us to classify all the level-0 finite-dimensional irreducible representations.

Let $l$ be a non-negative integer. 
Let $V_l=\oplus_{m=0}^l \C v^l_m$, $V_{l}(a)=V_l[a,a^{-1}]$ with $a\in \C^\times$, where by convention we set $v^l_m=0$ for $m<0$ or $m>l$. 
Let us define the level-0 action $\pi_{l,a}: U_{q,p}(\glth) \to \End_\FF(V_l(a))$ by
(Appendix C \cite{JKOS})
\bea
&&\pi_{l,a}(\al_{n})\cdot v^l_m=\frac{a^n}{n(q-q^{-1})}\left((q^n+q^{-n})q^{(l-2m)n}-(q^{(l+1)n}+q^{-(l+1)n})\right)v^l_m,\\
&&
\pi_{l,a}(e(z))v^l_m=
\frac{q^{-(l-m)}}{1-q^{2}}
\frac{(q^{ 2(l-m+1)};p)_\infty(pq^{-2m};p)_\infty}
{(p;p)_\infty(pq^{- 2};p)_\infty}
\delta\left(q^{l-2m+ 1}\frac{a}{z}\right)v^l_{m-1},
\lb{fd91}\\
&&
\pi_{l,a}(f(z))v^l_m=
\frac{q^{-m}}{1-q^{2}}
\frac{(q^{2(m+1)};p)_\infty(pq^{-2(l-m)};p)_\infty}
{(p;p)_\infty(pq^{2};p)_\infty}
\delta\left(q^{l-2m- 1}\frac{a}{z}\right)v^l_{m+1}.
\lb{fd9}
\ena
In particular, for $\la_1, \la_2\in \C$ satisfying $\la_1-\la_2=l$, one has
\bea
&&\hspace{-0.2cm}\pi_{l,a}(k^+_1(z))\cdot v^l_m=q^{-\la_1+m}\frac{\Theta_p(q^{l}z/a)}{\Theta_p(q^{-l+2m}z/a)}\frac{\Gamma(q^{l}z/a;q^4,p)\Gamma(q^4q^{-l-2}z/a;q^4,p)}{\Gamma(q^4q^{l-2}z/a;q^4,p)\Gamma(q^{-l}z/a;q^4,p)}v^l_m,
\lb{kp1}\\
&&\hspace{-0.8cm}\pi_{l,a}(k^+_2(z))\cdot v^l_m=q^{-\la_2-m}\frac{\Theta_p(q^{-l+2m+2}z/a)}{\Theta_p(q^{l+2}z/a)}\frac{\Gamma(q^4q^{l}z/a;q^4,p)\Gamma(q^4q^{-l-2}z/a;q^4,p)}{\Gamma(q^4q^{l-2}z/a;q^4,p)\Gamma(q^4q^{-l}z/a;q^4,p)}v^l_m.
\lb{kp2}
\ena

\begin{thm}
The $(\pi_{l.a},V_l(a))$ is the $l+1$-dimensional irreducible representation $L(P_{l,a})$ of $U_{q,p}(\glth)$ with the theta function $P_{l,a}(z)$ of order $l$ given by
\bea
&&P_{l,a}(z)=\prod_{k=1}^l\Theta_p(q^{-l+2k-1}z/a).
\ena
In particular, the fundamental representations of $U_{q,p}(\glth)$ are the two dimensional representations $V_1(a)$, for arbitrary $a \in \C^\times$ associated with 
\bea
&&P_{1,a}(z)=\Theta_p(z/a).
\ena
We call $V_l(a)$ an evaluation representation of $U_{q,p}(\glth)$.  
\end{thm}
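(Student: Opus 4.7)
The plan is to identify $v^l_0 \in V_l(a)$ as a highest weight vector, compute its weight, match it to the theta function $P_{l,a}(z)$ via Theorem \ref{DrinfeldTheta}, and finally establish irreducibility directly from the explicit action of $e(z)$ and $f(z)$. The conclusion $(\pi_{l,a},V_l(a)) \cong L(P_{l,a})$ then follows from the uniqueness of the level-$0$ finite-dimensional irreducible representation with given highest weight.

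First I would verify that $v^l_0$ is a highest weight vector. The formula \eqref{fd91} shows $\pi_{l,a}(e(z))\,v^l_0 = 0$ because the coefficient contains the factor $(q^{2(l-m+1)};p)_\infty$ with $m=0$, giving $(q^{2(l+1)};p)_\infty$, \emph{and} the action sends $v^l_0$ to $v^l_{-1}=0$. Extracting modes via the $\delta$-function then gives $e_{j,m}\cdot v^l_0 = 0$ for all $m\in\Z$, confirming condition (3) of Definition \ref{def:hwr}.

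Next, I would read off the eigenvalues $\lambda_1(z),\lambda_2(z)$ of $k^+_1(z),k^+_2(z)$ on $v^l_0$ from \eqref{kp1} and \eqref{kp2} at $m=0$, and form the ratio $\lambda_1(q^{-1}z)/\lambda_2(q^{-1}z)$. Using the functional equation
\begin{equation*}
\Gamma(q^4 w; q^4, p) = \frac{\Theta_p(w)}{(p;p)_\infty}\,\Gamma(w; q^4, p),
\end{equation*}
the four Gamma-function ratios in $\lambda_1/\lambda_2$ collapse to a ratio of theta functions, and after combining with the explicit theta-function prefactors one finds
\begin{equation*}
\frac{\lambda_1(q^{-1}z)}{\lambda_2(q^{-1}z)} = q^{-l}\,\frac{\Theta_p(q^{l+1}z/a)}{\Theta_p(q^{-l+1}z/a)}.
\end{equation*}
On the other hand, $P_{l,a}(q^2 z)/P_{l,a}(z)$ is a telescoping product of theta functions whose factors cancel except at the endpoints, yielding precisely $\Theta_p(q^{l+1}z/a)/\Theta_p(q^{-l+1}z/a)$. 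This establishes the condition \eqref{lajlajp1} with $P_j(z)=P_{l,a}(z)$, so $V_l(a)$ is a highest weight module with the claimed theta-function highest weight.

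Finally I would prove irreducibility. The action \eqref{fd9} of $f(z)$ on $v^l_m$ produces a nonzero scalar times $\delta(q^{l-2m-1}a/z)\,v^l_{m+1}$ whenever $0\le m<l$, since the prefactor $(q^{2(m+1)};p)_\infty(p q^{-2(l-m)};p)_\infty$ is nonzero in that range; extracting the coefficient of $z^{-n}$ shows $f_n\cdot v^l_m=c_{m,n}\,v^l_{m+1}$ with $c_{m,n}=$const$\cdot (q^{l-2m-1}a)^n\neq 0$. The analogous statement holds for $e_n$, which shifts $m\mapsto m-1$ for $0<m\le l$ with nonzero coefficient. Any nonzero submodule thus contains $v^l_0$ via iterated application of $e_n$'s and then, via iterated application of $f_n$'s, all basis vectors $v^l_m$. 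Hence $V_l(a)$ is irreducible of dimension $l+1$. Combining irreducibility with the matching of the highest weight to $P_{l,a}(z)$ and Theorem \ref{DrinfeldTheta} (and uniqueness of the finite-dimensional irreducible module associated to a given Drinfeld-type theta function), we conclude $V_l(a)\cong L(P_{l,a})$. The main bookkeeping obstacle is the ratio computation: one must carefully track the shifts of arguments in the four elliptic Gamma factors of each $\lambda_i$ and check that their combinations reduce cleanly, via the $\Gamma \to \Theta_p$ shift, to the telescoping theta product $P_{l,a}(q^2 z)/P_{l,a}(z)$.
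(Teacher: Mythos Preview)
Your proof is correct and follows essentially the same route as the paper: compute the eigenvalues of $k^+_1(z),k^+_2(z)$ on $v^l_0$, form the ratio $\lambda_1(q^{-1}z)/\lambda_2(q^{-1}z)$, and match it to $q^{-l}P_{l,a}(q^2z)/P_{l,a}(z)$. The only minor technical difference is that the paper factors out the common piece $f(z)^{-1}$ from $\lambda_1(z),\lambda_2(z)$ and removes it via the automorphism of Proposition~\ref{autobyf} before taking the ratio, whereas you take the ratio directly and let the common Gamma factors cancel; both are equivalent. You also supply two things the paper's proof leaves implicit: the check that $v^l_0$ is annihilated by $e(z)$, and a direct argument for irreducibility from the explicit action of $e(z),f(z)$. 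These additions are sound and make your argument more self-contained.
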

\noindent
{\it Proof.}\ 
From \eqref{kp1} and \eqref{kp2}, one obtains
\bea
&&\pi_{l,a}(k^+_1(z)) \cdot v^l_0=
{\theta(q^{l+2}z/a)}f(z)^{-1}v^l_0,
\quad\lb{gl2kp1l0}\\
&&\pi_{l,a}(k^+_2(z))\cdot v^l_0=
{\theta(q^{-l+2}z/a)}f(z)^{-1}v^l_0
\lb{gl2kp2l0}
\ena
with 
\be
&&f(z)^{-1}=\frac{1}{\theta(q^{l+2}z/a)}\frac{\Gamma(q^4q^{l}z/a;q^4,p)\Gamma(q^4q^{-l-2}z/a;q^4,p)}{\Gamma(q^4q^{l-2}z/a;q^4,p)\Gamma(q^4q^{-l}z/a;q^4,p)}.
\en
Hence up to an automorphism \eqref{autobyf}, we have 
\be
&&\la_1(z)=\theta(q^{l+2}z/a),
\qquad
\la_2(z)=\theta(q^{-l+2}z/a).
\en
Therefore
\bea
&&
\pi_{l,a}(\psi^+(z))\cdot v^l_0=
\frac{\la_1(q^{-1}z)}{\la_2(q^{-1}z)}v^l_0
=q^{-(\la_1-\la_2)}
\frac{\Theta_p(q^{l+1}z/a)}{\Theta_p(q^{-l+1}z/a)}v^l_0.\nn
\ena
\qed

Let us call the set $\Sigma_{l,a}= \{aq^{-l+1}, aq^{-l+3}, \cdots , aq^{l-1}\}$ of roots of $P_{l,a}$ the $q$-segment of length $l$ and centre $a$\cite{CP1994}. 
 
 \begin{df}
 Two $q$-segments $\Sigma_1$, $\Sigma_2$ are said to be in general position if either
 \begin{itemize}
 \item[(i)] $\Sigma_1\cup \Sigma_2$ is not a $q$-segment, or
 \item[(ii)]  $\Sigma_1\subseteq \Sigma_2$ or  $\Sigma_2\subseteq \Sigma_1$.
 \end{itemize}
 \end{df}

 \begin{prop}\lb{tensorVla}
 Let $l_1,l_2,\cdots,l_n\in \N$, $a_1,a_2,\cdots,a_n \in \C^\times, n \in \N$. 
 Then, the tensor product $V_{l_1}(a_1)\tot V_{l_2}(a_2)\tot \cdots \tot V_{l_n}(a_n)$ is irreducible as a representation of $U_{q,p}(\glth)$ iff the q-segments $\Sigma_{l_1,a_1}, \cdots,\Sigma_{l_n,a_n}$ are pairwise in general position. \end{prop}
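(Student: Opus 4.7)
The plan is to reduce to the case $n=2$ by induction on $n$, and then prove the two implications separately, following the strategy of Chari--Pressley \cite{CP1994} for $U_q(\slth)$ adapted to the elliptic setting. For the induction, Proposition~\ref{LPtotLQ} identifies the highest weight of any tensor product as the product of the constituent theta functions, and if some pair of factors fails general position the corresponding rank-$2$ subfactor is already reducible by the $n=2$ case and produces a proper submodule of the full tensor product.

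For the \emph{if} direction with $n=2$, assuming $\Sigma_{l_1,a_1}$ and $\Sigma_{l_2,a_2}$ are in general position, I would proceed in two steps. First, show that the only singular vector in $V_{l_1}(a_1)\tot V_{l_2}(a_2)$ (that is, a vector annihilated by all $e_{j,m}$) is, up to scalar, the highest weight vector $v^{l_1}_0\tot v^{l_2}_0$. This is the technical heart of the argument and requires analysing the action of $\Delta(e(z))$ on a general weight vector $v^{l_1}_{m_1}\tot v^{l_2}_{m_2}$, using the coproduct~\eqref{coproUqp} together with the delta-function formula~\eqref{fd91}: the singular-vector equations become linear relations whose nontriviality forces the supports of the two delta functions to coincide, which in turn forces the segments to overlap in the non-general-position configuration. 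Second, show by direct computation that $v^{l_1}_0\tot v^{l_2}_0$ cyclically generates the whole tensor product via repeated application of $\Delta(f(z))$, using the formula~\eqref{fd9}. Combined, any nonzero submodule must contain a singular vector, which must be $v^{l_1}_0\tot v^{l_2}_0$ up to scalar, and hence must equal the whole space.

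For the \emph{only if} direction with $n=2$, suppose the segments fail general position. Then their union is a $q$-segment $\Sigma_{L,A}$ with $L=l_1+l_2-k$, and their intersection is a $q$-segment $\Sigma_{k,B}$ with $0<k<\min(l_1,l_2)$. The product of Drinfeld theta functions factorises as $P_{l_1,a_1}(z)P_{l_2,a_2}(z)=P_{L,A}(z)P_{k,B}(z)$, and $\Sigma_{k,B}\subset\Sigma_{L,A}$ are in general position (by containment). By the already-proved \emph{if} direction, $V_L(A)\tot V_k(B)$ is irreducible of dimension $(L+1)(k+1)$, and by Corollary~\ref{tensorPQ} the irreducible subquotient $L(P_{l_1,a_1}P_{l_2,a_2})\cong V_L(A)\tot V_k(B)$ sits inside $V_{l_1}(a_1)\tot V_{l_2}(a_2)$. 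Since
\begin{equation*}
(l_1+1)(l_2+1)-(L+1)(k+1)=(l_1-k)(l_2-k)>0,
\end{equation*}
the tensor product is strictly larger than its unique irreducible quotient, hence reducible.

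The main obstacle is the singular-vector analysis in the \emph{if} direction. One must explicitly solve the system imposed by $\Delta(e(z))\cdot\xi=0$ for $\xi=\sum c_{m_1,m_2}\,v^{l_1}_{m_1}\tot v^{l_2}_{m_2}$, using the elliptic delta functions and theta-function identities. Unlike the trigonometric case treated in \cite{CP1994}, the coefficients involve products of $\Theta_p$ and elliptic Gamma function normalisations coming from $\pi_{l,a}(k^+_l(z))$; verifying that these generate nontrivial linear relations among the $c_{m_1,m_2}$ precisely outside the general-position locus will require careful use of the quasi-periodicity of $\Theta_p$ together with the explicit elliptic evaluation formulas from the preceding section. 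A secondary obstacle is the bookkeeping in the inductive step, ensuring that the iterated coproduct structure from Section~\ref{sec:Hopfalgebroid} permits reducing the generic $n$-fold tensor product to the $n=2$ analysis by grouping adjacent factors.
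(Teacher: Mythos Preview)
Your approach is essentially the same as the paper's: the paper does not give an independent argument but simply asserts that the proof goes through ``in the same way as in the case of $U_q(\slth)$ \cite{CP} or the Yangian $Y(\glt)$ \cite{CPYangian,Molev}'', which is precisely the Chari--Pressley strategy you outline.

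Two small points worth tightening. First, in the \emph{only if} direction you should allow $k=0$: two $q$-segments can fail general position with empty intersection (adjacent segments whose union is a longer $q$-string), and your dimension count $(l_1-k)(l_2-k)>0$ still works in that case, but your statement ``$0<k<\min(l_1,l_2)$'' excludes it. Second, your inductive reduction to $n=2$ only handles the \emph{only if} direction; for the \emph{if} direction with $n>2$ you still need the standard step (as in \cite{CP,Molev}) that pairwise general position lets you order the factors so the tensor product is both cyclic and cocyclic on the highest weight vector, not just that no pairwise subfactor is reducible. This is routine but should be mentioned, since a tensor product of modules that are pairwise irreducible need not be irreducible in general.
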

 One can prove the statement in the same way as in the case of $U_q(\slth)$\cite{CP} or the Yangian $Y(\glt)$\cite{CPYangian,Molev}.

 \begin{thm}
 Every level-0  finite-dimensional irreducible representation of $U_{q,p}(\slth)$ 
 is isomorphic to a tensor product of evaluation representations.
 \end{thm}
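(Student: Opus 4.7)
The plan is to combine the Drinfeld--theta classification (Theorem \ref{DrinfeldTheta}) with the tensor-product irreducibility criterion (Proposition \ref{tensorVla}).  Let $L$ be a level-$0$ finite-dimensional irreducible $U_{q,p}(\slth)$-module.  By Theorem \ref{DrinfeldTheta}, $L\cong L(P)$ for a theta function $P(z)$ of order $l=\la_1-\la_2\in \Z_{\geq 0}$, which I would factor as
\be
P(z)=C\prod_{k=1}^{l}\Theta_p(z/b_k),\qquad b_k\in \C^\times.
\en
The isomorphism class of $L(P)$ depends only on the multiset $\{b_1,\dots,b_l\}$ modulo $p^{\Z}$, since $P$ enters the action of $\psi^+(z)$ only through the ratio $P(q^2z)/P(z)$.

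The combinatorial heart of the proof is then to show that this multiset on the elliptic curve $\C^\times/p^{\Z}$ admits a decomposition into $q$-segments $\Sigma_{l_1,a_1},\dots,\Sigma_{l_n,a_n}$ that are pairwise in general position in the sense of Proposition \ref{tensorVla}.  I would argue by induction on $l$ using the standard greedy procedure from the $U_q(\slth)$ case: starting from any $b_k$, extend it maximally on both sides by successive multiplication by $q^{\pm 2}$ within the remaining multiset to obtain a maximal $q$-segment $\Sigma$, remove $\Sigma$, and iterate on the complement.  A short case analysis, comparing a newly extracted maximal segment with each previously extracted one, then shows that the collection is pairwise in general position.

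Given such a decomposition, I would set
\be
V:=V_{l_1}(a_1)\tot V_{l_2}(a_2)\tot \cdots \tot V_{l_n}(a_n),
\en
which is irreducible by Proposition \ref{tensorVla}.  Applying Proposition \ref{LPtotLQ} iteratively (in the $N=2$ case), the vector $\zeta:=v^{l_1}_0\tot\cdots\tot v^{l_n}_0$ is a highest weight vector of $V$, killed by every $e_{m}$, and the joint eigenvalues of $k^+_1(z),k^+_2(z)$ on $\zeta$ are given by the product $\prod_{i=1}^{n}P_{l_i,a_i}(z)$.  Since by construction the roots of this product in $\C^\times/p^{\Z}$ coincide with $\{b_1,\dots,b_l\}$, one has $\prod_i P_{l_i,a_i}(z)=C' P(z)$ for some nonzero constant $C'$; using the rescaling automorphism of Proposition \ref{autobyf} to absorb the constant, Theorem \ref{DrinfeldTheta} yields $V\cong L(P)\cong L$.

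The main obstacle I anticipate lies in step two, the combinatorial decomposition into $q$-segments in pairwise general position on the elliptic curve.  The greedy procedure is standard over $\C^\times$, but one has to verify that passing to $\C^\times/p^{\Z}$ does not produce accidental coincidences: for generic $p$ a $q$-segment of length $l$ lies in a single fundamental domain, so choosing representatives of the $b_k$ carefully reduces the elliptic problem to the rational one, while for special $p$ some extra case analysis is needed to guarantee that overlaps of the form $\Sigma\subseteq\Sigma'$ or disjoint unions are correctly identified.
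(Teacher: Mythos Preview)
Your proposal is correct and follows essentially the same route as the paper: classify $L$ by its theta function $P$, decompose the multiset of roots of $P$ into $q$-segments $\Sigma_{l_i,a_i}$ in pairwise general position, and then identify $L(P)$ with the (irreducible) tensor product $V_{l_1}(a_1)\tot\cdots\tot V_{l_n}(a_n)$ via Proposition \ref{tensorVla} and the computation of the highest weight.  The paper disposes of the elliptic subtlety you worry about simply by taking the multiset of zeros of $P$ \emph{in the fundamental parallelogram} and then invoking the classical combinatorial decomposition from Chari--Pressley \cite{CP1994} verbatim; no separate elliptic argument is needed, so your anticipated obstacle does not actually arise.
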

 \noindent
 {\it Proof.}\ Let $L(P)$ be an irreducible finite-dimensional representation associated with the theta function $P(z)$. Consider the multiset $\Sigma$ formed by the zeros of $P(z)$ in the fundamental  parallelogram. 
 Following a proof of Theorem 4.3 in \cite{CP1994}, one has a unique decomposition
\be
&&\Sigma=\Sigma_{l_1,a_1}\cup \Sigma_{l_2,a_2}\cup \cdots \cup \Sigma_{l_n,a_n}  
\en
for some $l_1,l_2,\cdots,l_k\in \N$, $a_1,a_2,\cdots,a_n \in \C^\times, n \in \N$, 
and all $q$-segments $\Sigma_{l_i,a_i}$'s are pairwise in general position. Then  the statement follows from Corollary \ref{tensorPQ} and Proposition \ref{tensorVla} by 
\bea
&&P(z)=P_{l_1,a_1}(z)P_{l_2,a_2}(z)\cdots P_{l_n,a_n}(z).\lb{Pevla}
\ena 
 \qed

\subsection{The Gelfand-Tsetlin bases} 
We give the  Gelfand-Tsetlin basis of the level-0 finite-dimensional irreducible 
representation 
\bea
&&L(P)=V_{l_1}(\ta_1)\tot V_{l_2}(\ta_2)\tot \cdots \tot V_{l_n}(\ta_n),
\ena
where the theta function $P$ is given in \eqref{Pevla} with replacing $a_i$ with 
\be
&&\ta_i=q^{-\al_i-\beta_i+2}a_i.
\en 
Here  $\al_{i},\bet_{i}\ (1\leq i\leq n)$ 
 are complex numbers satisfying 
\bea
&&\al_{i}-\bet_{i}=l_i\in \Z_{\geq 0}.
\ena
The highest weight vector is given by
\bea
&&\zeta=v^{l_1}_0\tot v^{l_2}_0\tot\cdots \tot v^{l_n}_0.
\ena

Let  $\ga=(\ga_1,\cdots,\ga_n)$ be a $n$-tuple of complex numbers satisfying
\bea
 &&\al_{i}- \ga_i \in \Z_{\geq 0},\qquad  \ga_{i}- \bet_i \in \Z_{\geq 0}, \qquad
 (1\leq i\leq n).\lb{condga}
\ena
From \eqref{gl2kp1l0} and \eqref{gl2kp2l0} with $a=\ta_i$ we have 
\be
&&\pi_{l_i,a_i}(k^+_1(z))\cdot v^{l_i}_0=\theta(q^{2\al_i}z/a_i)
f(z)^{-1}v^{l_i}_0,\qquad\\
&&\pi_{l_i,a_i}(k^+_2(z))\cdot v^{l_i}_0=\theta(q^{2\beta_i}z/a_i)
f(z)^{-1}
v^{l_i}_0.
\en
Hence up to an automorphism of the form \eqref{autobyf}, one has
\bea
&&k^+_1(z)\cdot \zeta=\prod_{i=1}^n\theta(q^{2\al_i}z/a_i)
\zeta,\lb{gl2kp1z}\\
&&k^+_2(z)\cdot \zeta=\prod_{i=1}^n\theta(q^{2\beta_i}z/a_i)
\zeta.\lb{gl2kp2z}
\ena
Let $A_1(z), A_2(z), B_2(z), C_2(z)$ be operators defined in Definition \ref{def:ABC} for the $N=2$ case.
Define 
\bea
&&\xi_{\ga}=\prod_{i=1}^nB_2(q^{-2(\ga_i-1)}a_i)B_2(q^{-2(\ga_i-2)}a_i)\cdots B_2(q^{-2\bet_{i}}a_i)\cdot \zeta,\\
&&\xi_{\bet}=\zeta,
\ena 
where $\bet=(\bet_{1},\cdots,\bet_{k})$. 

\begin{thm}\lb{gl2GTbasis}
The set of vectors $\{\xi_\ga\}$ with $\ga$ satisfying \eqref{condga} forms a basis of $L(P)$. Moreover, the generators of $U_{q,p}(\glth)$ act on $\xi_\ga$ as follows
\bea
&&A_1(z)\cdot \xi_\ga=\prod_{i=1}^n\theta(q^{2\al_i}z/a_i)\theta(q^{2(\bet_i-1)}z/a_i)
\, \xi_\ga,\lb{A1xiga}\\
&&A_2(z)\cdot \xi_\ga=\prod_{i=1}^n\theta(q^{2\ga_i}z/a_i)\, \xi_\ga,\lb{A2xiga}\\
&&B_2(q^{-2\ga_j}a_j)\cdot \xi_\ga=\xi_{\ga+\delta_j}, \lb{Bxiga}\\
&&C_2(q^{-2\ga_j}a_j)\cdot \xi_\ga=-\Delta^{(n-1)}\left(\frac{\theta(q^2\Pi^*)}{\theta(\Pi^*)}\right)\cdot
\prod_{i=1}^n\theta(q^{2(\al_i-\ga_j+1)})\theta(q^{2(\bet_i-\ga_j)})\, \xi_{\ga-\delta_j},\lb{Cxiga}
\ena
for $j\in \{1,\cdots,n\}$, where we set $\ga\pm \delta_j=(\ga_1,\cdots,\ga_j\pm 1,\cdots,\ga_n)$. The vector $\xi_{\ga\pm\delta_j}$ is considered to be zero if 
$\ga\pm\delta_j$ does not satisfy \eqref{condga}.  

\end{thm}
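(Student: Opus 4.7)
The proof has two parts: (i) establish the stated action formulas \eqref{A1xiga}--\eqref{Cxiga}, and (ii) deduce that $\{\xi_\ga\}$ is a basis. The whole argument runs inside $E_{q,p}(\widehat{\gl}_2)$ via the isomorphism of Theorem \ref{isoUqpEqp}, and uses the commutation relations of Proposition \ref{comABC} specialized to $N=2$, together with the highest weight action \eqref{gl2kp1z}--\eqref{gl2kp2z} and the vanishing $C_2(w)\zeta=L^+_{21}(w)\zeta=0$ from $(3)'$.

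First I would treat $A_1(z)$ and $A_2(z)$, since these produce diagonal actions that immediately pin down the structure. The operator $A_1(z)$ is proportional to $\qdet L^+(z)=\cN_2 K(z)$ by \eqref{qdetLp}, so its coefficients are central; hence $A_1(z)$ acts on the irreducible module $L(P)$ by the same scalar as on $\zeta$, which by \eqref{AmK} and \eqref{gl2kp1z}--\eqref{gl2kp2z} gives \eqref{A1xiga} after absorbing $\cN_2$ by an automorphism of the type of Proposition \ref{autobyf}. For $A_2(z)=K^+_2(z)$ I would argue by induction on the number of $B_2$-factors defining $\xi_\ga$: each time we commute $A_2(z)$ past $B_2(w)$ using \eqref{comAmBm}, the ``wanted'' term reproduces $B_2(w)A_2(z)$ with coefficient $\theta(q^2z/w)/\theta(z/w)$, while the ``unwanted'' term $B_2(z)A_2(w)$ is killed when $w$ runs through the special values $q^{-2(\ga_i-k)}a_i$ by a collapse of its theta-prefactor against the string structure, exactly as in Nazarov--Tarasov \cite{NT}. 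Combined with $A_2(z)\zeta=\prod_i\theta(q^{2\bet_i}z/a_i)\zeta$ this telescopes to $\prod_i\theta(q^{2\ga_i}z/a_i)\xi_\ga$, giving \eqref{A2xiga}.

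Next, formula \eqref{Bxiga} is essentially a definition once we know that the various $B_2$-operators commute up to an irrelevant scalar. The needed reordering relation comes from the $E_{12}\otimes E_{12}$-component of the $RLL$-relation \eqref{RLL}, namely $B_2(z)B_2(w)=\frac{\theta(q^{-2}z/w)}{\theta(q^2z/w)}B_2(w)B_2(z)$ type, so the product defining $\xi_\ga$ is well defined up to a scalar absorbed in the normalization. Formula \eqref{Cxiga} is the main technical point: I would apply \eqref{comCmBm} to move $C_2(w)$ to the rightmost position. The first term on the right-hand side of \eqref{comCmBm} yields a contribution with $C_2$ acting on the remaining $B_2$-product and ultimately on $\zeta$, producing zero either directly or after iteration. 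The remaining two terms, involving $K^+_{m-1}(w)A_{m+1}$ factors, reproduce $\xi_{\ga-\delta_j}$; evaluating the resulting theta-ratios at $w=q^{-2\ga_j}a_j$ and using the eigenvalues of $A_1$, $A_2$, $K^+_1$ obtained in Steps 1--2 gives the product $\prod_i\theta(q^{2(\al_i-\ga_j+1)})\theta(q^{2(\bet_i-\ga_j)})$ up to the dynamical prefactor $-\Delta^{(n-1)}\!\left(\theta(q^2\Pi^*)/\theta(\Pi^*)\right)$, which records the $P$-shifts carried by the $B_2$-string via the moment map relations \eqref{lgr}--\eqref{rgr}.

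Finally, linear independence of the $\xi_\ga$ is immediate from \eqref{A2xiga}, since distinct $\ga$ give distinct zero-sets for the theta-eigenvalue of $A_2(z)$. Since $\#\{\ga\}=\prod_i(l_i+1)=\dim L(P)$ and by Steps 2--4 the span is stable under $A_1, A_2, B_2, C_2$ (which together with the Gauss decomposition generate the image of $U_{q,p}(\glth)$), irreducibility of $L(P)$ forces $\{\xi_\ga\}$ to be a basis. The hardest step is the $C_2$-action: matching the three competing terms of \eqref{comCmBm} at the resonant spectral value $q^{-2\ga_j}a_j$ requires careful theta-function bookkeeping and tracking the dynamical shift of $\Pi^*$ across the full $B_2$-string, and this is where most of the genuine work lies.
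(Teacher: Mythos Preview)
Your overall strategy matches the paper's: the same induction for $A_2(z)$ via \eqref{comAmBm}, the same use of \eqref{comCmBm} for $C_2$, and the same distinct-eigenvalue argument for linear independence. But there is a real gap in your basis argument, and two boundary cases are missing.

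\textbf{The gap: non-vanishing of $\xi_\ga$.} You claim that the span of the $\xi_\ga$ is stable under $A_1,A_2,B_2,C_2$ and then invoke irreducibility. However, formulas \eqref{Bxiga} and \eqref{Cxiga} give the action of $B_2,C_2$ only at the discrete spectral values $q^{-2\ga_j}a_j$, not as operators $B_2(z),C_2(z)$ for generic $z$; the coefficients of $e_1(z),f_1(z)$ correspond via the half currents to $B_2(z),C_2(z)$ at \emph{all} $z$, so you have not shown the span is a $U_{q,p}(\glth)$-submodule. Without that, irreducibility does not force the span to be all of $L(P)$, and you have no argument that each $\xi_\ga\neq0$. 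The paper closes this gap differently: it proves $\xi_\ga\neq0$ directly by applying a descending string of $C_2$'s to $\xi_\ga$ and showing that one recovers $\zeta$ with an explicit nonzero theta-product coefficient (this is Lemma \ref{C2B2}, obtained by iterating \eqref{Cxiga}). Then linear independence via distinct $A_2$-eigenvalues plus the count $\#\{\ga\}=\prod_i(l_i+1)=\dim L(P)$ finishes.

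\textbf{Missing boundary cases.} The theorem asserts that $\xi_{\ga\pm\delta_j}$ is declared zero when \eqref{condga} is violated, so you must verify (a) $B_2(q^{-2\al_j}a_j)\xi_\ga=0$ when $\ga_j=\al_j$, and (b) $C_2(q^{-2\bet_j}a_j)\xi_\ga=0$ when $\ga_j=\bet_j$. You do not address (a) at all. The paper proves (a) only \emph{after} establishing the basis: if $\xi':=B_2(q^{-2\al_j}a_j)\xi_\ga$ were nonzero, the same induction would make it an $A_2$-eigenvector with an eigenvalue having a zero at $q^{-2(\al_j+1)}a_j$, which is not among the eigenvalues of the basis vectors---contradiction. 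For (b) the paper gives a separate direct computation using \eqref{comCmBm}, showing that all three terms vanish because $A_2(q^{-2(\ga_2-1)}a_2)$ and $A_1(q^{-2\bet_1}a_1)$ annihilate $\xi_{\ga-\delta_2}$. Your sketch (``produces zero either directly or after iteration'') does not cover the first term of \eqref{comCmBm}, which still contains a $C_2$ and must be shown to vanish by a different mechanism than hitting $\zeta$.

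\textbf{Minor point.} Your claimed $B_2$-exchange relation $B_2(z)B_2(w)=\tfrac{\theta(q^{-2}z/w)}{\theta(q^2z/w)}B_2(w)B_2(z)$ is incorrect: at level $0$ one has $p^*=p$, hence $\rho^+=\rho^{+*}$, and the $(1,1;2,2)$ entry of \eqref{RLL} gives simply $B_2(z)B_2(w)=B_2(w)B_2(z)$. So \eqref{Bxiga} really is the definition with no reordering issue, as the paper states.
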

\noindent
{\it Proof.}\ 
From \eqref{AmK},\eqref{comAmBn}, \eqref{gl2kp1z} and \eqref{gl2kp2z}, one obtains \eqref{A1xiga}. 

\eqref{A2xiga} can be proved inductively on the number of operators $B_2(\bullet)$ in $\xi_\ga$.  Noting $\xi_\ga=B_2(q^{-2(\ga_1-1)}a_1)\cdot\xi_{\ga-\delta_1}$, from \eqref{comAmBm}, one has
\be
A_2(z)B_2(q^{-2(\ga_1-1)}a_1)\cdot\xi_{\ga-\delta_1}
&=&\frac{\theta(q^2q^{2(\ga_1-1)}z/a_1)}{\theta(q^{2(\ga_1-1)}z/a_1)}B_2(q^{-2(\ga_1-1)}a_1)A_2(z)\cdot \xi_{\ga-\delta_1}\nn\\
&&\qquad-\frac{\theta(q^2)\theta(\Pi^{-1}q^{2(\ga_1-1)}z/a_1)}{\theta(\Pi^{-1})\theta(q^{2(\ga_1-1)}z/a_1)}B_2(z)A_2(q^{-2(\ga_1-1)}a_1)\cdot \xi_{\ga-\delta_1}.
\en
The second term vanishes by the induction hypothesis for some $j$ 
\bea
&&A_2(z)\cdot \xi_{\ga-\delta_j}=\theta(q^{2(\ga_j-1)}z/a_j)\prod_{i=1\atop \not=j}^n\theta(q^{2\ga_i}z/a_i)\, \xi_{\ga-\delta_j}.\lb{xigamdj}
\ena

\eqref{Bxiga} follows from the definition of $\xi_\ga$. 

\eqref{Cxiga} with $\ga_j> \bet_j$ follows from $\xi_\ga=B_2(q^{-2(\ga_j-1)}a_j)\cdot\xi_{\ga-\delta_j}$ and \eqref{xigamdj} and \eqref{comCmBm}.
\be
&&C_2(q^{-2\ga_j}a_j)\cdot\xi_\ga=C_2(q^{-2\ga_j}a_j)B_2(q^{-2(\ga_j-1)}a_j)\cdot\xi_{\ga-\delta_j}\nn\\
&&=A_2(q^{-2\ga_j}a_j)f^+_{12}(q^{-2(\ga_j-1)}a_j)e^+_{21}(q^{-2\ga_j}a_j)A_2(q^{-2(\ga_j-1)}a_j)\cdot \xi_{\ga-\delta_j}\nn\\
&&\qquad +A_1(q^{-2(\ga_j-1)}a_j)
\Delta^{(n-1)}\hspace{-0.1cm}\left(\frac{\theta(q^2)\theta(\Pi^{*-1}q^{-2})}{\theta(\Pi^{*-1})\theta(q^{-2})}\right)
\cdot \xi_{\ga-\delta_j}\nn\\
&&\qquad -A_1(q^{-2\ga_j}a_j)A_{2}(q^{-2}q^{-2\ga_j}a_j)^{-1}A_{2}(q^{-2(\ga_j-1)}a_j)\cdot \Delta^{(n-1)}\hspace{-0.1cm}\left(\frac{\theta(q^2)\theta(\Pi^{-1}q^{-2})}{\theta(\Pi^{-1})\theta(q^{-2})}\right)
\cdot \xi_{\ga-\delta_j}\nn\\
&&=-
\Delta^{(n-1)}\hspace{-0.1cm}\left(\frac{\theta(\Pi^{*}q^{2})}{\theta(\Pi^{*})}\right)
\prod_{i=1}^n\theta(q^{2(\al_i-\ga_j+1)})\theta(q^{2(\bet_i-\ga_j)}) \xi_{\ga-\delta_j}.
\en
Note that $p^*=p$ on $L(P)$. 

For the case $\ga_j=\bet_j$ for some $j$, one has
\be
&&C_2(q^{-2\bet_j}a_j)\cdot \xi_\ga=0.
\en 
In fact, let us take $j=1$ and consider $\ga=(\bet_1,\ga_2,\cdots,\ga_n)$ 
without loss of  generality. Then one has 
\be
&&C_2(q^{-2\bet_1}a_1)\cdot \xi_{(\bet_1,\ga_2,\cdots,\ga_n)}\\
&&=C_2(q^{-2\bet_1}a_1)B_2(q^{-2(\ga_2-1)}a_2)\cdot \xi_{\ga-\delta_2}\\
&&=A_2(q^{-2\bet_1}a_1)B_2(q^{-2(\ga_2-1)}a_2)A_2(q^{-2(\ga_2-1)}a_2)^{-1}
A_2(q^{-2\bet_1}a_1)^{-1}C_2(q^{-2\bet_1}a_1)A_2(q^{-2(\ga_2-1)}a_2)\xi_{\ga-\delta_2}\\
&&+A_2(q^{-2\bet_1}a_1)A_1(q^{-2(\ga_2-1)}a_2)A_2(q^{-2(\ga_2-1)}q^{-2}a_2)^{-1}
\Delta\left(\frac{\theta(q^2)\theta(\Pi^{*-1}q^{-2\bet_1}q^{2(\ga_2-1)}a_1/a_2)}
{\theta(\Pi^{*-1})\theta(q^{-2\bet_1}q^{2(\ga_2-1)}a_1/a_2)}\right)\xi_{\ga-\delta_2}\\
&&-A_1(q^{-2\bet_1}a_1)A_2(q^{-2\bet_1}q^{-2}a_1)^{-1}A_2(q^{-2(\ga_2-1)}a_2)
\Delta\left(\frac{\theta(q^2)\theta(\Pi^{-1}q^{-2\bet_1}q^{2(\ga_2-1)}a_1/a_2)}
{\theta(\Pi^{-1})\theta(q^{-2\bet_1}q^{2(\ga_2-1)}a_1/a_2)}\right)\xi_{\ga-\delta_2}.
\en
This vanishes because $A_2(q^{-2(\ga_2-1)}a_2)$ and $A_1(q^{2\bet_1}a_1)$ vanish 
on $\xi_{\ga-\delta_2}$.

We now prove that the vectors $\xi_\ga$  form a basis of $L(P)$. 
Firstly, $\xi_\ga\not=0$ follows from Lemma \ref{C2B2} given in the below.  
Secondly, the vectors $\xi_\ga$ are linearly independent because they are eigenvectors for $A_2(z)$ with distinct eigenvalues. The number of these vectors is
\be
&&\prod_{i=1}^n ( \al_i -\bet_i + 1) 
\en
which coincides with the dimension of $L(P)$, thus proving the claim.

It remains to verify that $B_2(q^{-2\al_j}a_j)\xi_{\ga}=0$. The argument used for the proof of \eqref{A2xiga} shows that if the vector
$\xi'=B_2(q^{-2\al_j}a_j)\xi_\ga$  is nonzero, then $\xi'$ is an eigenvector for 
$A_2(z)$ with the eigenvalue 
\be
&&\theta(q^{2\ga_1}z/a_1)\cdots\theta(q^{2\ga_{j-1}}z/a_{j-1})
\theta(q^{2(\al_j+1)}z/a_j) \theta(q^{2\ga_{j+1}}z/a_{j+1})\cdot 
\theta(q^{2\ga_n}z/a_n).
\en
However, as we have seen above, the module  $L$ admits a basis which consists of eigenvectors of $A_2(z)$ with distinct eigenvalues. We come to a contradiction since none of these eigenvalues coincides with the eigenvalue of 
$\xi'$. So, $\xi'=0$.

\qed

\begin{lem}\lb{C2B2}
\bea
&&\stackrel{\curvearrowleft}{\prod_{1\leq i\leq n}}
C_2(q^{-2(\bet_i+1)}a_i)\cdots C_2(q^{-2(\ga_i-1)}a_r)C_2(q^{-2\ga_i}a_i)
\cdot \xi_\ga\nn\\
&&\qquad=(-)^{\sum_{i=1}^n(\ga_i-\bet_i)}\Delta^{(n-1)}\hspace{-1mm}\left(\frac{\theta(\Pi^*q^{2\sum_{i=1}^n(\ga_i-\bet_i)})}{\theta(\Pi^*)}\right)\nn\\
&&\qquad\times\prod_{i=1}^n\prod_{j=1}^n\prod_{a_j=0}^{\ga_j-\bet_j-1}
\theta(q^{2(\al_i-\ga_j+a_j+1)})\theta(q^{2(\bet_i-\ga_j+a_j)})
\, \zeta.
\ena

\end{lem}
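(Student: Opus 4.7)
The plan is to proceed by induction on the total lowering degree $M := \sum_{i=1}^n(\ga_i-\bet_i)$, which equals the number of $C_2$-operators on the left-hand side. The base case $M=0$ is immediate: then $\ga=\bet$, the $C_2$-product is empty, $\xi_\bet=\zeta$, and $\Delta^{(n-1)}(\theta(\Pi^*)/\theta(\Pi^*))=1$, so both sides equal $\zeta$.

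For the inductive step, assume the identity at total degree $M-1$ and peel off the innermost (first-applied) operator in the prescribed order $\curvearrowleft$, namely $C_2(q^{-2\ga_{j_0}}a_{j_0})$ for the appropriate index $j_0$ (by the convention of the product, $j_0=n$ if $\ga_n>\bet_n$, otherwise the next largest index, etc.). By \eqref{Cxiga},
\begin{align*}
C_2(q^{-2\ga_{j_0}}a_{j_0})\cdot\xi_\ga = -\Delta^{(n-1)}\!\left(\frac{\theta(q^2\Pi^*)}{\theta(\Pi^*)}\right)\prod_{i=1}^n\theta(q^{2(\al_i-\ga_{j_0}+1)})\theta(q^{2(\bet_i-\ga_{j_0})})\,\xi_{\ga-\delta_{j_0}}.
\end{align*}
Since the theta-function factors in $\FF$ are scalars commuting with the remaining $C_2$-operators, and $\Delta^{(n-1)}(\theta(q^2\Pi^*)/\theta(\Pi^*))$ acts diagonally on each weight vector, both may be pulled out to the left. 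The residual product of $C_2$-operators acts on $\xi_{\ga-\delta_{j_0}}$ and has total degree $M-1$, so the induction hypothesis (with parameters $(\ga-\delta_{j_0},\bet)$) applies.

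The essential technical point is the telescoping of the $\Pi^*$-factor. Using the right-commutation rule \eqref{rgr}, one checks that $B_2\sim L^+_{12}$ shifts the $\Pi^*=q^{2(P_{\ep_1}-P_{\ep_2})}$ eigenvalue by a factor of $q^2$, while $C_2\sim L^+_{21}$ shifts it by $q^{-2}$. Hence if $\zeta$ carries $\Pi^*$-eigenvalue $\pi^*_0$, then $\xi_\ga$ (obtained from $\zeta$ by $M$ applications of $B_2$) carries $q^{2M}\pi^*_0$ and $\xi_{\ga-\delta_{j_0}}$ carries $q^{2(M-1)}\pi^*_0$. Consequently, $\Delta^{(n-1)}(\theta(q^2\Pi^*)/\theta(\Pi^*))$ evaluated on $\xi_{\ga-\delta_{j_0}}$ equals $\theta(q^{2M}\pi^*_0)/\theta(q^{2(M-1)}\pi^*_0)$, while the induction hypothesis contributes $\theta(q^{2(M-1)}\pi^*_0)/\theta(\pi^*_0)$ evaluated on $\zeta$; their product telescopes to $\theta(q^{2M}\pi^*_0)/\theta(\pi^*_0)$, which is precisely the evaluation of $\Delta^{(n-1)}(\theta(\Pi^*q^{2M})/\theta(\Pi^*))$ on $\zeta$.

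The remaining bookkeeping is routine. The step theta-product $\prod_i\theta(q^{2(\al_i-\ga_{j_0}+1)})\theta(q^{2(\bet_i-\ga_{j_0})})$ is exactly the $(j=j_0, a_{j_0}=0)$ slice of the claimed full product, while the induction supplies all remaining slices after the reindexing $a_{j_0}\mapsto a_{j_0}-1$ at $j=j_0$; the overall sign $(-1)\cdot(-1)^{M-1}=(-1)^M$ assembles from the minus in each application of \eqref{Cxiga}. The principal obstacle is accurately tracking the dynamical $\Pi^*$-shifts (the cumulative shift on $\xi_\ga$ from the $B_2$'s defining it, together with the stepwise shift from each $C_2$) so that the telescoping is exact, together with verifying that the prescribed order $\curvearrowleft$ is compatible at every stage with the hypothesis of \eqref{Cxiga}, namely that the spectral parameter of the next peeled $C_2$ matches the current value of the relevant $\ga$-coordinate. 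This is ensured by applying, for each fixed $i$, the operators $C_2(q^{-2\ga_i}a_i), C_2(q^{-2(\ga_i-1)}a_i),\ldots, C_2(q^{-2(\bet_i+1)}a_i)$ in that order before moving to the next index.
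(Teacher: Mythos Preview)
Your proof is correct and follows exactly the approach the paper indicates: the paper's proof consists of the single sentence ``Use \eqref{Cxiga} repeatedly,'' and your induction on $M=\sum_i(\ga_i-\bet_i)$ with the telescoping of the $\Pi^*$-factor is precisely the content of that repeated application spelled out in detail.
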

\noindent
{\it Proof.}\ Use \eqref{Cxiga} repeatedly. \qed

\section{The Gelfand-Tsetlin Bases for $U_{q,p}(\glnh)$-modules}\lb{sec:GTB}

In this section, we consider general  level-0 finite-dimensional irreducible representations specified by the Gelfand-Tsetlin patterns, and construct their bases.  
Our construction is an elliptic analogue of the one studied by Nazarov-Tarasov \cite{NT}.


Let $k^+_l(z), A_l(z), B_m(z), C_m(z)\ (1\leq l\leq N,\ 2\leq m\leq N)$ as in Sec.\ref{sec:qdet}.  
Recall that on the level-0 representations, we have
\be
&&p^*=p,\quad \theta^*(z)=\theta(z),\quad \cN_k=\cN'_k=1\ (1\leq k\leq N). 
\en

For $s\in \{1,\cdots,n\}$, let $L(\la^{(s)}(z))$ be a level-0 finite-dimensional irreducible representation of $U_{q,p}(\glnh)$ with the highest weight $\la^{(s)}(z)=(\la^{(s)}_1(z),\cdots,\la^{(s)}_N(z))$. We consider an irreducible representation $L(\la(z))$ given as a subquotient of the tensor product.  
\bea
&&L(\la^{(1)}(z))\tot\cdots\tot L(\la^{(n)}(z)).
\ena
The highest weight $\la(z)$ is given by 
\bea
&&\la(z)=(\la_1(z),\cdots,\la_N(z)),\\
&&\la_j(z)=\prod_{s=1}^n\la^{(s)}_j(z).
\ena
From Theorem \ref{DrinfeldTheta}, 
there exist theta functions $P^{(s)}_j(z)$ of order $r^{(s)}_j\equiv \la^{(s)}_j-\la^{(s)}_{j+1}\in \Z_{\geq 0}$ such that 
\bea
&&\frac{\la^{(s)}_j(q^{-j}z)}{\la^{(s)}_{j+1}(q^{-j}z)}=q^{-r^{(s)}_j}\frac{P^{(s)}_j(q^2z)}{P^{(s)}_j(z)}.
\ena 
In the following we consider the finite-dimensional irreducible representation $L(\la(z))$ whose highest weight is given by $\bfP=(P_1(z),\cdots,P_{N-1}(z))$
\bea
&&P_j(z)=\prod_{s=1}^nP^{(s)}_j(z),\qquad P^{(s)}_j(z)=\prod_{k=1}^{r^{(s)}_j}\Theta_p(q^{2(\la^{(s)}_j-k)}q^{-j}z/a^{(s)}) \lb{DTglN}
\ena
without loss of generality. 
Hence up to an automorphism \eqref{autobyf}, we have 
\bea
&&\la^{(s)}_j(z)=\theta(q^{2\la^{(s)}_j}z/a^{(s)}).
\ena

We call the following set of complex numbers  $\la^{(s)}_{l,j}\ (1\leq  j\leq l \leq N)$ the Gelfand-Tsetlin pattern $\bla^{(s)}$ associated with $\la^{(s)}=(\la^{(s)}_1,\cdots,\la^{(s)}_N)$, if
\be
&&\la^{(s)}_{N,j}=\la^{(s)}_j\qquad (1\leq j\leq N),\\
&&\la^{(s)}_{l,j}- \la^{(s)}_{l-1,j}\in \Z_{\geq 0},\quad  \la^{(s)}_{l-1,j}- \la^{(s)}_{l,j+1}
\in \Z_{\geq 0}.
\en
Schematically we have
\be
\bla^{(s)}&=&\mmatrix{
\la^{(s)}_{N,1}&&\la^{(s)}_{N,2}&\cdots&\cdots&\cdots&\la^{(s)}_{N,N-1}&&\la^{(s)}_{N,N}\quad\cr
      &\la^{(s)}_{N-1,1}&&\la^{(s)}_{N-1,2}&\cdots&\cdots&&\ \ \ \la^{(s)}_{N-1,N-1}&\cr
      &\quad\ddots&&\ddots&&&&\ \rotatebox[origin=c]{85}{$\ddots$}\quad&\cr
      &                &\ddots&&\ddots&&&\rotatebox[origin=c]{85}{$\ddots$}\qquad\qquad&\cr
      &&&&&&&&\cr
      &     &                 &         &\la^{(s)}_{2,1}&&\la^{(s)}_{2,2}&&\cr
      &&&&&&&&\cr
      &&&&&\quad \la^{(s)}_{1,1}&&&\cr
}
\en
In particular, we denote by $\bla^{(s)}_0$ the Gelfand-Tsetlin pattern where 
$\la^{(s)}_{l,j}=\la^{(s)}_{N,j}\ (j\leq l\leq N-1)$ for each $j\in \{1,\cdots,N-1 \}$. 

To describe the Gelfand-Tsetlin bases,
it is convenient to set $\nu_i^{(s)}:=\lambda_{N+1-i}^{(s)}$\ 
$(1\leq s\leq n)$ and  $\nu_i(z):=\lambda_{N+1-i}(z)$.
Hence we have
\begin{align}
\nu_j(z)=\prod_{s=1}^n\theta(q^{2\nu_j^{(s)}}z/a^{(s)}) 
\ \ \qquad 1\leq j\leq N.
\end{align}
The Gelfand-Tsetlin patterns 
$\bnu^{(s)}=(\nu_{l,j}^{(s)})_{1\leq j\leq l\leq N}$, $s=1,\dots,n$ are collections of complex numbers $\nu_{l,j}^{(s)}$ 
$(1 \le j \le l \le N)$ satisfying
\bea
&&\nu_{N,j}^{(s)}=\nu_{j}^{(s)},\lb{condnu1}\\
&&\nu_{l,j}^{(s)}-\nu_{l-1,j-1}^{(s)} \in \mathbb{Z}_{\geq 0},\quad
\nu_{l-1,j-1}^{(s)} - \nu_{l,j-1}^{(s)} \in \mathbb{Z}_{\geq0}.\lb{condnu2}
\ena
In the following, to make the presentation simple, we write the condition \eqref{condnu2} as $\nu_{l,j}^{(s)}\geq 
\nu_{l-1,j-1}^{(s)} \geq \nu_{l,j-1}^{(s)} $.  Schematically we have
\be
\bnu^{(s)}&=&\mmatrix{
\nu^{(s)}_{N,N}&&\nu^{(s)}_{N,N-1}&\cdots&\cdots&\cdots&\nu^{(s)}_{N,2}&&\nu^{(s)}_{N,1}\quad\cr
      &\nu^{(s)}_{N-1,N-1}&&\nu^{(s)}_{N-1,N-2}&\cdots&\cdots&&\ \ \nu^{(s)}_{N-1,1}&\cr
      &\quad\ddots&&\ddots&&&&\ \rotatebox[origin=c]{85}{$\ddots$}\quad&\cr
      &                &\ddots&&\ddots&&&\rotatebox[origin=c]{85}{$\ddots$}\qquad\qquad&\cr
      &&&&&&&&\cr
      &     &                 &         &\nu^{(s)}_{2,2}&&\nu^{(s)}_{2,1}&&\cr
      &&&&&&&&\cr
      &&&&&\quad \nu^{(s)}_{1,1}&&&\cr
}
\en
By definition the set of all Gelfand-Tsetlin patterens $\{\bla^{(s)}\}$ has a bijection to the one of all $\{\bnu^{(s)}\}$ for each $s\in \{1,\cdots,n\}$. 
 
We introduce the following order for the set of indices $\cI=\{(l,j),\  1 \le j \le l \le N\ \}$:
\begin{align}
(\ell,j) \prec (\ell',j^\prime) \Leftrightarrow
j < j^\prime \ \mathrm{or} \ (j=j^\prime \ \mathrm{and} \ \ell> \ell').
\end{align}

Let us also set $\overline{A}_m(z):=A_{N+1-m}(z)$, $\overline{B}_m(z):=B_{N+1-n}(z)$. 
We rewrite the commutation relations \eqref{comAmBm}, \eqref{comAmBn} and \eqref{comCmBm} as
\begin{align}
&[\overline{A}_m(z),\overline{B}_n(w)]=0, \ \ \ (m \neq n), \label{newcrtwo}\\
&\overline{A}_m(z) \overline{B}_m(w)
=\frac{\theta(q^2z/w)}{\theta(z/w)} \overline{B}_m(w) \overline{A}_m(z)
-\frac{\theta(q^2)\theta(\Pi^{-1}_{N-m,N-m+1}z/w)}{\theta(\Pi^{-1}_{N-m,N-m+1})\theta(z/w)}
\overline{B}_m(z) \overline{A}_m(w)
, \label{newcrone} \\[1mm]
&\overline{C}_m(z)\overline{B}_m(w) \nn \\
&=\overline{A}_m(z)\overline{B}_m(w)\overline{A}_m(w)^{-1}\overline{A}_m(z)^{-1}\overline{C}_m(z)\overline{A}_m(w)
\nn \\
&\quad+\overline{A}_m(z)\overline{A}_{m+1}(w)\overline{A}_m(w q^{-2})^{-1}\overline{A}_{m-1}(wq^{-2})\frac{\bar{c}(\Pi^*_{N-m,N-m+1}z/w)}{\overline{b}(z/w)}
 \nn \\
&\quad-\overline{A}_{m-1}(z q^{-2})\overline{A}_{m+1}(z)\overline{A}_m(z q^{-2})^{-1}\overline{A}_m(w)
\frac{\bar{c}(\Pi_{N-m,N-m+1}z/w)}{\overline{b}(z/w)} .
\label{combCmbBm}
\end{align}

From  Theorem \ref{isoUqpEqp} and \eqref{AmK}, one has 
\be
&&\bA_m(z)=k^+_{N+1-m}(z)k^+_{N+1-(m-1)}(zq^{-2})\cdots k^+_N(zq^{-2(m-1)})
\en
on the level-0 representation $L(\la(z))$. Note that $\cN_k=1$ on it.  
Hence the action of $\bA_m(z)$ on the highest weight vector $\zeta$ 
is given by
\begin{align}
\overline{A}_m(z) \cdot\zeta&=\nu_1(q^{-2(m-1)}z) \nu_2(q^{-2(m-2)}z)
\cdots \nu_m(z) \zeta \nonumber \\
&=\prod_{s=1}^n \prod_{t=1}^m \theta(q^{2(\nu_t^{(s)}+t-m)}z/a^{(s)}) \zeta.
\label{actionhwrewrite}
\end{align}

We define a vector ${\xi}_{\bnu}$ labeled by the $n$-tuple of Gelfand-Tsetlin patterns $\bnu=(\bnu^{(1)},\bnu^{(2)},\dots,\bnu^{(n)})$ with 
$\bnu^{(s)}=(\nu_{\ell,j}^{(s)})_{1\leq j\leq l\leq N}$ as
\begin{align}
{\xi}_{\bnu}
:=\stackrel{\curvearrowright}
{\prod_{(\ell,j)\in \cI
}
}
\Bigg(\prod_{s=1}^n
\prod_{\substack{1 \le t \le \nu_{\ell, j}^{(s)}-\nu_{N,j}^{(s)}}}
\overline{B}_\ell(q^{-2(\nu_{N,j}^{(s)}+t-\ell+j-1)}a^{(s)})
\Bigg) \cdot\zeta. \label{GZNTtypebasis}
\end{align}
We show that a set of vectors $\{\xi_{\bnu}\}$ with $\bnu$ satisfying \eqref{condnu1} and \eqref{condnu2} forms a basis of the representation $L(\nu(z))$. 

For this purpose, it is crucial to note the following facts. 
\begin{itemize}
\item[(1)] Let $\bnu^{(s)}_0$ $(1\leq s\leq n)$ be the Gelfand-Tsetlin  pattern satisfying  
$\nu_{j,j}^{(s)}=\nu_{j+1,j}^{(s)}=\cdots=\nu_{N,j}^{(s)}$ for $1\leq j\leq N$, 
and set $\bnu_0=(\bnu^{(1)}_0,\cdots,\bnu^{(n)}_0)$. Then one has 
\be
&&{\xi}_{\bnu_0}=\zeta. 
\en
\item[(2)] For any  Gelfand-Tsetlin pattern $\bnu\neq \bnu_0$, there exist a set of integers $\ell, j, r$ satisfying $2\leq j\leq \ell\leq N$,\ $1\leq r\leq n$ and $\bnu=\bnu^{(\ell,j;r)}$, where 
 $\bnu^{(\ell,j;r)}$ denotes the $n$-tuple of the  Gelfand-Tsetlin patterns consisting of $\nu^{(s)}$ $(1\leq s\leq n)$ satisfying
\begin{itemize}
\item[i)] $\nu_{k,k}^{(s)}=\nu_{k+1,k}^{(s)}=\cdots=\nu_{N,k}^{(s)}$ for $1\leq k\leq j-1$, \ $1\leq s\leq n$
\item[ii)] $\nu_{\ell,j}^{(r)} > \nu_{\ell+1,j}^{(r)}=\cdots=\nu_{N,j}^{(r)}$ and 
$\nu_{\ell,j}^{(s)} \ge \nu_{\ell+1,j}^{(s)}=\cdots=\nu_{N,j}^{(s)}$
for all $s(\not=r)
$. 
\end{itemize}
Namely $(\ell,j)$ is the minimum pair in $\cI$ satisfying $\nu^{(r)}_{\ell,j}>\nu^{(r)}_{N,j}$. So we  have 
\be
\xi_{\bnu^{(\ell,j;r)}}&=&\bB_\ell(q^{-2(\nu^{(r)}_{\ell,j}-\ell+j-1)}a^{(r)})
\bB_\ell(q^{-2(\nu^{(r)}_{\ell,j}-\ell+j-2)}a^{(r)})\cdots \bB_\ell(q^{-2(\nu^{(r)}_{N,j}-\ell+j)}a^{(r)})\nn\\
&&\qquad \times\stackrel{\curvearrowright}
{\prod_{(\ell,j)\prec (\ell',j')}}
\Bigg(\prod_{s=1\atop \neq r}^n
\prod_{\substack{1 \le t \le \nu_{\ell', j'}^{(s)}-\nu_{N,j'}^{(s)}}}
\overline{B}_{\ell'}(q^{-2(\nu_{N,j'}^{(s)}+t-\ell'+j'-1)}a^{(s)})
\Bigg) \cdot\zeta.
\en
\end{itemize}  
\begin{proposition} \label{eigenvalueprop}
For a Gelfand-Tsetlin pattern $\bnu\neq \bnu_0$, let $\ell,j, r$  be the set of integers satisfying the 
conditions in (2). 
We have the following actions of operators. 
\begin{align}
&\overline{A}_m(z)\cdot 
{\xi}_{\bnu}
=\prod_{s=1}^n \prod_{t=1}^m \theta(q^{2(\nu_{m,t}^{(s)}+t-m)}z/a^{(s)})
\xi_{\bnu},
\label{eigenvectors}
\\
&\bB_\ell(q^{-2(\nu^{(r)}_{(\ell,j)}-\ell+j)}a^{(r)})\cdot \xi_{\bnu
}=\xi_{\bnu+\delta^{(r)}_{(\ell,j)}},\lb{actbB}
\\
&\bC_\ell(q^{-2(\nu^{(r)}_{(\ell,j)}-\ell+j)}a^{(r)})\cdot \xi_{\bnu
}\nn\\
&\quad =-\Delta^{(n-1)}\left(\frac{\theta(q^2\Pi^*_{N-\ell,N+1-\ell})}{\theta(\Pi^*_{N-\ell,N+1-\ell})}\right)\nn\\
&\qquad\times\prod_{s=1}^n\left(\prod_{t=1}^{\ell+1}
\theta(q^{2(\nu^{(s)}_{\ell+1,t}-\nu^{(r)}_{\ell,j}+t-j)})
\prod_{u=1}^{\ell-1}\theta(q^{2(\nu^{(s)}_{\ell-1,u}-\nu^{(r)}_{\ell,j}+u-j+1)})\right)\xi_{\bnu
-\delta^{(r)}_{(\ell,j)}}.\lb{actbC}
\end{align}
In particular, the coefficient of \eqref{actbC} does not vanish if $\nu^{(r)}_{N,N}-\nu^{(s)}_{N,N}\neq \Z$ modulo the fundamental parallelogram for $r\neq s$.  
\end{proposition}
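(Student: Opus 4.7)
The proof proceeds by induction on the total number $|\bnu|:=\sum_{s,\ell,j}(\nu^{(s)}_{\ell,j}-\nu^{(s)}_{N,j})$ of $\overline{B}$-factors appearing in the definition \eqref{GZNTtypebasis} of $\xi_{\bnu}$. The base case $\bnu=\bnu_0$ (so $\xi_{\bnu_0}=\zeta$) for the eigenvalue formula \eqref{eigenvectors} is exactly \eqref{actionhwrewrite}. For the inductive step I would use fact (2) stated in the excerpt: for $\bnu\neq\bnu_0$ there is a minimum pair $(\ell,j)$ and an index $r$ such that $\xi_{\bnu}=\overline{B}_\ell(w_0)\cdot\xi_{\bnu'}$ with $w_0=q^{-2(\nu^{(r)}_{\ell,j}-\ell+j-1)}a^{(r)}$ and $\bnu'=\bnu-\delta^{(r)}_{(\ell,j)}$. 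This reduction is the backbone of the whole argument.

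To establish \eqref{eigenvectors} inductively, I would distinguish $m\neq\ell$ from $m=\ell$. In the first case the commutation \eqref{newcrtwo} lets $\overline{A}_m(z)$ pass through $\overline{B}_\ell(w_0)$ freely and the induction hypothesis on $\xi_{\bnu'}$ gives the claim, since the eigenvalue formula only records the $m$-th row of $\bnu$. In the case $m=\ell$ I would apply \eqref{newcrone}: the first term produces the expected multiplicative factor $\theta(q^2z/w_0)/\theta(z/w_0)$, and the second, unwanted, term involves $\overline{B}_\ell(z)\overline{A}_\ell(w_0)\xi_{\bnu'}$. By the inductive eigenvalue formula for $\xi_{\bnu'}$ the factor at $(s,t)=(r,j)$ is $\theta(q^{2(\nu^{(r)}_{\ell,j}-1+j-\ell)}w_0/a^{(r)})=\theta(1)=0$, so this term drops out, and iterating the argument through all $\overline{B}_\ell(\cdot)$ factors (which by construction sit together at position $(\ell,j)$ thanks to the ordering $\prec$ and \eqref{comBmBn}) delivers precisely the product $\prod_s\prod_t\theta(q^{2(\nu^{(s)}_{\ell,t}+t-\ell)}z/a^{(s)})$.

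For \eqref{actbB}, the main point is that acting by $\overline{B}_\ell(q^{-2(\nu^{(r)}_{\ell,j}-\ell+j)}a^{(r)})$ simply adjoins one more factor to the ordered product defining $\xi_{\bnu}$; the fact that it must be placed at the right slot follows from \eqref{comBmBn} together with the observation that $\overline{B}_\ell$ commutes with all $\overline{B}_{\ell'}$ for $\ell'\neq\ell\pm 1$, and the remaining $\overline{B}_{\ell\pm 1}$ operators occurring to the right correspond to strictly later indices in $\prec$ and thus commute up to permutations of arguments already present. The verification of \eqref{actbC} is the heart of the proposition and the main obstacle. Here I would combine relation \eqref{combCmbBm} with the previously established \eqref{eigenvectors} and \eqref{actbB}: write $\xi_{\bnu}=\overline{B}_\ell(w_0)\xi_{\bnu'}$ with $w_0=q^{-2(\nu^{(r)}_{\ell,j}-\ell+j-1)}a^{(r)}$, apply $\overline{C}_\ell(z)$ with $z=q^{-2(\nu^{(r)}_{\ell,j}-\ell+j)}a^{(r)}=q^{-2}w_0$, and analyze the three terms on the right of \eqref{combCmbBm}. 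The first term, containing $\overline{C}_\ell(z)\overline{A}_\ell(w_0)$ acting ultimately on $\xi_{\bnu'}$, vanishes either by induction on $|\bnu|$ (applied to the $\overline{C}_\ell$ piece, using the minimality of $(\ell,j)$ to ensure $\bnu'$ still admits the relevant weight) or because $\overline{A}_\ell(w_0)\xi_{\bnu'}$ is proportional to $\xi_{\bnu'}$ with an eigenvalue that combines cleanly with the $\overline{A}_\ell(z)^{-1}$. The remaining two terms, via \eqref{actionhwrewrite} applied to the $\overline{A}_{m\pm 1}$, $\overline{A}_m$ eigenvalues on $\xi_{\bnu'}$, collapse through the theta function identity $\theta(q^2 z/w_0)/\theta(z/w_0)=\theta(q^{-2+2})/\theta(q^{-2})=0/\theta(q^{-2})$ at $z=q^{-2}w_0$ in the first piece and give a single surviving term proportional to $\xi_{\bnu-\delta^{(r)}_{(\ell,j)}}$; careful bookkeeping of the ratios $\bar c/\bar b$ together with the evaluation $\overline{A}_{\ell\pm 1}(z)\xi_{\bnu'}$ reproduces the explicit product of thetas and the dynamical prefactor $-\Delta^{(n-1)}(\theta(q^2\Pi^*_{N-\ell,N+1-\ell})/\theta(\Pi^*_{N-\ell,N+1-\ell}))$. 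The non-vanishing claim under the assumption $\nu^{(r)}_{N,N}-\nu^{(s)}_{N,N}\notin\mathbb{Z}$ mod the period lattice is then immediate because every factor $\theta(q^{2(\nu^{(s)}_{\ell+1,t}-\nu^{(r)}_{\ell,j}+t-j)})$ has argument that is not a lattice point. The main difficulty I expect is the combinatorial tracking in the $\overline{C}_\ell$ calculation—several terms must conspire to cancel via the dynamical Yang--Baxter nature of the coefficients, and isolating the single surviving contribution in the presence of multiple tensor factors (the $n$-fold coproduct $\Delta^{(n-1)}$) will require the sequence of subalgebras \eqref{seqsubalg} and a careful use of centrality of $A_m$ inside $E_{q,p}(\widehat{\gl}_{N-m+1})$.
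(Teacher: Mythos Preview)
Your approach is essentially the paper's: induction on the number of $\overline{B}$-factors, using \eqref{newcrtwo} for $m\neq\ell$, \eqref{newcrone} for $m=\ell$ (with the unwanted term killed by $\overline{A}_\ell(w_0)\xi_{\bnu'}=0$), and \eqref{combCmbBm} for the $\overline{C}_\ell$ action. Two points deserve sharpening. First, in your analysis of \eqref{combCmbBm} at $z=q^{-2}w_0$, $w=w_0$, the mechanism is simpler than you suggest: both the first and third terms carry a rightmost factor $\overline{A}_\ell(w_0)$ acting on $\xi_{\bnu'}$, which vanishes by \eqref{zeroaction}; only the second term survives, and there $\overline{A}_\ell(q^{-2}w_0)\overline{A}_\ell(w_0q^{-2})^{-1}$ cancels, leaving $\overline{A}_{\ell+1}(w_0)\overline{A}_{\ell-1}(q^{-2}w_0)$ evaluated on $\xi_{\bnu'}$ via \eqref{eigenvectors}. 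No appeal to the subalgebra chain \eqref{seqsubalg} or centrality is needed here. Second, for the non-vanishing claim you only address the factors with $s\neq r$; for $s=r$ you must also invoke the structural conditions i), ii) defining $\bnu^{(\ell,j;r)}$ together with \eqref{condnu2} to rule out integer exponents (specifically $\nu^{(r)}_{\ell,j}>\nu^{(r)}_{\ell+1,j}$ and $\nu^{(r)}_{\ell+1,j}\geq\nu^{(r)}_{\ell,j-1}=\nu^{(r)}_{\ell-1,j-1}$).
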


\begin{proof}
Let us consider \eqref{eigenvectors}. This can be proved in the same way as \cite{NT}, Theorem 3.2.
We show by induction on the number of operators
$\overline{B}_\ell(q^{-2(\nu_{N,j}^{(s)}+t-\ell+j-1)}a^{(s)})$
in \eqref{GZNTtypebasis}. The case $\bnu=\bnu_0$, the statement follows from  \eqref{actionhwrewrite}. 
Next, for $\bnu\neq \bnu_0$, 
 one can assume 
$\bnu=\bnu^{(\ell,j;r)}$ without loss of generality. 
Set 
\bea
&&\bnu^{(r)}-\delta_{(\ell,j)}:=(\nu^{(r)}_{1,1},\cdots,\nu^{(r)}_{\ell,j}-1,\cdots,\nu^{(r)}_{N,N})
\ena
and 
\bea
&&\bnu^{(\ell,j;r)}-\delta^{(r)}_{(\ell,j)}:=(\bnu^{(1)},\cdots,\bnu^{(r)}-\delta_{(\ell,j)},\cdots,\bnu^{(n)}).
\ena
One has the following relation. 
\begin{align}
\xi_{\bnu^{(\ell,j;r)}}
=
\overline{B}_\ell(q^{-2(\nu_{\ell,j}^{(r)}-\ell+j-1)}a^{(r)})\cdot
\xi_{\bnu^{(\ell,j;r)}-\delta^{(r)}_{(\ell,j)}}.
\label{relationNTbasis}
\end{align}
Let us set 
\begin{align}
\overline{a}_{m,
\bnu}(z):=\prod_{s=1}^n \prod_{t=1}^m \theta(q^{2(\nu_{m,t}^{(s)}+t-m)}z/a^{(s)}).
\label{eigenvalues}
\end{align}
For $m(\neq \ell)$, by the induction assumption
\begin{align}
\overline{A}_m(z)\cdot
\xi_{\bnu^{(\ell,j;r)}-\delta^{(r)}_{(\ell,j)}
}=
\overline{a}_{m,\bnu^{(\ell,j;r)}-\delta^{(r)}_{(\ell,j)}
}(z)
\xi_{\bnu^{(\ell,j;r)}-\delta^{(r)}_{(\ell,j)}
}, \label{inductiveassumption}
\end{align}
and the commutativity of $\bA_m(z)$  with $\bB_\ell(\bullet)$, one has
\begin{align}
\overline{A}_m(z)\cdot \xi_{\bnu^{(\ell,j;r)}
}
&=\overline{A}_m(z)
\overline{B}_\ell(q^{-2(\nu_{\ell,j}^{(r)}-\ell+j-1)}a^{(r)})\cdot
\xi_{\bnu^{(\ell,j;r)}-\delta^{(r)}_{(\ell,j)}
} \nonumber \\
&=\overline{B}_\ell(q^{-2(\nu_{\ell,j}^{(r)}-\ell+j-1)}a^{(r)})
\overline{A}_m(z)\cdot
\xi_{\bnu^{(\ell,j;r)}-\delta^{(r)}_{(\ell,j)}
} \nonumber \\
&=\overline{a}_{m,\bnu^{(\ell,j;r)}-\delta^{(r)}_{(\ell,j)}
}(z)\overline{B}_\ell(q^{-2(\nu_{\ell,j}^{(r)}-\ell+j-1)}a^{(r)})
\cdot\xi_{\bnu^{(\ell,j;r)}-\delta^{(r)}_{(\ell,j)}
} \nonumber \\
&=\overline{a}_{m,\bnu^{(\ell,j;r)}
}(z)
\xi_{\bnu^{(\ell,j;r)}
}.
\end{align}
 In the last line, we used  the equality
\begin{align}
\overline{a}_{m,
\bnu^{(\ell,j;r)}
}(z)=\overline{a}_{m,
\bnu^{(\ell,j;r)}-\delta^{(r)}_{(\ell,j)}
}(z). \label{sameeigenvalue}
\end{align}
This is  due to the fact that 
$\ell$ is the largest index of the $\bB$-operators appearing in $\txi_{\bnu^{(\ell,j;r)}-\delta^{(r)}_{(\ell,j)}}$.

For  $m=\ell$, note the following relation
\begin{align}
\overline{a}_{m,\bnu^{(\ell,j;r)}
}(z)=
\frac{\theta(q^{2(\nu_{m,j}^{(r)}-m+j)}z/a^{(r)})}{\theta(q^{2(\nu_{m,j}^{(r)}-m+j-1)}z/a^{(r)})
}
\overline{a}_{m,\bnu^{(\ell,j;r)}-\delta^{(r)}_{(\ell,j)}
}(z), \label{relationeigenvalues}
\end{align}
from \eqref{eigenvalues}.
From the inductive assumption
\eqref{inductiveassumption}, we get
\begin{align}
&\overline{A}_\ell(q^{-2(\nu_{\ell,j}^{(r)}-\ell+j-1)}a^{(r)})\cdot
\xi_{\bnu^{(\ell,j;r)}-\delta^{(r)}_{(\ell,j)}
} 
=\overline{a}_{\ell,\bnu^{(\ell,j;r)}-\delta^{(r)}_{(\ell,j)}
}(q^{-2(\nu_{\ell,j}^{(r)}-\ell+j-1)}
a^{(r)})
\xi_{\bnu^{(\ell,j;r)}-\delta^{(r)}_{(\ell,j)}
}=0. \label{zeroaction}
\end{align}
Setting $w$ in \eqref{newcrone} to $w=q^{-2(\nu_{\ell,j}^{(r)}-\ell+j-1)}a^{(r)}$
and making both sides act on  $\xi_{\bnu^{(\ell,j;r)}-\delta^{(r)}_{(\ell,j)}}$, 
we get from \eqref{relationeigenvalues} and \eqref{zeroaction}, 
\begin{align}
&\overline{A}_\ell(z)
\overline{B}_\ell(q^{-2(\nu_{\ell,j}^{(r)}-\ell+j-1)}a^{(r)})\cdot
\xi_{\bnu^{(\ell,j;r)}-\delta^{(r)}_{(\ell,j)}
} \nonumber \\
&=\frac{\theta(q^{2(\nu_{\ell,j}^{(r)}-\ell+j)}z/a^{(r)})}{\theta(q^{2(\nu_{\ell,j}^{(r)}-\ell+j-1)}z/a^{(r)})}
\overline{B}_\ell(q^{-2(\nu_{\ell,j}^{(r)}-\ell+j-1)}a^{(r)})
\overline{A}_\ell(z)\cdot
\xi_{\bnu^{(\ell,j;r)}-\delta^{(r)}_{(\ell,j)}
} \nonumber \\
&=\frac{\theta(q^{2(\nu_{\ell,j}^{(r)}-\ell+j)}z/a^{(r)})}{\theta(q^{2(\nu_{\ell,j}^{(r)}-\ell+j-1)}z/a^{(r)})}
\overline{B}_\ell(q^{-2(\nu_{\ell,j}^{(r)}-\ell+j-1)}a^{(r)})
\overline{a}_{\ell,\bnu^{(\ell,j;r)}-\delta^{(r)}_{(\ell,j)}
}(z)
\xi_{\bnu^{(\ell,j;r)}-\delta^{(r)}_{(\ell,j)}
} \nonumber \\
&=\overline{a}_{\ell,\bnu^{(\ell,j;r)}
}(z)\overline{B}_\ell(q^{-2(\nu_{\ell,j}^{(r)}-\ell+j-1)}a^{(r)})
\xi_{\bnu^{(\ell,j;r)}-\delta^{(r)}_{(\ell,j)}
}. \label{beforeendofproof}
\end{align}
Then the statement follows from \eqref{relationNTbasis}. 

The statement \eqref{actbB} follows from the definition of $\xi_{\bnu}$.

Using \eqref{combCmbBm}, \eqref{relationNTbasis}, \eqref{eigenvectors} and \eqref{zeroaction}, one obtains \eqref{actbC}. The non-vanishment of \eqref{actbC} follows from 
due to \eqref{condnu2} and the conditions $i)$ and $ii)$ for $\bnu^{(\ell,j;r)}$. 
Namely the latter 2 conditions yield $\nu^{(r)}_{\ell,j}>\nu^{(r)}_{\ell+1,j}$ and 
$\nu^{(r)}_{\ell+1,j}\geq \nu^{(r)}_{\ell,j-1}=\nu^{(r)}_{\ell-1,j-1}$.

\end{proof}

The following is an elliptic version of  Theorem 5.2.4 in \cite{Molev}.
\begin{theorem} \label{basistheorem}
If $ \nu_{N,N}^{(r)} - \nu_{N,N}^{(s)} \not\in \mathbb{Z}$ modulo the fundamental parallelogram for $r \neq s$,
the vectors $\xi_{\bnu}$ with 
 $\bnu=(\bnu^{(1)},\dots,\bnu^{(n)})$ satisfying \eqref{condnu1} and \eqref{condnu2} form a basis of the representation $L(\la(z))$.
\end{theorem}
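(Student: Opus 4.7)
The plan is to combine three ingredients: linear independence of $\{\xi_{\bnu}\}$ via a diagonal action of a commuting family, nonvanishing of each $\xi_{\bnu}$ via a reverse $\bC$-application, and a dimension count matching $\#\{\bnu\}$ with $\dim L(\la(z))$.

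\textbf{Step 1 (Linear independence).} By Proposition \ref{prop:comkls} and Proposition \ref{comABC} the coefficients of $\bA_m(z)$ for $m=1,\dots,N$ generate a commutative subalgebra, and Proposition \ref{eigenvalueprop} shows that each $\xi_{\bnu}$ is a joint eigenvector with eigenvalue $\prod_{s=1}^n\prod_{t=1}^m\theta(q^{2(\nu^{(s)}_{m,t}+t-m)}z/a^{(s)})$. Under the hypothesis $\nu^{(r)}_{N,N}-\nu^{(s)}_{N,N}\notin\Z$ modulo the fundamental parallelogram, the centres $a^{(s)}$ corresponding to distinct $s$ lie in disjoint $p^{\Z}$-orbits in $\C^\times$, and the inequalities \eqref{condnu2} propagate the genericity to every $\nu^{(s)}_{m,t}$. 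Grouping the zeros of the eigenvalue according to the $a^{(s)}$-class thus recovers $\bnu$ uniquely, so different patterns produce different eigenvalues and the $\xi_{\bnu}$ are linearly independent.

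\textbf{Step 2 (Nonvanishing).} To show $\xi_{\bnu}\ne 0$, I will apply the reversed string of $\bC_{\ell}$-operators matching each $\bB_{\ell}$ used in \eqref{GZNTtypebasis} (outer factors first). Using \eqref{actbC} inductively, the result is a scalar multiple of $\zeta$; the accumulated scalar factorises into theta-function values whose arguments, by \eqref{condnu2} and the genericity assumption, are neither $1$ nor $p$-translates thereof (the ``diagonal'' factors are controlled by the Gelfand--Tsetlin inequalities, and the ``cross'' factors with $r\ne s$ are controlled by the hypothesis on $\nu^{(r)}_{N,N}-\nu^{(s)}_{N,N}$, as already remarked below Proposition \ref{eigenvalueprop}). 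Hence the coefficient is nonzero and $\xi_{\bnu}\ne 0$.

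\textbf{Step 3 (Dimension count).} Under the genericity hypothesis the tensor product $L(\la^{(1)}(z))\tot\cdots\tot L(\la^{(n)}(z))$ is irreducible, so it coincides with $L(\la(z))$. For $N=2$ this is Proposition \ref{tensorVla}; for general $N$ I will mimic the same argument, using the evaluation homomorphism \eqref{evaluationhom} to realise each factor as the evaluation module attached to the classical $U_q(\gl_N)$-module $L(\la^{(s)})$ and invoking Corollary \ref{tensorPQ} to reduce irreducibility to pairwise general position of the Drinfeld data \eqref{DTglN}. Then $\dim L(\la(z))=\prod_s \dim L(\la^{(s)})=\prod_s \#\{\bnu^{(s)}\}$, which equals the number of tuples $\bnu$. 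Together with Steps~1 and~2 this proves that $\{\xi_{\bnu}\}$ is a basis.

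The step I expect to be the main obstacle is Step~3: the commutation relations of Proposition \ref{comABC} are still dynamical at the elliptic level, so the classical Tarasov-type criterion for tensor-product irreducibility has to be transported carefully through the isomorphism $\cU\cong\cE$ of Theorem \ref{isoUqpEqp}. A fallback, if this causes trouble, is to bypass tensor-product irreducibility and prove directly that $\mathrm{span}\{\xi_{\bnu}\}$ is $\cU$-stable by expanding $\bB_\ell(w),\bC_\ell(w)$ in $w$ via partial fractions of \eqref{newcrone}-\eqref{combCmbBm} and using Proposition \ref{HCABC} to reach all basic half currents; but this route looks considerably heavier and is not the cleanest path.
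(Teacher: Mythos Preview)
Your Steps 1 and 2 are essentially the paper's argument: linear independence via distinct joint $\bA_m(z)$-eigenvalues (Proposition \ref{eigenvalueprop}), and nonvanishing by stripping off the $\bB$-operators with $\bC$-operators using \eqref{actbC}.

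The divergence is in Step 3. You propose to prove first that the tensor product $L(\la^{(1)}(z))\tot\cdots\tot L(\la^{(n)}(z))$ is irreducible under the genericity hypothesis, and then match dimensions. The paper avoids this entirely. Since $L(\la(z))$ is by construction a subquotient of that tensor product, one has the trivial inequality
\[
\dim L(\la(z))\ \le\ \prod_{s=1}^n \dim L(\la^{(s)}(z)).
\]
Steps 1 and 2 already produce $\prod_s \dim L(\la^{(s)}(z))$ linearly independent nonzero vectors inside $L(\la(z))$ (the number of $n$-tuples of Gelfand--Tsetlin patterns equals $\prod_s \dim L(\la^{(s)})$ by the classical count, via the evaluation homomorphism \eqref{evaluationhom}). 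This forces equality of dimensions and hence that $\{\xi_{\bnu}\}$ is a basis. In other words, irreducibility of the tensor product is a \emph{consequence} of the counting, not an input; you do not need to transport any Tarasov-type criterion through Theorem \ref{isoUqpEqp}, and your fallback plan is unnecessary.

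Your proposed direct proof of tensor-product irreducibility for general $N$ (``mimic the $N=2$ argument'') is also not obviously available: Proposition \ref{tensorVla} rests on the $q$-segment combinatorics specific to $\glth$, and the paper gives no higher-rank analogue. So while your route is not wrong in principle, it sets up as its main obstacle a statement that the paper's argument gets for free.

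One minor omission: the paper also records that $\bB_\ell(q^{-2(\nu^{(r)}_{\ell+1,j+1}-\ell+j)}a^{(r)})\cdot\xi_{\bnu}=0$ when applying $\bB_\ell$ would violate \eqref{condnu2}, by the eigenvalue argument from Theorem \ref{gl2GTbasis}. This is not strictly needed for the basis claim itself, but it confirms that the construction is closed under the raising operators and is worth noting.
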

\begin{proof}
Note that by definition we have $\la(z)=\nu(z)$ and $\la^{(s)}(z)=\nu^{(s)}(z)$ $(1\leq s\leq n)$,  
and  $\ds{\dim L(\la(z))\leq \prod_{s=1}^n \dim L(\la^{(s)}(z))}$. 
From \eqref{eigenvectors},  the vectors $\xi_{\bnu}$ labeled by the different Gelfand-Tsetlin patterns belong to the different simultaneous eigenvalues of $\bA_\ell(z)$ $(1\leq \ell\leq N)$, so that they are linearly 
independent.   
By applying \eqref{actbC} repeatedly, one can remove all $\bB_\ell$-operators from $\xi_{\bnu}$ inductively,   
and obtains $\zeta$ with non-zero coefficient under the assumption.  Hence 
we have $\xi_{\bnu} \neq 0$. 
The total number of the vectors $\xi_{\bnu}$, i.e.
the total number of the Gelfand-Tsetlin patterns is $\prod_{k=1}^n \mathrm{dim} L(\la^{(k)}(z))$. 
This is due to the fact that the finite-dimensional irreducible representation $L(\la^{(s)}(z))$ of $U_{q,p}(\glnh)$ is obtained by the evaluation homomorphism \eqref{evaluationhom} to the $U_q(\gl_N)$-module $L(\la^{(s)})$ and to the classical result that $\dim L(\la^{(s)})$ coincides with the number of the Gelfand-Tsetlin patterns $\bla^{(s)}$. 
Finally, to keep the condition $\nu^{(r)}_{\ell+1,j+1}-\nu^{(r)}_{\ell,j}\in \Z_{\geq 0}$ one needs to show  $\ds{\bB_\ell(q^{-2(\nu^{(r)}_{\ell+1,j+1}-\ell+j)}a^{(r)})\cdot \xi_{\bnu}=0}$. In fact, this vanishment follows from the same argument as in the proof of Theorem \ref{gl2GTbasis}. 
\end{proof}

\color{black}

\section{Tensor Product of the Vector Representations}
In this section,
we consider the tensor product of the vector representations and 
construct the Gelfand-Tsetlin bases in terms of the $L$-operator. 
We derive the relation between them and the ones in 
in the previous section
for the case of tensor product of the vector representations.

Before considering the vector representation, we prepare the following. 
\begin{dfn}
We call that a vector $\eta$ is
a singular vector of weight $\mu(z)=(\mu_k(z),\dots,\mu_{N}(z))$
with respect to the subalgebra
$E_{q,p}(\widehat{\gl}_{N-k+1})$ (see \eqref{seqsubalg}) if it satisfies
\begin{align}
L^+_{ij}(z) \cdot \eta&=0, \ \ \ k \le j < i \le N, \\
L^+_{ii}(z) \cdot \eta&=\mu_i(z)\eta, \ \ \ k \le i \le N.
\end{align}
\end{dfn}
We have the following statement analogous to \cite{Molev} Lemma 5.2.1.
\begin{lemma} \label{singularlemma} 
For $k\in \{1,2,\cdots,N\}$, let $\eta$ be a singular vector of weight $\mu(z)=(\mu_k(z),\dots,\mu_{N}(z))$
with respect to the subalgebra $E_{q,p}(\widehat{\gl}_{N-k+1})$. 
Assume that $\eta$ satisfies $L^+_{kk}(\alpha) \cdot \eta=0$
for some $\alpha \in {\mathbb C}^\times$.
Then $L^+_{k-1,k}(\alpha) \cdot \eta$ is also a singular vector
with respect to  $E_{q,p}(\widehat{\gl}_{N-k+1})$,
with weight given by
\begin{align}
\Bigg(
\frac{\theta(q^2 z/\alpha)}{\theta(z/\alpha)} \mu_k(z),
\mu_{k+1}(z), \dots, \mu_N(z)
\Bigg).
\end{align}
\end{lemma}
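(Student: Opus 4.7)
The plan is to verify the three defining conditions of a singular vector with the stated weight for $\eta':=L^+_{k-1,k}(\alpha)\cdot\eta$, namely
(i) $L^+_{ij}(z)\eta'=0$ for $k\le j<i\le N$,
(ii) $L^+_{ii}(z)\eta'=\mu_i(z)\eta'$ for $k+1\le i\le N$, and
(iii) $L^+_{kk}(z)\eta'=\tfrac{\theta(q^2 z/\alpha)}{\theta(z/\alpha)}\mu_k(z)\eta'$.

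The common tool is the $RLL$ relation \eqref{RLL}. Taking the matrix element of \eqref{RLL} with row indices $(i_1,i_2)=(i,k-1)$ and column indices $(j_1,j_2)=(j,k)$ and setting $z_1=z,z_2=\alpha$, one uses the fact that the elliptic dynamical $R$-matrix \eqref{ellipticrmatrix} preserves the multiset of tensor labels, so each side contains at most two terms. For $i>k-1$ the LHS reads
\begin{align*}
\bar{b}(z/\alpha)L^+_{ij}(z)L^+_{k-1,k}(\alpha) + \bar{c}(z/\alpha,\Pi_{k-1,i})L^+_{k-1,j}(z)L^+_{ik}(\alpha),
\end{align*}
and similarly the RHS is a two-term sum with $z$ and $\alpha$ interchanged and theta functions starred. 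For (i) every factor of the form $L^+_{ik}(\bullet)$ with $i>k$ annihilates $\eta$ by singularity, as does $L^+_{ij}(\bullet)$ when $k\le j<i$; the only surviving contribution is $\bar{b}(z/\alpha)L^+_{ij}(z)L^+_{k-1,k}(\alpha)\cdot\eta$, forcing $L^+_{ij}(z)\eta'=0$. For (ii) the surviving relation reads $\bar{b}(z/\alpha)L^+_{ii}(z)\eta'=\bar{b}(z/\alpha)L^+_{k-1,k}(\alpha)\mu_i(z)\eta$, which gives $L^+_{ii}(z)\eta'=\mu_i(z)\eta'$ once the scalar $\mu_i(z)$ is moved through $L^+_{k-1,k}(\alpha)$.

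For (iii) I specialize to $(i_1,i_2,j_1,j_2)=(k,k-1,k,k)$. Since $j_1=j_2$, the RHS collapses to the single term $L^+_{k-1,k}(\alpha)L^+_{kk}(z)$, giving
\begin{align*}
\bar{b}(z/\alpha)L^+_{kk}(z)L^+_{k-1,k}(\alpha) + \bar{c}(z/\alpha,\Pi_{k-1,k})L^+_{k-1,k}(z)L^+_{kk}(\alpha) = L^+_{k-1,k}(\alpha)L^+_{kk}(z).
\end{align*}
The crucial point is that the second term on the LHS vanishes on $\eta$ by the hypothesis $L^+_{kk}(\alpha)\eta=0$; substituting $L^+_{kk}(z)\eta=\mu_k(z)\eta$ on the RHS and dividing by $\bar{b}(z/\alpha)=\theta(z/\alpha)/\theta(q^2z/\alpha)$ yields exactly the claimed eigenvalue.

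The main obstacle is bookkeeping the dynamical moment-map relations \eqref{lgr}--\eqref{rgr}: the weight functions $\mu_i(z)\in\mathbb{F}$ are not central, so when they are passed through $L^+_{k-1,k}(\alpha)$ their arguments $(P,P+h)$ are shifted by $-Q_{\bar\epsilon_k}$ and $-Q_{\bar\epsilon_{k-1}}$ respectively. One must check that for $i\ne k$ these shifts leave $\mu_i(z)$ unchanged as the eigenvalue on $\eta'$, while for $i=k$ they combine with $\bar{b}(z/\alpha)^{-1}$ to produce precisely the theta-function factor in (iii). This is a routine verification in the $H$-Hopf algebroid framework of Sec.~\ref{sec:Hopfalgebroid}, using the explicit bi-grading weight $(-\bar\epsilon_{k-1},-\bar\epsilon_k)$ of the element $L^+_{k-1,k}(\alpha)$, but it is the only step where the elliptic dynamical structure enters non-trivially beyond the Yangian-type argument of \cite{Molev}, Lemma 5.2.1.
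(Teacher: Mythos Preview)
Your proof is correct and follows the same approach as the paper: extract the relevant matrix elements of the $RLL$ relation \eqref{RLL}, use singularity of $\eta$ together with the hypothesis $L^+_{kk}(\alpha)\eta=0$ to kill the unwanted terms, and divide by $\bar{b}(z/\alpha)$. Your final paragraph's worry about dynamical shifts of the $\mu_i(z)$ is unnecessary here---the eigenvalues are scalar Laurent series (cf.\ Definition~\ref{def:hwr}), and the theta-function factor in (iii) comes entirely from $\bar{b}(z/\alpha)^{-1}=\theta(q^2z/\alpha)/\theta(z/\alpha)$, not from any moment-map shift.
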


\begin{proof}
What we need to show are the following:
\begin{align}
L^+_{ij}(z) L^+_{k-1,k}(\alpha) \cdot \eta&=0, \ \ \
k \le j < i \le N, \label{toshowsingularone} \\
L^+_{kk}(z) L^+_{k-1,k}(\alpha) \cdot \eta&=
\frac{1
}{\overline{b}(z/\alpha)} \mu_k(z)
L^+_{k-1,k}(\alpha) \cdot \eta, \label{toshowsingulartwo} \\
L^+_{ii}(z) L^+_{k-1,k}(\alpha) \cdot \eta&=
\mu_i(z) L^+_{k-1,k}(\alpha) \cdot \eta, \ \ \
k+1 \le i \le N. \label{toshowsingularthree}
\end{align}
These relations can be shown by using the properties and assumption
for the singular vector $\eta$,
together with the following relations which are particular matrix elements
of the defining relations \eqref{RLL}
\begin{align}
&\overline{b}(z_1/z_2) L_{ij}^+(z_1)
L_{k-1,k}^+(z_2)
+
\overline{c}(z_1/z_2,\Pi_{k-1,i}) L_{k-1,j}^+(z_1)
L_{ik}^+(z_2) \nonumber \\
=&L_{k-1,j}^+(z_2) L_{ik}^+(z_1)
c(z_1/z_2,\Pi^*_{kj})
+
L_{k-1,k}^+(z_2) L_{ij}^+(z_1)
\overline{b}(z_1/z_2), \ \ \ k+1 \le j \le i \le N,
\label{matrixlelementsforsinularone}
\\
&\overline{b}(z_1/z_2) L_{ik}^+(z_1)
L_{k-1,k}^+(z_2)
+
\overline{c}(z_1/z_2,\Pi_{k-1,i}) L_{k-1,k}^+(z_1)
L_{ik}^+(z_2) \nonumber \\
=&L_{k-1,k}^+(z_2) L_{ik}^+(z_1)
, \ \ \ k \le i \le N. \label{matrixlelementsforsinulartwo}
\end{align}

\eqref{toshowsingularone} for $k+1 \le j < i \le N$
can be derived by setting $z_1=z$, $z_2=\alpha$ in
\eqref{matrixlelementsforsinularone} and acting both hand sides on $\eta$.
We get
\begin{align}
&\overline{b}(z/\alpha) L_{ij}^+(z)
L_{k-1,k}^+(\alpha) \cdot \eta
+
\overline{c}(z/\alpha,\Pi_{k-1,i}) L_{k-1,j}^+(z)
L_{ik}^+(\alpha) \cdot \eta \nonumber \\
=&L_{k-1,j}^+(\alpha) L_{ik}^+(z)
c(z/\alpha,\Pi^*_{kj}) \cdot \eta
+
L_{k-1,k}^+(\alpha) L_{ij}^+(z)
\overline{b}(z/\alpha) \cdot \eta \nonumber \\
=&c(z/\alpha,\Pi^*_{kj})
L_{k-1,j}^+(\alpha) L_{ik}^+(z) \cdot \eta
+\overline{b}(z/\alpha)
L_{k-1,k}^+(\alpha) L_{ij}^+(z) \cdot \eta,
\end{align}
and using $L_{ik}^+(\alpha) \cdot \eta=
L_{ik}^+(z) \cdot \eta=L_{ij}^+(z) \cdot \eta=0
$ since $\eta$ is a singular vector and $\overline{b}(z/\alpha) \neq 0$,
we get \eqref{toshowsingularone} for $k+1 \le j < i \le N$.

\eqref{toshowsingularone} with $j=k$ follows from
\eqref{matrixlelementsforsinulartwo} for $k+1 \le i \le N$.
We set $z_1=z$, $z_2=\alpha$ in
\eqref{matrixlelementsforsinulartwo} and act both hand sides on $\eta$
to get
\begin{align}
&\overline{b}(z/\alpha) L_{ik}^+(z)
L_{k-1,k}^+(\alpha) \cdot \eta
+
\overline{c}(z/\alpha,\Pi_{k-1,i}) L_{k-1,k}^+(z)
L_{ik}^+(\alpha) \cdot \eta 
=
L_{k-1,k}^+(\alpha) L_{ik}^+(z) \cdot \eta,
\end{align}
from which we find \eqref{toshowsingularone} when $j=k$ follows by noting
${L}_{ik}^+(\alpha) \cdot \eta={L}_{ik}^+(z) \cdot \eta=0$ and $\overline{b}(z/\alpha) \neq 0$.

\eqref{toshowsingulartwo} follows from the case $k=i$ of
\eqref{matrixlelementsforsinulartwo}.
Setting $z_1=z$, $z_2=\alpha$ and acting on $\eta$ yields
\begin{align}
&\overline{b}(z/\alpha) L_{kk}^+(z)
L_{k-1,k}^+(\alpha) \cdot \eta
+
\overline{c}(z/\alpha,\Pi_{k-1,k}) L_{k-1,k}^+(z)
L_{kk}^+(\alpha) \cdot \eta 
=
L_{k-1,k}^+(\alpha) L_{kk}^+(z) \cdot \eta,
\end{align}
and using $L_{kk}^+(z) \cdot \eta=\mu_k(z) \eta$, $\overline{b}(z/\alpha) \neq 0$
and the additional assumption on the singular vector
$L_{kk}^+(\alpha) \cdot \eta=0$ gives
\eqref{toshowsingulartwo}.

Finally, \eqref{toshowsingularthree} follows from
the case $j=i$ of
\eqref{matrixlelementsforsinularone}.
Setting $z_1=z$, $z_2=\alpha$ and acting on $\eta$,
we get
\begin{align}
&\overline{b}(z/\alpha) L_{ii}^+(z)
L_{k-1,k}^+(\alpha) \cdot \eta
+
\overline{c}(z/\alpha,\Pi_{k-1,i}) L_{k-1,i}^+(z)
L_{ik}^+(\alpha) \cdot \eta \nonumber \\
=&c(z/\alpha,\Pi^*_{ki})
L_{k-1,i}^+(\alpha) L_{ik}^+(z) \cdot \eta
+\overline{b}(z/\alpha)
L_{k-1,k}^+(\alpha) L_{ii}^+(z) \cdot \eta,
\end{align}
and we note \eqref{toshowsingularthree} follows by
using $L_{ik}^+(\alpha) \cdot \eta=
L_{ik}^+(z) \cdot \eta=0$, $L_{ii}^+(z) \cdot \eta=\mu_i(z) \eta$
and $\overline{b}(z/\alpha) \neq 0$.

\end{proof}

\subsection{The vector representation}
Let  $(\pi_w,\widehat{V}_w)$ be the vector representation of $U_{q,p}(\widehat{\gl}_N)$ with 
$\widehat{V}=\bigoplus_{\mu=1}^N\FF v_\mu$ and $\widehat{V}_w= \widehat{V}[w,w^{-1}]$. 
See \cite{KonnoJintone}, B.2. We assume $e^{ Q_\alpha} v_\mu=v_\mu$ ($\alpha \in \bgH^*
$). The  action of the $L$-operator ${L}^+(z)$ on $\widehat{V}_w$ is given by
\begin{align}
\displaystyle \pi_w(L_{ij}^{+}(z))v_\nu
=\pi_w (L_{ij}^{+} (z,\Pi^*))e^{-Q_{\overline{\epsilon}_j}}
v_\nu
=\sum_{\mu=1}^N
&{\tR}(z/w,\Pi^*)_{i \mu}^{j \nu} v_\mu
\label{levelzeroactionofL}
\end{align}
where we take 
\begin{align}
&\tR(z,\Pi^*)=a(z) {\bR}(z,\Pi^*)\lb{def:tR}
\end{align}
with  $a(z)=\theta(q^2z)$. 
See Figure \ref{figDR}.
Then the  action of $L^+(z)$ on $\widehat{V}_z \widetilde{\otimes} 
\widehat{V}_{w_1} \widetilde{\otimes} \cdots
\widetilde{\otimes} \widehat{V}_{w_n}$ is given by the following (\cite{KonnoJinttwo}, Proposition 4.1). 
\begin{proposition}  \label{coproduct}
\begin{align}
&(\pi_{w_1} \otimes \cdots \otimes \pi_{w_n})
\Delta^{\prime (n-1)} (L^+(z)) \nonumber \\
=&\overline{R}^{+(0n)}(z/w_n,\Pi^* q^{2 \sum_{j=1}^{n-1} h^{(j)}})
\overline{R}^{+(0 n-1)}(z/w_{n-1},\Pi^* q^{2 \sum_{j=1}^{n-2} h^{(j)}})
\cdots
\overline{R}^{+(01)}(z/w_1,\Pi^*).
\label{actioncoproduct}
\end{align}
Here $\Delta'$ denotes the opposite comultiplication of $\Delta$ in Sec.\ref{sec:Hopfalgebroid}. 
\end{proposition}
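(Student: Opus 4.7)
\emph{Proof sketch.} The plan is to prove (5.4) by induction on $n$. The base case $n=1$ is immediate from (5.2), which identifies $\pi_{w_1}(L^+(z))$ on $\widehat V_z \otimes \widehat V_{w_1}$ with the dynamical $R$-matrix $\overline R^{+(01)}(z/w_1,\Pi^*)$ (up to the scalar $a(z/w_1)$ in the normalization $\widetilde R = a(z)\overline R$).

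For $n>1$, the first step is to turn $\Delta'^{(n-1)}(L^+(z))$ into an ordered product of $L$-operators in the $n$ coalgebra slots. Iterating the coproduct rule (2.38), $\Delta(L^+_{ij}(z))=\sum_k L^+_{ik}(z)\tot L^+_{kj}(z)$, and using coassociativity, one obtains in auxiliary-space notation
$$\Delta^{(n-1)}(L^+(z)) = L^{+(0,1)}(z)\,L^{+(0,2)}(z)\cdots L^{+(0,n)}(z),$$
where the superscript $(0,i)$ means matrix indices in an auxiliary copy of $\widehat V$ and algebra entries in the $i$-th tensor slot. Since $\Delta'=\tau\circ\Delta$, iteration and coassociativity of $\Delta'$ reverse the order of the $n$ algebra factors completely, so
$$\Delta'^{(n-1)}(L^+(z)) = L^{+(0,n)}(z)\,L^{+(0,n-1)}(z)\cdots L^{+(0,1)}(z).$$
Applying $\pi_{w_1}\otimes\cdots\otimes\pi_{w_n}$ slot by slot and invoking (5.2) then replaces each $L^{+(0,i)}(z)$ by a dynamical $R$-matrix acting on $\widehat V_z\otimes\widehat V_{w_i}$.

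The main obstacle is the bookkeeping of dynamical shifts. By (2.44), $L^+_{ij}(z)$ equals the dynamical $L^+_{ij}(z,\Pi^*)$ multiplied on the right by an $e^{-Q}$ operator, so each application of $\pi_{w_i}$ yields $\overline R^{+(0i)}(z/w_i,\Pi^*)$ together with a trailing shift factor. To recover the product on the right-hand side of (5.4), these shift operators have to be commuted past the $R$-matrices sitting in the slots $j<i$; each such commutation translates $\Pi^*$ by a factor of $q^{2h^{(j)}}$ controlled by the weight in slot $j$. Accumulating the shifts produces exactly $\Pi^*\,q^{2\sum_{j=1}^{i-1}h^{(j)}}$ in the $R$-matrix for slot $i$, matching (5.4). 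The consistency of this accumulation at every step is controlled by the fully dynamical $RLL$-relation (2.45) and the dynamical Yang--Baxter equation (2.19); one can alternatively package the whole argument as a single-step induction from $n-1$ to $n$ using those identities directly.
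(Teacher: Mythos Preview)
The paper does not actually prove this proposition; it is quoted with the citation ``(\cite{KonnoJinttwo}, Proposition 4.1)'' and used as a black box thereafter. So there is no in-paper argument to compare against, and your sketch has to be judged on its own merits.

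Your outline is the standard one and is essentially sound. At level $0$ the iterated opposite comultiplication does give $\Delta'^{(n-1)}(L^+(z)) = L^{+(0,n)}(z)\cdots L^{+(0,1)}(z)$ in auxiliary-space notation, and applying $\pi_{w_i}$ in each slot yields the product of dynamical $R$-matrices. The dynamical-shift accounting is the real content. A cleaner way to phrase it than ``commuting $e^{-Q}$ operators through'' is to use the $H$-algebroid tensor product directly: in $\widetilde\otimes$ the right moment map of one factor is identified with the left moment map of the next (see the definition of $\Delta$ in \S2.7 and the bigrading \eqref{bigradingUqp}), so when the representation in slot $i$ evaluates the dynamical parameter it automatically sees $\Pi^*$ shifted by $q^{2\sum_{j<i} h^{(j)}}$. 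Your description via $e^{-Q}$ is equivalent but makes the argument look more ad hoc than it is.

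One point you raise parenthetically but should resolve explicitly: the proposition is written with $\overline{R}^+$, whereas \eqref{levelzeroactionofL} produces $\widetilde R = a(z)\,\overline R$. These differ by the overall scalar $\prod_i a(z/w_i)=\prod_i\theta(q^2 z/w_i)$. The paper's own subsequent calculations in \S6.2 (e.g.\ $L^+_{11}(z)\cdot\zeta=\prod_l a(z/w_l)\,\zeta$) and the remark after \eqref{partitionfunctionnotation} that the partition functions are built from $\widetilde R$ confirm that the $a(\cdot)$ factors are present in what is being called $L^+(z)$ on the tensor product. So the formula in the proposition should be read with $\widetilde R$ in place of $\overline R^+$, or equivalently up to that scalar; make this normalisation explicit rather than burying it in a parenthesis.
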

For brevity, we denote $(\pi_{w_1} \otimes \cdots \otimes \pi_{w_n})
\Delta^{\prime (n-1)} (L^+(z))$  by $L^+(z)$.  We set
\be
&&L^+(z)=\sum_{k,l=1}^NE_{k,l}\tot L^+_{kl}(z)
\en
with $L^+_{ij}(z)\in \End_\FF(
\widehat{V}_{w_1} \widetilde{\otimes} \cdots
\widetilde{\otimes} \widehat{V}_{w_n})$.

\begin{figure}[htbp]
\centering
\includegraphics[width=12cm]{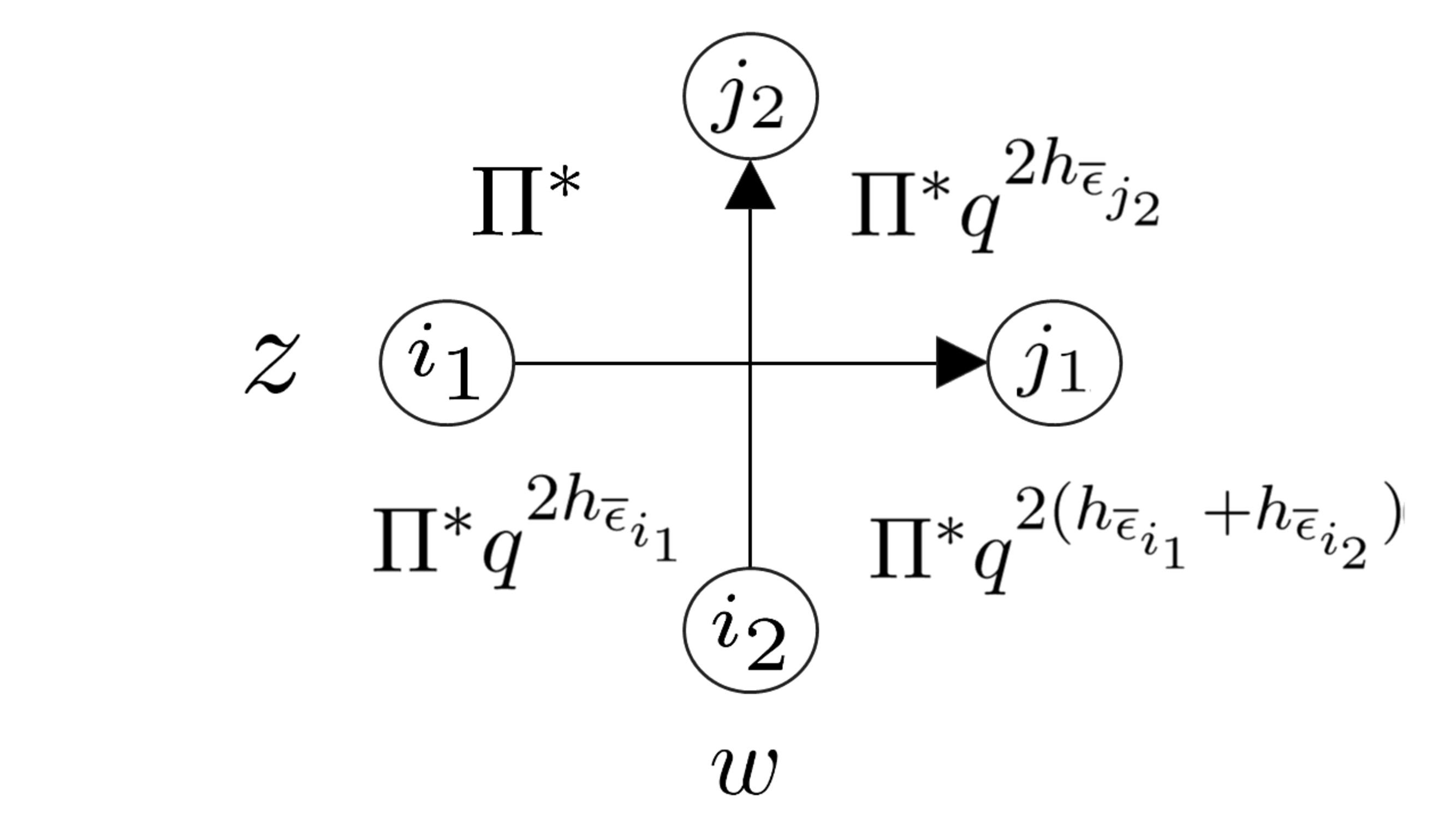}
\caption{Graphical description of the matrix elements $\tR(z/w,\Pi^*)_{i_1i_2}^{j_1j_2}$.
}
\label{figDR}
\end{figure}

Now let us consider the action of $L^+_{kl}(z)$ on the standard basis  of
$\widehat{V}_{w_1} \widetilde{\otimes} \cdots
\widetilde{\otimes} \widehat{V}_{w_n}$ i.e. 
$\{ v_{\bmu}:=v_{\mu_1} \widetilde{\otimes} \cdots \widetilde{\otimes} v_{\mu_n}
\ | \ \bmu=({\mu_1},\dots,{\mu_n})\in [1,N]^n \ 
 \}$.
We then set  for $\bal\in [1,N]^n$ 
\begin{align}
\displaystyle
L^+_{kl}(z) \cdot v_{\boldsymbol{ \alpha }}
=\sum_{\boldsymbol{\beta}\in [1,N]^n
} T_{kl}^+(z;w_1,\dots,w_n;\Pi^*)_{\boldsymbol{\beta}}^{\boldsymbol{\alpha}} v_{\boldsymbol{ \beta }}.
\end{align}
Making this action repeatedly, one obtains 
\begin{align}
\displaystyle
&L^+_{k_m l_m}(z_m) L^+_{k_{m-1} l_{m-1}}(z_{m-1})
\cdots
L^+_{k_1 l_1}(z_1) v_{\boldsymbol{\alpha}} \nonumber \\
=&\sum_{
\boldsymbol{\beta}\in [1,N]^n
}
Z_{\bfK
\boldsymbol{\beta}}^{\bfL
\boldsymbol{\alpha}}
(z_1,\dots,z_m;w_1,\dots,w_n;\Pi^*)
v_{\boldsymbol{\beta}}\lb{LsonSB}
\end{align}
with 
\begin{align}
&Z_{\bfK
\boldsymbol{\beta}}^{\bfL
\boldsymbol{\alpha}}
(z_1,\dots,z_m;w_1,\dots,w_n;\Pi^*) \nonumber \\
=&
\sum_{\boldsymbol{\alpha}_{1},\dots,\boldsymbol{\alpha}_{m-1}\in [1,N]^n
} T_{k_m l_m}^+(z_m;z_1,\dots,z_n;\Pi^*)_{\boldsymbol{\beta}}^{\boldsymbol{\alpha}_{m-1}}
T_{k_{m-1} l_{m-1}}^+(z_{m-1};w_1,\dots,w_n;\Pi^*q^{2\bra \bep_{l_m},h\ket
})_{\boldsymbol{\alpha}_{m-1}}^{\boldsymbol{\alpha}_{m-2}}
\cdots \nonumber \\
&\qquad\qquad \cdots T_{k_1 l_1}^+(z_1;w_1,\dots,w_n;\Pi^*q^{2\sum_{i=2}^{m}\bra\bep_{l_i},h\ket
})_{\boldsymbol{\alpha}_{1}}^{\boldsymbol{\alpha}},
\label{partitionfunctionnotation}
\end{align}
where $
\bfK=(k_1,\dots,k_m)$, $\bfL=
(l_1,\dots,l_m)$, $\boldsymbol{\alpha}=(\alpha_1,\dots,\alpha_n)$, $\boldsymbol{\beta}=(\beta_1,\dots,\beta_n)\in [1,N]^n$. Note that from \eqref{actioncoproduct} each coefficient $Z_{\bfK\bbe}^{\bfL\bal}$ is given as a product of $R$-matrices $\tR^+(z,\Pi^*)$ with the dynamical shifts.  See Figure \ref{figurepartitionfunction}. 
The dynamical parameters in the matrix element $\tR^+(z,\Pi^*q^{2\bra\bep_{a},h\ket}
)_{ij}^{kl}$ should be treated as $\Pi^*_{i,j}q^{2\bra\bep_{a},h_{i,j}\ket}=
\Pi^*_{i,j}q^{2(\delta_{a,i}-\delta_{a,j})}
$. 

\begin{figure}[h] 
\centering
\includegraphics[width=12cm]{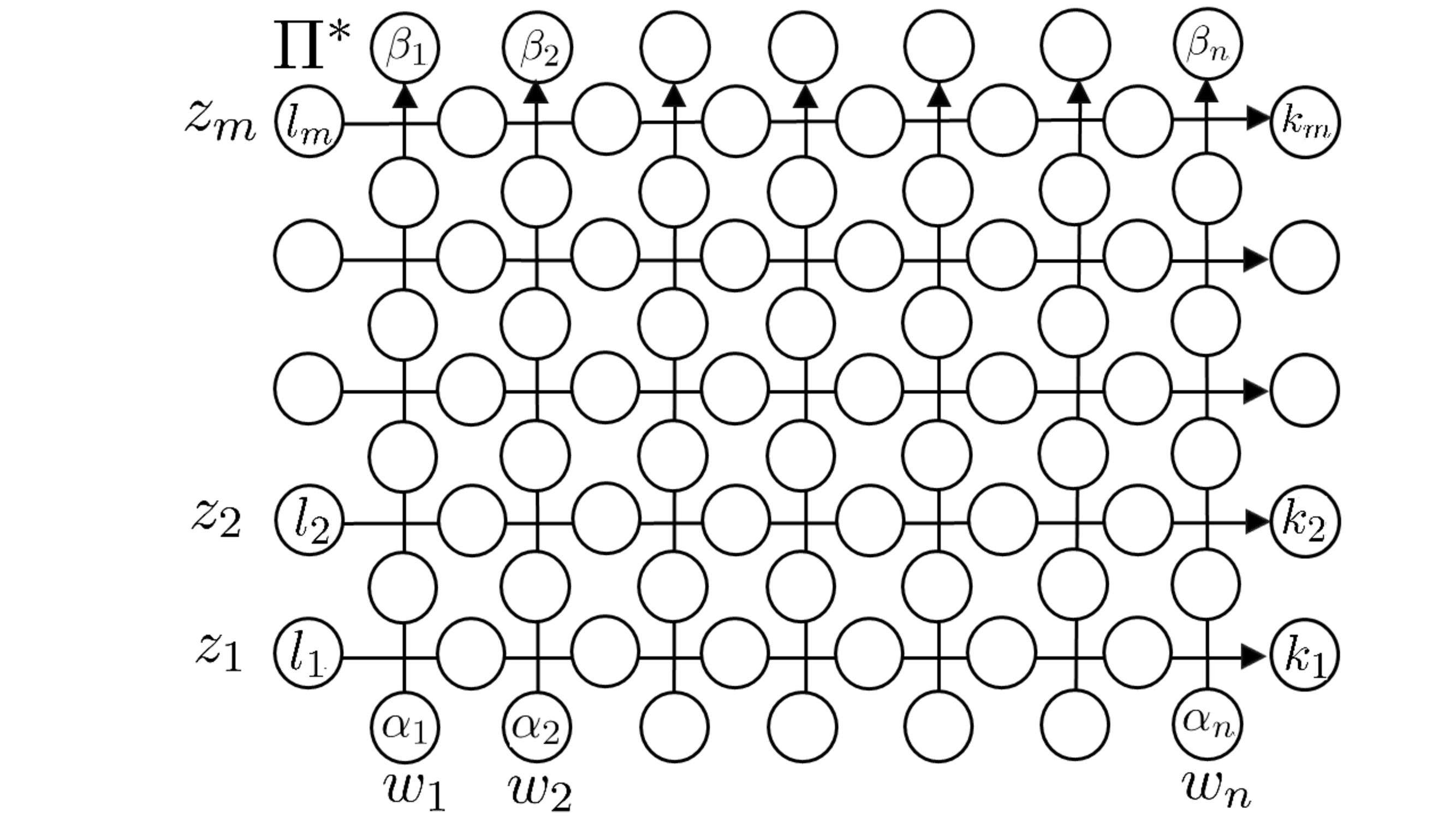}
\caption{ Graphical description of $Z_{\bfK
\boldsymbol{\beta}}^{\bfL
\boldsymbol{\alpha}}
(z_1,\dots,z_m;w_1,\dots,w_n;\Pi^*)$.
}
\label{figurepartitionfunction}
\end{figure}

\subsection{The Gelfand-Tsetlin bases for $\widehat{V}_{w_1} \widetilde{\otimes} \cdots
\widetilde{\otimes} \widehat{V}_{w_n}$
}\lb{GTBtensor}
Let us consider $\cV=\widehat{V}_{w_1} \widetilde{\otimes} \cdots
\widetilde{\otimes} \widehat{V}_{w_n}$ with generic evaluation parameters
$w_1,\cdots,w_n$.  By the action \eqref{actioncoproduct}, $\cV$ is a $N^n$-dimensional irreducible 
representation of  $U_{q,p}(\widehat{\gl}_N)$.  It is the highest weight representation with 
the highest vector $\zeta=v_{(1^n)}$. Here $(1^n)=(1,\cdots,1)$. 
Using Proposition \ref{coproduct},
we can check
\begin{align}
\displaystyle
L_{11}^+(z) \cdot \zeta=\prod_{l=1}^n a(z/w_l) \zeta
=\prod_{l=1}^n \theta(q^2 z/w_l) \zeta,
\end{align}
and
\begin{align}
\displaystyle
L_{ii}^+(z) \cdot \zeta=\prod_{l=1}^n a(z/w_l)\overline{b}(z/w_l) \zeta
=\prod_{l=1}^n \theta(z/w_l) \zeta,
\end{align}
for $i=2,\dots,N$,
hence the highest weight $\lambda(z)=(\lambda_1(z),\dots,\lambda_N(z))$ is given by
\begin{align}
\lambda_1(z)&=\prod_{l=1}^n \theta(q^2 z/w_l), \\
\lambda_i(z)&=\prod_{l=1}^n \theta(z/w_l) ,\ \ \ i=2,\dots,N.
\end{align}

In order to describe the Gelfand-Tsetlin bases, 
let us introduce  a partition $I=(I_1,\dots,I_N)$ of $[1,n]=\{1,\dots,n \}$, i.e.
\begin{align}
I_1 \cup \cdots \cup I_N=[1,n], \ \ \ I_k \cap I_l =\phi \ (k \neq l).
\end{align}
Let $\cI_n$ be a set of all partitions of $[1,n]$.  
For $\bmu=(\mu_1,\mu_2,\dots,\mu_n)\in [1,N]^n$, we define 
$I_l=\{ i \in [1,n] \ | \ \mu_i=l \}$ ($l=1,\dots,N$). Then $I=(I_1,\dots,I_N)\in \cI_n$. 
We often write thus obtained $I$ as $I=I_\bmu$, and the corresponding standard base as 
$v_I=v_\bmu=v_{\mu_1}\tot\cdots\tot v_{\mu_n}$. 

In addition, for an index set $S=\{i_1,i_2,\dots,i_r \}$ with $i_1<i_2<\dots<i_r$ and the $L$-operators $L^+_{kl}(z_{i_a})$ $(i_a\in S)$,  
we write
\be
&&L^+_{kl}(z_S):=\prod_{j \in S}^{\curvearrowleft} L^+_{kl}(z_j)= L^+_{kl}(z_{i_r})\cdots L^+_{kl}(z_{i_2})L^+_{kl}(z_{i_1}).
\en

Now we construct the Gelfand-Tsetlin bases in terms of the $L$-operators.
\begin{dfn}
For $I\in \cI_n$, let us define a vector $\txi_I\in \cV$ by
\begin{align}
\txi_I
=L_{N-1,N}^+(w_{I_N})
L_{N-2,N-1}^+(w_{I_{N-1} \cup I_N})
\cdots
L_{12}^+(w_{I_2 \cup \cdots \cup I_N}) \cdot \zeta.
\label{GZvectorrepresentation}
\end{align}
\end{dfn}
\noindent
{\it Example.}\ Consider the case $N=3$, $n=9$, $I=I_{231213231}$.
Then
$I_1=\{3<5<9 \}$, $I_2=\{1<4<7 \}$, $I_3=\{2<6<8 \}$. 
We have 
\begin{align}
\txi_I&=L_{23}^+(w_{I_3})L_{12}^+(w_{I_2 \cup I_3}) \cdot \zeta \nn \\
&=
L_{23}^+(w_8) L_{23}^+(w_6) L_{23}^+(w_2)
L_{12}^+(w_8) L_{12}^+(w_7) L_{12}^+(w_6) L_{12}^+(w_4) L_{12}^+(w_2) L_{12}^+(w_1)
\cdot \zeta. 
\end{align}

We show that the set of vectors $\{\txi_I\}_{I\in \cI_n}$ forms a basis of $\cV$, on which all the generators $A_\ell(z)$  of the Gelfand-Tsetlin subalgebras of $E_{q,p}(\glnh)$ are simultaneously diagonalized.  
\begin{proposition}\lb{eigenAl}
The vector $\txi_I$ is an eigenvector of $A_l(z)$, $l=1,\dots,N$
\begin{align}
A_l(z) \cdot \txi_I
=\prod_{k=1}^{N-l+1} \lambda_{N-l+1,k}^I(q^{-2k+2}z) \txi_I,
\label{actionaoperators}
\end{align}
where $\lambda_{jk}^I(z)$, $1 \le k \le j \le N$
are given by
\begin{align}
\lambda_{j1}^I(z)&=\prod_{m=1}^n \theta(q^{2\delta_{m \in I_{N-j+1} \cup \cdots \cup I_N}} z/w_m), \\
\lambda_{jk}^I(z)&=\prod_{m=1}^n \theta(z/w_m), \ \ \ k \neq 1.
\end{align}
Here 
\be
&&\delta_{s\in S}=\left\{\mmatrix{1&\ (s\in S)\cr
0&\ (s\not\in S)\cr}\right.
\en
for any index set $S$. 

\end{proposition}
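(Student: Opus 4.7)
The plan is to compute $A_l(z)\cdot\txi_I$ by splitting the product defining $\txi_I$ into operators that commute with $A_l(z)$ and those that do not, applying Lemma~\ref{singularlemma} iteratively to the non-commuting part, and then using the quantum-determinant structure of $A_l(z)$ on the resulting singular vector.

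For $j=1,\dots,N$, introduce the partial vectors
$$\eta_j:=L^+_{j-1,j}(w_{I_j\cup\cdots\cup I_N})L^+_{j-2,j-1}(w_{I_{j-1}\cup\cdots\cup I_N})\cdots L^+_{12}(w_{I_2\cup\cdots\cup I_N})\cdot\zeta,\quad \eta_1:=\zeta,$$
so that $\txi_I=L^+_{N-1,N}(w_{I_N})\cdots L^+_{l,l+1}(w_{I_{l+1}\cup\cdots\cup I_N})\cdot\eta_l$. By induction on $j$, I would prove that $\eta_j$ is a singular vector for the subalgebra $E_{q,p}(\widehat{\gl}_{N-j+1})$ with weight $(\widetilde{\mu}_j(z),\lambda_{j+1}(z),\dots,\lambda_N(z))$, where $\widetilde{\mu}_j(z)=\prod_m\theta(q^{2\delta_{m\in I_j\cup\cdots\cup I_N}}z/w_m)$ and $\lambda_i(z)=\prod_m\theta(z/w_m)$ for $i\ge 2$. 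The inductive step from $\eta_{j-1}$ to $\eta_j$ consists in applying Lemma~\ref{singularlemma} with $k=j$ once for each $w_i\in I_j\cup\cdots\cup I_N$; the vanishing hypothesis $L^+_{jj}(w_i)\cdot(\mathrm{current})=0$ holds because the $j$-th weight component evaluated at $z=w_i$ carries the zero factor $\theta(w_i/w_i)=\theta(1)$ inherited from the unmodified product $\prod_m\theta(z/w_m)$, while the denominators $\theta(w_i/w_{i'})$ generated in earlier iterations remain nonzero by the genericity of $w_1,\dots,w_n$.

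By the proposition below \eqref{seqsubalg} (Theorem~6.7 of \cite{KonnoASPM}), the coefficients of $A_l(z)$ lie in the center of $E_{q,p}(\widehat{\gl}_{N-l+1})$, so $A_l(z)$ commutes with every factor $L^+_{k-1,k}(w)$ with $k\ge l+1$ appearing between $\txi_I$ and $\eta_l$. Pushing $A_l(z)$ through yields
$$A_l(z)\cdot\txi_I=L^+_{N-1,N}(w_{I_N})\cdots L^+_{l,l+1}(w_{I_{l+1}\cup\cdots\cup I_N})\cdot(A_l(z)\cdot\eta_l).$$
To evaluate $A_l(z)\cdot\eta_l$ I use \eqref{AmK} together with $\cN_{N-l+1}=1$ at level zero to write $A_l(z)=K^+_l(z)K^+_{l+1}(zq^{-2})\cdots K^+_N(zq^{-2(N-l)})$. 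A triangular argument based on \eqref{Lplk} combined with the vanishing $L^+_{ij}(z)\cdot\eta_l=0$ for $l\le j<i\le N$ forces $E^+_{m,j}(z)\cdot\eta_l=0$ throughout that range, so \eqref{Lpll} reduces $K^+_i(z)\cdot\eta_l$ to $L^+_{ii}(z)\cdot\eta_l$, which acts as $\widetilde{\mu}_l(z)\eta_l$ for $i=l$ and as $\lambda_i(z)\eta_l$ for $l<i\le N$. Hence $A_l(z)\cdot\eta_l=\widetilde{\mu}_l(z)\prod_{i=l+1}^N\lambda_i(q^{-2(i-l)}z)\,\eta_l$, which after the reindexing $k=i-l+1$ matches the claimed eigenvalue $\prod_{k=1}^{N-l+1}\lambda^I_{N-l+1,k}(q^{-2k+2}z)$.

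The main obstacle is the bookkeeping in the induction: at each of the $|I_j\cup\cdots\cup I_N|$ applications of Lemma~\ref{singularlemma} inside the step $\eta_{j-1}\mapsto\eta_j$, one must track the evolving modified weight and confirm that the newly introduced denominators $\theta(w_i/w_{i'})$ never vanish at the next spectral parameter, which is precisely where the genericity assumption on $w_1,\dots,w_n$ is essential.
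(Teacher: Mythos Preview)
Your proposal is correct and follows essentially the same strategy as the paper: both define the partial vectors (your $\eta_j$ is the paper's $\txi_I^{(N-j+1)}$), establish by repeated application of Lemma~\ref{singularlemma} that they are singular for $E_{q,p}(\widehat{\gl}_{N-j+1})$ with the stated weights, and then use the centrality of $A_l(z)$ in that subalgebra to commute it past the remaining $L^+_{k-1,k}$ factors. The only minor difference is in the evaluation of $A_l(z)\cdot\eta_l$: the paper uses the quantum-minor expansion \eqref{expansion} directly, observing that only the identity permutation survives because $L^+_{ij}\cdot\eta_l=0$ for $i>j\ge l$, whereas you go through the product formula $A_l(z)=K^+_l(z)\cdots K^+_N(zq^{-2(N-l)})$ and a downward triangular induction via \eqref{Lplk}--\eqref{Lpll} to reduce each $K^+_i$ to $L^+_{ii}$; both arguments are valid and yield the same eigenvalue.
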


\begin{proof}

Define $\txi_I^{(l)}$ as
\begin{align}
\txi_I^{(l)}
:=L_{N-l,N-l+1}^+(w_{I_{N-l+1} \cup \cdots \cup I_N})
L_{N-l-1,N-l}^+(w_{I_{N-l} \cup \cdots \cup I_N})
\cdots
L_{12}^+(w_{I_2 \cup \cdots \cup I_N}) \cdot \zeta.
\end{align}
Then $\txi_I$ can be expressed as
\begin{align}
\txi_I
=L_{N-1,N}^+(w_{I_N}) \cdots
L_{l,l+1}^+(w_{I_{l+1} \cup \cdots \cup I_N}) \cdot
\txi_I^{(N-l+1)}. \label{singulardecomposition}
\end{align}

We first examine the action of $A_l(z)$ on $\txi_I^{(N-l+1)}$. 
By applying Lemma \ref{singularlemma} repeatedly,
one finds that $\txi_I^{(N-l+1)}$ is a singular vector
of weight $(\lambda_{N-l+1,1}^I(z),
\lambda_{N-l+1,2}^I(z), \cdots, \lambda_{N-l+1,N-l+1}^I(z)
)$ with respect to the subalgebra $E_{q,p}(\widehat{\gl}_{N-l+1})$.
Note $\lambda_{Nk}^I(z)=\lambda_k(z)$, $k=1,\dots,N$, where 
$(\lambda_1(z),\lambda_2(z),\dots,\lambda_N(z))$
is the weight for the highest vector $\txi_I^{(N)}=\zeta$. 
Since $\txi_I^{(N-l+1)}$ is a singular vector, we have
\begin{align}
L^+_{ij}(z) \cdot \txi_I^{(N-l+1)}&=0, \ \ \qquad
l \le j < i \le N, \label{singularfactone} \\
L^+_{l+k-1,l+k-1}(z) \cdot 
\txi_I^{(N-l+1)}
&=
\lambda_{N-l+1,k}^I(z) 
\txi_I^{(N-l+1)}, \ \ \qquad
k=1,\dots,N-l+1, \label{singularfacttwo}
\end{align}
From \eqref{expansion}, we have
\begin{align}
&A_l(z)
=\sum_{\sigma \in \mathfrak{S}_{N-l+1}}
\mathrm{sgn}_{{[}l,N{]}}^* (\sigma,\Pi^*)
L^+_{l \sigma(l)}(z)
L^+_{l+1 \sigma(l+1)}(q^{-2}z)
\cdots
L^+_{N, \sigma(N)}(q^{-2N+2l} z). \label{expansionforA}
\end{align}
Using \eqref{singularfactone}, \eqref{singularfacttwo}
and \eqref{expansionforA}, we find that $\txi_I^{(N-l+1)}$
is an eigenvector of $A_l(z)$:
\begin{align}
A_l(z) \cdot \txi_I^{(N-l+1)}
=\prod_{k=1}^{N-l+1} \lambda_{N-l+1,k}^I(q^{-2k+2} z) \txi_I^{(N-l+1)}.
\label{actionpartialxi}
\end{align}

Next, we consider the action of $A_l(z)$ on $\txi_I$.
Recall $A_l(z)$ is the center of the subalgebra $E_{q,p}(\widehat{\gl}_{N-l+1})$ generated by $L^+_{ij}(w), \ l \le i,j \le N$
\begin{align}
[A_l(z), L^+_{ij}(w)]=0, \ \ \ l \le i,j \le N.
\label{centersubalgebra}
\end{align}
Using \eqref{singulardecomposition}, \eqref{actionpartialxi}
and \eqref{centersubalgebra},
we find that $\txi_I$ is an eigenvector of $A_l(u)$:
\begin{align}
A_l(z) \cdot \txi_I&=
A_l(z) L_{N-1,N}^+(w_{I_N}) \cdots
L_{l,l+1}^+(w_{I_{l+1} \cup \cdots \cup I_N}) \cdot
\txi_I^{(N-l+1)} \nonumber \\
&=L_{N-1,N}^+(w_{I_N}) \cdots
L_{l,l+1}^+(w_{I_{l+1} \cup \cdots \cup I_N})
A_l(z) \cdot \txi_I^{(N-l+1)} \nonumber \\
&=\prod_{k=1}^{N-l+1} \lambda_{N-l+1,k}^I(q^{-2k+2} z)
L_{N-1,N}^+(w_{I_N}) \cdots
L_{l,l+1}^+(w_{I_{l+1} \cup \cdots \cup I_N}) \cdot
\txi_I^{(N-l+1)} \nonumber \\
&=\prod_{k=1}^{N-l+1} \lambda_{N-l+1,k}^I(q^{-2k+2} z)
\txi_I.
\end{align}

\end{proof}

\begin{thm}
The set of vectors $\{\txi_I\}_{I\in \cI_n}$ forms a basis of $\cV$. 
\end{thm}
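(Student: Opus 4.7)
The strategy is to combine a dimension count with the joint eigenspace decomposition supplied by Proposition~\ref{eigenAl}, reducing the argument to a non-vanishing check. Since $|\cI_n|=N^n=\dim\cV$ (an ordered partition $(I_1,\dots,I_N)$ of $[1,n]$ assigns each $m$ independently to one of $N$ blocks), it suffices to prove the $\txi_I$ are nonzero and linearly independent.

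By Proposition~\ref{eigenAl}, each $\txi_I$ is a simultaneous eigenvector of the commuting family $\{A_l(z)\}_{l=1}^N$ (commutativity from \eqref{comAmAn}). The $I$-dependence of the eigenvalue of $A_l(z)$ enters only through the single factor
\[
\lambda^I_{N-l+1,1}(z)=\prod_{m=1}^n\theta\!\left(q^{2\delta_{m\in I_l\cup\cdots\cup I_N}}z/w_m\right),
\]
the factors with $k\geq 2$ being $I$-independent. For generic $w_m$, the zero locus of this theta function determines the subset $I_l\cup\cdots\cup I_N\subseteq[1,n]$: a zero at $q^{-2}w_m$ iff $m\in I_l\cup\cdots\cup I_N$, at $w_m$ iff $m\notin I_l\cup\cdots\cup I_N$. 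Running $l$ from $N$ down to $2$ recovers the decreasing chain $I_N\subset I_{N-1}\cup I_N\subset\cdots\subset I_2\cup\cdots\cup I_N$ and hence the whole partition $I$ by successive differences. Distinct $I$ therefore produce distinct eigenvalue tuples, so any nonzero $\txi_I$'s are linearly independent.

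The main obstacle is the non-vanishing $\txi_I\neq 0$. My plan is to exploit the degeneration of the elliptic $R$-matrix at coincident spectral parameters: at $z=1$ one has $\overline{b}(1)=b(1,\Pi)=0$ while $c(1,\Pi)=\overline{c}(1,\Pi)=1$, so $\overline{R}^+(1,\Pi)$ collapses to the permutation (plus diagonal identity). Via Proposition~\ref{coproduct}, this implies that each factor $L^+_{l,l+1}(w_j)$ in~\eqref{GZvectorrepresentation} carries a leading contribution in which the auxiliary $0$-slot is swapped with the $j$-th tensor slot, effectively raising the $j$-th factor from $v_l$ to $v_{l+1}$. Tracking these leading swaps through the nested product from $l=1$ up to $l=N-1$, I would identify the standard basis vector $v_\bmu$ specified by $\mu_m=k\Leftrightarrow m\in I_k$ as appearing in $\txi_I$ with a nonzero coefficient (a product of theta function evaluations at generic ratios $w_j/w_{j'}$), establishing $\txi_I\neq 0$.

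The hard part is the combinatorial bookkeeping through the dynamical shifts in~\eqref{actioncoproduct}; a cleaner inductive route may be available via the subalgebra filtration~\eqref{seqsubalg} and the singular vector Lemma~\ref{singularlemma}, used as in the proof of Proposition~\ref{eigenAl}: one builds $\txi_I$ outward from the innermost piece $\txi_I^{(N-l+1)}$, checking layer by layer that the outer $L^+_{l,l+1}(w_{I_{l+1}\cup\cdots\cup I_N})$ does not annihilate the singular vector produced so far, the key input being that the vanishing of $\overline{b}$ at coincident arguments forces the non-diagonal ``creation'' contribution to dominate.
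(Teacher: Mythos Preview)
Your approach is essentially the paper's: count $|\cI_n|=N^n=\dim\cV$ and use Proposition~\ref{eigenAl} to get distinct simultaneous eigenvalues, hence linear independence. The paper's proof of this theorem is in fact terser than yours---it simply asserts distinctness of eigenvalues and the dimension count without isolating the non-vanishing step; the explicit non-vanishing of $\txi_I$ is established only later in Sec.~\ref{sec:CBM}, via the closed formula~\eqref{coeffGZbasis} for the diagonal change-of-basis entry $\tX_{II}$, which is exactly the ``leading swap'' coefficient you describe.
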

\begin{proof}
From Proposition \ref{eigenAl}, each vector $\txi_I$ belongs to the different simultaneous eigenvalues of $A_l(z)$ $(1\leq l\leq N)$ so that   $\txi_I\ {(I\in \cI_n)}$ are linearly independent. The number of these vectors is 
\be
&&\sum_{(|I_1|,\cdots,|I_N|)\in \N^N\atop \sum_{a=1}^N|I_a|=n
}\frac{n!}{|I_1|!\cdots |I_N|!}=N^n. 
\en
\end{proof}

In \cite{KonnoJinttwo}, the Gelfand-Tsetlin bases of $\cV$ was constructed 
in a different way. Let us denote them by $\xi_I'$ $(I\in \cI_n)$. 
The construction is as follows. 
Let us define $\widetilde{S}_i(\Pi^*)$ by 
\be
&&\widetilde{S}_i(\Pi^*):=\cP^{(i i+1)}\bR^{(i i+1)}(z_{i}/z_{i+1},\Pi^* q^{2{\sum_{j=1}^{i-1}h^{(j)}}})s^z_i, 
\en
where $\cP$ and $s^z_i$ are the following permutation operators  
\be
&&\cP: v\tot w \mapsto w\tot v, \qquad s^z_i f(\cdots,z_i ,z_{i+1},\cdots )=f(\cdots,z_{i+1}, z_i ,\cdots )
\en
for any function $f(z_1,\cdots,z_n)$. 
Then by using the dynamical Yang-Baxter equation \eqref{DYBE} and the unitarity relation for $\bR(z,\Pi^*)$ one can show the following.
\begin{prop}
\be
&&\hspace{-2cm}\tS_i(\Pi^*)\tS_{i+1}(\Pi^*)\tS_i(\Pi^*)=\tS_{i+1}(\Pi^*)\tS_{i}(\Pi^*)\tS_{i+1}(\Pi^*),\\
&&\hspace{-2cm}\tS_i(\Pi^*)\tS_j(\Pi^*)= \tS_j(\Pi^*)\tS_i(\Pi^*) \qquad\qquad (|i-j|>1)\\
&&\hspace{-2cm}\tS_{i}(\Pi^*)^2=1.
\en
\end{prop}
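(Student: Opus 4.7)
The plan is to verify the three relations by reducing them to the dynamical Yang--Baxter equation \eqref{DYBE} and the unitarity of the elliptic dynamical $R$-matrix $\bR(z,\Pi^*)$, carefully bookkeeping how the auxiliary operators $\cP^{(ii+1)}$ (permuting tensor factors $i$ and $i+1$) and $s_i^z$ (permuting spectral parameters $z_i,z_{i+1}$) interact with the dynamical shift $q^{2\sum_{j<i}h^{(j)}}$. The basic identities $\cP^{(ii+1)}$ commutes with $s_i^z$, $s_i^z$ acts on a Laurent polynomial in $z_i/z_{i+1}$ by $z_i/z_{i+1}\mapsto z_{i+1}/z_i$ but fixes $h^{(j)}$ for all $j$, and $\cP^{(ii+1)}h^{(a)}\cP^{(ii+1)}=h^{(\sigma_{i,i+1}(a))}$ will be used throughout.

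For the distant commutation $\tS_i(\Pi^*)\tS_j(\Pi^*)=\tS_j(\Pi^*)\tS_i(\Pi^*)$ with $|i-j|>1$, say $j>i+1$, the factors $\cP^{(ii+1)}$ and $\cP^{(jj+1)}$ act on disjoint pairs of tensor slots, likewise the $s_i^z,s_j^z$ act on disjoint pairs of spectral parameters, so everything commutes trivially provided the dynamical shifts line up. This requires checking that the shift $\Pi^*q^{2\sum_{k<j}h^{(k)}}$ is invariant under $\cP^{(ii+1)}$, which is immediate since both $h^{(i)}$ and $h^{(i+1)}$ appear in the sum when $j>i+1$ and $\cP^{(ii+1)}$ merely swaps the two.

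For the involutive property $\tS_i(\Pi^*)^2=1$, I will pull the left $s_i^z$ through $\cP^{(ii+1)}$ (they commute) and then through $\bR^{(ii+1)}(z_i/z_{i+1},\Pi^*q^{2H})$ with $H:=\sum_{j<i}h^{(j)}$; this inverts the spectral variable while leaving $H$ untouched, and the two $s_i^z$'s cancel. After the conjugation identity $\cP^{(ii+1)}\bR^{(ii+1)}(z,\Pi^*q^{2H})\cP^{(ii+1)}=\bR^{(i+1\,i)}(z,\Pi^*q^{2H})$, one is left with the product
\[
\bR^{(i+1\,i)}(z_i/z_{i+1},\Pi^*q^{2H})\,\bR^{(ii+1)}(z_{i+1}/z_i,\Pi^*q^{2H})=1,
\]
which is the standard unitarity relation for $\bR$.

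The main obstacle is the braid relation $\tS_i\tS_{i+1}\tS_i=\tS_{i+1}\tS_i\tS_{i+1}$. The strategy is to push all three $s^z$-permutations past the $\cP$'s and $\bR$'s to the rightmost position on each side. During this reshuffle, each $s^z_a$ permutes the spectral arguments $z_j/z_k$ of the $\bR$-matrices it crosses but does not alter the $H$-type shifts in the dynamical parameter, while each $\cP^{(aa+1)}$ conjugates the superscripts of subsequent $\bR$-matrices and simultaneously transforms $h^{(a)}\leftrightarrow h^{(a+1)}$ inside any shift of the form $q^{2\sum_{k<\ell}h^{(k)}}$ to its right. Using the common identity $s_i^z s_{i+1}^z s_i^z=s_{i+1}^z s_i^z s_{i+1}^z$, the $s^z$-tails on both sides match, so the braid relation reduces to an identity among three $\cP$-dressed $\bR$-matrices acting on $\widehat V_{z_i}\tot\widehat V_{z_{i+1}}\tot\widehat V_{z_{i+2}}$. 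After the $\cP$'s are stripped via the conjugation rule above, this identity becomes precisely the dynamical Yang--Baxter equation \eqref{DYBE} with $s$ identified with $\sum_{j<i}h^{(j)}$ and with the three triples $(z_1,z_2,z_3)\mapsto(z_i,z_{i+1},z_{i+2})$. The chief difficulty is not the structural form of the computation but the careful tracking of the three independent types of shifts (spectral $s_a^z$, space-swap $\cP^{(aa+1)}$, and dynamical $q^{2h^{(\cdot)}}$) so that the residual equation is recognised as \eqref{DYBE} rather than a misaligned variant.
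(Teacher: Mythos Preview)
Your proposal is correct and takes essentially the same approach as the paper, which merely remarks that the relations follow from the dynamical Yang--Baxter equation \eqref{DYBE} and the unitarity of $\bR(z,\Pi^*)$; you have supplied the bookkeeping of the $\cP$'s, $s_i^z$'s and dynamical shifts that the paper leaves implicit.
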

For $I,J\in \cI_{n}$ with $|I_l|=|J_l|$ $(l=1,\cdots,N)$, let $I^{(l)}=\{i^{(l)}_1<\cdots<i^{(l)}_{\la^{(l)}}\}$ and $J^{(l)}=\{j^{(l)}_1<\cdots<j^{(l)}_{\la^{(l)}}\}$ 
$(l=1,\cdots,N)$.
We define a partial ordering $\leqslant$ by 
\be
I\leqslant J \Leftrightarrow i^{(l)}_a \leq j^{(l)}_a\qquad \forall l, a. 
\en
One then can construct the Gelfand-Tsetlin bases $\xi'_I$ $(I\in \cI_{n})$ by
\bea
&&\hspace{-2cm} \xi'_{I^{max}}:=v_{I^{max}},\qquad 
\xi'_{s_i(I)}:=\tS_i(\Pi^*)\xi_I,\lb{def:GT}
\ena  
where  $s_i(I)=I_{\cdots,\mu_{i+1},\mu_i,\cdots}$ for $I=I_{\cdots,\mu_i,\mu_{i+1},\cdots}$, and 
\be
&& I^{max}=I_{\tiny\underbrace{N\cdots N}_{|I_N|}\ \cdots \underbrace{1\cdots 1}_{|I_1|}}. 
\en  
In \cite{KonnoJinttwo}, the level-0 action of the Drinfeld generators  on $\xi_I^\prime$ was explicitly obtained. 
In particular the $K^+_l(z)$ $(1\leq l\leq N)$ are simultaneously diagonalized as
\begin{align}
\displaystyle
K^+_j(z) \cdot \xi_I^\prime
&=\prod_{m=1}^n\theta(q^2z/w_m)\prod_{k=1}^{j-1} \prod_{a \in I_k}
\frac{\theta(z/w_a)}{\theta(q^2z/w_a)} \prod_{l=j+1}^N \prod_{b \in I_l}
\frac{\theta(q^{-2}z/w_b)}{\theta(z/w_b)} \xi_I^\prime.\label{actionK}
\end{align}
Here we modified the formula obtained in \cite{KonnoJinttwo} by 
considering the difference of the vector representations of $L^+(z)$ used there and in this section:
in \cite{KonnoJinttwo} it is defined by the $R$-matrix ${\bR}(z/w,\Pi^*)
$ \eqref{ellipticrmatrix} instead of $\tR(z/w,\Pi^*)=a(z/w){\bR}(z/w,\Pi^*)$ \eqref{def:tR}. 
One can rewrite this 
as
\begin{align}
\displaystyle
K^+_j(z) \cdot \xi_I^\prime
&=\frac{\prod_{m=1}^n \theta(q^{2 \delta_{m \in I_j \cup \cdots \cup I_N}} z/w_m)
\prod_{m=1}^n \theta(q^{-2}z/w_m)
}{
\prod_{m=1}^n \theta(q^{2\delta_{m \in I_{j+1} \cup \cdots \cup I_N}-2} z/w_m)}
\xi_I^\prime.
\end{align}
Hence we have
\begin{align}
A_l(z) \cdot \xi_I^\prime
&=K^+_{l}(z) K^+_{l+1}(q^{-2}z) \cdots
K^+_{N}(q^{-2N+2l} z) \cdot \xi_I^\prime \nonumber \\
&=\prod_{m=1}^n \theta(q^{2\delta_{m \in I_{j} \cup \cdots \cup I_N}} z/w_m)
\prod_{m=1}^n \prod_{l=1}^{N-j}
\theta(q^{-2l}z/w_m) \xi_I^\prime,
\end{align}
which is in agreement with
\eqref{actionaoperators}. 
Therefore the difference between $\txi_I$ and $\xi_I^\prime$ is a multiplication by a scalar function. 
We determine this scalar function in Sec.\ref{sec:CBM}

\subsection{Relation between $\xi_\bnu$ and $\txi_I$
}

Let us compare the Gelfand-Tsetlin bases $\xi_{\bnu}$'s \eqref{GZNTtypebasis} in the previous section
with $\txi_I$'s \eqref{GZvectorrepresentation}. 
First, let us specialize 
the construction in the previous subsection
to the tensor product of the vector representations. 
This means we restrict the tuples of Gelfand-Tsetlin patterns
$\bla=(\bla^{(1)}, \dots, \bla^{(n)})$ to those  labelled by $I$.
By comparing the eigenvalues of  $A_l(z)$ (also recall $\overline{A}_{N+1-l}(z)=A_l(z)$), we find the following correspondence: $a^{(s)}=w_s$, and 
\be
&&\lambda_{j,1}^{(s)}=
\delta_{s \in I_{N-j+1} \cup \cdots \cup I_N },\\
&&\lambda_{j,k}^{(s)}=0,
\qquad k \neq j \qquad
\en
for $j=1,\dots,N$. 
In terms of $\bnu=(\bnu^{(1)}, \dots, \bnu^{(n)})$,
the correspondence is
\be
&&\nu_{j,j}^{(s)}=
\delta_{s \in I_{N-j+1} \cup \cdots \cup I_N },\\
&&\nu_{j,k}^{(s)}=0, 
\qquad k =1,\dots,j-1
\en
for $j=1,\dots,N$.
Then   the $\bB_\ell$-operators which contribute to the product in \eqref{GZNTtypebasis} are only those specified by $(\ell, j, s)$ satisfying $\ell=j$, $\nu^{(s)}_{j,j}=\delta_{s\in I_{N-j+1} \cup \cdots \cup I_N }=1$, i.e. 
$\overline{B}_j(w_s)$ $(1\leq j \leq N-1)$ with $s \in I_{N-j+1} \cup \cdots \cup I_N$. 
Let us denote  this $\xi_{\bnu}$ by ${\xi}_I$. Noting 
$\overline{B}_j(w)=B_{N+1-j}(w)=\ell^+(w)_{\{N-j,N-j+2,\dots,N \}}^{[N+1-j,N]}$, 
we  have 
\begin{align}
{\xi}_I=\ell^+(w_{I_N})_{\{ N-1 \}}^{\{N \}}
\ell^+(w_{I_{N-1} \cup I_N})_{\{ N-2,N \}}^{[N-1,N]}
\cdots
\ell^+(w_{I_{2} \cup \cdots \cup I_N})_{\{ 1,3,\dots,N \}}^{[2,N]} \cdot \zeta. \label{specializationNTtype}
\end{align}

Let us derive the scalar coefficient which relates 
${\xi}_I$ \eqref{specializationNTtype} and $\txi_I$ \eqref{GZvectorrepresentation}.
We first show the following.

\begin{proposition}
Let $I$ and $J$ be two partitions of $[1,n]$ such that
$I_{l+2}=\cdots=I_N=\phi$, $J_{l+2}=\cdots=J_N=\phi$,
$J_l=I_l \cup \{ k \}$, $J_{l+1}=I_{l+1} \backslash \{
k \}$, $J_j=I_j \ ( j \neq l,l+1)$. We have
\begin{align}
\displaystyle
\ell^+(w_k)_{
\{l,l+2,\dots,N \}}^{[l+1,N]} \cdot \txi_J
=\prod_{j=1}^{n} \prod_{a=1}^{N-l-1}
\theta(q^{-2a} w_k/w_j) \txi_I.
\label{actionvectordrinfeldanother}
\end{align}
\end{proposition}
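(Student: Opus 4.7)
The strategy combines an eigenvalue-matching argument with a direct computation on a singular vector.

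First, rewrite both $\txi_I$ and $\txi_J$ relative to a common singular vector. Set $\eta := \txi_I^{(N-l+1)} = \txi_J^{(N-l+1)}$; these agree because $J_j = I_j$ for $j<l$, $J_l \cup J_{l+1} = I_l \cup I_{l+1}$, and the emptiness of $I_{l+2},\dots,I_N$ makes the top factors $L^+_{j,j+1}(w_\phi)=1$ for $j>l$ trivial. By iterating Lemma \ref{singularlemma}, $\eta$ is a singular vector with respect to $E_{q,p}(\widehat{\gl}_{N-l+1})$ with $L^+_{ii}(z)\eta = \mu_i(z)\eta$ and in particular $\mu_i(z) = \prod_{j=1}^n\theta(z/w_j)$ for $l+1 \leq i \leq N$. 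Hence $\txi_I = L^+_{l,l+1}(w_{I_{l+1}})\eta$ and $\txi_J = L^+_{l,l+1}(w_{J_{l+1}})\eta$.

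Second, show that $B_{l+1}(w_k)\txi_J$ is a simultaneous eigenvector of every $A_m(z)$ with the same eigenvalues as $\txi_I$. For $m \neq l+1$, this follows from $[A_m(z),B_{l+1}(w_k)]=0$ in \eqref{comAmBn} together with Proposition \ref{eigenAl}, since the index sets $I_{N-j+1}\cup\cdots\cup I_N$ and $J_{N-j+1}\cup\cdots\cup J_N$ coincide whenever $j \neq N-l$. For $m = l+1$, use the commutation relation \eqref{comAmBm} together with the key vanishing $A_{l+1}(w_k)\txi_J = 0$; this holds because the eigenvalue of $A_{l+1}(z)$ on $\txi_J$ contains the factor $\theta(z/w_k)$ (as $k \notin J_{l+1}$) and $\theta(1) = 0$, so only the first term of \eqref{comAmBm} survives and produces the correct multiplier $\theta(q^2z/w_k)/\theta(z/w_k)$ matching the ratio of eigenvalues on $\txi_I$ and $\txi_J$. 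By Theorem \ref{basistheorem}, the simultaneous eigenvectors of $\{A_m(z)\}$ are separated, so $B_{l+1}(w_k)\txi_J = c\cdot\txi_I$ for some scalar $c$.

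Finally, determine $c$ from the base case $J_{l+1}=\phi$, $I_{l+1}=\{k\}$, in which $\txi_J = \eta$ and $\txi_I = L^+_{l,l+1}(w_k)\eta$. Expanding $B_{l+1}(w_k)$ via \eqref{expansion} with row set $\{l\}\cup[l+2,N]$ and column set $[l+1,N]$, a right-to-left inspection using $L^+_{ij}(z)\eta = 0$ for $l \leq j < i \leq N$ shows that only the identity permutation $\sigma = \mathrm{id}$ contributes, because every other $\sigma$ produces at some step a lower-triangular factor $L^+_{i_a,j_{\sigma(a)}}$ in the subalgebra (namely with $j_{\sigma(a)} < i_a$) which annihilates $\eta$ after the diagonal factors to its right have acted as scalars. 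The surviving term is $L^+_{l,l+1}(w_k)\prod_{a=2}^{N-l}L^+_{l+a,l+a}(q^{-2(a-1)}w_k)\eta$; the diagonal eigenvalues $\mu_{l+a}(q^{-2(a-1)}w_k) = \prod_j \theta(q^{-2(a-1)}w_k/w_j)$ combine into $\prod_{a=1}^{N-l-1}\prod_{j=1}^n\theta(q^{-2a}w_k/w_j)$, multiplying $\txi_I$. The main obstacle is extending this determination of $c$ to arbitrary $J_{l+1}$: one must commute the diagonal factors $L^+_{l+a,l+a}(q^{-2(a-1)}w_k)$ past $L^+_{l,l+1}(w_{J_{l+1}})$ and reassemble $L^+_{l,l+1}(w_k)L^+_{l,l+1}(w_{J_{l+1}})$ into the ordered product $L^+_{l,l+1}(w_{I_{l+1}})$ dictated by the convention of \eqref{GZvectorrepresentation}. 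This can be handled by an induction on $|J_{l+1}|$ using the $RLL$-relation \eqref{RLL} and the singular-vector identities of $\eta$ to absorb all the correction terms, so that the scalar from the base case is propagated to the general case.
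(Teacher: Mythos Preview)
Your eigenvalue-matching argument in step 3 is correct, and your base-case expansion in step 4 is also correct. However, the overall architecture is more convoluted than necessary, and the final induction in step 5 is left as a vague promise that does not obviously go through without further work.

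The simplification you are missing is that your base-case expansion already works for \emph{general} $J_{l+1}$, not just the empty one. The vector $\txi_J$ is itself a singular vector with respect to the subalgebra $E_{q,p}(\widehat{\gl}_{N-l})$: since $J_{l+2}=\cdots=J_N=\phi$ we have $\txi_J=\txi_J^{(N-l)}$, and iterated application of Lemma~\ref{singularlemma} gives $L^+_{ij}(z)\,\txi_J=0$ for $l+1\le j<i\le N$ together with the diagonal eigenvalues $L^+_{l+a,l+a}(z)\,\txi_J=\lambda^J_{N-l,a}(z)\,\txi_J$. Your right-to-left inspection of the expansion \eqref{expansion} therefore applies verbatim with $\txi_J$ in place of $\eta$: for any non-identity $\sigma$ the largest $a_0$ with $\sigma(a_0)\neq a_0$ satisfies $a_0\ge 2$ and $\sigma(a_0)<a_0$, the factors to its right are diagonal (hence scalar on $\txi_J$), and the factor $L^+_{l+a_0,\,l+\sigma(a_0)}$ annihilates $\txi_J$. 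Only $\sigma=\mathrm{id}$ survives, the diagonal factors contribute exactly $\prod_{a=1}^{N-l-1}\prod_j\theta(q^{-2a}w_k/w_j)$, and the remaining leftmost factor is $L^+_{l,l+1}(w_k)\,\txi_J=\txi_I$. This is the whole proof; steps 3 and 5 are then redundant.

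This is precisely the paper's route, except that the paper uses the column expansion \eqref{expansionsmallerqminors} rather than the full expansion \eqref{expansion}: it peels off $L^+_{i_\ell,\,l+1}(w_k)$ on the right, kills all $i_\ell\ne l$ terms by the singular property of $\txi_J$, and recognises the remaining minor as $A_{l+2}(q^{-2}w_k)$ acting on $\txi_I$. The two expansions lead to the same computation; the paper's version is slightly cleaner because it identifies the scalar factor as a single eigenvalue of $A_{l+2}$ rather than a product of individual $L^+_{l+a,l+a}$ eigenvalues. Note also that the reordering issue you raise for $L^+_{l,l+1}(w_k)L^+_{l,l+1}(w_{J_{l+1}})$ versus $L^+_{l,l+1}(w_{I_{l+1}})$ is present in the paper's proof as well (it simply asserts $L^+_{l,l+1}(w_k)\cdot\txi_J=\txi_I$), so your careful flagging of it is reasonable, but it is orthogonal to the main gap above.
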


\begin{proof}
Using \eqref{expansionsmallerqminors},
we have
\begin{align}
\displaystyle
\ell^+(w_k)_{
\{l,l+2,\dots,N \}}^{[l+1,N]} \cdot \txi_J
=
\sum_{j=1}^{N-l} \prod_{1 \le a < j}
\frac{\theta(q^2 \Pi_{i_j,i_a})}{\theta(\Pi_{i_a,i_j})}
\ell^+(q^{-2}w_k)_{ \{l,l+2,\dots,N \} \backslash \{ i_j \}  }^{[l+2,N]}
L^+_{i_j,l+1}(w_k) \cdot \txi_J, \label{formalaction}
\end{align}
where $i_1=l$, $i_a=l+a$ ($a=2,\dots,N-l$).
From $\displaystyle L_{ij}^+(z) \cdot \txi_J=0$, $l+1 \le j < i \le N$,
we note that only the summand corresponding to $j=1$ in
\eqref{formalaction} survives.
Noting also $L^+_{l,l+1}(w_k) \cdot \txi_J=\txi_I$, we get
\begin{align}
\displaystyle
\ell^+(w_k)_{
\{l,l+2,\dots,N \}}^{[l+1,N]} \cdot \txi_J
=
\ell^+(q^{-2} w_k)_{[l+2,N]}^{[l+2,N]}
L^+_{l,l+1}(w_k) \cdot \txi_J
=
\ell^+(q^{-2 } w_k)_{[l+2,N]}^{[l+2,N]} \cdot \txi_I.
\label{formalactiontwo}
\end{align}
Noting that our vector $\txi_I$ here is $\txi_I^{(N-l)}$, 
one can evaluate the right hand side of \eqref{formalactiontwo}
by using \eqref{expansion} and the following properties.
\begin{align}
L^+_{ij}(z) \cdot \txi_I&=0, \ \ \
l+1 \le j < i \le N, \\
L^+_{l+k,l+k}(z) \cdot
\txi_I
&=
\lambda_{N-l,k}^I(z)
\txi_I, \ \ \
k=1,\dots,N-l.
\end{align}
These are obtained by replacing $l$ in \eqref{singularfactone}
and \eqref{singularfacttwo} by $l+1$. 
Then the  result is 
\begin{align}
&\ell^+(q^{-2} w_k)_{[l+2,N]}^{[l+2,N]} \cdot \txi_I
\nonumber \\
=&\sum_{\sigma \in \mathfrak{S}_{N-l-1}}
\mathrm{sgn}_{[\ell+2,N]}^* (\sigma,\Pi^*)
L^+_{l+2, \sigma(l+2)}(q^{-2} w_k)
L^+_{l+3, \sigma(l+3)}(q^{-4}w_k)
\cdots
L^+_{N, \sigma(N)}(q^{-2N+2+2l} w_k) \cdot \txi_I
\nonumber \\
=&\lambda_{N-l,2}^I(q^{-2} w_k)
\lambda_{N-l,3}^I(q^{-4} w_k) \cdots
\lambda_{N-l,N-l}^I(q^{-2N+2+2l} w_k) \txi_I \nonumber \\
=&\prod_{j=1}^{n} \prod_{a=1}^{N-l-1} \theta(q^{-2a} w_k/w_j) \txi_I.
\label{evaluationxiI}
\end{align}
From \eqref{formalactiontwo} and \eqref{evaluationxiI},
we get \eqref{actionvectordrinfeldanother}.

\end{proof}
From \eqref{actionvectordrinfeldanother} and $L^+_{l,l+1}(w_k) \cdot \txi_J=\txi_I$, we have the following 
relation.
\begin{proposition}
\begin{align}
\displaystyle {\xi}_I=
\prod_{j=1}^n \prod_{l=1}^{N-1} \prod_{a=1}^{N-l-1}
\theta(q^{-2a} 
w_{I_{l+1} \cup \cdots \cup I_N}/w_j) \txi_I.
\label{differenceoverallfactorMNT}
\end{align}
\end{proposition}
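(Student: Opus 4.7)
The plan is to convert the right-hand side of \eqref{specializationNTtype} into \eqref{GZvectorrepresentation} by iterating the preceding proposition, one element at a time, and collecting the scalar factors that arise.

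For $m = 1, 2, \dots, N$, let $I^{[m]}$ be the partition of $[1, n]$ with $(I^{[m]})_j = I_j$ for $j < m$, $(I^{[m]})_m = I_m \cup I_{m+1} \cup \cdots \cup I_N$ and $(I^{[m]})_j = \phi$ for $j > m$, so that $\txi_{I^{[1]}} = \zeta$ and $\txi_{I^{[N]}} = \txi_I$. A direct check from \eqref{GZvectorrepresentation} gives
\[
\txi_{I^{[m]}} = L^+_{m-1, m}(w_{I_m \cup \cdots \cup I_N})\, L^+_{m-2, m-1}(w_{I_{m-1} \cup \cdots \cup I_N}) \cdots L^+_{1, 2}(w_{I_2 \cup \cdots \cup I_N}) \cdot \zeta,
\]
and in particular $\txi_{I^{[l+1]}} = L^+_{l, l+1}(w_{I_{l+1} \cup \cdots \cup I_N}) \cdot \txi_{I^{[l]}}$, so $\{\txi_{I^{[m]}}\}_{m=1}^N$ interpolates between $\zeta$ and $\txi_I$.

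Starting from $\zeta = \txi_{I^{[1]}}$, I apply the factors of \eqref{specializationNTtype} from right to left in stages $l = 1, \dots, N-1$, using the same ordering of the product over $k$ as the one implicit in $L^+_{l,l+1}(w_S)$. At stage $l$, the operator $\ell^+(w_{I_{l+1} \cup \cdots \cup I_N})_{\{l, l+2, \dots, N\}}^{[l+1, N]}$ is written as an ordered product of $\ell^+(w_k)_{\{l, l+2, \dots, N\}}^{[l+1, N]}$ over $k \in I_{l+1} \cup \cdots \cup I_N$, applied one at a time. Every intermediate state is of the form $\txi_J$ for a partition $J$ with $J_{l+2} = \cdots = J_N = \phi$, so the preceding proposition applies and yields
\[
\ell^+(w_k)_{\{l, l+2, \dots, N\}}^{[l+1, N]} \cdot \txi_J = \prod_{j=1}^n \prod_{a=1}^{N-l-1} \theta(q^{-2a} w_k / w_j)\, \txi_{I'},
\]
where $I'$ is obtained from $J$ by raising $k$ from level $l$ to level $l+1$. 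After processing every $k \in I_{l+1} \cup \cdots \cup I_N$ at stage $l$, the state $\txi_{I^{[l]}}$ is transformed into $\txi_{I^{[l+1]}}$ multiplied by $\prod_{k \in I_{l+1} \cup \cdots \cup I_N}\prod_{j=1}^n \prod_{a=1}^{N-l-1} \theta(q^{-2a} w_k / w_j)$.

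Iterating over $l = 1, \dots, N-1$ produces $\txi_{I^{[N]}} = \txi_I$ with total accumulated scalar
\[
\prod_{l=1}^{N-1} \prod_{k \in I_{l+1} \cup \cdots \cup I_N} \prod_{j=1}^n \prod_{a=1}^{N-l-1} \theta(q^{-2a} w_k / w_j) = \prod_{j=1}^n \prod_{l=1}^{N-1} \prod_{a=1}^{N-l-1} \theta(q^{-2a} w_{I_{l+1} \cup \cdots \cup I_N} / w_j),
\]
which is exactly the factor asserted in \eqref{differenceoverallfactorMNT}. The only point that needs verification is the hypothesis $J_{l+2} = \cdots = J_N = \phi$ of the preceding proposition at each substep; this is automatic from the level-by-level structure, since at stage $l$ elements are lifted only from level $l$ to level $l+1$, and no site has been placed above level $l+1$ when the operator is applied.
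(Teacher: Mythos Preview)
Your proof is correct and follows the same route as the paper: both arguments iterate the preceding proposition \eqref{actionvectordrinfeldanother} over $k \in I_{l+1}\cup\cdots\cup I_N$ and over $l=1,\dots,N-1$, using $L^+_{l,l+1}(w_k)\cdot\txi_J=\txi_{I'}$ to track the vector part and collecting the theta-factor at each step. The paper's proof is the one-line remark ``From \eqref{actionvectordrinfeldanother} and $L^+_{l,l+1}(w_k)\cdot\txi_J=\txi_I$'', whereas you have spelled out the intermediate partitions $I^{[m]}$ and verified the emptiness hypothesis $J_{l+2}=\cdots=J_N=\phi$ at each substep; the substance is identical.
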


\subsection{The change of basis matrix
}\lb{sec:CBM}

Now let us make a connection between $\txi_I$ and $\xi'_I$. 
As shown in Sec.\ref{GTBtensor}, their difference is a multiplication by a scalar function
\bea
&&\txi_I=N(\bfw)\xi_I'.\lb{txi2xip} 
\ena
We determine a function $N(\bfw)$ of the evaluation parameters $\bfw=(w_1,\cdots,w_n)$.   
As a byproduct we obtain an explicit formula for the partition function of the 2-dimensional square lattice model defined by the elliptic $R$-matrix $\tR(z,\Pi^*)$ \eqref{def:tR} in terms of the elliptic weight function.  
  
For this purpose let us  consider the change of basis matrix $(\tX_{IJ})_{I,J\in \cI_n}$ from the standard basis $\{v_I\}_{I\in \cI_n}$  to the Gelfand-Tsetlin basis $\{\txi_I\}_{I\in \cI_n}$.
\be
&&\txi_I=\sum_{J\in \cI_n}\tX_{IJ}v_J.
\en
From \eqref{LsonSB} and \eqref{GZvectorrepresentation} with $\zeta=v_{(1^n)}$, the matrix element $\tX_{IJ}$ for $J=J_{\bmu}$ with $\bmu=(\mu_1,\dots,\mu_n)$ is given by 
\begin{align}
\tX_{IJ}
=Z_{\bfK\; \bmu}^{\bfL (1^n)}(w_{I_2 \cup \cdots \cup I_N},
w_{I_3 \cup \cdots \cup I_N},\dots,w_{I_N}
; w_1,\dots,w_n;\Pi^*),\lb{XIJ}
\end{align}
where
$
\bfK=(
1^{|I_2 \cup \cdots \cup I_N|},
2^{|I_3 \cup \cdots \cup I_N|}, \dots,
(N-1)^{|I_N|}
)
$
and
$
\bfL=(
2^{|I_2 \cup \cdots \cup I_N|},
3^{|I_3 \cup \cdots \cup I_N|}, \dots,
N^{|I_N|}
)
$. 

As a statistical model, $\tX_{IJ}$ is a partition function of the 2-dimensional $n\times n$ square lattice model defined by 
assigning the $R$-matrix elements $\tR(z_i/w_j,\Pi^*q^{2h_{\bullet}})_{i_1i_2}^{j_1j_2}$  on the vertex at $(i,j)$  
(Figure \ref{figDR})\footnote{We count the row from the bottom to the top and the column from the left to the right.}  as statistical weights. Here $(z_1,\cdots,z_n)$ is taken as $(w_{I_2 \cup \cdots \cup I_N},
w_{I_3 \cup \cdots \cup I_N},\dots,w_{I_N})$. 
 On each link, the overlapped indices of the neighbor $R$-matrices are summed over $1,\cdots,N$
(Figure \ref{figurepartitionfunction}).

On the other hand, in \cite{KonnoJinttwo} the change of basis matrix from $\{v_I\}$ to $\{\xi'_I\}$
was obtained explicitly in terms of the elliptic weight functions. 
Let us recall them.  Let us set 
$I^{(l)}:=I_1 \cup \cdots \cup I_l$ and
denote its elements as 
$I^{(l)}=\{ i_1^{(l)}<\cdots<i_{\lambda^{(l)}}^{(l)} \}$, where  
we set $\lambda_l=|I_l|$ and  $\lambda^{(l)}:=\lambda_1+\cdots+\lambda_l$. 
To each $i_a^{(l)}$ ($l=1,\dots,N$, $a=1,\dots,\lambda^{(l)}$),
we associate a variable $t_a^{(l)} \equiv t(i_a^{(l)})$
with $t_a^{(N)}=w_a$ $(a=1,\dots,n)$ and set $\bt=(t_a^{(l)})$
$(l=1,\dots,N, a=1,\dots,\lambda^{(l)})$
and $\bfw=(w_a)$ $(a=1,\dots,n)$. 
Then the elliptic weight functions \cite{KonnoJintone,KonnoJinttwo} are given by
\begin{align}
\widetilde{W}_I(\bt,\bfw,\Pi^*)
&=\mathrm{Sym}_{t^{(1)}} \cdots \mathrm{Sym}_{t^{(N-1)}}
\widetilde{U}_I(t,w,\Pi^*), \\
\widetilde{U}_I(t,w,\Pi^*)
&=\prod_{l=1}^{N-1} \prod_{a=1}^{{\lambda}^{(l)}}
\Bigg(
\frac{\theta(q^{-2 C_{\mu_s,l+1}(s)}
\Pi^*_{\mu_s,l+1}
t_b^{(l+1)}/t_a^{(l)}) \theta(q^2) }
{\theta(q^2 t_b^{(l+1)}/t_a^{(l)})
\theta(q^{-2 C_{\mu_s,l+1}(s)}
\Pi^*_{\mu_s,l+1})} \Bigg|_{i_b^{(l+1)}=i_a^{(l)}=s}
\nonumber \\
&\times
\prod_{\substack{
b=1
\\
i_b^{(l+1)} > i_a^{(l)}
}}^{\lambda^{(l+1)}}
\frac{\theta(t_b^{(l+1)}/t_a^{(l)})}{\theta(q^2 t_b^{(l+1)}/t_a^{(l)})}
\prod_{b=a+1}^{\lambda^{(l)}} 
\frac{\theta(q^{-2 }t_a^{(l)}/t_b^{(l)})}{\theta(t_a^{(l)}/t_b^{(l)})}
\Bigg),
\end{align}
where we set
$C_{\mu_s,l+1}(s):=\sum_{j=s+1}^n \langle \overline{\epsilon}_{\mu_j},
h_{\mu_s,l+1} \rangle$ $(\mu_s \le l)$,
and $\mathrm{Sym}_{t^{(l)}}$ denotes symmetrization over the variables
$t_1^{(l)},\dots,t_{\lambda^{(l)}}^{(l)}$.

Define the specialization $\bt=\bfw_I$ by  
$t_a^{(l)}=w_{i_a^{(l)}} (l=1,\dots,N-1,a=1,\dots,\lambda^{(l)})$. 
One obtains the lower triangular matrix $X=(\tW_J(\bfw_I,\bfw,\Pi^*))_{I,J\in \cI_{\la}}$\cite{KonnoJintone}. 
Here we put the matrix elements  in the decreasing order $I^{max}\geqslant\cdots \geqslant I^{min}$.
For $\la=(\la_1,\cdots,\la_N)\in \N^N$, $\mathcal{I}_\lambda$ denotes the set of all 
partition $I$ of $[1,n]$ satisfying $I_l=\la_l$ $l=1,\dots, N$. 
Then one obtains the following statement.
 \begin{theorem} (\cite{KonnoJinttwo} Thm 4.5)
\begin{align}
\displaystyle \xi_I^\prime
=\sum_{J \in \mathcal{I}_\lambda} \widetilde{W}_J(\bfw_I,\bfw,\Pi^* q^{2 \sum_{j=1}^n \langle
\overline{\epsilon}_{\mu_j},h \rangle }) v_J.  \label{GZbasisweightfunction}
\end{align}
\end{theorem}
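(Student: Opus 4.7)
The plan is to proceed by induction on the partial ordering $\leqslant$ on $\mathcal{I}_\lambda$, exactly in the pattern in which the vectors $\xi_I'$ are defined: start from $I^{max}$ and descend by successive applications of $\widetilde{S}_i(\Pi^*)$. For the base case $I=I^{max}$, one has $\xi'_{I^{max}}=v_{I^{max}}$ by definition, so I need to check that the right-hand side reduces to $v_{I^{max}}$ after specialization $\bt=\bfw_{I^{max}}$. This should follow from a triangularity argument: at this specialization, each $\widetilde{W}_J$ with $J\ne I^{max}$ contains a factor of the form $\theta(t_b^{(l+1)}/t_a^{(l)})$ that collapses to $\theta(1)=0$ unless the index structure of $J$ dominates that of $I^{max}$ at every level; since $I^{max}$ is maximal only $J=I^{max}$ survives, and a direct counting of the remaining $\theta(q^2)$ factors against the normalization in $\widetilde{U}_I$ yields the coefficient $1$.

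For the inductive step, assume the formula holds for $I$ and let $s_i(I)\lessdot I$ so that $\xi'_{s_i(I)}=\widetilde{S}_i(\Pi^*)\xi'_I$. The key identity to establish is
\begin{align}
\widetilde{S}_i(\Pi^*)\sum_{J\in\mathcal{I}_\lambda}\widetilde{W}_J\bigl(\bfw_I,\bfw,\Pi^*q^{2\sum_{j=1}^n\langle\bar\epsilon_{\mu_j},h\rangle}\bigr)v_J
=\sum_{J\in\mathcal{I}_\lambda}\widetilde{W}_J\bigl(\bfw_{s_i(I)},\bfw,\Pi^*q^{2\sum_{j=1}^n\langle\bar\epsilon_{\mu_j},h\rangle}\bigr)v_J. \nonumber
\end{align}
Since $\widetilde{S}_i(\Pi^*)=\mathcal{P}^{(i,i+1)}\bar R^{(i,i+1)}(w_i/w_{i+1},\Pi^*q^{2\sum_{j<i}h^{(j)}})\,s^z_i$, I would use the known exchange relation for the elliptic weight functions: under the swap $w_i\leftrightarrow w_{i+1}$, $\widetilde{W}_J(\bt,\bfw,\Pi^*)$ transforms as the image under precisely the same dynamical $R$-matrix, acting on the $(i,i+1)$-factors of the standard basis, with a compatible shift of $\Pi^*$. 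This is the elliptic analogue of the $R$-matrix property of stable envelopes; I expect the proof to reduce to verifying that $\widetilde{U}_I(\bt,\bfw,\Pi^*)$ satisfies the required exchange relation before symmetrization, and then invoking the symmetrization in $\bt^{(l)}$ to promote it to $\widetilde{W}_J$.

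Finally, combining the exchange relation with the specialization $\bt=\bfw_I$, I would observe that $\bfw_{s_i(I)}$ differs from $\bfw_I$ only in whether $w_i$ or $w_{i+1}$ appears at certain positions $t_a^{(l)}$, and that these changes correspond exactly to the permutations produced by $\mathcal{P}^{(i,i+1)}s^z_i$. The dynamical $R$-matrix in $\widetilde{S}_i$ then matches the one appearing in the exchange law, and the two cancel, leaving only the re-indexing that converts $\widetilde{W}_J(\bfw_I,\cdot)$ into $\widetilde{W}_J(\bfw_{s_i(I)},\cdot)$.

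The hard part will be the bookkeeping of dynamical shifts. The operator $\widetilde{S}_i(\Pi^*)$ carries the shift $\Pi^*q^{2\sum_{j<i}h^{(j)}}$, while the weight function carries the shift $\Pi^*q^{2\sum_{j=1}^n\langle\bar\epsilon_{\mu_j},h\rangle}$ which depends on the full configuration $\bmu$, and acting on $v_J$ forces further shifts through the moment maps $\mu_l,\mu_r$ as in Section~\ref{sec:Hopfalgebroid}. One must verify that the shift absorbed when $\widetilde{S}_i$ acts on a given summand $v_J$ matches the shift in $\widetilde{W}_J(\bfw_{s_i(I)},\bfw,\cdot)$ on the nose, and also that the $\mathfrak{S}_{\lambda^{(l)}}$-symmetrization in the definition of $\widetilde{W}_J$ is compatible with both the involutivity $\widetilde{S}_i(\Pi^*)^2=1$ and the braid relation, so that the induction is well-defined and independent of the reduced word expressing $I$ in terms of $I^{max}$.
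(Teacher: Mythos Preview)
The paper does not supply its own proof of this statement: it is quoted verbatim as Theorem~4.5 of \cite{KonnoJinttwo}, and the surrounding text only uses the result. So there is no in-paper argument to compare against.

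That said, your outline is the standard and correct route, and it is precisely the strategy used in the cited reference. The recursive definition $\xi'_{s_i(I)}=\widetilde{S}_i(\Pi^*)\xi'_I$ forces an inductive proof along the $\leqslant$ order, and the two ingredients you isolate are exactly the right ones: (i) the triangularity $\widetilde{W}_J(\bfw_I,\bfw,\Pi^*)=0$ unless $J\geqslant I$ together with the diagonal value \eqref{Wtildediagonalelements}, which the present paper records from \cite{KonnoJintone} and which immediately gives the base case $\xi'_{I^{max}}=v_{I^{max}}$; and (ii) the $R$-matrix exchange relation for the elliptic weight functions under $w_i\leftrightarrow w_{i+1}$, established in \cite{KonnoJintone,KonnoJinttwo}, which is the engine of the inductive step. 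Your worry about the bookkeeping of dynamical shifts is legitimate but not an obstruction: the shift $\Pi^*q^{2\sum_{j<i}h^{(j)}}$ in $\widetilde{S}_i$ and the global shift $\Pi^*q^{2\sum_j\langle\bar\epsilon_{\mu_j},h\rangle}$ in $\widetilde{W}_J$ are reconciled by the $H$-bigrading \eqref{bigradingUqp} once one tracks how $h^{(j)}$ acts on each $v_J$, and the independence of the reduced word is guaranteed by the braid and involutivity relations for $\widetilde{S}_i$ already stated in the paper. In short, your plan is correct and coincides with the approach of the source the paper cites.
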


From \eqref{txi2xip} and the invertibility of the change of basis matrices, one obtains the following formula 
for the partition function. 
\begin{prop}
\be
&&Z_{\bfK\; \bmu}^{\bfL (1^n)}(w_{I_2 \cup \cdots \cup I_N},
w_{I_3 \cup \cdots \cup I_N},\dots,w_{I_N}
;\bfw
;\Pi^*)=N(\bfw)\widetilde{W}_J(\bfw_I,\bfw,\Pi^* q^{2 \sum_{j=1}^n \langle
\overline{\epsilon}_{\mu_j},h \rangle })
\en
with $J=J_{\bmu}$, $
\bfK=(
1^{|I_2 \cup \cdots \cup I_N|},
2^{|I_3 \cup \cdots \cup I_N|}, \dots,
(N-1)^{|I_N|}
)
$
and
$
\bfL=(
2^{|I_2 \cup \cdots \cup I_N|},
3^{|I_3 \cup \cdots \cup I_N|}, \dots,
N^{|I_N|}
)
$.
\end{prop}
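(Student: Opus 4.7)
The proposal is to deduce the identity by comparing two independent expansions of the same vector in the standard basis $\{v_J\}_{J\in \cI_n}$, one coming from the $L$-operator construction of $\txi_I$ and the other from the $\gS_n$-action construction of $\xi'_I$, and then using the scalar relation \eqref{txi2xip}.

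First I would start from \eqref{txi2xip}, $\txi_I = N(\bfw)\, \xi'_I$. Expanding the left-hand side in the standard basis using the definition \eqref{GZvectorrepresentation} together with the iterated action formula \eqref{LsonSB}--\eqref{partitionfunctionnotation} and reading off the coefficient of $v_J$ with $J=J_{\bmu}$, one obtains exactly the partition function $\tX_{IJ}$ displayed in \eqref{XIJ}, namely
\[
Z_{\bfK\,\bmu}^{\bfL\,(1^n)}\bigl(w_{I_2\cup\cdots\cup I_N},\ldots,w_{I_N};\bfw;\Pi^*\bigr),
\]
with the stated strings $\bfK$ and $\bfL$ (read off from the indices of the $L^+_{l,l+1}(\cdot)$ applied to the highest vector $\zeta=v_{(1^n)}$).

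Next, I would expand the right-hand side using Theorem 4.5 of \cite{KonnoJinttwo}, i.e. equation \eqref{GZbasisweightfunction}, which gives
\[
\xi'_I = \sum_{J\in \cI_\la} \widetilde{W}_J\bigl(\bfw_I,\bfw,\Pi^*q^{2\sum_{j=1}^n\bra\bep_{\mu_j},h\ket}\bigr)\, v_J.
\]
Equating the coefficients of $v_J$ on both sides of $\txi_I=N(\bfw)\xi'_I$ yields the claimed identity. The only subtlety is that the partition $J$ must lie in $\cI_\la$ with $\la_l=|I_l|$ for the right-hand side to be nonzero, but this is automatic by weight conservation of the $R$-matrix entries entering $Z$, so the identity holds for all $J=J_{\bmu}$.

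The main (indeed only) obstacle is bookkeeping: one must check that the specialization of evaluation parameters in the $Z$-side, namely $(z_1,\dots,z_{n-|I_1|})=(w_{I_2\cup\cdots\cup I_N},\ldots,w_{I_N})$, matches the specialization $\bt=\bfw_I$ on the weight function side, and that the dynamical shift appearing in $\widetilde{W}_J$ is consistent with the accumulated shifts in \eqref{partitionfunctionnotation}. Once this dictionary is verified, the proposition is immediate, and no further calculation is needed since all the structural identities have been established in Sec.\ref{GTBtensor} and in \cite{KonnoJinttwo}.
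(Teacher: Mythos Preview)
Your proposal is correct and is essentially the same argument the paper gives: the proposition is obtained directly from the scalar relation \eqref{txi2xip} by expanding $\txi_I$ via \eqref{XIJ} and $\xi'_I$ via \eqref{GZbasisweightfunction} and equating the coefficients of $v_J$. The ``bookkeeping'' you flag is in fact not an issue, since the two sides are by definition the coefficients of the same standard basis vector in the two expansions, so no separate matching of specializations or dynamical shifts needs to be verified.
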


\noindent
{\it Example.}\ The case of $N=2$, $n=3$, $\lambda=(2,1)$: 
one has
\begin{align}
\displaystyle \widetilde{W}_{I_{211}}(w_{I_{112}},w,\Pi^*)&
=\frac{\theta(q^{-2}\Pi^*_{1,2}w_3/w_1)\theta(q^2)}{\theta(q^2 w_3/w_1)\theta(q^{-2}\Pi^*_{1,2})}, \\
\displaystyle \widetilde{W}_{I_{121}}(w_{I_{112}},w,\Pi^*)&
=\frac{\theta(w_3/w_1)\theta(\Pi^*_{1,2}w_{3}/w_2)\theta(q^2)}{
\theta(q^2 w_3/w_1)\theta(q^2 w_3/w_2)\theta(\Pi^*_{1,2})}, \\
\displaystyle \widetilde{W}_{I_{111}}(w_{I_{112}},w,\Pi^*)&
=\frac{\theta(w_3/w_1)\theta(w_3/w_2)}{\theta(q^2 w_{3}/w_1)\theta(q^2 w_3/w_2)},
\end{align}
and
\begin{align}
\displaystyle
\xi_{112}^\prime=&
\displaystyle \widetilde{W}_{I_{211}}(w_{I_{112}},w,\Pi^* q^{2 <2 \overline{\epsilon}_{1}+\overline{\epsilon}_2 ,h>})v_{211}
+\displaystyle \widetilde{W}_{I_{121}}(w_{I_{112}},w,\Pi^* q^{2 <2 \overline{\epsilon}_{1}+\overline{\epsilon}_2 ,h>})v_{121} \nn \\
&+\displaystyle \widetilde{W}_{I_{111}}(w_{I_{112}},w,\Pi^* q^{2 <2 \overline{\epsilon}_{1}+\overline{\epsilon}_2 ,h>})v_{111} \nn \\
=&
\frac{\theta(\Pi^*_{1,2}w_3/w_1)\theta(q^2)}{\theta(q^2 w_3/w_1)\theta(\Pi^*_{1,2})} v_{211}
+\frac{\theta(w_3/w_1)
\theta(q^2 \Pi^*_{1,2}w_{3}/w_2)\theta(q^2)}{\theta(q^2 w_3/w_1)
\theta(q^2 w_3/w_2)  \theta(q^2 \Pi^*_{1,2})} v_{121} \nn \\
&+\frac{\theta(w_3/w_1)\theta(w_3/w_2)}{\theta(q^2 w_{3}/w_1)
\theta(q^2 w_3/w_2)} v_{112}.
\end{align}
On the other hand,
by direct computation, we find
\begin{align}
\txi_{112}=&\frac{\theta(\Pi^*_{1,2}w_3/w_1)\theta(q^2)^2 \theta(q^2 w_3/w_2)}
{\theta(\Pi^*_{1,2})} v_{211}
+\frac{\theta(w_3/w_1)\theta(q^2 \Pi^*_{1,2} w_{3}/w_2)\theta(q^2)^2}
{\theta(q^2 \Pi^*_{1,2})} v_{121} \nn \\
&+\theta(w_3/w_1)\theta(w_3/w_2)\theta(q^2) v_{112}.
\end{align}
The relation between
$\txi_{112}$ and $\xi_{112}^\prime$ is $\txi_{112}=\theta(q^2)
\theta(q^2 w_3/w_2)\theta(q^2 w_3/w_1) \xi_{112}^\prime$.

In the rest of this section, we determine $N(\bfw)$ as the ratio of the diagonal elements 
of $\tX$ and $X$. For $X$, we have the result.  
\begin{proposition} (\cite{KonnoJintone}, Prop 5.1)
\begin{align}
\widetilde{W}_I(\bfw_I,\bfw,\Pi^*) 
=\prod_{1 \le k < l < N} \prod_{a \in I_k} \prod_{\substack{b \in I_l \\ a<b}}
\frac{\theta(w_b/w_a)}{\theta(q^2 w_b/w_a)}.
\label{Wtildediagonalelements}
\end{align}
\end{proposition}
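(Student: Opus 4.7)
The plan is to compute $\widetilde{W}_I(\bfw_I,\bfw,\Pi^*)$ by expanding the symmetrization and showing that only the identity permutation tuple contributes. First I would write
\begin{align*}
\widetilde{W}_I(\bfw_I,\bfw,\Pi^*)=\sum_{(\sigma_1,\ldots,\sigma_{N-1})\in\mathfrak{S}_{\lambda^{(1)}}\times\cdots\times\mathfrak{S}_{\lambda^{(N-1)}}}\widetilde{U}_I^{(\sigma_1,\ldots,\sigma_{N-1})}(\bfw_I,\bfw,\Pi^*),
\end{align*}
where $\sigma_l$ acts on $\widetilde{U}_I$ by the substitution $t_a^{(l)}\mapsto w_{i_{\sigma_l(a)}^{(l)}}$, and analyze each summand.

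Next I would check that at the identity tuple the three types of factors in $\widetilde{U}_I$ behave as follows. For the ``diagonal'' factor at an index $(a,b)$ with $i_b^{(l+1)}=i_a^{(l)}=s$, the specialization gives $t_b^{(l+1)}/t_a^{(l)}=1$, so the factor collapses to
\begin{align*}
\frac{\theta(q^{-2C_{\mu_s,l+1}(s)}\Pi^*_{\mu_s,l+1})\theta(q^2)}{\theta(q^2)\theta(q^{-2C_{\mu_s,l+1}(s)}\Pi^*_{\mu_s,l+1})}=1.
\end{align*}
For the ``level-$(l+1)$ neighbor'' product $\prod_{i_b^{(l+1)}>i_a^{(l)}}\theta(t_b^{(l+1)}/t_a^{(l)})/\theta(q^2t_b^{(l+1)}/t_a^{(l)})$, the identity specialization produces the factors appearing on the right-hand side of the claimed identity (up to reorganization). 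For the ``intra-level'' product $\prod_{b>a}\theta(q^{-2}t_a^{(l)}/t_b^{(l)})/\theta(t_a^{(l)}/t_b^{(l)})$, the identity specialization survives as a nonzero quantity that will turn out to telescope.

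The main step, and the main obstacle, is the triangularity argument showing that any non-identity tuple $(\sigma_1,\ldots,\sigma_{N-1})$ contributes zero. The mechanism is that the numerator factor $\theta(t_b^{(l+1)}/t_a^{(l)})$ with the constraint $i_b^{(l+1)}>i_a^{(l)}$ forces, after a non-trivial permutation, a coincidence $t_a^{(l)}=t_b^{(l+1)}=w_j$ for some $j\in I^{(l)}$ with $j<i_b^{(l+1)}$; this produces $\theta(1)=0$. Making this precise requires an inductive argument on the level $l$ from $l=N-1$ downward, using the nested structure $I^{(1)}\subset I^{(2)}\subset\cdots\subset I^{(N-1)}$ and the fact that the highest-level variables $t^{(N)}_b=w_b$ are not permuted. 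Carefully matching the combinatorics to isolate the forced coincidence is the technical crux.

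Finally, once only the identity term survives, I would collect the surviving factors level by level. The level-$(l,l+1)$ neighbor factors at $\sigma=\mathrm{id}$ yield $\prod_{a\in I^{(l)}}\prod_{b\in I^{(l+1)}\setminus\{i_a^{(l)}\},\;b>i_a^{(l)}}\theta(w_b/w_a)/\theta(q^2w_b/w_a)$, and the intra-level factors telescope across successive $l$'s since $I^{(l+1)}=I^{(l)}\cup I_{l+1}$. Reindexing by the partition blocks $I_k,I_l$ and enforcing $a<b$ reassembles the contributions into $\prod_{1\le k<l\le N}\prod_{a\in I_k}\prod_{b\in I_l,\;a<b}\theta(w_b/w_a)/\theta(q^2w_b/w_a)$, which is the claimed formula.
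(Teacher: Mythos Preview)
The paper does not give its own proof of this proposition; it is simply quoted from \cite{KonnoJintone}, Proposition~5.1. So there is no in-paper argument to compare against, and your direct computation is a reasonable way to supply a self-contained proof.

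Your strategy is sound. The vanishing of the non-identity permutation terms can indeed be established by downward induction on the level: assuming $\sigma_{l+1}=\cdots=\sigma_{N-1}=\mathrm{id}$, if $\sigma_l\neq\mathrm{id}$ pick $a$ with $\sigma_l(a)>a$; then $j:=i_{\sigma_l(a)}^{(l)}\in I^{(l)}\subset I^{(l+1)}$ satisfies $j>i_a^{(l)}$, so the neighbor product for this $a$ contains the numerator $\theta(t^{(l+1)}_{b}/t^{(l)}_a)=\theta(w_j/w_j)=0$, while no denominator factor vanishes for generic parameters. This is exactly the standard triangularity mechanism for weight functions.

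Two small corrections to your write-up. First, the cancellation at the identity tuple is not ``across successive $l$'s'' but within a single level: using $\theta(q^{-2}x/y)/\theta(x/y)=\theta(q^2y/x)/\theta(y/x)$, the intra-level factor $\prod_{a<b\in I^{(l)}}\theta(q^2w_b/w_a)/\theta(w_b/w_a)$ cancels precisely against the $b\in I^{(l)}$ portion of the neighbor factor at the \emph{same} level $l$, leaving only $\prod_{a\in I^{(l)}}\prod_{b\in I_{l+1},\,a<b}\theta(w_b/w_a)/\theta(q^2w_b/w_a)$. Second, summing these contributions over $l=1,\dots,N-1$ does yield your $\prod_{1\le k<l\le N}$ (with $l\le N$), so the printed bound $l<N$ in the statement is evidently a typo.
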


We show that the diagonal elements $\tX_{II}$ 
can be calculated recursively and have a factored expression.
To illustrate the calculation, let us show an example. 

\noindent
{\it Example.}\ Take $N=3$, $n=9$, $I=I_{231213231}$. From the example in Sec.\ref{GTBtensor}, we have 
\begin{align}
\tX_{II}=&Z_{(1^6,2^3),(231213231)}^{(2^6,3^3),(1^9)}(w_1,w_2,w_4,w_6,w_7,w_8,w_2,w_6,w_8;w_1,\dots,w_9;\Pi^*) \nonumber \\
=&
\sum_{\boldsymbol{\alpha}_{1},\dots,\boldsymbol{\alpha}_{8}\in [1,3]^9
} T_{23}^+(w_8;w_1,\dots,w_9;\Pi^*)_{(231213231)}^{\boldsymbol{\alpha}_{8}}
\nonumber \\
&\times T_{23}^+(w_6;w_1,\dots,w_9;\Pi^*q^{2h_{\overline{\epsilon}_{3}}})_{\boldsymbol{\alpha}_{8}}^{\boldsymbol{\alpha}_{7}}
T_{23}^+(w_2;w_1,\dots,w_9;\Pi^*q^{2h_{2\overline{\epsilon}_{3}}})_{\boldsymbol{\alpha}_{7}}^{\boldsymbol{\alpha}_{6}}
\nonumber \\
&\times
T_{12}^+(w_8;w_1,\dots,w_9;\Pi^*q^{2h_{3\overline{\epsilon}_{3}}})_{\boldsymbol{\alpha}_{6}}^{\boldsymbol{\alpha}_{5}}
T_{12}^+(w_7;w_1,\dots,w_9;\Pi^*q^{2h_{\overline{\epsilon}_{2}+
3\overline{\epsilon}_{3}}})_{\boldsymbol{\alpha}_{5}}^{\boldsymbol{\alpha}_{4}}
\nonumber \\
&\times
T_{12}^+(w_6;w_1,\dots,w_9;\Pi^*q^{2h_{2\overline{\epsilon}_{2}+
3\overline{\epsilon}_{3}}})_{\boldsymbol{\alpha}_{4}}^{\boldsymbol{\alpha}_{3}}
T_{12}^+(w_4;w_1,\dots,w_9;\Pi^*q^{2h_{3\overline{\epsilon}_{2}+
3\overline{\epsilon}_{3}}})_{\boldsymbol{\alpha}_{3}}^{\boldsymbol{\alpha}_{2}}
\nonumber \\
&
\times
T_{12}^+(w_2;w_1,\dots,w_9;\Pi^*q^{2h_{4\overline{\epsilon}_{2}+
3\overline{\epsilon}_{3}}})_{\boldsymbol{\alpha}_{2}}^{\boldsymbol{\alpha}_{1}}
T_{12}^+(w_1;w_1,\dots,w_9;\Pi^*q^{2h_{5\overline{\epsilon}_{2}+
3\overline{\epsilon}_{3}}})_{\boldsymbol{\alpha}_{1}}^{(1^9)}.
\label{beforedecomposition}
\end{align}
See Figure \ref{figuredigonalelementone} for a graphical description
of \eqref{beforedecomposition}.

\begin{figure}[] 
\centering
\includegraphics[width=16cm]{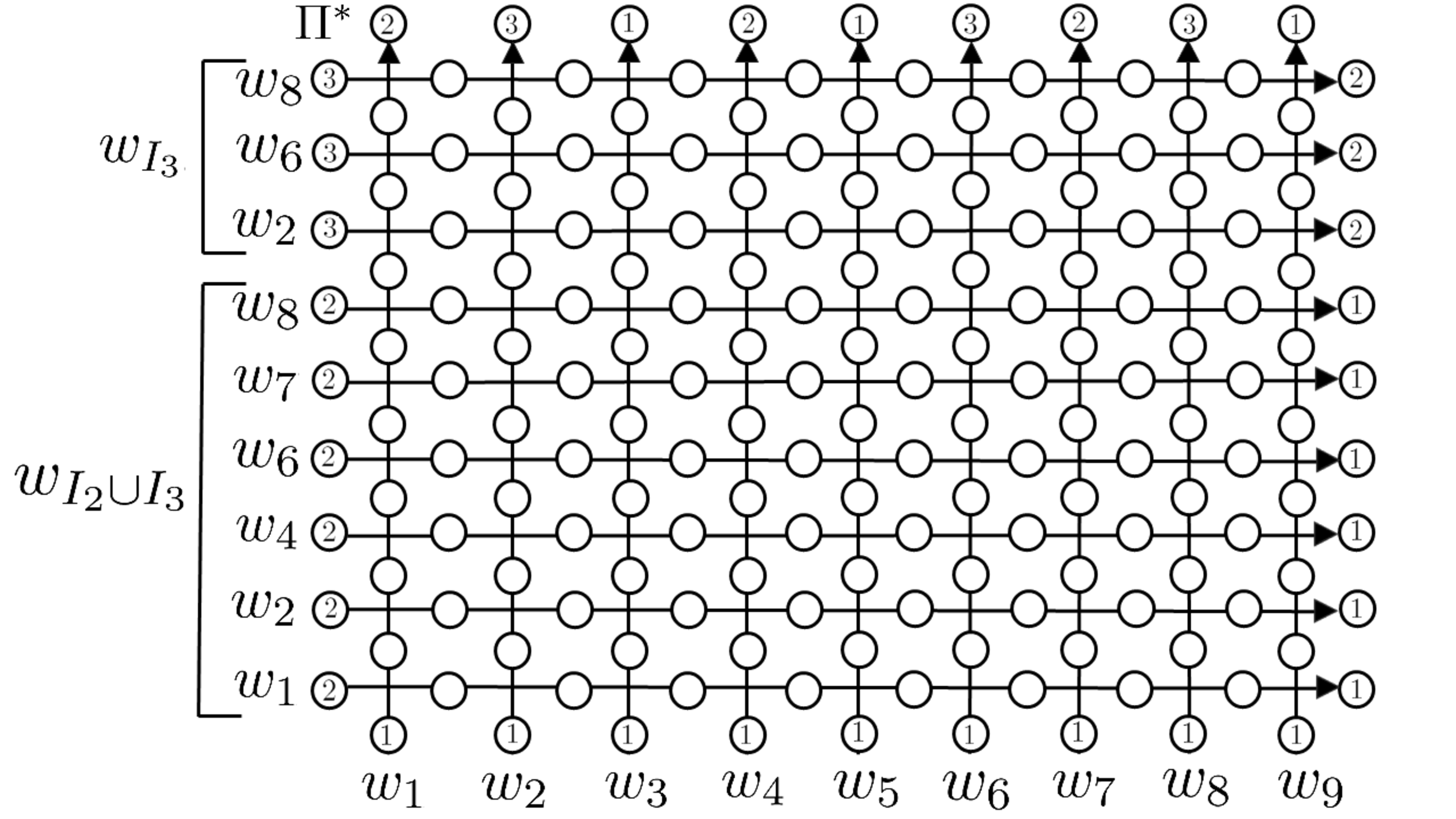}
\caption{ 
The partition function corresponding to
 $\tX_{II}$
for $N=3$, $I=231213231$.
}
\label{figuredigonalelementone}
\end{figure}

Taking $\boldsymbol{\alpha}_j=(\alpha_j,\boldsymbol{\alpha}_j^\prime)$ with $\al_j\in [1,3]$ and $\bal_j'\in [1,3]^8$, $j=1,\dots,8$, 
we decompose \eqref{beforedecomposition} as
\begin{align}
\displaystyle
&\sum_{\alpha_1,\dots,\alpha_8,\mu_1,\dots,\mu_9\in [1,3]}
C(\al_1,\dots,\al_8;\mu_1,\dots,\mu_9)Z(\alpha_1,\dots,\alpha_8;\mu_1,\dots,\mu_9), 
\label{afterdecomposition}
\end{align}
where we set 
\begin{align}
&C(\alpha_1,\dots,\alpha_8;\mu_1,\dots,\mu_9) \nonumber \\
&=
\tR(w_8/w_1,\Pi^*)_{\mu_9 2}^{3 \alpha_8}
\tR(w_6/w_1,\Pi^* q^{2h_{\overline{\epsilon}_3}})_{\mu_8 \alpha_8}^{3 \alpha_7}
\tR(w_2/w_1,\Pi^*  q^{2h_{2\overline{\epsilon}_3}})
_{\mu_7 \alpha_7}^{3 \alpha_6} \nonumber \\
&\times \tR(w_8/w_1,\Pi^* q^{2h_{3\overline{\epsilon}_3}})
_{\mu_6 \alpha_6}^{2 \alpha_5}
\tR(w_7/w_1,\Pi^*q^{2h_{ \overline{\epsilon}_2+3 \overline{\epsilon}_3}})
_{\mu_5 \alpha_5}^{2 \alpha_4} \nonumber \\
&\times \tR(w_6/w_1,\Pi^*q^{2h_{2 \overline{\epsilon}_2+3 \overline{\epsilon}_3}})_{\mu_4 \alpha_4}^{2 \alpha_3}
\tR(w_4/w_1,\Pi^*q^{2h_{3 \overline{\epsilon}_2+3 \overline{\epsilon}_3}})
_{\mu_3 \alpha_3}^{2 \alpha_2} \nonumber \\
&\times \tR(w_2/w_1,\Pi^*q^{2h_{4 \overline{\epsilon}_2+3 \overline{\epsilon}_3}})_{\mu_2 \alpha_2}^{2 \alpha_1}
\tR(w_1/w_1,\Pi^*q^{2h_{5 \overline{\epsilon}_2+3 \overline{\epsilon}_3}})_{\mu_1 \alpha_1}^{21},  \label{decompositionfactor}\\
&Z(\alpha_1,\dots,\alpha_8;\mu_1,\dots,\mu_9)\nn\\
&=\sum_{\boldsymbol{\alpha}_{1}^\prime,\dots,\boldsymbol{\alpha}_{8}^\prime\in [1,3]^8
}
T_{2 \mu_9}^+(w_8;w_2,\dots,w_9;\Pi^*q^{2h_{\overline{\epsilon}_2}})_{(31213231)}^{\boldsymbol{\alpha}_{8}^\prime}
\nonumber \\
&\times T_{2 \mu_8}^+(w_6;w_2,\dots,w_9;\Pi^*q^{2h_{\overline{\epsilon}_{3}+\overline{\epsilon}_{\alpha_8}}})_{\boldsymbol{\alpha}_{8}^\prime}^{\boldsymbol{\alpha}_{7}^\prime}
T_{2 \mu_7}^+(w_2;w_2,\dots,w_9;\Pi^*q^{2h_{2\overline{\epsilon}_{3}+\overline{\epsilon}_{\alpha_7}}})_{\boldsymbol{\alpha}_{7}^\prime}^{\boldsymbol{\alpha}_{6}^\prime}
\nonumber \\
&\times
T_{1 \mu_6}^+(w_8;w_2,\dots,w_9;\Pi^*q^{2h_{3\overline{\epsilon}_{3}+\overline{\epsilon}_{\alpha_6}}})_{\boldsymbol{\alpha}_{6}^\prime}^{\boldsymbol{\alpha}_{5}^\prime}
T_{1 \mu_5}^+(w_7;w_2,\dots,w_9;\Pi^*q^{2h_{\overline{\epsilon}_{2}+
3\overline{\epsilon}_{3}+\overline{\epsilon}_{\alpha_5}}})_{\boldsymbol{\alpha}_{5}^\prime}^{\boldsymbol{\alpha}_{4}^\prime}
\nonumber \\
&\times
T_{1 \mu_4}^+(w_6;w_2,\dots,w_9;\Pi^*q^{2h_{2\overline{\epsilon}_{2}+
3\overline{\epsilon}_{3}+\overline{\epsilon}_{\alpha_4}}})_{\boldsymbol{\alpha}_{4}^\prime}^{\boldsymbol{\alpha}_{3}^\prime}
T_{1 \mu_3}^+(w_4;w_2,\dots,w_9;\Pi^*q^{2h_{3\overline{\epsilon}_{2}+
3\overline{\epsilon}_{3}+\overline{\epsilon}_{\alpha_3}}})_{\boldsymbol{\alpha}_{3}^\prime}^{\boldsymbol{\alpha}_{2}^\prime}
\nonumber \\
&
\times
T_{1 \mu_2}^+(w_2;w_2,\dots,w_9;\Pi^*q^{2h_{4\overline{\epsilon}_{2}+
3\overline{\epsilon}_{3}+\overline{\epsilon}_{\alpha_2}}})_{\boldsymbol{\alpha}_{2}^\prime}^{\boldsymbol{\alpha}_{1}^\prime}
T_{1 \mu_1}^+(w_1;w_2,\dots,w_9;\Pi^*q^{2h_{5\overline{\epsilon}_{2}+
3\overline{\epsilon}_{3}+\overline{\epsilon}_{\alpha_1}}})_{\boldsymbol{\alpha}_{1}^\prime}^{(1^8)}.
\end{align}
The $C(\alpha_1,\dots,\alpha_8;\mu_1,\dots,\mu_9)$ part corresponds to the 1st column (the left most column ) in Figure \ref{figuredigonalelementone}.
We now show that $\alpha_1,\dots,\alpha_8$, $\mu_1,\dots,\mu_9$
in \eqref{afterdecomposition} are determined uniquely. 
In fact, applying the property
\begin{align}
{\tR}(z,P)_{k l}^{ij} = 0 \ \ \ \mathrm{unless} \ \ \ i=k, \ j=l
\ \ \ \mathrm{or} \ \ \ i=l, \ j=k
\label{icerule}
\end{align}
to each factor in  $C(\alpha_1,\dots,\alpha_8;\mu_1,\dots,\mu_9)$, one finds 
\begin{itemize}
\item $\tR(w_8/w_1,\Pi^*)_{\mu_9 2}^{3 \alpha_8}$ yields 
$\alpha_8=2$, $\mu_9=3$
\item $\tR(w_6/w_1,\Pi^* q^{2h_{\overline{\epsilon}_3}})_{\mu_8 \alpha_8}^{3 \alpha_7}$
with $\alpha_8=2$ yields  $\alpha_7=2$, $\mu_8=3$
\item ${\tR}(w_2/w_1,\Pi^*  q^{2h_{2\overline{\epsilon}_3}})
_{\mu_7 \alpha_7}^{3 \alpha_6}$ with $\alpha_7=2$ yields  $\alpha_6=2$, $\mu_7=3$.
\item  $\tR(w_1/w_1,\Pi^*q^{2h_{5 \overline{\epsilon}_2+3 \overline{\epsilon}_3}})_{\mu_1 \alpha_1}^{21}$ yields $\alpha_1=2$, $\mu_1=1$ due to 
$\tR(w_1/w_1,\Pi^*q^{2h_{5 \overline{\epsilon}_2+3 \overline{\epsilon}_3}})_{21}^{21}
=\theta(w_1/w_1)=0$
\item the factors 
$\tR(w_2/w_1,\Pi^*q^{2h_{4 \overline{\epsilon}_2+3 \overline{\epsilon}_3}})_{\mu_2 \alpha_2}^{2 \alpha_1}$,
$
\tR(w_4/w_1,\Pi^*q^{2h_{3 \overline{\epsilon}_2+3 \overline{\epsilon}_3}})
_{\mu_3 \alpha_3}^{2 \alpha_2}
$,
$\tR(w_6/w_1,\Pi^*q^{2h_{2 \overline{\epsilon}_2+3 \overline{\epsilon}_3}})_{\mu_4 \alpha_4}^{2 \alpha_3}$,\\ 
$\tR(w_7/w_1,\Pi^* q^{2h_{\overline{\epsilon}_2+3 \overline{\epsilon}_3}})
_{\mu_5 \alpha_5}^{2 \alpha_4}
$
and
$
\tR(w_8/w_1,\Pi^*q^{2h_{3 \overline{\epsilon}_3}})
_{\mu_6 \alpha_6}^{2 \alpha_5}
$ in this order with $\al_1=2$ yield $\alpha_2=\cdots=\alpha_6=\mu_2=\cdots=\mu_6=2$.
\end{itemize}
Therefore we obtain
\begin{align}
\displaystyle
\tX_{II}=&C(2^8;1, 2^5, 3^3)
Z_{(1^6,2^3),(31213231)}^{(1,2^5,3^3),(1^8)}
(w_1,w_2,w_4,w_6,w_7,w_8,w_2,w_6,w_8
;w_2,\dots,w_9; \Pi^*q^{2h_{\overline{\epsilon}_2}}).
\label{afterdecompositiontwo}
\end{align}
Here we made the identification 
\be
&&Z(2^8;1, 2^5, 3^3)=Z_{(1^6,2^3),(31213231)}^{(1,2^5,3^3),(1^8)}
(w_1,w_2,w_4,w_6,w_7,w_8,w_2,w_6,w_8
;w_2,\dots,w_9; \Pi^*q^{2h_{\overline{\epsilon}_2}}). 
\en
We can repeat this process 
and find that $\tX_{II}$ 
can be written as a product of 
the $R$-matrix elements 
uniquely determined by the configuration at the boundary of the lattice (Figure \ref{figurediagonalelementtwo}).
From the perspective of partition functions, this means that
there is only one configuration which gives nonzero contribution.

\begin{figure}[] 
\centering
\includegraphics[width=16cm]{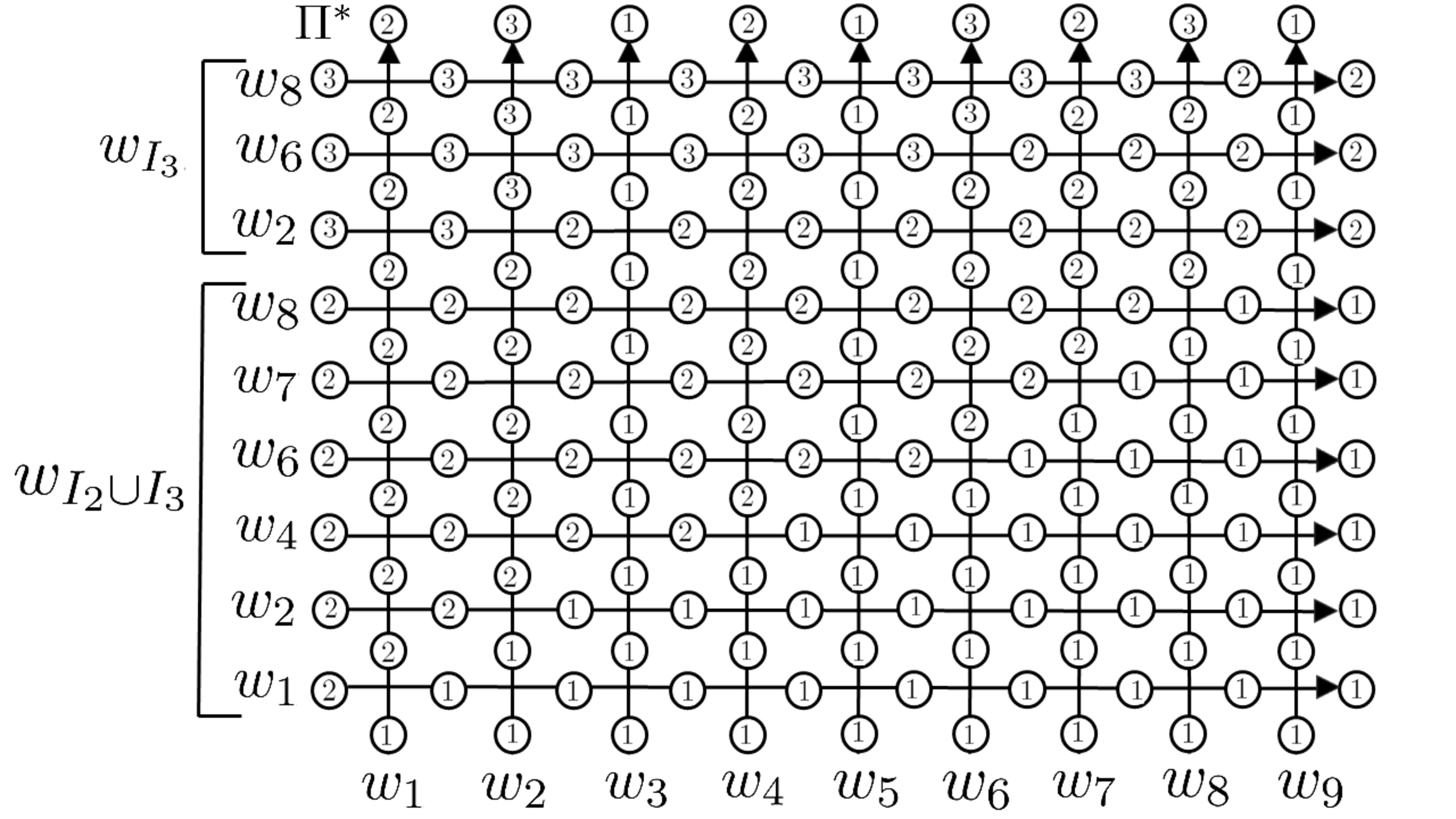}
\caption{ 
The unique configuration  corresponding to 
$\tX_{II}$ for $N=3$, $I=231213231$.
}
\label{figurediagonalelementtwo}
\end{figure}

\begin{thm}
\begin{align}
\displaystyle
\tX_{II}=&\ \theta(q^2)^{\sum_{k=2}^N (k-1) |I_k|
}
\prod_{l=1}^{N-1}
\Bigg\{
\prod_{\substack{
j \in I_{N-l+1} \cup \cdots \cup I_{N}
\\
k \in I_{1} \cup \cdots \cup I_{N-l-1}
}
} \theta(w_j/w_k)
\nonumber \\
&\times
\prod_{\substack{
j \in I_{N-l+1} \cup \cdots \cup I_{N}
\\
k \in I_{N-l} \cup \cdots \cup I_{N}
\\ j < k
}
}
\theta(q^2 w_j/w_k)
\prod_{\substack{
j \in I_{N-l+1} \cup \cdots \cup I_{N}
\\
k \in I_{N-l+1} \cup \cdots \cup I_{N} \\
j > k
}
}
\theta(q^2 w_j/w_k)
\prod_{\substack{
j \in I_{N-l+1} \cup \cdots \cup I_{N}
\\
k \in I_{N-l}
\\
j > k
}
}
\theta(w_j/w_k)
\Bigg\}. \label{coeffGZbasis}
\end{align}
\end{thm}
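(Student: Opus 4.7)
The quantity $\tX_{II}$ is the partition function of the two-dimensional lattice model defined by $\tR(\cdot,\Pi^*)$, with horizontal spectral parameters $(w_{I_2\cup\cdots\cup I_N},w_{I_3\cup\cdots\cup I_N},\dots,w_{I_N})$ and vertical spectral parameters $(w_1,\dots,w_n)$; boundary data on the four sides are prescribed by \eqref{XIJ}--\eqref{partitionfunctionnotation} with $\bfK,\bfL$ as in the statement, top row $(1^n)$ coming from $\zeta=v_{(1^n)}$, and bottom row $\bmu$ determined by $I$. The main assertion is that this sum collapses to a single non-vanishing term.

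Following the pattern of the $N=3$ example in the text, I would establish uniqueness of the configuration by induction on the columns, read left to right. At each vertex $(i,s)$ with $z_i=w_s$ one of the two ice-rule options allowed by \eqref{icerule} carries a factor $\tR(1,\Pi^\bullet)^{ab}_{ab}=a(1)\bar{b}(1)=\theta(1)=0$, forcing the creation vertex to be selected and freezing the column. The leftmost column of the lattice also has uniform input from the top equal to $1$, so the ice rule together with $\theta(1)=0$ determines it completely. Once the first column is peeled off, what remains is a partition function of the same form on a lattice with one fewer column, with $I$ restricted to $[2,n]$ and the dynamical parameter shifted according to the accumulated weight in \eqref{partitionfunctionnotation}. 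Iterating, $\tX_{II}$ reduces to an explicit product of $\tR$-matrix elements evaluated on the unique frozen configuration.

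Each vertex of the frozen configuration contributes one of three weights: the identity weight $a(z)=\theta(q^2 z)$ when the index passes through unchanged; the transmission weight $a(z/w)\bar{b}(z/w)=\theta(z/w)$ when two distinct indices cross; or the creation weight $a(z/w)\bar{c}(z/w,\Pi^*)=\theta(q^2)\theta((z/w)\Pi^*)/\theta(\Pi^*)$ at a diagonal vertex $z=w$ raising an index. Summing, the total power of $\theta(q^2)$ equals the number of creation vertices, namely $\sum_{k=2}^N(k-1)|I_k|$, since each site $s\in I_k$ must be raised from $1$ to $k$ in $k-1$ steps. The $\Pi^*$-dependent factors from successive creation vertices telescope against the cumulative dynamical shifts $q^{2\bra\bep_{l_i},h\ket}$ appearing in \eqref{partitionfunctionnotation}, which is consistent with $\tX_{II}$ being a diagonal matrix element (hence $\Pi^*$-independent in the lower-triangular basis, cf.\ \eqref{Wtildediagonalelements}). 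The surviving non-dynamical theta factors group naturally by the horizontal block (layer) of creation operators $L^+_{N-l,N-l+1}(w_{I_{N-l+1}\cup\cdots\cup I_N})$ for $l=1,\dots,N-1$, producing the four sub-products in \eqref{coeffGZbasis}:
\[
\theta(w_j/w_k) \text{ (case (a),(d))}, \qquad \theta(q^2 w_j/w_k) \text{ (case (b),(c))},
\]
according to whether the crossing index is below or above the diagonal and which side of the layer the vertical line lies on.

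The main obstacle is the combinatorial bookkeeping: for each ordered pair $(j,k)$ one must identify the lattice position at which the crossing occurs in the frozen configuration, determine whether the local weight is $\theta(w_j/w_k)$ or $\theta(q^2 w_j/w_k)$, and verify that the dynamical shifts indeed cancel exactly to produce the stated $\Pi^*$-independent answer. A clean way to organise this is induction on $|I_2|+\cdots+|I_N|$, removing the top-leftmost creation vertex to reduce $I$ to a partition with $|I_k|$ decreased by one for the relevant $k$, and then checking that the ratio of the claimed formulas \eqref{coeffGZbasis} before and after matches the product of vertex weights contributed by the removed row--column pair. The resulting factor of $\theta(q^2)$ together with the transmission and creation contributions from the peeled column can be matched against the explicit product on the right-hand side of \eqref{coeffGZbasis}, completing the induction.
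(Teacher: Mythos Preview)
Your approach is essentially the paper's: freeze the lattice column by column using the ice rule together with the vanishing $\theta(1)=0$ at the diagonal vertices, then read off the product of $R$-matrix weights on the unique surviving configuration. The paper carries this out by defining an intermediate partition function $Z_k$ on columns $k,\dots,n$, proving a lemma $Z_k=W_kZ_{k+1}$ with an explicit $W_k=W_k^1W_k^2W_k^3$, and then assembling $\prod_k W_k$ into the stated formula. Your alternative bookkeeping (induction on $\sum_{k\geq 2}|I_k|$, peeling off one creation vertex at a time) would also work, but the paper's column-by-column product is more direct for matching the four sub-products in \eqref{coeffGZbasis}.

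One point needs correction. The $\Pi^*$-independence does \emph{not} arise from any telescoping against the dynamical shifts. At a creation vertex one has $z/w=1$, and then $a(1)c(1,\Pi)=\theta(q^2)\cdot\dfrac{\theta(q^2)\theta(\Pi)}{\theta(\Pi)\theta(q^2)}=\theta(q^2)$ (and likewise for $\bar c$): the $\Pi^*$-factor drops out pointwise. The remaining issue, which you do not address, is that the $b$-weight $a(z)b(z,\Pi)$ \emph{is} genuinely $\Pi^*$-dependent and you must check it never occurs in the frozen configuration. This is true: in the $L^+_{m,m+1}$ block the auxiliary index at any vertex is $m$ or $m+1$, while the quantum index at column $k\in I_l$ is $\min(l,m)$ or $\min(l,m+1)$, so the auxiliary index is always $\geq$ the quantum index and only diagonal or $\bar b$ weights arise. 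The paper handles this implicitly by writing out $W_k$ explicitly and observing (Figure caption) that it carries no $\Pi^*$-dependence; you should make the argument explicit rather than invoke a telescoping that does not actually take place.
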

\begin{proof} The diagonal matrix element $\tX_{II}$ is given by \eqref{XIJ} with $I=J_{\bmu}$, $\bmu=(\mu_1,\cdots,\mu_n)$.  
We show that $\tX_{II}$ is factored into a product of the matrix elements of
the $R$-matrices  determined  uniquely by the boundary configuration.

To show this inductively in the number of columns in the lattice, 
let us introduce the following. For $k=1,\dots,n$ and $j=1,\cdots,N$, let us set  $n(I,k,j):=|\{ \mu_1,\dots,\mu_k \in I_j \}|$ 
and 
${I_j \cup \cdots \cup I_N}
=\{
i_1^{[j,N]} < 
i_2^{[j,N]} < \cdots < i_{|I_j \cup \cdots \cup I_N|}^{[j,N]
}
\}$. We define the $k$-th partition function  by
\begin{align}
&Z_k:=Z_{\bfK_k\; ( \mu_k,\dots,\mu_n )}^{\bfL (1^{n+1-k})}
(w_{i_1^{[2,N]}}, \dots, w_{i_{|I_2 \cup \cdots \cup I_N|}^{[2,N]}},
w_{i_1^{[3,N]}}, \dots, w_{i_{|I_3 \cup \cdots \cup I_N|}^{[3,N]}}
, \dots,
w_{i_1^{[N,N]}}, \dots, w_{i_{|I_N|}^{[N,N]}}; \nonumber \\
&\qquad\qquad\qquad\qquad \qquad\qquad
w_k,\dots,w_n;\Pi^*q^{\sum_{l=1}^{k-1}2h_{
\overline{\epsilon}_{\mu_l}}}
). \label{ZKdef}
\end{align}
Here, $\bfK_k=
(1^{n(I,k,2)+\cdots+n(I,k,N)},
2^{|I_2 \cup \dots \cup I_N|-n(I,k,2)},
3^{|I_3 \cup \dots \cup I_N|-n(I,k,3)}
,\dots,N^{|I_N|-n(I,k,N)}
)$, and  we fix the order of the variables 
$(w_{I_2 \cup \cdots \cup I_N}, w_{I_3 \cup \cdots \cup I_N},\dots,w_{I_N})$ as
\be
&&(w_{i_1^{[2,N]}}, \dots, w_{i_{|I_2 \cup \cdots \cup I_N|}^{[2,N]}},
w_{i_1^{[3,N]}}, \dots, w_{i_{|I_3 \cup \cdots \cup I_N|}^{[3,N]}}
, \dots, w_{i_1^{[N,N]}}, \dots, w_{i_{|I_N|}^{[N,N]}}).
\en
 We show that from the $k$-th partition function $Z_k$ one can obtain the $k+1$-th partition $Z_{k+1}$  by removing the $k$-th column as illustrated in the above example.   
The precise relation between $Z_k$ and $Z_{k+1}$ is given below. We also set $Z_{n+1}:=1$.
\begin{lemma}
For $k \in I_l$,
the relation between $Z_k$ and $Z_{k+1}$ is given by
\begin{align}
Z_k=W_k Z_{k+1},
\end{align}
where
\begin{align}
W_k&=W_k^1 W_k^2 W_k^3, \\
W_k^1&=\prod_{\substack{ j \in I_N } } \theta(w_j/w_k)
\prod_{\substack{ j \in I_{N-1} \cup I_N } } \theta(w_j/w_k)
\cdots
\prod_{\substack{ j \in I_{l+2} \cup \cdots \cup I_N} } \theta(w_j/w_k), \label{WKone} \\
W_k^2&=
\prod_{\substack{ j \in I_{l+1} \cup \cdots \cup I_N  \\ j>k} } \theta(w_j/w_k)
\prod_{\substack{ j \in I_{l+1} \cup \cdots \cup I_N  \\ j<k} } \theta(q^2 w_j/w_k)
, \label{WKtwo} \\
W_k^3&=\Bigg\{ \theta(q^2)
\prod_{\substack{ j \in I_{l} \cup \cdots \cup I_N  \\ j>k} } \theta(q^2 w_j/w_k)
\prod_{\substack{ j \in I_{l} \cup \cdots \cup I_N  \\ j<k} } \theta(q^2 w_j/w_k)
\Bigg\} \nn \\
&\times \Bigg\{ \theta(q^2)
\prod_{\substack{ j \in I_{l-1} \cup \cdots \cup I_N  \\ j>k} } \theta(q^2 w_j/w_k)
\prod_{\substack{ j \in I_{l-1} \cup \cdots \cup I_N  \\ j<k} } \theta(q^2 w_j/w_k)
\Bigg\} \nn \\
&\times \cdots \times \Bigg\{ \theta(q^2)
\prod_{\substack{ j \in I_{2} \cup \cdots \cup I_N  \\ j>k} } \theta(q^2 w_j/w_k)
\prod_{\substack{ j \in I_{2} \cup \cdots \cup I_N  \\ j<k} } \theta(q^2 w_j/w_k)
\Bigg\}. \label{WKthree}
\end{align}
\end{lemma}
\begin{proof}
We show by induction on $k$.
Figure \ref{figureonecolumnelementone} 
is a graphical description of
the $k$-th column
of the partition function corresponding to $\tilde{X}_{II}$
or equivalently the leftmost column of the partition function $Z_k$
when $k \in I_l$. 
By inductive assumption,
the left boundary condition of the column
which corresponds to the left boundary condition of $Z_k$
is given by $\bfK_k=
(1^{n(I,k,2)+\cdots+n(I,k,N)},
2^{|I_2 \cup \dots \cup I_N|-n(I,k,2)},
3^{|I_3 \cup \dots \cup I_N|-n(I,k,3)}
,\dots,N^{|I_N|-n(I,k,N)}
)$.
We find in the same way as in {\it Example} that the
empty circles in Figure \ref{figureonecolumnelementone} are uniquely filled with the numbers
as given in Figure \ref{figureonecolumnelementtwo}.
This
means that from this column we have products of
$R$-matrix elements denoted as $W_k$, which can be read out explicitly from Figure \ref{figureonecolumnelementtwo}
as $W_k=W_k^1 W_k^2 W_k^3$ where $W_k^1$, $W_k^2$, $W_k^3$ are \eqref{WKone}, \eqref{WKtwo} and \eqref{WKthree}.
Figure \ref{figureonecolumnelementtwo} also implies $Z_k=W_k Z_{k+1}$.
The right boundary of the column in Figure \ref{figureonecolumnelementtwo}
is $\bfK_{k+1}=
(1^{n(I,k+1,2)+\cdots+n(I,k+1,N)},
2^{|I_2 \cup \dots \cup I_N|-n(I,k+1,2)},
3^{|I_3 \cup \dots \cup I_N|-n(I,k+1,3)}
,\\ \dots,N^{|I_N|-n(I,k+1,N)}
)$ and this becomes the left boundary of $Z_{k+1}$.
\end{proof}

\begin{figure}[] 
\centering
\includegraphics[width=16cm]{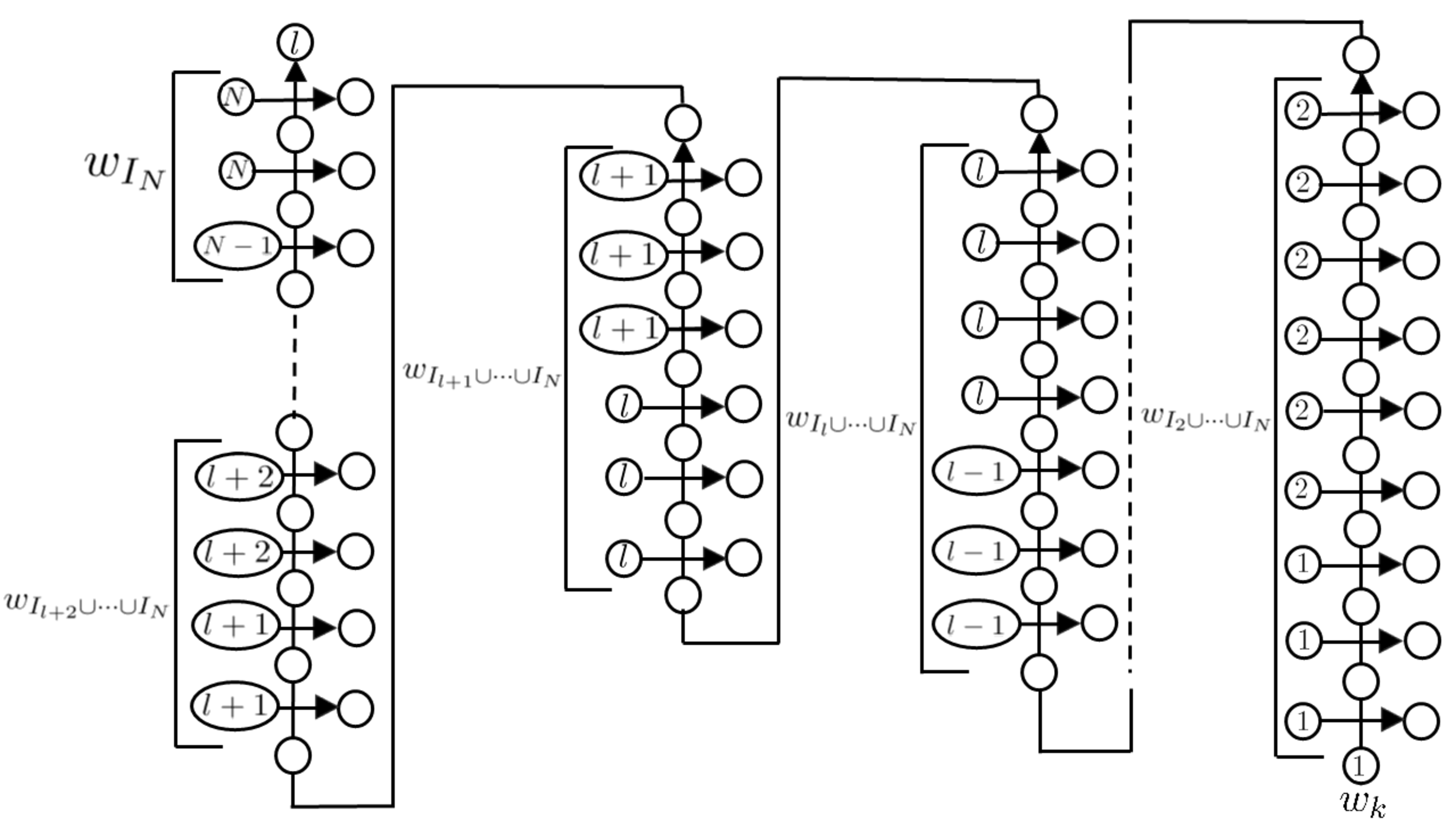}
\caption{ 
A graphical representation of the $k$-th column
of the partition function corresponding to $\tilde{X}_{II}$
or equivalently the leftmost column of the partition function $Z_k$,
when $k \in I_l$.
The long line which is bended corresponds to the space $\widehat{V}_{w_k}$.
Note there are also $R$-matrices in the dotted lines.
The left boundary condition of this column is (reading the numbers in circles from bottom to top)
$\bfK_k=
(1^{n(I,k,2)+\cdots+n(I,k,N)},
2^{|I_2 \cup \dots \cup I_N|-n(I,k,2)},
3^{|I_3 \cup \dots \cup I_N|-n(I,k,3)}
,\dots,N^{|I_N|-n(I,k,N)}
)$.
}
\label{figureonecolumnelementone}
\end{figure}

\begin{figure}[] 
\centering
\includegraphics[width=16cm]{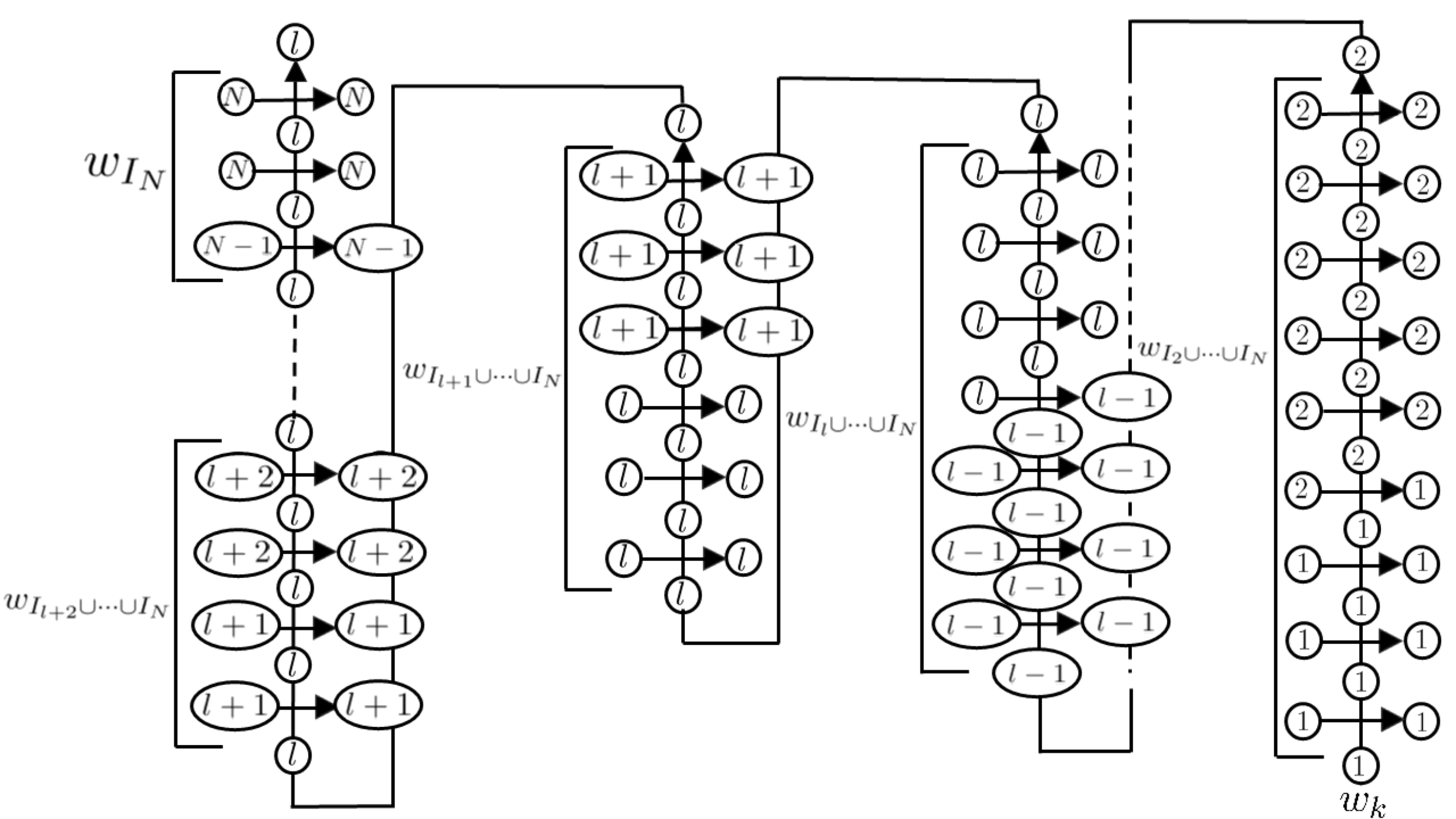}
\caption{ 
A graphical representation of the $k$-th column
of the partition function corresponding to $\tilde{X}_{II}$
or equivalently the leftmost column of the partition function $Z_k$,
when $k \in I_l$. We find in the same way as in {\it Example} that the
empty circles in Figure \ref{figureonecolumnelementone} are uniquely filled with the numbers
as given in this figure.
The product of the $R$-matrix elements for this configuration gives the factor $W_k$.
The first part from left in this figure gives $W_k^1$ \eqref{WKone}.
The second part from left gives $W_k^2$ \eqref{WKtwo}.
The remaining part gives $W_k^3$ \eqref{WKthree}.
The right boundary of this column becomes (reading the numbers in circles from bottom to top)
$\bfK_{k+1}=
(1^{n(I,k+1,2)+\cdots+n(I,k+1,N)},
2^{|I_2 \cup \dots \cup I_N|-n(I,k+1,2)},
3^{|I_3 \cup \dots \cup I_N|-n(I,k+1,3)}
,\dots,N^{|I_N|-n(I,k+1,N)}
)$.
Also note there is no $\Pi^*$-dependence on $W_k$.
}
\label{figureonecolumnelementtwo}
\end{figure}

To get the expression \eqref{coeffGZbasis},
first note
$\tX_{II}=\prod_{k=1}^n W_k
=\prod_{k=1}^n W_k^1 W_k^2 W_k^3$.
Also recall the disjoint union of $I_1,\dots,I_N$ is ${[}1, n {]}$.
Then one notes from the expressions \eqref{WKone}, \eqref{WKtwo}, \eqref{WKthree}
that $\prod_{k=1}^n W_k^1$, $\prod_{k=1}^n W_k^2$ and $\prod_{k=1}^n W_k^3$ can be expressed as
\begin{align}
\prod_{k=1}^n W_k^1&=
\displaystyle
\prod_{l=1}^{N-1}
\prod_{\substack{
j \in I_{N-l+1} \cup \cdots \cup I_{N}
\\
k \in I_{1} \cup \cdots \cup I_{N-l-1}
}
} \theta(w_j/w_k), \label{productWKone} \\
\prod_{k=1}^n W_k^2&=
\displaystyle
\prod_{l=1}^{N-1}
\Bigg\{
\prod_{\substack{
j \in I_{N-l+1} \cup \cdots \cup I_{N}
\\
k \in I_{N-l}
\\
j > k
}
}
\theta(w_j/w_k)
\prod_{\substack{
j \in I_{N-l+1} \cup \cdots \cup I_{N}
\\
k \in I_{N-l}
\\
j < k
}
}
\theta(q^2 w_j/w_k)
\Bigg\}, \label{productWKtwo}  \\
\prod_{k=1}^n W_k^3&=
\displaystyle \theta(q^2)^{\sum_{k=2}^N (k-1) |I_k|
} \prod_{l=1}^{N-1}
\Bigg\{
\prod_{\substack{
j \in I_{N-l+1} \cup \cdots \cup I_{N}
\\
k \in I_{N-l+1} \cup \cdots \cup I_{N}
\\ j < k
}
}
\theta(q^2 w_j/w_k)
\prod_{\substack{
j \in I_{N-l+1} \cup \cdots \cup I_{N}
\\
k \in I_{N-l+1} \cup \cdots \cup I_{N} \\
j > k
}
}
\theta(q^2 w_j/w_k)
\Bigg\}. \label{productWKthree} 
\end{align}
Multiplying \eqref{productWKone}, \eqref{productWKtwo} and \eqref{productWKthree}
and rearranging, one gets \eqref{coeffGZbasis}.

\end{proof}

As a corollary, we obtain $N(\bfw)=\tX_{II}/X_{II}$ as follows. 
\begin{cor}
\begin{align}
N(\bfw)
&=\theta(q^2)^{\sum_{k=2}^N (k-1) |I_k|
}
\prod_{1 \le k < l < N} \prod_{a \in I_k} \prod_{\substack{b \in I_l \\ a<b}}
\frac{\theta(q^2 w_b/w_a)}{\theta(w_b/w_a)}
\prod_{l=1}^{N-1}
\Bigg\{
\prod_{\substack{
j \in I_{N-l+1} \cup \cdots \cup I_{N}
\\
k \in I_{1} \cup \cdots \cup I_{N-l-1}
}
} \theta(w_j/w_k)
\nonumber \\
&\times
\prod_{\substack{
j \in I_{N-l+1} \cup \cdots \cup I_{N}
\\
k \in I_{N-l} \cup \cdots \cup I_{N}
\\ j < k
}
}
\theta(q^2 w_j/w_k)
\prod_{\substack{
j \in I_{N-l+1} \cup \cdots \cup I_{N}
\\
k \in I_{N-l+1} \cup \cdots \cup I_{N} \\
j > k
}
}
\theta(q^2 w_j/w_k)
\prod_{\substack{
j \in I_{N-l+1} \cup \cdots \cup I_{N}
\\
k \in I_{N-l}
\\
j > k
}
}
\theta(w_j/w_k)
\Bigg\}.
\nonumber 
\end{align}
\end{cor}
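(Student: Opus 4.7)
The proof is a short deduction from the preceding theorem together with the quoted Proposition~5.1 of \cite{KonnoJintone}. First, recall the identity $\txi_I = N(\bfw)\,\xi_I'$ noted in Sec.~\ref{GTBtensor}: both bases are joint eigenvectors of the commuting family $\{A_\ell(z)\}_{\ell=1}^N$, by Proposition~\ref{eigenAl} for $\{\txi_I\}$ and by \eqref{actionK} combined with $A_\ell(z)=K^+_\ell(z)K^+_{\ell+1}(q^{-2}z)\cdots K^+_N(q^{-2(N-\ell)}z)$ for $\{\xi_I'\}$, and the two vectors were observed to carry the same eigenvalues. Hence they are scalar multiples of one another, with the scalar permitted to depend on $I$ and on $\bfw$ but not on the target basis vector.

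Expanding both sides of $\txi_I=N(\bfw)\xi_I'$ in the standard basis $\{v_J\}_{J\in\cI_n}$ gives $\tX_{IJ}=N(\bfw)X_{IJ}$ for every $J\in\cI_n$. Choosing $J=I$ and noting that $X_{II}=\widetilde{W}_I(\bfw_I,\bfw,\Pi^*)$ is nonzero and $\Pi^*$-independent by the factored formula \eqref{Wtildediagonalelements}, one obtains
\[
N(\bfw)\;=\;\frac{\tX_{II}}{X_{II}}.
\]

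It then remains to substitute the explicit factorization \eqref{coeffGZbasis} of $\tX_{II}$ into the numerator and \eqref{Wtildediagonalelements} into the denominator. Division by $X_{II}$ inverts each ratio $\theta(w_b/w_a)/\theta(q^2 w_b/w_a)$ to $\theta(q^2 w_b/w_a)/\theta(w_b/w_a)$, producing the additional product $\prod_{1\le k<l<N}\prod_{a\in I_k}\prod_{b\in I_l,\,a<b}\theta(q^2 w_b/w_a)/\theta(w_b/w_a)$ that appears in the stated formula, while the four factors coming from $W^1_k,W^2_k,W^3_k$ and the prefactor $\theta(q^2)^{\sum_{k\ge 2}(k-1)|I_k|}$ in $\tX_{II}$ carry over unchanged.

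This is purely a bookkeeping step and no real obstacle arises; the only point requiring care is to check that the index conventions of the two source formulas align, i.e.\ that the ordering of $j,k$ in the theta factors and the ranges over $I_{N-l+1}\cup\cdots\cup I_N$ versus over pairs $(I_k,I_l)$ with $k<l$ match after the substitution. Since both \eqref{coeffGZbasis} and \eqref{Wtildediagonalelements} are already in fully factored form with the same underlying theta data, this comparison is routine.
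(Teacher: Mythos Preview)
Your proposal is correct and follows essentially the same approach as the paper: the paper simply states that $N(\bfw)=\tX_{II}/X_{II}$ and presents the resulting formula, while you spell out why the ratio of diagonal entries gives $N(\bfw)$ and describe the substitution of \eqref{coeffGZbasis} and \eqref{Wtildediagonalelements}. The reasoning and ingredients are identical.
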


\noindent
{\it Example.}\ 
For the case $N=2$, we have
\begin{align}
N(\bfw)&
=\prod_{a \in I_1} \prod_{b \in I_2} \theta(q^2 w_b/w_a)
\prod_{a \in I_2} \prod_{b \in I_2} \theta(q^2 w_b/w_a).\nn
\end{align}

\section*{Acknowledgments}
This work was partially supported by grant-in-Aid
for Scientific Research (C) 20K03507, 21K03176, 20K03793.

\bigskip

\appendix
\setcounter{equation}{0}
\begin{appendix}

\section{Defining Relations of  $U_{q,p}(\glnhbig)$ and $U_{q,p}(\slnhbig)$}\lb{sec:UqpEqp}

\subsection{$U_{q,p}(\glnh)$}\lb{defrelUqp}
 For $g(P), g(P+h)\in 
\FF$, 
\bea
&&g({P+h})e_j(z)=e_j(z)g({P+h}),\quad g({P})e_j(z)=e_j(z)g(P-\bra Q_{\al_j},P\ket ),\lb{gegl}\\
&&g({P+h})f_j(z)=f_j(z)g(P+h-\bra {\al_j},P+h\ket  ),\quad g({P})f_j(z)=f_j(z)g(P),\lb{gfgl}\\
&&g({P})k^+_l(z)=k^+_l(z)g(P-\bra Q_{\bep_l},P\ket  ),\quad
g({P+h})k^+_l(z)=k^+_l(z)g(P+h-\bra Q_{\bep_l},P\ket  ),\nn\\
&&\lb{gkgl}
\\
&&\rho^+_+(z_2/z_1)k^+_{l}(z_1)k^+_{l}(z_2)=\rho^+_+(z_1/z_2)k^+_{l}(z_2)k^+_{l}(z_1),
\qquad  (1\leq l\leq N),\lb{kjkj}\\
&&\rho^+_+(z_2/z_1)\frac{(p^*z_2/z_1;p^*)_\infty(pq^2z_2/z_1;p)_\infty}{(p^*q^2z_2/z_1;p^*)_\infty(pz_2/z_1;p)_\infty}k^+_{j}(z_1)k^+_{l}(z_2)\nn\\
&&\qquad ={\rho}^+_+(z_1/z_2)\frac{(q^{-2}z_1/z_2;p^*)_\infty(z_1/z_z;p)_\infty}{(z_1/z_2;p^*)_\infty(q^{-2}z_1/z_2;p)_\infty} k^+_{l}(z_2)k^+_{j}(z_1)
  \quad (1\leq j<l\leq N),\lb{kjkl}
  \\
&&\frac{(p^*q^{c+2-j}z_2/z_1;p^*)_\infty}{(p^*q^{c-j}z_2/z_1;p^*)_\infty}k_j^{+}(z_1)e_j(z_2)=q^{-1}\frac{(q^{-c+j}z_1/z_2;p^*)_\infty}{(q^{-c-2+j}z_1/z_2;p^*)_\infty}e_j(z_2)k_j^{+}(z_1),\lb{kjej}
\\
&&\frac{(p^*q^{c-2-j}z_2/z_1;p^*)_\infty}{(p^*q^{c-j}z_2/z_1;p^*)_\infty}k_{j+1}^{+}(z_1)e_j(z_2)=q\frac{(q^{-c+j}z_1/z_2;p^*)_\infty}{(q^{-c+2+j}z_1/z_2;p^*)_\infty}e_j(z_2) k_{j+1}^{+}(z_1)
,
\lb{kjp1ej}\\
&&k_l^{+}(z_1)e_j(z_2)k_l^{+}(z_1)^{-1}=e_j(z_2)\qquad\qquad (l\not=j,j+1),\lb{ejkl}
\\
&&\frac{(pq^{-j}z_2/z_1;p)_\infty}{(pq^{2-j}z_2/z_1;p)_\infty}k_j^{+}(z_1)f_j(z_2)=q\frac{(q^{-2+j}z_1/z_2;p)_\infty}{(q^{j}z_1/z_2;p)_\infty}f_j(z_2)k_j^{+}(z_1),
\\
&&\frac{(pq^{-j}z_2/z_1;p)_\infty}{(pq^{-2-j}z_2/z_1;p)_\infty}k_{j+1}^{+}(z_1)f_j(z_2)=q^{-1}\frac{(q^{2+j}z_1/z_2;p)_\infty}{(q^{j}z_1/z_2;p)_\infty}f_j(z_2)k_{j+1}^{+}(z_1)
,\lb{fjkjp1}\\
&&k_l^{+}(z_1)f_j(z_2)k_l^{+}(z_1)^{-1}=f_j(z_2)\qquad\qquad (l\not=j,j+1),
\lb{fjkl}
\\
&&
z_1 \frac{(q^{2}z_2/z_1;p^*)_\infty}{(p^*q^{-2}z_2/z_1;p^*)_\infty}e_j(z_1)e_j(z_2)=
-z_2 \frac{(q^{2}z_1/z_2;p^*)_\infty}{(p^*q^{-2}z_1/z_2;p^*)_\infty}e_j(z_2)e_j(z_1),\lb{ejejgl}\\
&&z_1 \frac{(q^{-1}z_2/z_1;p^*)_\infty}{(p^*qz_2/z_1;p^*)_\infty}e_j(z_1)e_{j+1}(z_2)=
-z_2 \frac{(q^{-1}z_1/z_2;p^*)_\infty}{(p^*qz_1/z_2;p^*)_\infty}e_{j+1}(z_2)e_j(z_1),\lb{ejejp1gl}\\
&&e_j(z_1)e_l(z_2)=e_l(z_2)e_j(z_1)\qquad\qquad (|j-l|>1)\\
&&
z_1 \frac{(q^{-2}z_2/z_1;p)_\infty}{(pq^{2}z_2/z_1;p)_\infty}f_j(z_1)f_j(z_2)=
-z_2 \frac{(q^{-2}z_1/z_2;p)_\infty}{(pq^{2}z_1/z_2;p)_\infty}f_j(z_2)f_j(z_1),\lb{fjfjgl}\\
&&
z_1 \frac{(qz_2/z_1;p)_\infty}{(pq^{-1}z_2/z_1;p)_\infty}f_j(z_1)f_{j+1}(z_2)=
-z_2 \frac{(qz_1/z_2;p)_\infty}{(pq^{-1}z_1/z_2;p)_\infty}f_{j+1}(z_2)f_j(z_1),\lb{fjfjp1gl}\\
&&f_j(z_1)f_l(z_2)=f_l(z_2)f_j(z_1)\qquad\qquad (|j-l|>1)\lb{fjfl}
\\
&&[e_i(z_1),f_j(z_2)]=\frac{\delta_{i,j}\kappa}{q-q^{-1}}
\left(\delta(
q^{-c}
z_1/z_2)
k_j^-(
q^{-\frac{c}{2}}
z_1)k_{j+1}^-(
q^{-\frac{c}{2}}
z_1)^{-1}\right.\nn\\
&&\left.\qquad\qquad\qquad\qquad\qquad\qquad\qquad -
\delta(
q^c
z_1/z_2)
k_j^+(
q^{-\frac{c}{2}}
z_2)k_{j+1}^+(
q^{-\frac{c}{2}}
z_2)^{-1}
\right),\lb{eifj}
\ena
\bea
&&
\frac{(p^*q^{2}{z_{2}}/{z_{1}}; p^*)_{\infty}}
{(p^*q^{-2}{z_{2}}/{z_{1}}; p^*)_{\infty}}
\left\{\frac{(p^*q^{-1}{z_{1}}/{w}; p^*)_{\infty}}
{(p^*qz_{1}/{w}; p^*)_{\infty}}
\frac{(p^*q^{-1}{z_{2}}/w; p^*)_{\infty}}
{(p^*q{z_{2}}/w; p^*)_{\infty}}
e_j(w)e_i(z_1)e_i(z_2)\right.
\nn\\
&&\left.\qquad\qquad\qquad-[2]_q\frac{(p^*q^{-1}w/{z_{1}}; p^*)_{\infty}}
{(p^*qw/z_{1}; p^*)_{\infty}}
\frac{(p^*q^{-1}{z_{2}}/w; p^*)_{\infty}}
{(p^*q{z_{2}}/w; p^*)_{\infty}}
e_i(z_1)e_j(w)e_i(z_2)\right.
\nn\\
&&\left.
\qquad+\frac{(p^*q^{-1}w/{z_{1}}; p^*)_{\infty}}
{(p^*qw/z_{1}; p^*)_{\infty}}
\frac{(p^*q^{-1}w/{z_{2}}; p^*)_{\infty}}
{(p^*qw/{z_{2}}; p^*)_{\infty}}
e_i(z_1)e_i(z_2)e_j(w)\right\}+(z_1\leftrightarrow z_2)=0,\label{serreegl}\\
&&
\frac{(pq^{-2}{z_{2}}/{z_{1}}; p)_{\infty}}
{(pq^{2}{z_{2}}/{z_{1}}; p)_{\infty}}
\left\{\frac{(pq^{}{z_{1}}/{w}; p)_{\infty}}
{(pq^{-1}z_{1}/{w}; p)_{\infty}}
\frac{(pq^{}{z_{2}}/w; p)_{\infty}}
{(pq^{-1}{z_{2}}/w; p)_{\infty}}
f_j(w)f_i(z_1)f_i(z_2)\right.
\nn\\
&&\left.\qquad\qquad\qquad-[2]_q\frac{(pq^{}w/{z_{1}}; p)_{\infty}}
{(pq^{-1}w/z_{1}; p)_{\infty}}
\frac{(pq^{}{z_{2}}/w; p)_{\infty}}
{(pq^{-1}{z_{2}}/w; p)_{\infty}}
f_i(z_1)f_j(w)f_i(z_2)\right.
\nn\\
&&\left.
\qquad+\frac{(pq^{}w/{z_{1}}; p)_{\infty}}
{(pq^{-1}w/z_{1}; p)_{\infty}}
\frac{(pq^{}w/{z_{2}}; p)_{\infty}}
{(pq^{-1}w/{z_{2}}; p)_{\infty}}
f_i(z_1)f_i(z_2)f_j(w)\right\}+(z_1\leftrightarrow z_2)=0
\quad   |i-j|=1,\nn\\&&
\label{serrefgl}
\end{eqnarray}  
where $\delta(z)=\sum_{n\in \Z}z^n$, $\rho(z)=\rho^{+*}(z)/\rho^+(z)$, 
\bea
&&\rho^+_+(z)= \frac{\{q^2z\}^*\{q^{-2}q^{2N}z\}^*\{z\}\{q^{2N}z\}}{\{z\}^*\{q^{2N}z\}^*\{q^2z\}\{q^{-2}q^{2N}z\}},\quad \{z\}=(z;q^{2N},p)_\infty, \quad \{z\}^*=(z;q^{2N},p^*)_\infty,\nn
\ena
and
$\kappa$ is given by
\bea
&&\kappa=\frac{(p;p)_\infty(p^*q^2;p^*)_\infty}{(p^*;p^*)_\infty(pq^2;p)_\infty}.
\lb{def:kappa}
\ena   
We treat these relations 
 as formal Laurent series in $z, w$ and $z_j$'s. 
All the coefficients in $z_j$'s are well defined in the $p$-adic topology.  

\subsection{ $U_{q,p}(\slnh)$}\lb{defrelUqpslnh}
The defining relations of $U_{q,p}(\slnh)$ consists of \eqref{ellboson}, \eqref{gegl}, \eqref{gfgl}, the Serre relations \eqref{serreegl} and \eqref{serrefgl}, and the following relations. 
\bea
&&
z_1 \frac{(q^{a_{ij}}z_2/z_1;p^*)_\infty}{(p^*q^{-a_{ij}}z_2/z_1;p^*)_\infty}e_i(z_1)e_j(z_2)=
-z_2 \frac{(q^{a_{ij}}z_1/z_2;p^*)_\infty}{(p^*q^{-a_{ij}}z_1/z_2;p^*)_\infty}e_j(z_2)e_i(z_1),\lb{eesln}\\
&&
z_1 \frac{(q^{-a_{ij}
}z_2/z_1;p)_\infty}{(pq^{a_{ij}}z_2/z_1;p)_\infty}f_i(z_1)f_j(z_2)=
-z_2 \frac{(q^{-a_{ij}}z_1/z_2;p)_\infty}{(pq^{a_{ij}}z_1/z_2;p)_\infty}f_j(z_2)f_i(z_1),\lb{ffsln}\\
&&[e_i(z_1),f_j(z_2)]=\frac{\delta_{i,j}}{q-q^{-1}}
\left(\delta(
q^{-c}
z_1/z_2)
\psi_j^-(
q^{\frac{c}{2}}
z_2)-
\delta(
q^c
z_1/z_2)
\psi_j^+(
q^{-\frac{c}{2}}
z_2)
\right).\lb{eifjsln}
\ena

\end{appendix}


\end{document}